\newtheorem{theorem}{Theorem}[chapter]
\newtheorem{lemma}[theorem]{Lemma}
\newtheorem{corollary}[theorem]{Corollary}
\newtheorem{proposition}[theorem]{Proposition}
\newtheorem{claim}[theorem]{Claim}
\theoremstyle{definition}
\newtheorem{definition}[theorem]{Definition}
\newtheorem{example}[theorem]{Example}
\theoremstyle{remark}
\newtheorem{remark}[theorem]{Remark}
\numberwithin{section}{chapter}
\numberwithin{equation}{chapter}
\numberwithin{figure}{chapter}
\newcommand\Acal{\mathcal{A}}
\newcommand\Bcal{\mathcal{B}}
\newcommand\Mcal{\mathcal{M}}
\newcommand\Ncal{\mathcal{N}}
\newcommand\Ocal{\mathcal{O}}
\newcommand\Pcal{\mathcal{P}}
\newcommand\Rcal{\mathcal{R}}
\newcommand\bu{\mathbf{u}}
\newcommand\bv{\mathbf{v}}
\newcommand\bx{\mathbf{x}}
\newcommand\bw{\mathbf{w}}
\newcommand\Ascr{\mathscr{A}}
\newcommand\Cscr{\mathscr{C}}
\newcommand\Fscr{\mathscr{F}}
\newcommand\Oscr{\mathscr{O}}
\newcommand\Uscr{\mathscr{U}}
\newcommand\Vscr{\mathscr{V}}
\newcommand\UnNcal{\underline{\mathcal{N}}}
\newcommand\UnM{\underline{M}}
\newcommand\UnN{\underline{N}}
\newcommand\UnX{\underline{X}}
\newcommand\UnY{\underline{Y}}
\newcommand\B{\mathbb{B}}
\newcommand\C{\mathbb{C}}
\newcommand\D{\overline{\mathbb D}}
\newcommand\CP{\mathbb{CP}}
\renewcommand\D{\mathbb D}
\newcommand\K{\mathbb{K}}
\newcommand\N{\mathbb{N}}
\renewcommand\P{\mathbb{P}}
\newcommand\R{\mathbb{R}}
\renewcommand\S{\mathbb{S}}
\newcommand\T{\mathbb{T}}
\newcommand\Z{\mathbb{Z}}
\renewcommand\b{\mathbb{B}}
\renewcommand\c{\mathbb{C}}
\newcommand\cd{\overline{\mathbb D}}
\newcommand\cp{\mathbb{CP}}
\renewcommand\d{\mathbb D}
\newcommand\n{\mathbb{N}}
\renewcommand\r{\mathbb{R}}
\newcommand\s{\mathbb{S}}
\renewcommand\t{\mathbb{T}}
\newcommand\z{\mathbb{Z}}
\newcommand\igot{\mathfrak{i}}
\renewcommand\igot{\mathfrak{i}}
\newcommand\pgot{\mathfrak{p}}
\newcommand\qgot{\mathfrak{q}}
\newcommand\Agot{\mathfrak{A}}
\newcommand\Igot{\mathfrak{I}}
\renewcommand\imath{\igot}
\newcommand\dist{\mathrm{dist}}
\newcommand\wt{\widetilde}
\newcommand\di{\partial}
\newcommand\dibar{\overline\partial}
\newcommand\hra{\hookrightarrow}
\newcommand\Flux{\mathrm{Flux}}
\newcommand\GCMI{\mathrm{GCMI}}
\newcommand\GCMII{\mathrm{GCMI}_{\Igot}}
\newcommand\CMI{\mathrm{CMI}}
\newcommand\OI{\Oscr_{\Igot}}
\newcommand\OmI{\Omega_{\Igot}}
\newcommand\supp{\mathrm{supp}}
\newcommand\Hess{\mathrm{Hess}}
\newcommand\lra{\longrightarrow}
\newcommand\ssharp{\,\sharp\,}
\newcommand\Id{\mathrm{Id}}
\begin{document}

\frontmatter

\title{New complex analytic methods in the study of non-orientable minimal surfaces in $\R^n$}


\author{Antonio Alarc\'{o}n}
\address{Departamento de Geometr\'{\i}a y Topolog\'{\i}a e Instituto de Matem\'aticas (IEMath-GR), Universidad de Granada, Campus de Fuentenueva s/n, E--18071 Granada, Spain}
\curraddr{}
\email{alarcon@ugr.es}
\thanks{A. Alarc\'on is supported by the Ram\'on y Cajal program of the Spanish Ministry of Economy and Competitiveness, and partially supported by the MINECO/FEDER grant no. MTM2014-52368-P, Spain}

\author{Franc Forstneri\v c}
\address{Faculty of Mathematics and Physics, University of Ljubljana, Jadranska 19, SI--1000 Ljubljana, Slovenia, and
Institute of Mathematics, Physics and Mechanics, Jadranska 19, SI--1000 Ljubljana, Slovenia}
\curraddr{}
\email{franc.forstneric@fmf.uni-lj.si}
\thanks{F.\ Forstneri\v c is partially  supported  by the research program P1-0291 and the grants J1-5432 and J1-7256 
from ARRS, Republic of Slovenia}

\author{Francisco J.\ L\'opez}
\address{Departamento de Geometr\'{\i}a y Topolog\'{\i}a e Instituto de Matem\'aticas (IEMath-GR), Universidad de Granada, Campus de Fuentenueva s/n, E--18071 Granada, Spain}
\curraddr{}
\email{fjlopez@ugr.es}
\thanks{Francisco J.\ L\'opez is partially supported by the MINECO/FEDER grant no. MTM2014-52368-P, Spain}

\date{March 3, 2016; final version: May 21, 2017}

\subjclass[2010]{Primary 53A10; Secondary 32B15, 32E30, 32H02}

\keywords{non-orientable surfaces, Riemann surfaces, minimal surfaces}

\dedicatory{}

\maketitle

\tableofcontents

\begin{abstract}
The aim of this work is to adapt the complex analytic methods originating in modern Oka theory to the study of non-orientable 
conformal minimal surfaces in $\r^n$ for any $n\ge 3$. These methods, which we develop essentially from the first principles, 
enable us to prove that the space of conformal minimal immersions of a given bordered non-orientable surface to $\r^n$ is a 
real analytic Banach manifold (see Theorem \ref{th:intro-structure}), obtain approximation results of Runge-Mergelyan type 
for conformal minimal immersions from non-orientable surfaces (see Theorem \ref{th:intro-Runge} and Corollary \ref{cor:intro-Runge}), 
and show general position theorems for non-orientable conformal minimal surfaces in $\r^n$ (see Theorem \ref{th:generalposition}). 
We also give the first known example of a properly embedded non-orientable minimal surface in $\r^4$; a M\"obius strip 
(see Example \ref{ex:Mobius}). 

All our new tools mentioned above apply to non-orientable minimal surfaces endowed with a fixed choice 
of a conformal structure. This enables us to obtain significant new applications to the global theory of non-orientable 
minimal surfaces. In particular, we construct proper non-orientable conformal minimal surfaces in $\r^n$ with any given conformal structure
(see Theorem \ref{th:applications1} {\rm (i)}), complete non-orientable minimal surfaces in $\r^n$ with arbitrary 
conformal type whose generalized Gauss map is nondegenerate and omits $n$ hyperplanes of $\cp^{n-1}$ in general position 
(see Theorem \ref{th:applications1} {\rm (iii)}), complete non-orientable minimal surfaces bounded by Jordan curves 
(see Theorem \ref{th:intro-Jordan}), and complete proper non-orientable minimal surfaces normalized by bordered surfaces
in $p$-convex domains of $\r^n$  (see Theorem \ref{th:intro3}).
\end{abstract}


\mainmatter


\chapter{Introduction}\label{ch:intro}

%
%

\section{A summary of the main results}
\label{sec:intro}

A minimal surface in $\R^n$ for $n\ge 3$ is a 2-dimensional smoothly embedded or immersed submanifold
which is locally area minimizing. The study of such surfaces was initiated by Euler and Lagrange 
in the 18th century. In 1776, Meusnier showed that a surface in $\R^n$ is minimal if and only if its 
mean curvature vector vanishes at every point. This observation ties the theory of minimal
surfaces with differential geometry. For background and history, 
we refer e.g.\ to the monographs  by Osserman \cite{Osserman-book}, Meeks and P\'erez 
\cite{MeeksPerez2012AMS}, and Colding and Minicozzi \cite{ColdingMinicozzi2011-book}.

Given an immersion $X\colon N\to \R^n$ from a smooth surface $N$, there is a unique conformal structure 
on $N$ which makes $X$ conformal (angle preserving); namely, the one determined by the pullback $X^* ds^2$ of the standard 
Euclidean metric $ds^2$ on $\R^n$. The immersion is said to be {\em complete} if $X^* ds^2$ is a complete metric on $N$.
A surface $N$ endowed with a conformal structure will be denoted by the  
corresponding caligraphic letter $\Ncal$. It is classical that a conformal immersion is minimal, in the sense 
that it parametrizes a minimal surface in $\R^n$, if and only if it is harmonic. 
If in addition $\Ncal$ is orientable, then it carries the structure of a Riemann surface;
this makes it possible to study harmonic maps $\Ncal\to \R^n$  as real parts of holomorphic 
maps $\Ncal\to \C^n$. It is therefore not surprising that the orientable case has been studied  extensively by 
exploiting complex analytic methods. 

Non-orientable minimal surfaces have also attracted considerable interest. Although the study of non-orientable minimal 
surfaces in $\r^3$ was initiated by Lie back in 1878 (see \cite{Lie1878MA}), the global theory, and in particular the use of 
complex analytic tools in its development, only began in the eighties of the last century with the seminal paper
of Meeks \cite{Meeks1981DMJ}; see also L\'opez \cite{Lopez1993,Lopez1996} and the references therein. 
Every surface is locally orientable, and hence we can study harmonic maps at least locally as real parts of
holomorphic maps. Recently, Alarc\'on  and L\'opez \cite{AlarconLopez2015GT} renewed the interest in 
this subject by proving the Mergelyan approximation theorem for non-orientable minimal surfaces  in $\R^3$. 

The aim of this work is to adapt the complex analytic methods originating  
in modern Oka theory (see \cite{Forstneric2011-book,Forstneric2013AFST} for the latter)
to the study of non-orientable conformal minimal surfaces in $\r^n$ for any $n\ge 3$. 
In the orientable case, these methods have been introduced in the recent works \cite{AlarconDrinovecForstnericLopez2015PLMS,AlarconForstneric2014IM,AlarconForstneric2015MA,
AlarconForstneric2015AS,AlarconForstnericLopez2016MZ}. 
As in \cite{AlarconLopez2015GT}, the main idea  is to pass to the oriented double cover which is a 
Riemann surface endowed with a fixed-point-free antiholomorphic involution; see Section \ref{sec:basics}.
For reasons that will become apparent shortly,  the non-orientable case is considerably more 
delicate both from the geometric and topological point of view, as well as from the purely technical side. 

The core of the work is Chapter \ref{ch:sprays} 
where we develop the relevant complex analytic methods essentially from the first principles.
These methods represent the main new technical contribution of the paper. The subsequent chapters 
are devoted to their implementation in the construction of non-orientable conformal minimal surfaces 
in $\R^n$ enjoying a variety of additional properties. 
Here is a brief summary of our main results; a more detailed explanation can be found
in Sections \ref{sec:intro-Runge}--\ref{sec:intro-proper} and in the main body of the work:
\begin{itemize}
\item  the space of conformal minimal immersions of a bordered non-orientable surface into $\R^n$
is a real analytic Banach manifold (see Theorem \ref{th:structure});
\vspace{1mm}
\item Runge and Mergelyan type approximation theorems hold for non-orientable conformal minimal surfaces 
(see Theorems \ref{th:Mergelyan} and  \ref{th:Mergelyan1});
\vspace{1mm}
\item general position results hold for non-orientable conformal minimal surfaces 
(see Theorems \ref{th:generalposition} and \ref{th:generalposition2});
\vspace{1mm}
\item construction of proper non-orientable conformal minimal surfaces in $\R^n$
with any given conformal structure (see Theorem \ref{th:Iproper});
\vspace{1mm}
\item existence and approximation results for complete non-orientable minimal surfaces in $\r^n$ 
with arbitrary conformal type and $n-2$ given coordinate functions (see Theorem \ref{th:fixed-derivative}
and Corollary \ref{co:fixed}); 
\vspace{1mm}
\item  construction of complete non-orientable minimal surfaces in $\r^n$ with arbitrary conformal type 
whose generalized Gauss map is nondegenerate and omits $n$ hyperplanes of $\cp^{n-1}$ in general position 
(see Corollary \ref{co:gaussmap});
\vspace{1mm}
\item existence of non-orientable complete minimal  surfaces with Jordan boundaries in $\R^n$ for any $n\ge 3$
(see Theorem \ref{th:Jordan});
\vspace{1mm} 
\item existence of complete proper non-orientable minimal surfaces normalized by bordered surfaces in $p$-convex domains 
(see Theorem \ref{th:convex}).
\end{itemize}

One of the major novelties of our approach, when compared with the existing works on the subject, is that 
our results hold for any given conformal structure on the source surface.
It is well known that problems typically become substantially easier
if one allows changes of the complex ($=$conformal)  structure. 
In the context of Oka theory, results of the latter type are referred to as {\em soft Oka theory}. 
To give a simple example, most continuous maps $\Ncal\to Y$ from an open non-simply connected 
Riemann surface $\Ncal$ to another complex manifold $Y$ are not homotopic 
to a holomorphic map; this already fails for maps between annuli, or for maps to $\C\setminus\{0,1\}$.
On the other hand, every continuous map admits a holomorphic
representative in the same homotopy class if one allows a homotopic deformation 
of the complex structure on the source surface $\Ncal$. 
(See \cite[Theorem 9.10.1, p.\ 446]{Forstneric2011-book} for a substantially
more general result in this direction.) 

A glance at the above list shows that most of our results pertain to the global theory of non-orientable 
minimal surfaces in $\r^n$. This part of the theory, which has been a very active focus of research during the 
last five decades (see e.g.\ the recent surveys \cite{Hoffman2005CMP,MeeksPerez2012AMS}),
concerns the questions how global assumptions such as Riemannian completeness, embeddedness, or properness 
(in $\R^n$, or in a given domain or class of domains in $\R^n$)
influence the topological, geometrical and conformal properties of minimal surfaces. 
Ours seems to be the first systematic investigation of this part of the theory in the case of non-orientable 
minimal surfaces in Euclidean spaces of arbitrary dimension $n\ge 3$.

%
%

\section{Basic notions of minimal surface theory} 
\label{sec:basics}

In order to describe the content of this work in more detail, we need to recall some of the basics 
of the theory of minimal surfaces; we refer to Chapter \ref{ch:prelim} for further details and to
Osserman \cite{Osserman-book} for a more complete treatment.

A non-orientable minimal surface in $\R^n$ is the image of a conformal minimal
immersion $\UnX\colon \UnNcal\to \R^n$ from a non-orientable surface, $\UnNcal$,
endowed with a conformal structure. Just as in the orientable case, a conformal immersion into $\R^n$ 
is minimal  if and only if it is harmonic. There is a 2-sheeted oriented covering $\iota\colon \Ncal \to\UnNcal$
of $\UnNcal$ by a  Riemann surface, $\Ncal$, endowed with a  fixed-point-free antiholomorphic involution $\Igot\colon \Ncal\to\Ncal$
(the deck transformation of the covering $\iota\colon \Ncal \to\UnNcal$) such that $\UnNcal=\Ncal/\Igot$. 
Any conformal minimal immersion $\UnX\colon \UnNcal\to\R^n$ arises from
a conformal minimal immersion $X\colon \Ncal\to \r^n$ which is {\em $\Igot$-invariant}, in the sense that
\begin{equation}\label{eq:XisIgotinvariant}
    	X\circ \Igot=X.
\end{equation}

The bridge between minimal surface theory in $\R^n$ and complex analysis in $\C^n$ is spanned by the
classical {\em Weierstrass representation formula} which we now recall; 
see Osserman \cite{Osserman-book} for the orientable framework, 
and Meeks \cite{Meeks1981DMJ} and Alarc\'on-L\'opez \cite{AlarconLopez2015GT}  for the non-orientable one 
in dimension $n=3$. Further details can be found in Section \ref{sec:CMI}.

Let $d=\di+\dibar$ be the decomposition of the exterior differential  on a Riemann surface $\Ncal$ 
as the sum of the $\C$-linear part $\di$ and the $\C$-antilinear part $\dibar$. Thus, holomorphic maps
$Z\colon \Ncal\to\C^n$ are those satisfying $\dibar Z=0$, while harmonic maps $X\colon\Ncal\to\R^n$
are characterized by the equation $\di\dibar X=-\dibar\di X=0$.
Given a conformal minimal immersion $X=(X_1,\ldots,X_n)\colon\Ncal \to\R^n$, the $\C^n$-valued $1$-form 
\[
	\Phi := \di X = (\phi_1,\ldots,\phi_n),\quad \text{with $\phi_j=\di X_j$ for $j=1,\ldots,n$,}
\] 
is holomorphic (since $X$ is harmonic), it does not vanish anywhere on $\Ncal$ (since
$X$ is an immersion), and it satisfies the nullity condition 
\begin{equation}\label{eq:null0}
	(\phi_1)^2 + (\phi_2)^2+\cdots + (\phi_n)^2=0
\end{equation}
characterizing the conformality of $X$. If the immersion $X$ is also $\Igot$-invariant (see \eqref{eq:XisIgotinvariant}),
then the $1$-form $\Phi=\di X$ is $\Igot$-invariant in the sense that
\begin{equation}\label{eq:I-invariantform}
   	 \Igot^*(\Phi)=\overline{\Phi}.
\end{equation}
Note that $dX=2\Re(\di X)=2\Re \Phi$, and hence $\Phi$ has vanishing real periods:
\begin{equation}\label{eq:vanishing-periods}
	\Re \int_\gamma \Phi =0\quad \text{for every closed path $\gamma$ in $\Ncal$}.
\end{equation}
Conversely, if $\Phi=(\phi_1,\ldots,\phi_n)$ is a nowhere vanishing holomorphic $1$-form
on $\Ncal$ satisfying conditions \eqref{eq:null0},  \eqref{eq:I-invariantform}, and \eqref{eq:vanishing-periods},
then the $\R^n$-valued $1$-form $\Re \Phi$ is exact on $\Ncal$ and satisfies $\Igot^* (\Re \Phi)=\Re\Phi$. Hence, 
for any fixed point $p_0\in\Ncal$ the integral
\begin{equation}\label{eq:integralofPhi}
	X(p) = X(p_0) + 2\int_{p_0}^p  \Re \Phi : \Ncal\lra \R^n,\quad p\in\Ncal
\end{equation}
is a well-defined $\Igot$-invariant conformal minimal immersion satisfying $\di X=\Phi$.

Let us fix a nowhere vanishing  $\Igot$-invariant  holomorphic $1$-form $\theta$ on $\Ncal$
(for the existence of such a $1$-form, see \cite[Corollary 6.5]{AlarconLopez2015GT}).  
Given $\Phi$ as above, the holomorphic map
\[
	f:= \Phi/\theta = (f_1,\ldots,f_n)\colon \Ncal\lra \c^n
\]
is {\em $\Igot$-invariant} in the sense that
\begin{equation}\label{eq:I-invariantmap}
   	 f\circ \Igot=\bar f,
\end{equation}
and it assumes values in the punctured {\em null quadric}
\begin{equation} \label{eq:null-quadric0}
	\Agot_*= \left\{z=(z_1,\ldots,z_n)\in \C^n: z_1^2+z_2^2+\cdots + z_n^2 =0\right\}\setminus \{0\}
\end{equation}
as seen from the nullity condition \eqref{eq:null0}. 

Conversely, an $\Igot$-invariant holomorphic map $f\colon \Ncal\to \Agot_*$ into the punctured null quadric $\Agot_*$ determines a 
holomorphic $1$-form $\Phi=f\theta$ satisfying conditions  \eqref{eq:null0} and \eqref{eq:I-invariantform}, 
but  not necessarily the period condition \eqref{eq:vanishing-periods}.

The above discussion shows that the Weierstrass representation reduces the study of conformal minimal 
immersions $\UnNcal \to\r^n$ from an open non-orientable conformal surface to the study of $\Igot$-invariant 
holomorphic maps $f\colon \Ncal\to \Agot_*$ from the $2$-sheeted oriented cover $(\Ncal,\Igot)$ of 
$\UnNcal=\Ncal/\Igot$ into the punctured null quadric  \eqref{eq:null-quadric0} such that the $1$-form $\Phi=f\theta$ 
satisfies the period vanishing conditions \eqref{eq:vanishing-periods}. (Here, $\theta$ can be any nowhere 
vanishing $\Igot$-invariant holomorphic $1$-form on the Riemann surface $\Ncal$.)
Indeed, the map $X\colon \Ncal\to\R^n$ given by  \eqref{eq:integralofPhi} is then an $\Igot$-invariant 
conformal minimal immersion, and hence it determines a conformal minimal immersion $\UnX\colon \UnNcal\to\R^n$. 
This is the main vantage point of our analysis.

We are now ready for a more precise description of our main results.

%
%

\section{Approximation and general position theorems} \label{sec:intro-Runge}

We begin with a Banach manifold structure theorem.

%
%

\begin{theorem}[Banach manifolds of conformal minimal immersions]
\label{th:intro-structure}
If $\underline \Ncal$ is a compact, smoothly bounded, non-orientable surface with a conformal structure, 
then for any pair of integers $n\ge 3$ and $r\ge 1$ the set of all conformal minimal immersions $\underline \Ncal\to\R^n$ 
of class $\Cscr^r(\UnNcal)$ is a real analytic Banach manifold. 
\end{theorem}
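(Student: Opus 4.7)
The plan is to reduce the statement to a corresponding result about $\Igot$-invariant holomorphic maps on the oriented double cover $(\Ncal,\Igot)$, via the Weierstrass representation summarized in Section \ref{sec:basics}. Fix a nowhere-vanishing $\Igot$-invariant holomorphic $1$-form $\theta$ on $\Ncal$. Pullback to $\Ncal$ identifies conformal minimal immersions $\UnX\in\Cscr^r(\UnNcal,\R^n)$ with $\Igot$-invariant conformal minimal immersions $X\in\Cscr^r(\Ncal,\R^n)$, and these in turn are parametrized by a basepoint value $X(p_0)\in\R^n$ together with an $\Igot$-invariant holomorphic map $f:\Ncal\to\Agot_*$ of class $\Cscr^r(\Ncal)$ whose associated $1$-form $\Phi=f\theta$ satisfies the real period vanishing conditions \eqref{eq:vanishing-periods} along a chosen homology basis $\gamma_1,\dots,\gamma_\ell$ of $H_1(\Ncal;\Z)$. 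Note that the $\Igot$-invariance of $\Re\Phi$ and the vanishing of real periods together force $\int_{p_0}^{\Igot(p_0)}\Re\Phi=0$ (by symmetrizing a path and its $\Igot$-image into a closed loop), so $X$ defined by \eqref{eq:integralofPhi} is automatically $\Igot$-invariant whatever the value $X(p_0)$. Thus it suffices to prove that the set $\Mcal$ of such $f$ is a real analytic Banach manifold, after which the total space of conformal minimal immersions will be locally modelled on $\R^n\times\Mcal$.

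Next I would endow the ambient space $\Ascr^r_{\Igot}(\Ncal,\Agot_*)$, consisting of $\Igot$-invariant $\Cscr^r(\Ncal)$ maps $\Ncal\to\Agot_*$ holomorphic in the interior, with a real analytic Banach manifold structure. Because $\Agot_*$ is a smooth complex submanifold of $\C^n\setminus\{0\}$ of codimension one whose defining polynomial has real coefficients, it is preserved by complex conjugation and therefore admits a holomorphic tubular neighborhood retraction $\rho:U\to\Agot_*$ defined on an open set $U\subset\C^n\setminus\{0\}$ and satisfying $\rho(\bar z)=\overline{\rho(z)}$. Post-composition with $\rho$ applied to perturbations of a given $f\in\Ascr^r_{\Igot}(\Ncal,\Agot_*)$ by elements of the real Banach space of $\Igot$-invariant $\Cscr^r$ maps $\Ncal\to\C^n$ yields local charts; the compatibility of $\rho$ with conjugation ensures that these charts preserve $\Igot$-invariance, and patching yields the desired structure. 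Introduce then the real analytic \emph{period map} $\Pcal:\Ascr^r_{\Igot}(\Ncal,\Agot_*)\to\R^{n\ell}$ defined by $\Pcal(f)=\bigl(\Re\int_{\gamma_j}f\theta\bigr)_{j=1}^\ell$; real analyticity follows from the fact that integration is a bounded linear operator and $\rho$ is holomorphic. The goal is to realize $\Mcal=\Pcal^{-1}(0)$ as a Banach submanifold of codimension $n\ell$ via the submersion theorem.

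The decisive step is to show that $d\Pcal_f$ is surjective at every $f\in\Mcal$. This is equivalent to producing, for each such $f$, an \emph{$\Igot$-invariant period-dominating spray}: a real analytic family $\{f_\bw\}_{\bw\in\R^N}$ of $\Igot$-invariant holomorphic maps $\Ncal\to\Agot_*$ with $f_0=f$ such that the linear map $\partial_\bw\Pcal(f_\bw)\big|_{\bw=0}:\R^N\to\R^{n\ell}$ is surjective. Once this is in place, the implicit function theorem in Banach spaces gives $\Mcal$ a real analytic Banach submanifold structure, and multiplying by the free $\R^n$-factor coming from the basepoint value completes the proof.

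The main obstacle is the construction of these $\Igot$-invariant period-dominating sprays, which is precisely what Chapter \ref{ch:sprays} is built to provide. In the orientable case one composes $f$ with time-$\bw$ flows of holomorphic vector fields on $\Agot_*$ supported in small disks whose arcs realize the homology basis; the tangency condition $\sum_j z_j V_j=0$ for vector fields $V$ on $\Agot_*$ is where the geometry of the null quadric enters. In the non-orientable setting each such local perturbation must be symmetrized using a disk $D$ together with its image $\Igot(D)$ and $\Igot$-conjugate parameters on the two components, and one must check that after symmetrization the period derivatives still span $\R^{n\ell}$ rather than collapsing because of the forced coupling between a loop $\gamma$ and its image $\Igot_*\gamma$. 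Decomposing $H_1(\Ncal;\Z)$ according to the $\Igot_*$-action into fixed and anti-fixed parts, and producing enough tangential fields on $\Agot_*$ compatible with both the complex parameter structure and the antiholomorphic involution $\Igot$ on $\Ncal$, is the delicate part; once available, the remainder of the argument follows standard Oka-theoretic patterns.
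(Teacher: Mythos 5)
Your overall reduction (pass to the double cover $(\Ncal,\Igot)$, split off the free $\R^n$ factor for the basepoint, pass to the derivative map $f=2\di X/\theta$, and realize the space as the zero set of a real-analytic period map whose surjectivity is established by $\Igot$-invariant period-dominating sprays) matches the paper's skeleton, and the final period-map step is essentially identical to the paper's Proposition~\ref{prop:Banach-structure}(3). Where you genuinely diverge is the step before that: how to endow $\Ascr^r_\Igot(\Ncal,\Agot_*)$ with a Banach-manifold structure. You propose post-composing $\Igot$-invariant $\Cscr^r$ perturbations $f+g$ with a conjugation-equivariant holomorphic retraction $\rho\colon U\to\Agot_*$, where $U$ is a neighborhood of the compact $f(\Ncal)$ in $\C^n$. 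The paper instead proceeds via the implicit function theorem applied directly to $\Psi(f)=\sum_j f_j^2$, and the decisive input is the corona-type Lemma~\ref{lem:maximal-ideals} (the description of maximal ideals of $\Ascr^r(\Ncal)$ via the Ahlfors map), which produces $\psi\in\Ascr^r_\Igot(\Ncal,\C^n)$ with $\sum f_j\psi_j=1$ and hence a bounded right inverse for $D\Psi_f$. That corona argument is precisely what is missing from your account, and it is the paper's main new ingredient in this step.

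The concrete gap in your version is the retraction itself: you assert that $\Agot_*$ ``therefore admits a holomorphic tubular neighborhood retraction'' on an open $U\subset\C^n\setminus\{0\}$, but this does not follow from conjugation-invariance of the defining polynomial, and it is not a routine fact. The manifold $\Agot_*$ is the smooth locus of the affine cone $\Agot$, which has an isolated singularity at $0$; since $\dim\Agot=n-1\ge 2$, Hartogs extension across $\{0\}$ shows $\Agot_*$ is not holomorphically convex, so Docquier--Grauert cannot be invoked on the nose, and a compact set such as $\Agot\cap\{|z|=1\}$ need not have a Stein neighborhood in $\C^n\setminus\{0\}$ at all. The natural ``project along $\nabla\Psi$'' map fails here because $\nabla\Psi(z)=2z$ is radial and $\Agot$ is a cone, and local transverse projections (say along $\partial/\partial z_n$) degenerate on $\{z_n=0\}$ and cannot be patched holomorphically. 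The established way to run the chart construction you have in mind is the one the paper explicitly credits to \cite{Forstneric2007AJM}: take a tubular \emph{Stein neighborhood of the graph} $\Gamma_f\subset\Ncal\times\Agot_*$ fibered over $\Ncal$, not a tubular neighborhood of $\Agot_*$ in $\C^n$; one then has to additionally symmetrize that Stein neighborhood and its fiberwise retraction under $\Igot\times\Igot_0$. This would close the gap, but at the cost of importing the full machinery of \cite{Forstneric2007AJM} and then proving an $\Igot$-equivariant refinement of it. The paper's direct approach via Lemma~\ref{lem:maximal-ideals} avoids this entirely and is what makes the $\Igot$-invariant case (and the period-map submanifold on top of it) go through cleanly; you should either supply the graph-neighborhood argument with its $\Igot$-symmetrization, or replace your chart step by the implicit-function-theorem argument.

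Two smaller remarks. Your observation that the real-period condition together with $\Igot$-invariance of $\Re\Phi$ makes $X$ in \eqref{eq:integralofPhi} automatically $\Igot$-invariant regardless of the basepoint value is correct and is exactly Lemma~\ref{lem:invariant}(a). Also, note that the paper works with the target $\Ascr^{r-1}_{\Igot,0}(\Ncal,\Agot_*)$ for the derivative $f=2\di X/\theta$ when $X\in\Cscr^r$, i.e.\ one loses a derivative; your writeup keeps $r$ throughout, which should be adjusted.
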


For a more precise result, see Theorem \ref{th:structure} and Corollary \ref{cor:structure}.
The analogous result in the orientable case is \cite[Theorem 3.1 (b)]{AlarconForstnericLopez2016MZ};
see also \cite[Theorem 3.2 (b)]{AlarconForstneric2014IM} for the corresponding result concerning 
null  curves and more general directed holomorphic immersion $\Ncal\to\C^n$.
In \cite{AlarconForstnericLopez2016MZ} we had to exclude {\em flat} conformal minimal immersions, 
i.e., those whose image lies in an affine $2$-plane in $\R^n$. This is unnecessary here 
since a non-orientable surface does not immerse into $\R^2$, and hence every immersion
into $\R^n$ for $n>2$ is nonflat.

It was shown in \cite{Forstneric2007AJM} that many natural spaces of holomorphic maps from 
a compact strongly pseudoconvex Stein domain $D$ to an arbitrary complex manifold $Y$ carry the structure of a
complex Banach manifold. This holds in particular for the space $\Ascr^r(D,Y)$ of holomorphic maps $\mathring D\to Y$ 
which are smooth of class $\Cscr^r(D)$ up to the boundary for some $r\in\Z_+=\{0,1,2,\ldots\}$.
The proof relies on the construction of a tubular Stein neighborhood of the graph of
a given map in the product manifold $D\times Y$, fibered over the base domain $D$. 
These results apply in particular to maps from bordered Riemann surfaces.

In the proof of Theorem \ref{th:intro-structure} (see Section \ref{sec:structure}) we proceed more directly.
Consider the oriented double cover $(\Ncal,\Igot)$ of $\UnNcal$. We begin by showing that 
the space $\Ascr_\Igot(\Ncal,\Agot_*)$  of all continuous $\Igot$-invariant maps $\Ncal \to \Agot_*$ 
that are holomorphic in the interior $\mathring\Ncal=\Ncal\setminus b\Ncal$ is a real analytic Banach submanifold of the 
real Banach space $\Ascr_\Igot(\Ncal,\C^n)$ of all maps $\Ncal \to \C^n$ of the same class.
To see this, we apply the classical ideal theory for the algebra $\Ascr(\Ncal)$ of continuous functions that 
are holomorphic in $\mathring\Ncal$, adapted to $\Igot$-invariant functions (see Lemma \ref{lem:maximal-ideals}),
to show that the defining equation $f_1^2+\ldots+ f_n^2=0$ for $\Ascr_\Igot(\Ncal,\Agot_*)$ has maximal rank 
whenever the component functions $f_1,\ldots,f_n$ have no common zeros; the conclusion then follows by
the implicit function theorem. Next, we use period dominating $\Igot$-invariant sprays 
(see Proposition \ref{prop:period-dominating-spray}) to show that the period vanishing equation 
\eqref{eq:vanishing-periods}
 is of maximal rank on $\Ascr_\Igot(\Ncal,\Agot_*)$,
and hence it defines a Banach submanifold of finite codimension. The same analysis
applies to maps of class $\Cscr^r$ for any $r\in\Z_+$.

%
%
%
%

Chapter \ref{ch:Runge} is devoted to the proof of Runge and Mergelyan type approximation theorems
for conformal minimal immersions of non-orientable surfaces into $\R^n$. 
A crucial point which makes the approximation theory possible is that the punctured null quadric 
$\Agot_*\subset\C^n$  \eqref{eq:null-quadric0} is an {\em Oka manifold}; see
\cite[Example 4.4]{AlarconForstneric2015MA}. More precisely, $\Agot_*$  is a homogeneous 
space of the complex Lie group $\C_*\oplus {\mathrm O}_n(\C)$, where ${\mathrm O}_n(\C)$ is the orthogonal
group over the complex number field $\C$. For the theory of Oka manifolds, see the monograph 
\cite{Forstneric2011-book} and the surveys \cite{ForstnericLarusson2011NY,Forstneric2013AFST}.

Here we shall content ourselves by stating the following simplified version of 
Theorems \ref{th:Mergelyan} and \ref{th:Mergelyan1}.
The latter results give approximation of generalized conformal minimal immersions 
on admissible $\Igot$-invariant Runge sets in $\Ncal$, including approximation on curves.

%
%
\begin{theorem}[Runge approximation theorem for $\Igot$-invariant conformal minimal immersions]
\label{th:intro-Runge}
Let $\Ncal$ be an open Riemann surface with a fixed-point-free antiholomorphic involution $\Igot \colon \Ncal\to\Ncal$, 
and let $D\Subset\Ncal$ be a relatively compact, smoothly bounded, $\Igot$-invariant domain (i.e., $\Igot(D)=D$) 
which is Runge in $\Ncal$. Then, any $\Igot$-invariant conformal minimal immersion $X=(X_1,\ldots,X_n)\colon \bar D\to\r^n$ 
$(n\ge 3)$ of class $\Cscr^1(\bar D)$ can be approximated in the $\Cscr^1(\bar D)$ topology by $\Igot$-invariant 
conformal minimal immersions  $\wt X=(\wt X_1,\ldots,\wt X_n)\colon \Ncal\to\r^n$. 

Furthermore, if we assume that the components $X_3,\ldots,X_n$ extend to harmonic functions on $\Ncal$,  
then we can choose $\wt X$ as above such that $\wt X_j=X_j$ for $j=3,\ldots,n$.
\end{theorem}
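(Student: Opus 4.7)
The plan is to reduce the approximation problem for conformal minimal immersions $\bar D\to\r^n$ to an $\Igot$-invariant Runge approximation problem for holomorphic maps into the punctured null quadric $\Agot_*\subset\c^n$, combined with a period-correction argument that respects the involution $\Igot$. Fix a nowhere vanishing $\Igot$-invariant holomorphic $1$-form $\theta$ on $\Ncal$, as provided by \cite[Corollary 6.5]{AlarconLopez2015GT}. Then $f:=\di X/\theta\colon \bar D\to\Agot_*$ is an $\Igot$-invariant holomorphic map, and by the discussion in Section \ref{sec:basics} the theorem amounts to approximating $f$, in the $\Cscr^0(\bar D)$ topology, by an $\Igot$-invariant holomorphic map $\wt f\colon \Ncal\to\Agot_*$ such that the $1$-form $\wt\Phi=\wt f\theta$ has vanishing real periods over a basis of $H_1(\Ncal;\z)$; the desired immersion $\wt X$ is then recovered by the integral \eqref{eq:integralofPhi}.

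First I would choose a finite basis $\gamma_1,\ldots,\gamma_\ell$ of $H_1(\Ncal;\z)$ supported in $D$ (possible since $D$ is Runge in $\Ncal$), chosen to be compatible with the involution $\Igot$ in the sense that the collection $\{\gamma_j,\Igot_*\gamma_j\}$ spans $H_1(\Ncal;\z)$. Around the given map $f$ I would construct an $\Igot$-invariant \emph{period dominating spray} of $\Igot$-invariant holomorphic maps $f_\zeta\colon \bar D\to\Agot_*$, depending holomorphically on a parameter $\zeta$ in a ball of $\c^N$, with $f_0=f$, such that the period map
\[
    \Pcal(\zeta) := \left(\Re\int_{\gamma_j} f_\zeta\,\theta\right)_{j=1}^{\ell} \in \r^{n\ell}
\]
is a submersion at $\zeta=0$; this is exactly the content of Proposition \ref{prop:period-dominating-spray}. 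Next, since $\Agot_*$ is an Oka manifold and $D$ is $\Igot$-invariant and Runge in $\Ncal$, I would invoke the equivariant Runge/Oka principle for $\Igot$-invariant maps (Chapter \ref{ch:sprays}) to approximate the \emph{entire} spray $\{f_\zeta\}_{\zeta}$ by an $\Igot$-invariant holomorphic spray $\{\wt f_\zeta\}$ defined on all of $\Ncal$. By uniform convergence of the spray, the corresponding period map $\wt\Pcal(\zeta)$ remains a submersion near $\zeta=0$ and satisfies $\wt\Pcal(0)\approx\Pcal(0)=0$, so the implicit function theorem yields a parameter $\zeta^*$ close to $0$ with $\wt\Pcal(\zeta^*)=0$. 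Setting $\wt f=\wt f_{\zeta^*}$ gives the desired $\Igot$-invariant holomorphic map into $\Agot_*$ with vanishing real periods, and integrating produces $\wt X$.

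For the \emph{relative} statement in which $X_3,\ldots,X_n$ extend harmonically to $\Ncal$, I would only modify the first two components. Put $\phi_j=\di X_j$ for $j=3,\ldots,n$, which are globally defined $\Igot$-invariant holomorphic $1$-forms on $\Ncal$, and set $h:= -\sum_{j=3}^n\phi_j^2$. The problem reduces to finding an $\Igot$-invariant holomorphic pair $(\wt\phi_1,\wt\phi_2)$ on $\Ncal$ satisfying the fibered nullity condition $\wt\phi_1^{\,2}+\wt\phi_2^{\,2}=h$, approximating $(\di X_1,\di X_2)$ on $\bar D$, with $\Phi=(\wt\phi_1,\wt\phi_2,\phi_3,\ldots,\phi_n)$ nowhere vanishing and having vanishing real periods. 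The fibers of the projection $\Agot_*\to\c^{n-2}$ onto the last $n-2$ coordinates are affine quadrics $\{u^2+v^2=c\}$, which are (for $c\neq 0$) biholomorphic to $\c_*$ and in the limit $c=0$ degenerate to a union of two lines through the origin; these fibers form a flexible family admitting an $\Igot$-invariant Runge/Oka approximation together with a period dominating spray in the remaining two variables, so the same implicit function argument closes the proof.

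The main obstacle is the careful upgrading of each nonequivariant step to the $\Igot$-equivariant setting: constructing the period dominating spray so that $f_\zeta\circ\Igot=\overline{f_\zeta}$ for parameters $\zeta$ in an appropriately conjugation-equivariant domain, verifying that Runge approximation on the Oka manifold $\Agot_*$ can be performed while preserving the relation \eqref{eq:I-invariantmap}, and pairing the homology basis $\gamma_j$ with its involuted counterparts so that the real period functionals for $\Igot$-invariant forms are genuinely independent. Each of these requires the machinery of $\Igot$-invariant sprays and equivariant Mergelyan theorems developed in Chapters \ref{ch:sprays} and \ref{ch:Runge}, and the relative case additionally demands that the fiberwise approximation does not introduce zeros of the full $\c^n$-valued $1$-form $\Phi$, which is handled by a transversality/general position argument on $D$.
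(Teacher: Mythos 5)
Your high-level strategy matches the paper's: reduce to approximation of the $\Igot$-invariant derivative map $f=2\di X/\theta\colon\bar D\to\Agot_*$, construct an $\Igot$-invariant period dominating spray (Proposition~\ref{prop:period-dominating-spray}), approximate by an equivariant Runge/Oka argument built from the gluing machinery of Chapter~\ref{ch:sprays}, and recover $\wt X$ by integration after an implicit-function-theorem period correction. The relative case in terms of the scalar quadric $\wt\phi_1^2+\wt\phi_2^2=h$ with $h=-\sum_{j\ge 3}\phi_j^2$ is also the right framing, and it is indeed implemented in Lemma~\ref{lem:Mergelyan1} and Theorem~\ref{th:Mergelyan2}.

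There is, however, a genuine gap in the first step. You assert that, because $D$ is Runge in $\Ncal$, one can choose a finite basis of $H_1(\Ncal;\z)$ supported in $D$. This is false: the Runge property (equivalently, $\Ncal\setminus D$ has no relatively compact components) implies that $H_1(D;\z)\to H_1(\Ncal;\z)$ is \emph{injective}, but not surjective. The simplest counterexample is $D$ a disk inside an annulus $\Ncal$; more generally $H_1(\Ncal;\z)$ need not even be finitely generated. Consequently your one-shot period-dominating spray over $\bar D$ cannot control the real periods of $\wt f\theta$ over homology classes of $\Ncal$ that do not come from $D$, and your implicit function theorem step only enforces $\Re\int_\gamma\wt f\theta=0$ for $\gamma\subset D$. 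Without this control the integral \eqref{eq:integralofPhi} is not well defined on $\Ncal$ and there is no immersion $\wt X$. The paper addresses exactly this by working with a flux homomorphism $\pgot\colon H_1(\Ncal;\z)\to\r^n$ prescribed in advance (extending $\Flux_X$ from $H_1(\bar D;\z)$), and by building $\wt f$ through an inductive exhaustion $M_1\Subset M_2\Subset\cdots\nearrow\Ncal$ where each step that creates new homology (Lemma~\ref{lem:step2}, the index-$1$ critical case) first extends $f$ to a pair of $\Igot$-symmetric arcs so that the new periods match $\pgot$, and only then thickens the resulting $\Igot$-admissible set via the Cartan-pair gluing. Your approach would also need this iterative/arcwise extension, and since you invoke ``the equivariant Runge/Oka principle'' as a black box from Chapter~\ref{ch:sprays}, note that no such one-step theorem exists there; Chapter~\ref{ch:sprays} provides the local bump (Lemma~\ref{lem:bumping}) and gluing lemma (Proposition~\ref{prop:gluing}) from which the global Mergelyan statement is assembled by exactly the exhaustion you skip. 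Finally, for the relative case the paper does not rely on a transversality argument to keep $(\wt\phi_1,\wt\phi_2)$ away from the common zero locus; it instead controls divisors directly via Oka approximation with interpolation (preserving the prescribed divisor of $\wt\phi_1-\imath\wt\phi_2$), which is also where the assumption that $\sum_{j\ge 3}\phi_j^2$ is not identically zero and nonvanishing on $\Gamma$ enters.
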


Recall that a compact set $K$ in an open Riemann surface $\Ncal$  is said to be {\em Runge} 
(or {\em holomorphically convex})  if any holomorphic function on a neighborhood of $K$ can be approximated, 
uniformly on $K$, by functions that are holomorphic on $\Ncal$. By classical theorems of 
Runge \cite{Runge1885AM} and Bishop \cite{Bishop1958PJM}, this holds if and only if the complement 
$\Ncal\setminus K$ has no connected component with compact closure. 

By passing to the quotient $\UnNcal=\Ncal/\Igot$, Theorem \ref{th:intro-Runge}
immediately gives the following corollary. For the statement concerning embeddings,
see Theorem \ref{th:generalposition}. 

%
%
\begin{corollary}[Runge approximation theorem for non-orientable conformal minimal surfaces]
\label{cor:intro-Runge}
Assume that $\UnNcal$  is a non-orientable surface  with a conformal structure and $D\Subset \UnNcal$ is a relatively compact 
domain with $\Cscr^1$ boundary such that $\UnNcal\setminus \bar D$ has  no relatively compact connected components.
Then, every conformal minimal immersion $\bar D\to \R^n$ $(n\ge 3$) of class $\Cscr^1(\bar D)$ can be 
approximated in the $\Cscr^1(\bar D)$ topology by conformal minimal immersions  
$\UnNcal\to\r^n$ (embeddings if $n\ge 5$). 
\end{corollary}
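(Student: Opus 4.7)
The plan is to reduce Corollary \ref{cor:intro-Runge} to Theorem \ref{th:intro-Runge} by lifting all the data to the oriented double cover, and to treat the embedding assertion separately by invoking the general position result (Theorem \ref{th:generalposition}).

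First, I would associate to $\UnNcal$ its oriented $2$-sheeted conformal cover $\iota\colon\Ncal\to\UnNcal=\Ncal/\Igot$, where $\Igot\colon \Ncal\to\Ncal$ is the fixed-point-free antiholomorphic deck involution provided by Section \ref{sec:basics}. Since $\iota$ is a local conformal diffeomorphism, the preimage $\wt D:=\iota^{-1}(D)\Subset \Ncal$ is a relatively compact $\Igot$-invariant domain with $\Cscr^1$ boundary. The key point is that $\wt D$ is Runge in $\Ncal$: indeed, any connected component $C$ of $\Ncal\setminus\overline{\wt D}=\iota^{-1}(\UnNcal\setminus \bar D)$ maps under $\iota$ onto a component $U$ of $\UnNcal\setminus \bar D$ as a covering map, so if $C$ were relatively compact in $\Ncal$, then $\iota(\bar C)$ would be compact and contain $\bar U$, contradicting the hypothesis. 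By the Runge--Bishop theorem this shows that $\wt D$ is holomorphically convex in $\Ncal$.

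Next, given a conformal minimal immersion $X\colon\bar D\to\R^n$ of class $\Cscr^1(\bar D)$, its pullback $\wt X:=X\circ\iota\colon \overline{\wt D}\to\R^n$ is again a conformal minimal immersion of class $\Cscr^1$ (conformality and harmonicity are local properties, preserved under the conformal covering $\iota$), and it is $\Igot$-invariant in the sense of \eqref{eq:XisIgotinvariant} by construction. Theorem \ref{th:intro-Runge} therefore yields, for any prescribed tolerance $\epsilon>0$, an $\Igot$-invariant conformal minimal immersion $\wt Y\colon\Ncal\to\R^n$ with $\|\wt Y-\wt X\|_{\Cscr^1(\overline{\wt D})}<\epsilon$. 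The $\Igot$-invariance means precisely that $\wt Y$ descends to a well-defined conformal minimal immersion $Y\colon\UnNcal\to\R^n$, and since $\iota$ is a local diffeomorphism the $\Cscr^1$-closeness on $\overline{\wt D}$ translates into $\|Y-X\|_{\Cscr^1(\bar D)}<C\epsilon$ for an absolute constant $C$ depending only on the chosen trivializations. This proves the approximation statement.

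For the embedding assertion when $n\ge 5$, after producing the approximant $Y$ as above I would perturb it once more, keeping it close to $X$, so as to place it in general position. This is exactly the content of Theorem \ref{th:generalposition}, which asserts that every conformal minimal immersion of a non-orientable surface into $\R^n$ with $n\ge 5$ can be approximated in $\Cscr^1$ by injective ones; the codimension count $(n-2)\cdot 2-2\cdot 2=2n-8\ge 2$ of the double-point locus makes generic avoidance of self-intersections possible. The main technical obstacle in the overall argument is concentrated in Theorem \ref{th:intro-Runge} itself, which requires $\Igot$-invariant period dominating sprays and the Oka property of $\Agot_*$; given that result, the corollary is a comparatively formal descent together with the general position perturbation.
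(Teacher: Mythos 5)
Your proof is correct and follows essentially the same route as the paper, which derives Corollary \ref{cor:intro-Runge} from Theorem \ref{th:intro-Runge} by passing to the oriented double cover $(\Ncal,\Igot)$ and defers the embedding assertion to Theorem \ref{th:generalposition}. You merely spell out the standard verifications (that $\wt D=\iota^{-1}(D)$ is an $\Igot$-invariant Runge domain, that the pullback $X\circ\iota$ is an $\Igot$-invariant conformal minimal immersion, and that $\Igot$-invariance of the approximant lets it descend to $\UnNcal$), which the paper treats as immediate.
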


These approximation theorems are of crucial importance to the theory; indeed, all
subsequent results presented in Chapters \ref{ch:general-position} and \ref{ch:applications}
depend on them.

The case $n=3$ of Theorem \ref{th:intro-Runge} is due to  Alarc\'on-L\'opez \cite{AlarconLopez2015GT}. 
For orientable surfaces, see Alarc\'on-L\'opez \cite{AlarconLopez2012JDG} for $n=3$ and 
Alarc\'on-Fern\'andez-L\'opez \cite{AlarconFernandezLopez2013CVPDE} and Alarc\'on-Forstneri\v c-L\'opez  
\cite{AlarconForstnericLopez2016MZ} for arbitrary dimension. 

The second part of Theorem \ref{th:intro-Runge}, 
where we fix $n-2$ components of the approximating immersions, is very useful for the construction of 
non-orientable minimal surfaces with additional global properties such as completeness or properness; we shall return  
to this subject later in this introduction.

Theorem \ref{th:intro-Runge} is proved in a similar way as the corresponding result in the
orientable case (see \cite[Theorem 5.3]{AlarconForstnericLopez2016MZ}, and also
\cite[Theorem 7.2]{AlarconForstneric2014IM} for more general directed holomorphic immersions);
however, the details are considerably more involved in the non-orientable case. 
The basic step in the proof is provided by Proposition \ref{prop:noncritical} which gives approximation 
of the $\Igot$-invariant holomorphic map $f=2\di X/\theta \colon \bar D\to \Agot_*$ 
by a map with the same properties defined on the union 
of $\bar D$ with an $\Igot$-symmetric pair of bumps attached to $\bar D$. 
This depends on the method of gluing pairs of {\em $\Igot$-invariant sprays} 
on a {\em very special $\Igot$-invariant Cartan pair} in $\Ncal$;
see  Section \ref{sec:gluing}.  Finitely many applications of Proposition \ref{prop:noncritical} 
complete the proof of Theorem \ref{th:intro-Runge} in the noncritical case 
(i.e., when there is no change of topology and $M$ deformation retracts onto $\bar D$); 
see Corollary \ref{cor:noncritical}. The critical case
amounts to passing an $\Igot$-invariant pair of Morse critical points of a strongly 
subharmonic  $\Igot$-invariant exhaustion function $\rho\colon \Ncal \to \R$; see
the proof of Theorem \ref{th:Mergelyan0} in Section \ref{sec:Mergelyan0}.
The geometrically different situations regarding the change of topology of the sublevel set of 
$\rho$ demand a considerably finer analysis than in the orientable case.
The last part of  Theorem \ref{th:intro-Runge}, concerning approximation with fixed
$n-2$ coordinates, is proved in Section \ref{sec:Mergelyan1}; see the proof of 
Theorem \ref{th:Mergelyan1}.

%
%
%
%
In Chapter \ref{ch:general-position} we prove the following general position result
for conformal minimal immersions from non-orientable surfaces.

\begin{theorem}[General position theorem]
\label{th:generalposition}
Let $\UnNcal$ be an open non-orientable surface endowed with a conformal structure, and let
$\UnX\colon \UnNcal \to\r^n$ be a conformal minimal immersion for some $n\ge 3$.

If $n\ge 5$ then $\UnX$ can be approximated uniformly on compacts by conformal minimal embeddings 
$\wt \UnX\colon \UnNcal \hra \r^n$. If $n=4$ then $\UnX$ can be approximated uniformly on compacts 
by conformal minimal immersions $\wt \UnX$ with simple double points. 

The same results hold if $\UnNcal$ is a compact non-orientable conformal 
surface with smooth boundary $b\UnNcal\ne \emptyset$ and $X$ is of class $\Cscr^r(\UnNcal)$ for 
some $r\in\n=\{1,2,3,\ldots\}$; in such case, the approximation takes place in the $\Cscr^r(\UnNcal)$ topology.
Furthermore, for every $n\ge 3$ we can approximate $\UnX$ by conformal minimal immersions
$\wt \UnX\colon \UnNcal \to\r^n$ such that $\wt \UnX\colon b\UnNcal\hra\R^n$ is an embedding
of the boundary.
\end{theorem}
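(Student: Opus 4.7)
The strategy is a parametric transversality argument carried out $\Igot$-equivariantly on the oriented double cover $\iota\colon\Ncal\to\UnNcal$ and then pushed down to $\UnNcal$. Lift $\UnX$ to an $\Igot$-invariant conformal minimal immersion $X\colon\Ncal\to\R^n$. Then $\UnX$ is an embedding if and only if $X(p)=X(q)$ implies $q\in\{p,\Igot(p)\}$, so the obstruction lives on the $4$-dimensional open real manifold
\[
   W \;=\; \bigl\{(p,q)\in\Ncal\times\Ncal : q\notin\{p,\Igot(p)\}\bigr\},
\]
on which $\Igot\times\Igot$ acts freely. The difference map $\Delta(p,q)=X(p)-X(q)$ is $(\Igot\times\Igot)$-invariant, hence descends to $\bar\Delta\colon W/(\Igot\times\Igot)\to\R^n$ on a quotient $4$-manifold. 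The aim is to perturb $X$ within $\Igot$-invariant conformal minimal immersions so that $\bar\Delta$ avoids $0\in\R^n$ (possible when $n\ge5$ by the dimension count) or meets $0$ transversally (for $n=4$, yielding isolated simple double points); a separate codimension count on $\Ncal\times\Ncal\times\Ncal$ rules out triple points generically as soon as $n\ge 4$. The last claim of the theorem is obtained by the same argument with $W$ replaced by the $2$-dimensional manifold $(b\Ncal\times b\Ncal)\setminus(\mathrm{diag}\cup\mathrm{graph}\,\Igot)$, which forces boundary embedding as soon as $n\ge 3$.

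\textbf{Construction of $\Igot$-invariant dominating sprays.} The core technical step is the following. Given a compact $\Igot$-invariant Runge set $K\subset\Ncal$ and finitely many pairs $(p_1,q_1),\dots,(p_N,q_N)\in W$ with pairwise disjoint $(\Igot\times\Igot)$-orbits, I would build a real-analytic family $\{X_t\}_{t\in B}$, $B\subset\R^{nN}$ a small ball, of $\Igot$-invariant conformal minimal immersions $\Ncal\to\R^n$ with $X_0=X$, such that
\[
   t\longmapsto \bigl(X_t(p_i)-X_t(q_i)\bigr)_{i=1}^N \in (\R^n)^N
\]
is a submersion at $t=0$. Starting from the Weierstrass datum $f=\di X/\theta\colon\Ncal\to\Agot_*$, I would select, for each $i$ and each basis vector $e_k\in\R^n$, an $\Igot$-invariant holomorphic $1$-form $\eta_{i,k}$ on a neighborhood of $K$ whose real integral $\int_{q_i}^{p_i}\Re\eta_{i,k}$ equals $e_k$; this is possible because the four points $p_i,q_i,\Igot(p_i),\Igot(q_i)$ are distinct, so there exist $\Igot$-invariant cut-offs concentrated near the conjugate pair $\{p_i,\Igot(p_i)\}$ and vanishing near $\{q_i,\Igot(q_i)\}$. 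To keep the perturbed datum inside the null quadric $\Agot_*$ I would compose with a dominating $\Igot$-invariant spray on $\Agot_*$, available because $\Agot_*$ is Oka, and to preserve the period-vanishing condition \eqref{eq:vanishing-periods} I would further combine with the period-dominating $\Igot$-invariant spray from Proposition \ref{prop:period-dominating-spray}. Approximating the outcome on $K$ by Theorem \ref{th:intro-Runge} produces the family $X_t$ defined globally on $\Ncal$.

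\textbf{Transversality, exhaustion, and the main obstacle.} With this spray in hand, the parametric transversality theorem (Abraham–Sard) applied to $(t,p,q)\mapsto X_t(p)-X_t(q)$ on $W$ yields a residual set of $t\in B$ for which the map is transverse to $\{0\}\subset\R^n$; by the dimension count, for such $t$ the $\Igot$-invariant immersion $X_t$ has no self-intersections on $K\times K\cap W$ when $n\ge5$, and only transverse simple double points there when $n=4$, so it descends to an immersion $\wt{\UnX}$ of $\UnNcal$ with the required property on $\iota(K)$. To go from a single compact to all of $\UnNcal$, I would exhaust $\UnNcal$ by smoothly bounded $\Igot$-invariant Runge domains $\bar D_1\subset\bar D_2\subset\cdots$, iterate the construction on each $D_j$, and correct at every step via Theorem \ref{th:intro-Runge} to retain the previously achieved property up to an arbitrarily small error; a standard summability of $\Cscr^r$-corrections yields a limit $\wt X$ with the claimed global property and the prescribed closeness to $X$ (in the bordered case, a single application suffices). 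The main technical hurdle is the submersivity of the spray in the presence of the $\Igot$-invariance constraint: equivariance cuts the number of available infinitesimal directions in half, so one must exploit precisely the fact that $q_i\notin\{p_i,\Igot(p_i)\}$ to localize perturbations on one conjugate pair at a time, ensuring that the $n$ translational directions at each pair can be realized independently across the $N$ pairs. Once this is granted, the remainder is an equivariant adaptation of the orientable general position argument built on the approximation machinery of Theorem \ref{th:intro-Runge}.
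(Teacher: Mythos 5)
Your proposal is correct and follows essentially the same route as the paper's proof of Theorem \ref{th:generalposition2}: lift to the oriented double cover, run a parametric transversality argument for the $\Igot$-invariant difference map on $(\Ncal\times\Ncal)\setminus D$ using $\Igot$-invariant period-dominating sprays with perturbations localized on arcs $\Lambda$ joining $p$ to $q$ with $\Lambda\cap\Igot(\Lambda)=\emptyset$, and conclude by a dimension count and exhaustion. The paper implements the deformation family directly as a composition of flow sprays along complete holomorphic vector fields tangent to $\Agot_*$, with $\Igot$-invariant coefficient functions prescribed on $\Lambda$ and symmetrized (Lemma \ref{lem:pq}), rather than adding $1$-forms to the Weierstrass datum and then correcting back into $\Agot_*$; the latter is a somewhat awkward formulation since additive perturbations leave the null quadric, but the idea is equivalent and the difference is cosmetic.
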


Theorem \ref{th:generalposition} is a corollary to Theorem  \ref{th:generalposition2} which 
gives a general position result for $\Igot$-invariant conformal minimal immersions 
from Riemann surfaces to $\R^n$. The proof also shows that a generic conformal
minimal immersion $\UnNcal\to\R^4$ has isolated normal crossings. However, in dimension 4 such double
points are stable under small deformations, and it is not clear whether one could remove them
even by big deformations. It is a major open problem whether every (orientable or non-orientable)
conformal surface admits a conformal minimal embedding into $\R^4$. 

The corresponding problem is well known and widely open also
for holomorphic embeddings of open Riemann surfaces \cite{BellNarasimhan1990EMS}
(which are a special case of conformal minimal embeddings):

{\em Does every open Riemann surface embed holomorphically into $\C^2$?} 

One of the most general recent results in this direction, due to  Forstneri\v c and Wold \cite{ForstnericWold2013APDE},
is that every planar domain (possibly infinitely connected) with at most finitely many isolated boundary points admits such an embedding.
For a discussion of this subject and references to other recent results, see \cite{AlarconForstnericLopez2016MZ}.

The analogue of Theorem \ref{th:generalposition} for orientable surfaces is 
\cite[Theorem 1.1]{AlarconForstnericLopez2016MZ}; here we adapt the proof to the non-orientable case. 
The main point is to prove a general position theorem for conformal minimal immersions 
from compact bordered surfaces (see \cite[Theorem 4.1]{AlarconForstnericLopez2016MZ} for the orientable case; 
for embeddings on the boundary, see also \cite[Theorem 4.5 (a)]{AlarconDrinovecForstnericLopez2015PLMS}).
This is accomplished by applying transversality methods along with period dominating sprays, a method developed in the
proof of Theorem \ref{th:intro-Runge}. 
The general case for an open surface is obtained by an induction with respect to a suitable exhaustion 
by compact subdomains and the Runge approximation theorem furnished by Theorem \ref{th:intro-Runge}. 


%
%

\section{Complete non-orientable minimal surfaces with Jordan boundaries}
\label{sec:intro-Jordan}

In this section, we focus on the existence of complete non-orientable minimal surfaces with 
Jordan boundaries and methods for their construction. 

Douglas \cite{Douglas1932TAMS} and Rad\'o \cite{Rado1930AM}  independently proved in 1931 that any Jordan curve in $\r^n$ 
for $n\ge 3$ bounds a minimal surface, thereby solving the classical {\em Plateau problem}. 
By the isoperimetric inequality, minimal surfaces in $\r^n$ spanning rectifiable Jordan curves cannot be complete. 
It has been a long standing open problem whether there exist
 (necessarily nonrectifiable) Jordan curves in $\R^n$ whose solution to the Plateau problem are
 complete minimal surfaces;  equivalently, whether there are complete minimal surfaces in $\R^n$
 bounded by Jordan curves. In the orientable case, the authors together with B. Drinovec
 Drnov{\v{s}}ek introduced another important complex analytic method to this problem, 
 namely the Riemann-Hilbert boundary value problem 
 adapted to the case of holomorphic null curves and conformal minimal surfaces, and thereby 
 proved that every bordered Riemann surface is the underlying
 conformal structure of a complete minimal surface bounded by Jordan curves 
 (see \cite[Theorem 1.1]{AlarconDrinovecForstnericLopez2015PLMS}; see also \cite{Alarcon2010TAMS,AlarconLopez2013IJM,AlarconLopez2014AGMS}
 for previous partial results in this direction). In this paper, we prove the following analogous existence result
 for non-orientable minimal surfaces.
 
%
%
%
%
\begin{theorem} [Non-orientable complete minimal surfaces bounded by Jordan curves] 
\label{th:intro-Jordan}
Let $\UnNcal$ be a compact non-orientable bordered surface with non\-empty boundary $b\UnNcal\ne\emptyset$,
endowed with a conformal structure, and let $n\ge3$ be an integer.
Every conformal minimal immersion $\UnX \colon \UnNcal\to\r^n$ can be approximated uniformly on $\UnNcal$ 
by continuous maps $\UnY\colon\UnNcal\to\r^n$ such that $\UnY|_{\mathring \UnNcal}\colon\mathring\UnNcal\to\r^n$ 
is a complete conformal minimal immersion and $\UnY|_{b\UnNcal}\colon b\UnNcal\to\r^n$ is a topological embedding. 
In particular, the boundary $\UnY(b\UnNcal) \subset \R^n$ consists of finitely many Jordan curves.
If $n\ge 5$ then there exist topological embeddings $\UnY\colon \UnNcal \hra \r^n$ with these properties.
\end{theorem}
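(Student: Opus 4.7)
The plan is to lift everything to the oriented double cover $\iota\colon \Ncal\to\UnNcal$ with fixed-point-free antiholomorphic deck involution $\Igot$, so that $\UnX$ pulls back to an $\Igot$-invariant conformal minimal immersion $X\colon\Ncal\to\R^n$, and to construct an approximating $\Igot$-invariant continuous map $Y\colon\Ncal\to\R^n$ whose restriction to $\mathring\Ncal$ is a complete conformal minimal immersion and whose restriction to $b\Ncal$ is a topological embedding. Since every object is $\Igot$-invariant, $Y$ then descends to the required $\UnY$. When $n\ge 5$ we moreover arrange, by invoking Theorem~\ref{th:generalposition} at each stage, that all intermediate immersions are embeddings, and the uniform limit is then an embedding as well.

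The construction is recursive in the spirit of the orientable analogue of Alarc\'on--Drinovec Drnov{\v{s}}ek--Forstneri\v c--L\'opez (Theorem 1.1 of \cite{AlarconDrinovecForstnericLopez2015PLMS}). Fix a base point $p_0\in\mathring\Ncal$ and choose an $\Igot$-invariant exhaustion $K_1\Subset K_2\Subset\cdots\Subset\mathring\Ncal$ by smoothly bounded Runge compacts. I would inductively produce $\Igot$-invariant conformal minimal immersions $X_j\colon\Ncal\to\R^n$, with $X_0=X$, such that $X_j$ is an arbitrarily small $\Cscr^1(K_{j-1})$-perturbation of $X_{j-1}$, the restrictions $X_j|_{b\Ncal}$ form a uniformly Cauchy sequence, and
\begin{equation*}
    \dist_{X_j^*ds^2}(p_0,\, bK_j)\;>\; j .
\end{equation*}
Each step $X_{j-1}\rightsquigarrow X_j$ proceeds in two moves: an $\Igot$-invariant Riemann--Hilbert deformation on a thin $\Igot$-invariant collar of $bK_j$ that stretches the image of the boundary so as to force the intrinsic-distance bound while hardly moving the immersion on a slightly smaller subdomain; followed by the $\Igot$-invariant Runge--Mergelyan approximation of Theorem~\ref{th:intro-Runge}, which globalizes the locally modified immersion to an $\Igot$-invariant conformal minimal immersion on all of $\Ncal$. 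With summable tolerances the limit $Y=\lim_j X_j$ exists uniformly on $\Ncal$, is conformal minimal on $\mathring\Ncal$, and is complete because $\dist_{Y^*ds^2}(p_0,\,bK_j)\ge j-C$ for a fixed constant $C$.

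The hard part will be the $\Igot$-invariant Riemann--Hilbert deformation. In the orientable setting one deforms a boundary component $\gamma\subset bK_j$ by attaching a smoothly varying family of small null holomorphic disks to the graph of the Weierstrass map $f = 2\di X_{j-1}/\theta\colon\Ncal\to\Agot_*$ and then solving a Riemann--Hilbert problem in the Oka manifold $\Agot_*$. Here this must be carried out equivariantly, i.e.\ compatibly with $f\circ\Igot=\bar f$. The components of $bK_j$ fall into two kinds: $\Igot$-swapped pairs $\{\gamma,\Igot(\gamma)\}$, which one handles by performing the Riemann--Hilbert perturbation on $\gamma$ and transporting it to $\Igot(\gamma)$ by conjugation under $\Igot$; and $\Igot$-invariant components $\gamma=\Igot(\gamma)$ (corresponding to one-sided boundary circles of $\UnNcal$), where the family of attached disks must itself be chosen $\Igot$-equivariant. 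The $\Igot$-invariant sprays, period-dominating $\Igot$-invariant sprays, and $\Igot$-invariant gluing results developed in Chapter~\ref{ch:sprays} (and already used to prove Theorem~\ref{th:intro-Runge}) provide exactly the technical machinery needed to realize the Riemann--Hilbert step equivariantly and to restore the period vanishing condition \eqref{eq:vanishing-periods} in an $\Igot$-invariant way. Finally, the boundary injectivity of $\UnY|_{b\UnNcal}$ is secured by invoking the boundary-embedding part of Theorem~\ref{th:generalposition} at each stage and choosing the tolerances summably small, so that $Y|_{b\Ncal}$ is a topological embedding whose image is a finite union of $\Igot$-invariant pairs (or singletons) of Jordan curves; descending to $\UnNcal=\Ncal/\Igot$ yields the statement of Theorem~\ref{th:intro-Jordan}.
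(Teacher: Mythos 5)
Your overall plan (pass to the oriented double cover, run an $\Igot$-equivariant Riemann--Hilbert iteration, and descend to $\UnNcal=\Ncal/\Igot$) is the same as the paper's, but the way you interleave the two moves leaves a real gap. You propose to apply the Riemann--Hilbert deformation on a collar of $bK_j$ for an exhaustion $K_1\Subset K_2\Subset\cdots\Subset\mathring\Ncal$, and then to globalize by the Runge--Mergelyan approximation of Theorem~\ref{th:intro-Runge}. The Mergelyan step, however, only controls the new immersion on the Runge compact $K_j$ (in the $\Cscr^1(K_j)$-topology) and gives \emph{no} control near $b\Ncal$. Consequently nothing in your construction makes the restrictions $X_j|_{b\Ncal}$ uniformly Cauchy; you list that as a required property, but the two-step scheme does not produce it, and without it the limit $Y=\lim_j X_j$ need not exist as a continuous map on $\Ncal$ (which is exactly what the theorem asserts). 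The fix, and what the paper does (Lemma~\ref{lem:C0} in Section~\ref{sec:Jordan}), is to apply the $\Igot$-invariant Riemann--Hilbert method of Theorem~\ref{th:RH} directly on $\Igot$-symmetric pairs of short arcs in $b\Ncal$ rather than on inner level sets $bK_j$. Theorem~\ref{th:RH} already returns an $\Igot$-invariant conformal minimal immersion of class $\Cscr^1(\Ncal)$ defined on the whole compact bordered surface whose $\Cscr^0(\Ncal)$-distance from the input is as small as prescribed, so no separate Mergelyan globalization step is needed. Since the immersions are harmonic, $\Cscr^0$ control on $b\Ncal$ propagates to $\Cscr^0$ control on $\Ncal$ by the Maximum Principle, and the Pythagorean bookkeeping of Lemma~\ref{lem:C0} (increase $\dist_{X_j}(p_0,b\Ncal)$ by $c/j$ at the cost of a $\Cscr^0$-perturbation of size $c/j$, so that the intrinsic distances diverge while the squares of the perturbations are summable) gives a uniformly Cauchy sequence whose limit is complete.

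A secondary issue: the uniform limit of (topological) embeddings need not be an embedding, so invoking Theorem~\ref{th:generalposition} at every stage together with summable tolerances does not by itself secure injectivity of $\underline Y$ or of $\underline Y|_{b\UnNcal}$. One must in addition choose each tolerance below a positive margin of injectivity of the previous approximant; this is precisely the extra condition (d) built into the recursion in the paper's proof of Theorem~\ref{th:Jordan}. Apart from these two points, your observations on the $\Igot$-equivariant Riemann--Hilbert step (transporting disks along $\Igot$-swapped arcs, and splitting $\Igot$-invariant boundary circles into $\Igot$-disjoint pairs of arcs) and the appeal to the gluing machinery of Chapter~\ref{ch:sprays} are correct and match the paper.
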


Theorem \ref{th:intro-Jordan} is a corollary to Theorem \ref{th:Jordan} in Section \ref{sec:Jordan}
which furnishes a $\Igot$-invariant map $Y \colon \Ncal \to \r^n$ with the analogous properties 
on a 2-sheeted oriented covering $(\Ncal,\Igot)$ of $\UnNcal$.  

Theorem \ref{th:intro-Jordan} gives the first known examples of complete bounded non-orientable minimal surfaces 
with Jordan boundaries. It is related to the Calabi-Yau problem for minimal surfaces, dating back to Calabi's conjectures 
from 1966 \cite{Calabi1965P} (see also Yau \cite{Yau1982AMS,Yau2000AMS}), asking about 
the topological, geometric, and conformal properties of complete bounded minimal surfaces.
Earlier constructions of boun\-ded complete ({\em orientable or non-orientable}) minimal surfaces in $\R^n$ were based
on classical Runge's approximation theorem applied on labyrinths of compact sets in the given surface $\Ncal$,
at the cost of having to cut away pieces of $\Ncal$ in order to keep the image $X(\Ncal)\subset\R^n$ suitably bounded.
The original construction of this type, with $\Ncal$ the disk, is due to
Nadirashvili \cite{Nadirashvili1996IM}; the first non-orientable examples were given by L\'opez, Mart\'in, and Morales 
in \cite{LopezMartinMorales2006TAMS}.  A fairly complete set of references to subsequent works can be found in the papers
\cite{AlarconDrinovecForstnericLopez2015PLMS,AlarconForstneric2015MA,AlarconForstneric2015AS, 
AlarconLopez2013MA}.  

%
%
%
%

The proof of Theorem \ref{th:intro-Jordan}, and of the related Theorem \ref{th:Jordan}, 
uses approximate solutions to a Riemann-Hilbert type boundary value problem for 
$\Igot$-invariant conformal minimal immersions; see Theorem \ref{th:RH}.
This method allows one to stretch a piece of the image surface $X(\Ncal) \subset \R^n$ so as to increase the 
immersion-induced boundary distance in $\Ncal$ near a given arc in the boundary $b\Ncal$, 
while at the same time controlling the placement of the whole surface in $\R^n$.
In this way, one avoids having to cut away pieces of $\Ncal$ during the construction,
thereby obtaining results for any given conformal structure on $\Ncal$. As we show
in this paper, this technique can also be adapted to the $\Igot$-invariant situation
which is needed for the application to non-orientable bordered surfaces.

The term {\em Riemann-Hilbert problem} classically refers to a family of free boundary value problems
for holomorphic maps and, by analogy, for certain other classes of maps. 
For a brief historical background, especially in connection with the applications 
to the theory of null curves and conformal minimal immersions,
we refer to the survey by Alarc\'on and Forstneri\v c \cite{AlarconForstneric2015AS}.
Let us  explain this problem in the original complex analytic setting.

Assume that $\Ncal$ is a compact bordered Riemann surface and $Z\colon \Ncal\to\C^n$ is a 
holomorphic map. (It suffices that $Z$ be of class $\Ascr(\Ncal,\C^n)$; also, the target $\C^n$ 
may be replaced by an arbitrary complex manifold.) Let $I\subset b\Ncal$ be a boundary arc. 
Consider a continuous family of holomorphic disks $G_p\colon \cd\to\C^n$
$(p\in I)$ such that $G_p(0)=Z(p)$ for every $p\in I$.
Let $I'$ be a smaller arc contained in the relative interior of $I$, and let $U\subset \Ncal$ be a 
small open neighborhood of $I'$ with $U\cap b\Ncal\subset I$. 
Given this configuration, one can find a holomorphic map 
$\wt Z\colon \Ncal \to\C^n$ which is uniformly close to $Z$ on $\Ncal \setminus U$, 
$\wt Z(U)$ is close to $Z(U)\cup \bigcup_{p\in I} G_p(\cd)$, 
and the curve $\wt Z(I')$ is close to the cylinder $\bigcup_{p\in I'} G_p(b\d)$. 
All approximations can be made as close as desired.
A simple proof in the basic case when $\Ncal$ is the disk can be found in 
\cite[Proposition 2.1]{ForstnericGlobevnik2001MRL} and in 
\cite[Lemma 3.1]{DrinovecForstneric2012IUMJ}. The general case is obtained by gluing sprays as 
explained in the proof of Theorem  \ref{th:RH}.

This method is quite versatile and can be used for several purposes. One of them is the construction of proper
holomorphic maps from bordered Riemann surfaces. 
In this case, one chooses the disks $G_p$ such that for a given exhaustion function
$\rho$ on the  target manifold, the composition $\rho\circ G_p$ has a minimum at the center point $0\in\cd$
and it satisfies an estimate of the form $\rho(G_p(\zeta)) \ge \rho(G_p(0))+c|\zeta|^2$
for some positive constant $c>0$ independent of the point $p\in I'$.  This requires that $\rho$ has a suitable convexity property
(more precisely, its Levi form should have at least two positive eigenvalues at every point).
The effect of the Riemann-Hilbert modification is that we push the boundary points $p\in I'$ 
in a controlled way to higher level sets of $\rho$ without decreasing $\rho$ much anywhere else
along the image of the surface. This is how Drinovec Drnov\v sek and Forstneri\v c \cite{DrinovecForstneric2007DMJ}
constructed proper holomorphic maps from the interior of any given bordered Riemann surface 
to any complex manifold satisfying a suitable $q$-convexity condition.  For a generalization to higher
dimensional source domains, see \cite{DrinovecForstneric2010AJM} and the references therein.

The Riemann-Hilbert method can also be used to construct bounded complete
complex curves in $\C^n$ for any $n>1$. This is how Alarc\'on and Forstneri\v c 
\cite{AlarconForstneric2013MA} found a proper complete holomorphic immersion 
from the interior of any bordered Riemann surface to the unit ball of $\C^2$. In this case,
the holomorphic disks $G_p$ are chosen to be linear, complex orthogonal to the vector $Z(p)\in\C^n$
for each $p\in I$, and of uniform size $\epsilon>0$.  
In view of Pythagoras' theorem, the Riemann-Hilbert modification increases 
the immersion-induced boundary distance  from an interior point of $\Ncal$ 
to the arc $I'$ by approximately $\epsilon$, while at the same time the outer radius of the 
immersion increases by at most $C\epsilon^2$ for some constant $C>0$. 
A recursive use of this technique on short  boundary arcs, along with certain other complex analytic tools, 
leads to the mentioned theorem. 

Both these applications of the Riemann-Hilbert method  have recently been extended
to the constructions of proper and/or complete null holomorphic curves in $\C^n$
and conformal minimal surfaces in $\R^n$ for any $n\ge 3$.
The Riemann-Hilbert method was first introduced to the study of null curves in dimension $n=3$ 
by Alarc\'on and Forstneri\v c \cite{AlarconForstneric2015MA}. It was improved 
and extended to constructions of complete bounded null curves and conformal minimal immersions
in any dimension $n\ge 3$ in \cite{AlarconDrinovecForstnericLopez2015PLMS}.
In the same paper, and before that in \cite{DrinovecForstneric2015TAMS}, this method was also used 
to study minimal hulls and null hulls of compact sets in $\R^n$ and $\C^n$, respectively. 
The latter results are in the spirit of Poletsky's theory of analytic disk functionals; 
see \cite{DrinovecForstneric2012IUMJ} and the references therein.
Finally, in \cite{AlarconDrinovecForstnericLopez2015PLMS,AlarconDrinovecForstnericLopez2015AJM}
the Riemann-Hilbert method was applied to the construction of complete 
proper conformal minimal immersions from an arbitrary bordered Riemann surface 
to a certain class of domains in $\R^n$. 

In Section \ref{sec:RH} we extend the Riemann-Hilbert method to $\Igot$-invariant 
conformal minimal immersions $\Ncal\to\R^n$; see Theorem \ref{th:RH}. The proof 
follows the orientable case (see \cite[Theorem 3.6]{AlarconDrinovecForstnericLopez2015PLMS}), using
tools from Chapter \ref{ch:sprays} on approximation and gluing $\Igot$-invariant sprays.
The key to the proof of Theorem \ref{th:Jordan}  (which includes Theorem \ref{th:intro-Jordan} as a corollary)
is to use the Riemann-Hilbert method on $\Igot$-invariant pairs of short boundary arcs $I= I'\cup \Igot(I') \subset b\Ncal$
in order to increase the  boundary distance from a fixed interior point $p_0 \in\Ncal$ to $I$ by a 
prescribed amount by a deformation which is arbitrarily $\Cscr^0$ small on $\Ncal$. 
A recursive application of this  procedure yields a sequence of $\Igot$-invariant conformal
minimal immersions $Y_j\colon \Ncal \to\R^n$ $(j=1,2,\ldots)$ 
converging uniformly on $\Ncal$ to a continuous map $Y \colon \Ncal\to\R^n$ 
which is a complete conformal minimal immersion in the interior $\mathring \Ncal$
and is a topological embedding (modulo $\Igot$-invariance) of the boundary $b\Ncal$.
By passing down to $\UnNcal=\Ncal/\Igot$ we obtain a map $\UnY\colon \UnNcal\to\R^n$ 
satisfying Theorem \ref{th:intro-Jordan}.

%
%
\section{Proper non-orientable minimal surfaces in domains in $\R^n$} \label{sec:intro-proper}

We begin this section with the following theorem summarizing several results from Sections \ref{sec:Iproper} and \ref{sec:Icomplete}
on the existence of 
complete conformal minimal immersions of arbitrary non-orientable surfaces to $\R^n$.
The proofs rely upon the Runge-Mergelyan theorems
in this setting; see Theorems \ref{th:intro-Runge}, \ref{th:Mergelyan} and \ref{th:Mergelyan1}.

%
%

\begin{theorem}[Complete non-orientable conformal minimal surfaces in $\R^n$]
\label{th:applications1}
Let $\UnNcal$ be an open non-orientable surface endowed with a conformal structure.
The following assertions hold for any $n\ge 3$:
\begin{enumerate}[\rm (i)]
\item Every conformal minimal immersion $\UnNcal\to\r^n$ can be approximated 
uniformly on compacts by conformal minimal immersions $\UnNcal \to \r^n=\r^2\times\r^{n-2}$ properly 
projecting into $\r^2$ (hence proper in $\r^n$).  If $n\ge 5$, there exist approximating 
conformal minimal embeddings with this property.
\vspace{1mm}
\item Let $\underline K$ be a compact set in $\UnNcal$.  
Every conformal minimal immersion $\UnX=(\UnX_1,\ldots,\UnX_n)\colon\underline K\to\r^n$ such that $\UnX_j$ 
extends harmonically to $\UnNcal$ for $j=3,\ldots,n$ can be approximated uniformly on $\underline K$ by complete conformal 
minimal immersions $\wt \UnX=(\wt \UnX_1,\ldots,\wt \UnX_n)\colon\UnNcal\to\r^n$ with $\wt \UnX_j=\UnX_j$ for $j=3,\ldots,n$.
\vspace{1mm} 
\item $\UnNcal$ carries a complete conformal minimal immersion $\UnNcal\to\r^n$ whose generalized Gauss map 
$\UnNcal\to \cp^{n-1}$  is nondegenerate and 
omits $n$ hyperplanes of $\cp^{n-1}$ in general position.
\end{enumerate}
\end{theorem}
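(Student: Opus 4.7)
\emph{Setup.} I pass to the oriented double cover $\iota\colon\Ncal\to\UnNcal$ with fixed-point-free antiholomorphic involution $\Igot$ and work throughout with $\Igot$-invariant conformal minimal immersions $X\colon\Ncal\to\R^n$; the desired non-orientable immersion of $\UnNcal=\Ncal/\Igot$ is recovered by descent. Fix an $\Igot$-invariant normal exhaustion $K_1\Subset K_2\Subset\cdots$ of $\Ncal$ by smoothly bounded, $\Igot$-symmetric Runge compact sets. Throughout, I use a fixed nowhere vanishing $\Igot$-invariant holomorphic $1$-form $\theta$ on $\Ncal$.

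\emph{Proof plan for (i).} Lift the given immersion to an $\Igot$-invariant $X^{(0)}\colon\Ncal\to\R^n$ and construct inductively a sequence of $\Igot$-invariant conformal minimal immersions $X^{(j)}\colon\Ncal\to\R^n$ with $\|X^{(j)}-X^{(j-1)}\|_{\Cscr^0(K_{j-1})}<2^{-j}$ and $(X^{(j)}_1)^2+(X^{(j)}_2)^2>j^2$ on $bK_j$. At the inductive step, preserve the $\Igot$-invariant harmonic components $X^{(j-1)}_3,\ldots,X^{(j-1)}_n$ and apply the fixed-coordinate part of Theorem \ref{th:intro-Runge} to modify only $X_1,X_2$ so as to push $(X_1,X_2)(bK_j)$ outside the disk of radius $j+1$ while remaining close on $K_{j-1}$. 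The limit $X=\lim_j X^{(j)}$ then has a proper projection onto $\R^2$, hence is proper into $\R^n$. For $n\ge 5$ intersperse general-position corrections from Theorem \ref{th:generalposition} at each step so that the limit is injective; descending to $\UnNcal$ yields the stated proper immersion or embedding.

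\emph{Proof plan for (ii).} The strategy is the classical Jorge-Xavier-Nadirashvili labyrinth construction, adapted to the $\Igot$-invariant setting. Lift $\UnX$ to an $\Igot$-invariant $X^{(0)}$ on a neighborhood of $K^\star=\iota^{-1}(\underline K)$, with the last $n-2$ coordinates extended by their prescribed $\Igot$-invariant harmonic extensions to $\Ncal$. For each $j$ build an $\Igot$-symmetric compact labyrinth $\Lambda_j\subset K_{j+1}\setminus\mathring K_j$ consisting of a fine family of arcs, designed so that any divergent curve in $\Ncal$ must accumulate arbitrarily large induced length upon crossing $\Lambda_j$ whenever the conformal metric exceeds a sufficiently large threshold $M_j$ there. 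Using Theorem \ref{th:Mergelyan1} produce an $\Igot$-invariant conformal minimal immersion $X^{(j+1)}\colon\Ncal\to\R^n$ agreeing with $X^{(0)}$ in coordinates $3,\ldots,n$, approximating $X^{(j)}$ within $2^{-j}$ on $K_j$, and with the Euclidean norm of its differential at least $M_j$ on $\Lambda_j$. The limit $\wt X=\lim_j X^{(j)}$ is a complete $\Igot$-invariant conformal minimal immersion of $\Ncal$ with the prescribed last $n-2$ components; descent to $\UnNcal$ yields $\wt{\UnX}$.

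\emph{Proof plan for (iii) and the main obstacle.} Part (iii) refines (ii). First pick $n-2$ specific $\Igot$-invariant harmonic functions $X_3,\ldots,X_n\colon\Ncal\to\R$ whose holomorphic differentials $\di X_r$ are nowhere vanishing on $\Ncal$; such functions can be constructed as real parts of $\Igot$-invariant holomorphic functions with nowhere zero derivative, via an $\Igot$-invariant adaptation of the Gunning-Narasimhan theorem. Apply (ii) to obtain a complete $\Igot$-invariant conformal minimal immersion $X$ of $\Ncal$ with these last $n-2$ components. A small general-position perturbation of $X_1,X_2$, carried out through the $\Igot$-invariant period-dominating sprays of Proposition \ref{prop:period-dominating-spray}, then ensures that $\di X_1$ and $\di X_2$ are also nowhere zero and that the generalized Gauss map $[\di X_1:\cdots:\di X_n]\colon\Ncal\to\CP^{n-1}$ is nondegenerate; its image then omits the $n$ coordinate hyperplanes, which are in general position in $\CP^{n-1}$. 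The principal difficulty lies in part (ii): the $\Igot$-symmetric labyrinth must be constructed, and the fixed-coordinate Mergelyan approximation of Theorem \ref{th:Mergelyan1} must be applied to grow the induced metric uniformly on each symmetric arc while simultaneously preserving the prescribed $n-2$ coordinates, the period vanishing conditions, and the antiholomorphic symmetry. Once (ii) is secured, (i) reduces to a standard exhaustion-plus-approximation argument and (iii) is a modest additional general-position refinement.
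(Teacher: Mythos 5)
Your overall plan---lift to the oriented double cover, use $\Igot$-invariant Runge/Mergelyan approximation recursively, achieve completeness via symmetric labyrinths---matches the paper's strategy (Theorem \ref{th:Iproper}, Corollaries \ref{co:fixed} and \ref{co:gaussmap}), and your treatment of (ii) is essentially what the paper does through Theorem \ref{th:fixed-derivative}. But your argument for (iii) contains a genuine gap, and your sketch of (i) omits a key mechanism.

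The gap in (iii): you propose to omit the $n$ coordinate hyperplanes $\{z_j=0\}$, which requires each $\di X_j$ to be nowhere vanishing, and you claim this can be arranged for $j=1,2$ by ``a small general-position perturbation'' after completeness is secured. This cannot work. Zeros of holomorphic functions on a compact set persist, counted with multiplicity, under small perturbations; transversality yields at best that a generic perturbation has simple zeros, never that they disappear. Moreover, once $X_3,\ldots,X_n$ are frozen, the identity $(\di X_1)^2+(\di X_2)^2=-\sum_{r\ge 3}(\di X_r)^2$ ties $(\di X_1,\di X_2)$ rigidly together, so they cannot be perturbed independently. The paper avoids the issue entirely by choosing different hyperplanes: it arranges $\wt f_{i,1}^2+\wt f_{i,2}^2=\lambda_i$ to be a nonzero real constant for each pair $i$, whence the factorization $(\wt f_{i,1}+\imath\wt f_{i,2})(\wt f_{i,1}-\imath\wt f_{i,2})=\lambda_i\ne 0$ forces both factors to be nowhere zero and the Gauss map automatically omits the hyperplanes $\{z_{2i-1}\pm\imath z_{2i}=0\}$ (plus $\{z_n=0\}$ for odd $n$); no after-the-fact removal of zeros is needed. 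Your preliminary choice of $X_3,\ldots,X_n$ also needs care: Gunning--Narasimhan gives one $\Igot$-invariant function with nowhere vanishing differential, but you need $n-2$ of them that are simultaneously nowhere critical \emph{and} $\C$-linearly independent as differentials, and $\sum_{r\ge 3}(\di X_r)^2$ must not vanish; this is not addressed.

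On (i), keeping $X_3,\ldots,X_n$ globally fixed and ``pushing $(X_1,X_2)(bK_j)$ outside the disk'' via the fixed-coordinate Runge theorem is unjustified as stated: that theorem approximates on a compact subset and gives no direct control of the boundary image. The paper's Theorem \ref{th:Iproper}, following \cite{AlarconLopez2015GT,AlarconLopez2012JDG}, does not preserve $X_3,\ldots,X_n$ throughout; it alternately freezes tilted directions $(1,\pm\tan\beta,0,\ldots,0)$ together with $n-3$ of the last components at different steps, so as to grow $Y_2+\tan(\beta)|Y_1|$. Your sketch drops this tilting mechanism, which is the part that actually produces properness.
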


The case $n=3$ of Theorem \ref{th:applications1} is proved in \cite{AlarconLopez2015GT}. 
More general versions  are given by Theorem \ref{th:Iproper} in Section \ref{sec:Iproper} (for part (i)) and Corollaries \ref{co:fixed} 
and \ref{co:gaussmap} in Section \ref{sec:Icomplete} (for parts (ii) and (iii), respectively).  
Analogous results are already known in the orientable framework (see \cite{AlarconLopez2012JDG,AlarconFernandezLopez2012CMH,AlarconFernandezLopez2013CVPDE,AlarconForstnericLopez2016MZ}). 

We wish to point out that part {\rm (i)} of the theorem is related to an old  conjecture of
Sullivan about the conformal type of properly immersed minimal surfaces in $\r^3$ with finite topology,
 and also to a question of Schoen and Yau from 1985 about the conformal type of minimal surfaces in $\r^3$ 
properly projecting into a plane (see \cite[p.\ 18]{SchoenYau1997IP}). The number of exceptional 
hyperplanes in part {\rm (iii)} is sharp according to a result of Ahlfors  \cite{Ahlfors1941ASSF}. 
We refer to Chapter \ref{ch:applications} for further references and motivation. 

We also find the first known example of a properly embedded non-orientable minimal surface
in $\R^4$ (a minimal M\"obius strip); see Example \ref{ex:Mobius}. No such surfaces exist in $\R^3$
due to topological reasons. A properly {\em immersed} minimal M\"obius strip in $\R^3$ with finite total
curvature was found by Meeks \cite{Meeks1981DMJ} back in 1981.

The last topic that we shall touch upon concerns the existence of proper (and complete proper) 
conformal minimal immersions of non-orientable bordered surfaces to certain classes of domains in $\R^n$.  

We recall the notion of a (strongly) $p$-plurisubharmonic function on a domain in $\R^n$
and of a $p$-convex domain in $\R^n$. This class of functions and domains have been studied by 
Harvey and Lawson in \cite{HarveyLawson2013IUMJ}; see also 
\cite{DrinovecForstneric2015TAMS,HarveyLawson20092AJM,HarveyLawson2011ALM,HarveyLawson2012AM}.
For more details and further references, see Section \ref{sec:proper}.

Let $n\ge 3$ and $p\in \{1,2,\ldots,n\}$. A smooth real function $\rho$ on a domain $D\subset \R^n$
is said to be {\em (strongly) $p$-plurisubharmonic} if for every point $x\in D$, the sum of the smallest 
$p$ eigenvalues of its Hessian $\Hess_\rho(x)$ at $x$ is nonnegative (resp.\ positive). 
This holds if and only if the restriction of $\rho$ to any $p$-dimensional minimal submanifold 
is a subharmonic function on that submanifold. Note that $1$-plurisubharmonic functions are convex functions, 
while $n$-plurisubharmonic  functions are the usual subharmonic functions.

A domain  $D\subset\r^n$ is said to be {\em $p$-convex} if it admits a smooth strongly $p$-plurisubharmonic 
exhaustion function. A $1$-convex domain is simply a convex domain, while every domain in $\R^n$ is $n$-convex,
i.e., it admits a strongly subharmonic exhaustion function (see Greene and Wu \cite{GreeneWu1975} 
and Demailly \cite{Demailly1990}). A domain $D$ with $\Cscr^2$ boundary is $p$-convex if and only if 
\[
	\kappa_1(x)+\cdots + \kappa_p(x) \ge 0\ \ \text{for each point $x\in bD$}, 
\]
where $\kappa_1(x)\le \kappa_2(x)\le \cdots\le \kappa_{n-1}(x)$ denote the normal curvatures of the boundary
$bD$ at the point $x$ with respect to the inner normal. The domain is said to be {\em strongly $p$-convex}
for some $p\in \{1,\ldots,n-1\}$ if it is smoothly bounded (of class $\Cscr^2$) and we have the strict inequality
$\kappa_1(x)+\cdots + \kappa_p(x) > 0$ for each point $x\in bD$.
Smooth $(n-1)$-convex domains in $\r^n$ are also called {\em mean-convex}, while
$2$-convex domains are called {\em minimally convex} 
(cf.\ \cite{DrinovecForstneric2015TAMS,AlarconDrinovecForstnericLopez2015AJM}).

In Section \ref{sec:proper} we prove the following result which holds both for 
orientable and non-orientable surfaces; see Theorem  \ref{th:convex} for a more precise statement.

%
%

\begin{theorem}[Proper non-orientable minimal surfaces in $p$-convex domains] \label{th:intro3}
Let $\UnNcal$ be a compact bordered non-orientable surface.
If $D\subset\r^n$ is a $p$-convex domain, where $p=2$ if $n=3$ and $p=n-2$ if $n>3$, 
then every conformal minimal immersion $\UnX\colon\UnNcal\to\r^n$ satisfying $\UnX(\UnNcal)\subset D$ 
can be approximated uniformly on compacts in $\mathring\UnNcal$ by complete proper conformal minimal 
immersions $\mathring\UnNcal\to D$. If the domain $D$ is relatively compact, smoothly bounded and 
strongly $p$-convex, then the approximating immersions can be chosen to extend continuously to $\UnNcal$. 
\end{theorem}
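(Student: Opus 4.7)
The plan is to reduce to an $\Igot$-invariant problem on the oriented double cover $(\Ncal,\Igot)$ of $\UnNcal$, lift the given immersion $\UnX$ to an $\Igot$-invariant conformal minimal immersion $X\colon \Ncal\to \R^n$ with $X(\Ncal)\subset D$, and then run an $\Igot$-equivariant version of the recursive Riemann-Hilbert scheme that, in the orientable case, yields proper (and complete proper) conformal minimal immersions into $p$-convex domains as in \cite{AlarconDrinovecForstnericLopez2015PLMS, AlarconDrinovecForstnericLopez2015AJM}. The whole construction will take place in the $\Igot$-invariant category, so the limit map descends to $\UnNcal=\Ncal/\Igot$; the key inputs to make this possible are the $\Igot$-invariant Mergelyan theorem (Theorem \ref{th:Mergelyan}), $\Igot$-invariant period-dominating sprays, and the $\Igot$-invariant Riemann-Hilbert theorem (Theorem \ref{th:RH}), all of which have been developed in the earlier chapters.

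The first step is to pick a smooth strongly $p$-plurisubharmonic exhaustion $\rho\colon D\to\R$; by averaging/regularizing we may assume $\rho$ is smooth and strongly $p$-plurisubharmonic throughout $D$ (in the strongly $p$-convex compact case, $\rho$ is defined up to $bD$ as minus the signed distance, or any standard defining function). We then exhaust $\Ncal$ by an increasing sequence of relatively compact, smoothly bounded, $\Igot$-invariant, Runge domains $\Ncal_1\Subset\Ncal_2\Subset\cdots\Subset \Ncal=\bigcup_j\Ncal_j$. We construct $\Igot$-invariant conformal minimal immersions $X_j\colon \bar\Ncal_j\to D$ of class $\Cscr^1(\bar\Ncal_j)$, starting from (a small shrinking of) $X$ as $X_1$, such that the following hold: (a) $X_{j+1}$ is $\Cscr^1$-close to $X_j$ on $\bar\Ncal_j$ (closeness summable so as to converge uniformly on compacts of $\mathring\Ncal$); (b) $\min_{b\Ncal_{j+1}}\rho\circ X_{j+1}\geq c_j$ with $c_j\nearrow \sup_D\rho$, giving properness of the limit into $D$; and (c) the $X_{j+1}$-induced boundary distance from a fixed basepoint $p_0\in\mathring\Ncal_1$ to $b\Ncal_{j+1}$ is at least $j$, giving completeness. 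The Runge-Mergelyan step for $\Igot$-invariant immersions (Theorem \ref{th:Mergelyan}, together with general position from Theorem \ref{th:generalposition}) handles the noncritical extension of $X_j$ from $\bar\Ncal_j$ to a neighborhood of $\bar\Ncal_{j+1}$ and the changes of topology between the sublevel sets.

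The main work is in the inductive step, which is a finite loop of $\Igot$-invariant Riemann-Hilbert modifications on an $\Igot$-invariant partition of $b\Ncal_{j+1}$ into short arcs paired by $\Igot$. For each pair $(I,\Igot(I))$ of short boundary arcs we attach a continuous $\Igot$-equivariant family of analytic disks $G_p\colon \cd\to\R^n$, tangent to the minimal surface at $p\in I$, whose image lies in $D$ and which are designed so that, by the geometric content of $p$-convexity (at each boundary point of $bD$ the sum of the smallest $p$ normal curvatures is positive), one can arrange $\rho(G_p(\zeta))\ge \rho(G_p(0))+c|\zeta|^2$ for some uniform $c>0$; this is the same mechanism as in \cite[\S 6]{AlarconDrinovecForstnericLopez2015PLMS}, but we now insist that the family be $\Igot$-equivariant, i.e.\ $G_{\Igot(p)}(\zeta)=G_p(\bar\zeta)$. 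Applying the $\Igot$-invariant Riemann-Hilbert theorem (Theorem \ref{th:RH}) simultaneously over the $\Igot$-symmetric family of arcs yields an $\Igot$-invariant conformal minimal immersion that is $\Cscr^0$-close to $X_j$ on $\bar\Ncal_j$ and whose boundary $X_{j+1}(b\Ncal_{j+1})$ has been pushed to higher level sets of $\rho$ by a definite amount, with the collateral benefit that one can arrange the chord-disk modifications to also add a fixed amount to the boundary distance (exactly as in the proof of Theorem \ref{th:Jordan}). Combining a properness stage with a completeness stage (or merging them, as in \cite[Theorem 1.2]{AlarconDrinovecForstnericLopez2015AJM}) produces the sequence $X_j$ with the required properties.

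The main obstacle, and where the non-orientable setting adds genuine difficulty beyond the orientable case, is to carry out the Riemann-Hilbert deformation in an $\Igot$-equivariant fashion without losing the geometric gains: one must partition $b\Ncal_{j+1}$ into arcs that come in $\Igot$-paired blocks disjoint from their $\Igot$-images, choose the disk families $G_p$ to respect $\Igot$, and then apply the $\Igot$-invariant gluing of sprays on $\Igot$-invariant Cartan pairs from Chapter \ref{ch:sprays} to produce the new immersion in the $\Igot$-invariant class; the period vanishing (\ref{eq:vanishing-periods}) must be restored by an $\Igot$-invariant period-dominating spray (Proposition \ref{prop:period-dominating-spray}). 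Once all $\Igot$-equivariance is in place, the quantitative geometric estimates giving properness and completeness are formally the same as in the orientable proofs. Finally, passing to the quotient $\UnNcal=\Ncal/\Igot$, the limit $\Igot$-invariant immersion $Y\colon \mathring\Ncal\to D$ descends to the required complete proper conformal minimal immersion $\underline Y\colon \mathring\UnNcal\to D$; in the strongly $p$-convex case, the standard boundary-regularity argument for Riemann-Hilbert deformations near a defining function $\rho$ with nonvanishing differential on $bD$ delivers continuity up to $b\UnNcal$.
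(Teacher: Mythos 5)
Your proposal follows essentially the same route as the paper's proof of Theorem \ref{th:convex}: lift to the $\Igot$-invariant problem on the oriented double cover, run a recursive Riemann-Hilbert scheme in the $\Igot$-invariant category (using Theorem \ref{th:RH}, $\Igot$-invariant period-dominating sprays, and the $\Igot$-invariant Mergelyan and general position theorems), alternate or merge the properness and completeness stages, and descend to the quotient. The paper packages the inductive step as Lemma \ref{lem:convex} (the $\Igot$-equivariant analogue of \cite[Proposition 3.3]{AlarconDrinovecForstnericLopez2015AJM}) together with Lemmas \ref{lem:C0} and \ref{lem:lifting2}, but these implement exactly the recursive loop you describe, including the choice of conformal minimal disks along which $\rho$ increases quadratically (via \cite[Lemma 3.1]{AlarconDrinovecForstnericLopez2015AJM} for $p=2$, $n=3$, and planar disks for $n>3$).
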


The orientable case of Theorem \ref{th:intro3} in dimension $n=3$, with $p=2$, is given by
\cite[Theorem 1.1]{AlarconDrinovecForstnericLopez2015AJM} (see also
\cite[Theorem 1.9]{AlarconDrinovecForstnericLopez2015AJM} for boundary continuity of maps
into strongly $2$-convex domains), while for $n\ge 4$ and $p=n-2$ it is stated as 
\cite[Remark 3.8]{AlarconDrinovecForstnericLopez2015AJM}. The result is new
for non-orientable surfaces.

It may be of interest to recall that any domain in $\R^3$ whose boundary
is a properly embedded minimal surface is a minimally convex domain 
(see \cite[Corollary 1.3]{AlarconDrinovecForstnericLopez2015AJM});
hence, Theorem \ref{th:intro3} applies to such domains. It is also worthwhile pointing  out that, already in the 
orientable case, the class of $2$-convex domains in $\R^3$ is the biggest general class of 
domains in $\R^3$ for which this result holds; see 
\cite[Remark 1.11 and Examples 1.13, 1.14]{AlarconDrinovecForstnericLopez2015AJM}.

Theorem \ref{th:intro3}, and its counterpart in the orientable case, 
contributes to the problem of understanding which domains in Euclidean spaces admit 
complete properly immersed minimal surfaces, and how the geometry of the domain influences the conformal 
properties of such surfaces. We again refer to Chapter \ref{ch:applications} for the motivation
and further references.

In conclusion, we briefly describe the method of proof of Theorem \ref{th:intro3};
the details are provided in the proof of Theorem  \ref{th:convex}.  A  major ingredient is the 
Riemann-Hilbert boundary value problem described in the previous section. 

Let $\rho\colon D\to \R$ be a smooth strongly $p$-plurisubharmonic exhaustion function.
Also, let $(\Ncal,\Igot)$ be an oriented double-sheeted covering of $\UnNcal$ and 
$X\colon \Ncal\to D$ be the lifting of $\UnX$. We inductively construct a sequence
of $\Igot$-invariant conformal minimal immersions $X_j\colon \Ncal\to D$ $(j=1,2,\ldots)$
which approximate $X$ on a given compact subset of $\Ncal$ and 
converge uniformly on compacts in $\mathring\Ncal$ to a proper complete 
$\Igot$-invariant conformal minimal immersion $\wt X=\lim_{j\to\infty} X_j \colon \mathring\Ncal \to D$.
If $D$ is smoothly bounded and strongly $p$-convex, then the process is designed
so that the convergence takes place in $\Cscr^0(\Ncal,\R^n)$. 
The induced map $\wt \UnX\colon\mathring \UnNcal \to D$ then satisfies the conclusion
of Theorem \ref{th:intro3}.

The Riemann-Hilbert method is used in this recursive process in two different and 
mutually independent ways: 
\begin{itemize}
\item[\rm (a)] to push the boundary $X(b\Ncal)$ of the image surface closer to $bD$, and
\vspace{1mm}
\item[\rm (b)] to increase the boundary distance in $\Ncal$ induced by the immersion.
\end{itemize}

Both these applications of the Riemann-Hilbert method have been mentioned in the previous section. 
The procedure (b) can be carried out by deformations  that are $\Cscr^0$-small on $\Ncal$, 
and hence it does not infringe upon the procedure (a); in the inductive process we simply alternate them
in order to achieve both completeness and properness of the limit map.
This has already been exploited in the orientable case in the papers \cite{AlarconDrinovecForstnericLopez2015PLMS,AlarconDrinovecForstnericLopez2015AJM}.

Part (b) has been explained in connection with the proof of Theorem \ref{th:intro-Jordan}.

The inductive step in procedure (a) uses the Riemann-Hilbert method to deform the surface 
along a short boundary arc in the direction of a carefully selected family of conformal minimal disks 
in $D$ along which the function $\rho$ increases quadratically. The construction of such 
disks depends on the hypothesis that $\rho$ is strongly $p$-plurisubharmonic;
see \cite[Lemma 3.1]{AlarconDrinovecForstnericLopez2015AJM} for the case  $p=2$, $n=3$.
We actually find a corresponding family of null holomorphic disks in the tube
domain $D\times \imath\R^n\subset\C^n$ over $D$ by following the standard complex analytic 
method of choosing so called {\em Levi disks}. (Here, $\imath =\sqrt{-1}$.)
These are complex disks which are tangential 
to the level set of $\rho$ at the center point and such that 
the Levi form of $\rho$, considered as a function on $D\times \imath \R^n$ which is 
independent of the imaginary component, is positive along these disk. These conditions 
ensure that $\rho$ increases quadratically along the disk. A  minor complication arises
at any critical point of $\rho$, but it is well understood how to handle it by reducing to the noncritical case.

For more details, see Section \ref{sec:proper} and the paper \cite{AlarconDrinovecForstnericLopez2015AJM}.


\chapter{Preliminaries} 
\label{ch:prelim}

We shall denote the complex coordinates on $\C^n$ by $z=x+\imath y=(z_1,\ldots, z_n)$, with $z_j=x_j+\imath y_j$.   
We denote by $\D=\{z \in\C\colon |z|<1\}$ the unit disk in $\C$ and by 
\[
	\B^n=\{z\in \C^n\colon ||z||^2=\sum_{j=1}^n |z_j|^2 <1\} \subset \C^n
\]
the unit ball. Here, $\imath =\sqrt{-1}$.

Given a smooth surface $N$, we shall be considering smooth maps $X\colon N\to\R^n$
and $Z=X+\imath Y\colon N\to \C^n$ whose images are minimal surfaces in $\R^n$ or complex 
null curves in $\C^n$. Points of $N$ will be denote by $p,p_0$ and the like.  { By a compact domain $A\subset \Ncal$, 
we mean a connected compact subset which is a topological surface with boundary.}
Often $N$ will be endowed with a conformal structure or, in the orientable case, 
with a Riemann surface structure, and this will be emphasized by using instead 
the corresponding caligraphic letter $\Ncal$.


\section{Conformal structures on surfaces}
\label{sec:conformal}
Let $\UnN$ be a smooth connected non-orientable surface. There is a smooth connected orientable surface $N$ 
and a two-sheeted covering map $\iota \colon N\to \UnN$ whose group of deck transformations is generated
by a fixed-point-free orientation reversing involution $\Igot \colon N\to N$. 
The triple $(N,\Igot,\iota)$ is unique up to an isomorphism of covering spaces. 
Conversely, given a pair $(N,\Igot)$ as above, the quotient $\underline{N}=N/\Igot$ 
is a non-orientable surface whose orientable 2-sheeted covering
space is $N$, and the deck group is generated by the involution $\Igot$.

A {\em conformal structure} on  a surface $N$ is determined by a choice  of a Riemannian metric; 
two metrics $g,g'$ on $N$ determine the same conformal structure if and only if $g'=\lambda g$ for some 
function $\lambda\colon N\to (0,\infty)$.  We denote by $\Ncal=(N,[g])$ the surface 
with a choice of a conformal structure. If $\UnN$ is a non-orientable surface and  $\iota \colon N\to \UnN$
is as above, the pullback $\iota^*g$ of a Riemannian metric $g$ on $\UnN$ is a Riemannian metric on $N$
which, together with a choice of orientation, determines the structure of a Riemann surface 
$\Ncal$. In this complex structure, the involution $\Igot\colon\Ncal\to\Ncal$ 
is antiholomorphic and the quotient projection $\iota \colon \Ncal \to  \UnNcal$ is conformal.

Equivalently, a conformal structure on a surface $M$ is determined by a choice of an atlas
$\Uscr=\{(U_i,\phi_i)\}$, where $\{U_i\}$ is an open cover of $M$ and
$\phi_i\colon U_i\to U'_i\subset\C$ are homeomorphisms whose transition
functions $\phi_{i,j}=\phi_i\circ\phi_j^{-1}$ are holomorphic or antiholomorphic on each connected 
component of their domain.  If $M$ is orientable then, 
replacing $\phi_i$ by $\bar\phi_i$ if necessary, we obtain a complex  atlas.
If $M$ is non-orientable and $\iota\colon N\to M$ is a 2-sheeted oriented covering,
then a conformal atlas $\Uscr$ on $M$ pulls back to a conformal atlas $\Vscr$ on $N$.
Choosing an orientation on $N$ and orienting each chart in $\Vscr$ accordingly
gives a complex atlas, and $\Ncal=(N,\Vscr)$ is then a Riemann surface.
The quotient projection $\iota \colon \Ncal \to \Mcal$ is either holomorphic or antiholomorphic 
in every pair of charts from the respective atlases $\Vscr$, $\Uscr$.

In the sequel, objects related to a non-orientable surface $\UnNcal$ will be underlined, 
while the corresponding objects related to the oriented 2-sheeted covering $\Ncal$ will be 
denote by the same letter but without underlining.

%
%

\section{$\Igot$-invariant functions and $1$-forms. Spaces of functions and maps}
\label{sec:invariant} 

Let $\Ncal$ be an open Riemann surface. A differential $(1,0)$-form $\phi$ on $\Ncal$ is 
given in any local holomorphic coordinate $z=x+\imath y$ by $\phi(z) = a(z)dz$ for some 
function $a$; $\phi$ is holomorphic if and only if every coefficient function $a(z)$ obtained in this way 
is holomorphic. A $(0,1)$-form is locally of the form $\phi(z)=b(z) d\bar z$. 
The exterior differential splits as $d=\di+\dibar$  where 
\[
	\di f= \frac{\di f}{\di z} dz = \frac{1}{2}\left(\frac{\di f}{\di x}- \imath \frac{\di f}{\di y}\right) dz,
	\quad
	\dibar f= \frac{\di f}{\di \bar z} d\bar z = \frac{1}{2}\left(\frac{\di f}{\di x} +\imath \frac{\di f}{\di y}\right) d\bar z.
\]

We denote by $\Oscr(\Ncal)$ and $\Omega(\Ncal)$ the spaces of all holomorphic functions 
and 1-forms on $\Ncal$, respectively, endowed with the compact-open topology. If $K$ is a compact set in $\Ncal$, 
then $\Oscr(K)$ stands for the algebra of functions $f\colon U_f\to \C$ that are holomorphic on open 
neighborhoods $U_f\supset K$ of $K$ (depending on the function), in the sense 
of germs along $K$. If $K$ has boundary of class $\Cscr^1$ and $k\ge 0$ is an integer
then $\Ascr^k(K)$ denotes the space of $\Cscr^k$ functions $K\to \C$
that are holomorphic in the interior $\mathring K=K\setminus bK$. 
We shall write $\Ascr^0(K)=\Ascr(K)$. We denote by 
\begin{equation}\label{eq:supnorm}
	||f||_{0,K} = \sup\{ |f(p)|: p\in K\}
\end{equation}
the supremum norm of a function $f$ in any of the spaces $\Oscr(K)$ or $\Ascr^k(K)$.

Given a complex manifold $Z$, we denote the corresponding space of maps to $Z$ by $\Oscr(\Ncal,Z)$, 
$\Oscr(K,Z)$, and $\Ascr^k(K,Z)$. For $f=(f_1,\ldots, f_n)\in \Ascr(K,\C^n)$ we set
\[
	||f||^2_{0,K} = \sum_{i=1}^n ||f_i||^2_{0,K}.
\]
If $f\colon K\times V\to Z$ is a map  on the product $K\times V$ of a compact set $K\subset \Ncal$
and an open set $V \subset \C^m$ for some $m$, we shall say that $f$ is of class $\Ascr(K)$ 
(or $\Ascr(K\times V,Z)$ when we wish to emphasize the sets $V$ and $Z$)
if $f$ is continuous on $K\times V$ and holomorphic on $\mathring K\times V$.

%
%

\begin{definition}[$\Igot$-invariant maps and $1$-forms] \label{def:invariant}
Let $(\Ncal,\Igot)$ be a Riemann surface with a fixed-point-free antiholomorphic involution
$\Igot\colon \Ncal\to\Ncal$. 
\begin{itemize}
\item[\rm (a)] A map $X \colon \Ncal\to \R^n$ is {\em $\Igot$-invariant} if 
\[
	X \circ \Igot= X.
\]
\item[\rm (b)]  A holomorphic map $f\colon \Ncal\to \C^n$ is {\em $\Igot$-invariant} if 
\[ 
	f\circ \Igot =\bar f,
\] 
and is {\em $\Igot$-skew-invariant} if
$ 
	f\circ \Igot =- \bar f.
$ 
\vspace{1mm}
\item[\rm (c)]
A holomorphic (or meromorphic) $1$-form $\phi$ on $\Ncal$  is  {\em $\Igot$-invariant} if
\[ 
	\Igot^* \phi = \bar \phi.
\] 
\end{itemize}
\end{definition}

Note that a function $f=u+\imath v\in\Oscr(\Ncal)$  is $\Igot$-invariant if and only if $u,v\colon \Ncal\to\R$ are 
conjugate harmonic functions satisfying
\[
	u\circ \Igot = u,\quad v\circ \Igot=-v.
\]
Thus, the real part $u=\Re f$ is $\Igot$-invariant while the imaginary part  $v=\Im f$ is $\Igot$-skew-invariant. 
We introduce the following notation:
\[
	\OI(\Ncal)= \{f\in \Oscr(\Ncal) :   f\circ\Igot=  \bar f\}.
\]
Note that $\imath \OI(\Ncal)=\{\imath f \colon f\in\OI(\Ncal)\} =\{f\in \Oscr(\Ncal):   f\circ\Igot= - \bar f\}$ and that  $ \OI(\Ncal)$ 
and $\imath \OI(\Ncal)$ are real Fr\'echet algebras such that
\begin{equation}\label{eq:direct sum}
	\OI(\Ncal)\cap \imath \OI(\Ncal)=\{0\} \quad 
	\text{and} \quad
	\Oscr(\Ncal) = \OI(\Ncal) \oplus \imath \OI(\Ncal),
\end{equation}
with projections $\Pi_\Igot \colon \Oscr(\Ncal) \to \OI(\Ncal)$,  $\Pi_{\bar \Igot} \colon \Oscr(\Ncal) \to \imath \OI(\Ncal)$ 
given by
\begin{equation}\label{eq:projections}
	\Pi_\Igot(f)           = \frac{1}{2} \left(f + \overline{f\circ \Igot}\, \right),\quad
	\Pi_{\bar \Igot}(f) = \frac{1}{2} \left(f - \overline{f\circ \Igot}\, \right).
\end{equation}
These projections satisfy $\ker \Pi_\Igot = \imath \OI(\Ncal)$ and $\ker \Pi_{\bar \Igot}=\OI(\Ncal)$.

By $\Omega_\Igot(\Ncal)$ we denote the space of  all $\Igot$-invariant holomorphic $1$-forms
on $\Ncal$ with the compact-open topology. Note that $\Omega_\Igot(\Ncal)$ is an $\OI(\Ncal)$-module.
The following lemma summarizes connections between $\Igot$-invariant functions and $1$-forms.

%
%
\begin{lemma}\label{lem:invariant}
Let $(\Ncal,\Igot)$ be a connected open Riemann surface with a fixed-point-free antiholomorphic involution. 
If $u\colon\Ncal\to \R$ is a $\Igot$-invariant harmonic function (i.e., $u\circ \Igot =u$), 
then $\di u\in \Omega_\Igot(\Ncal)$ is a $\Igot$-invariant holomorphic $1$-form:
$\Igot^*(\di u)=\overline {\di u}$. Conversely, assume that $\phi\in \Omega_\Igot(\Ncal)$.
Pick a point $p_0\in \Ncal$ and a number $a \in \R$. Then the following hold: 
\begin{itemize}
\item [\rm (a)]
If the periods of $\Re \phi$ vanish, then the function 
$u(p) = a+ \int_{p_0}^p \Re \phi$ $(p\in\Ncal)$ is harmonic, $\Igot$-invariant, 
and satisfies $2\di u = \phi$. 
\vspace{1mm}
\item[\rm (b)]
If the periods of $\phi$ vanish, then the function  
\[
	f(p)=a - \frac{\imath}{2} \int_{p_0}^{ \Igot(p_0)} \Im \phi  +\int_{p_0}^p \phi,
	\quad   p\in \Ncal,
\]
is holomorphic and $\Igot$-invariant $(f\in \OI(\Ncal))$ and $df=\di f=\phi$.	
\end{itemize}
\end{lemma}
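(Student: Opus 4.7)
The plan is to handle the forward implication by a direct type-decomposition computation, and both converse parts by constructing the primitive, recognizing it as harmonic or holomorphic from a local calculation, and then reducing $\Igot$-invariance to a single scalar identity at the base point. Since $\Igot$ is antiholomorphic, the pullback $\Igot^*$ exchanges $(1,0)$- and $(0,1)$-forms. Applying $\Igot^*$ to $du = \di u + \dibar u$ and using $u\circ \Igot = u$ together with type matching gives $\Igot^*(\di u) = \dibar u$, and since $u$ is real this equals $\overline{\di u}$; holomorphy of $\di u$ then follows from $\dibar\di u = \frac{1}{4}\Delta u \, dz\wedge d\bar z = 0$.

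The central tool in the converse is the following symmetry identity. Given a path $\gamma$ from $p_0$ to $\Igot(p_0)$, the reverse of $\Igot\circ\gamma$ is another path $\tilde\gamma$ from $p_0$ to $\Igot(p_0)$, and for any $\phi \in \Omega_\Igot(\Ncal)$ one has
\[
\int_{\tilde\gamma} \phi \;=\; -\int_\gamma \Igot^*\phi \;=\; -\int_\gamma \bar\phi \;=\; -\overline{\int_\gamma \phi}.
\]
When the periods of $\phi$ (or merely those of $\Re\phi$) vanish, the two integrals coincide, forcing $\int_{p_0}^{\Igot(p_0)}\phi$ to be purely imaginary; in particular $\int_{p_0}^{\Igot(p_0)} \Re\phi = 0$.

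For (a), the function $u$ is well-defined thanks to the period hypothesis, and $du = \Re\phi = \frac{1}{2}(\phi + \bar\phi)$ with $\phi$ holomorphic of type $(1,0)$ gives $\di u = \frac{1}{2}\phi$ and $\dibar\di u = 0$, so $u$ is harmonic and $2\di u = \phi$. For $\Igot$-invariance, the function $w := u\circ \Igot - u$ satisfies $dw = \Igot^*(\Re\phi) - \Re\phi = \Re\bar\phi - \Re\phi = 0$, hence is constant on the connected surface $\Ncal$ and equals $w(p_0) = \int_{p_0}^{\Igot(p_0)} \Re\phi$, which vanishes by the symmetry identity.

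For (b), $f$ is well-defined, $df = \phi$ is a holomorphic $(1,0)$-form (so $f$ is holomorphic with $\di f = \phi$), and $g := \overline{f\circ \Igot}$ satisfies $dg = \overline{\Igot^*\phi} = \overline{\bar\phi} = \phi = df$, so $f - g$ is constant. Writing $T := \int_{p_0}^{\Igot(p_0)}\Im\phi$, the symmetry identity gives $\int_{p_0}^{\Igot(p_0)}\phi = \imath T$, so $f(\Igot(p_0)) = a + \frac{\imath}{2} T$ and $g(p_0) = \overline{f(\Igot(p_0))} = a - \frac{\imath}{2} T = f(p_0)$. The conceptual obstacle here is recognizing that the correct base-point normalization in the definition of $f$ must be $-\frac{\imath}{2} T$; once the symmetry identity for $\int_{p_0}^{\Igot(p_0)}\phi$ is in hand this choice is forced upon us, and everything else is bookkeeping.
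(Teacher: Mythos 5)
Your proof is correct and follows essentially the same approach as the paper: you derive $\Igot^*\di u = \overline{\di u}$ by type decomposition, and for (a) and (b) you establish a symmetry relation for $\int_{p_0}^{\Igot(p_0)}\phi$ coming from the $\Igot$-invariance of $\phi$, then use it to show the primitive (after the forced base-point normalization) is fixed up to conjugation by $\Igot$. The paper writes the symmetry as $\int_\lambda\phi = 2\int_\gamma\Re\phi$ over the loop $\lambda=\gamma\cup\Igot_*\gamma$, whereas you compare $\gamma$ with the reversed push-forward $\tilde\gamma$, but these are the same observation; likewise, proving $\Igot$-invariance by showing $d(u\circ\Igot - u)=0$ and $dg = df$ and evaluating at $p_0$ is a minor reorganization of the paper's direct evaluation.
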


\begin{proof}
Assume that $u$ is a $\Igot$-invariant real harmonic function. Then
\[
	\Igot^* \di u + \Igot^* \dibar u = \Igot^* du = d(u\circ\Igot) = du = \di u + \dibar u =
	\di u +  \overline{\di u}.
\]
Since $\Igot$ is antiholomorphic, the 1-form $\Igot^* \di u$ is of type $(0,1)$ while
$\Igot^* \dibar u$ is of type $(1,0)$. It follows that $\Igot^* \di u = \overline{\di u}$ as claimed. 

Assume that $\phi\in \Omega_\Igot(\Ncal)$.
Fix a point $p\in \Ncal$ and choose a path $\gamma\colon [0,1]\to\Ncal$ connecting $p=\gamma(0)$ 
to $\Igot(p)=\gamma(1)$. The path $\Igot_*\gamma$ then connects $\Igot(p)$ back to $p$,
and $\lambda=\gamma\cup \Igot_*\gamma$  is a loop based at $p$. We have that
\[
	   \int_\lambda \phi = \int_\gamma \phi + \int_{\Igot_* \gamma} \phi
	   = \int_\gamma \phi + \int_\gamma \Igot^*  \phi
	   = \int_\gamma \phi + \int_\gamma \bar  \phi 
	   = 2 \int_\gamma \Re \phi.
\]
If the real periods of $\phi$ vanish then $2\int_\gamma \Re\phi= \int_\lambda \phi=0$ and
hence  $u=\int \Re \phi$ satisfies $u(\Igot(p))= u(p)+\int_\gamma \Re\phi = u(p)$, so 
it is $\Igot$-invariant. The integral $f=\int \phi = u+\imath v$ is well defined and holomorphic
on any simply connected  domain in $\Ncal$, and the Cauchy-Riemann equations yield 
$\phi=df=\di f = 2\di u$. This proves (a).

Assume now that $\phi$ has vanishing complex periods and let $f$ be as in part (b). 
Then, $\Re f=u$ is the $\Igot$-invariant harmonic function in part (a).
Furthermore, the function $g=\overline{f\circ \Igot}$ is holomorphic and 
satisfies $\Re g=\Re f \circ \Igot = \Re f$. A calculation shows that 
$\Im f(p_0)=\Im g(p_0)$;  hence,  $f=g = \overline{f\circ\Igot}$ by the identity principle.
\end{proof}

Let $u$ be a smooth real function on $\Ncal$. Its {\em conjugate differential} is defined by 
\begin{equation}\label{eq:conjugatedif}
	d^c u= \imath(\dibar u - \di u) = 2 \Im (\di u).  
\end{equation}
In any local holomorphic coordinate $z=x+\imath y$ on $\Ncal$ we have $d^c u = -\frac{\di u}{\di y} dx+\frac{\di u}{\di x}dy$. 
Note that   
\[
	2\di u = du + \imath d^c u\quad \text{and}\quad  
	dd^c u= 2\imath\,  \di\dibar u = \Delta u\, \cdotp  dx\wedge dy.  
\]
Hence, $u$ is harmonic if and only if $d^c u$ is a closed $1$-form,  and in this case 
we have $d^c u=dv$ for any local harmonic conjugate $v$ of $u$. 

The {\em flux} of a harmonic function $u$ is the homomorphism 
$\Flux_u \colon H_1(\Ncal;\Z)\to\R$ on the first homology group of $\Ncal$ given by
\begin{equation}
\label{eq:Flux}
	 \Flux_u (\gamma) =  \int_\gamma d^c u = 2\int_\gamma \Im(\di u), \quad [\gamma] \in H_1(\Ncal;\Z).
\end{equation}
A harmonic function $u$ admits a harmonic conjugate on $\Ncal$ if and only if 
$\Flux_u=0$. If $\phi$ is a holomorphic $\Igot$-invariant $1$-form on $\Ncal$ and $\gamma$ is a smooth 
arc or a closed curve, then
\begin{equation}\label{eq:invariance}
	\int_{\Igot_*\gamma} \phi = \int_\gamma \Igot^* \phi = \int_\gamma \bar\phi. 
\end{equation}
Applying this formula to $\phi=\di u$, where $u$ is a $\Igot$-invariant harmonic function on $\Ncal$, we get
for any closed curve $\gamma$ in $\Ncal$ that
\begin{equation}
\label{eq:FluxJ*}
	\Flux_u(\Igot_*\gamma) =  2 \int_{\Igot_*\gamma} \Im(\di u) = 
	- 2\int_{\gamma}  \Im(\di u) =  -\Flux_u(\gamma).
\end{equation}

We shall need the following result of Alarc\'on and L\'opez; the standard version 
without $\Igot$-invariance is due to Gunning and Narasimhan \cite{GunningNarasimhan1967MA}.

\begin{proposition}[Corollary 6.5 in \cite{AlarconLopez2015GT}]
\label{prop:Cor6.5}
Every open Riemann surface $\Ncal$ carrying an anti\-holomorphic involution $\Igot\colon \Ncal\to \Ncal$ 
without fixed points admits a $\Igot$-invariant holomorphic function $f\in \OI(\Ncal)$
without critical points. In particular, the differential $df=\di f$ of such a function is a nowhere vanishing
$\Igot$-invariant holomorphic $1$-form on $\Ncal$.
\end{proposition}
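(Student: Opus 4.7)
The plan is to adapt the classical Gunning--Narasimhan argument to the $\Igot$-invariant setting, producing $f\in\OI(\Ncal)$ with $df$ nowhere vanishing by an inductive Runge-type exhaustion. The ``in particular'' clause then follows at once from the first assertion of Lemma \ref{lem:invariant}.

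First I would fix an $\Igot$-invariant Runge exhaustion. By averaging a smooth strongly subharmonic exhaustion of $\Ncal$ over the involution, one obtains a smooth strongly subharmonic $\Igot$-invariant exhaustion $\rho\colon\Ncal\to\R$; after a small $\Igot$-invariant perturbation to a Morse function (whose critical points may be assumed to come in $\Igot$-invariant pairs), the sublevel sets at regular values form an exhaustion
\[
    K_1\Subset K_2\Subset K_3\Subset\cdots\Subset\Ncal,\qquad \bigcup_{j\in\N}K_j=\Ncal,
\]
by compact $\Igot$-invariant Runge domains with smooth boundary, where the passage from $K_j$ to $K_{j+1}$ is either noncritical or crosses a single $\Igot$-invariant pair $\{p_j,\Igot(p_j)\}$ of Morse critical points of $\rho$.

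Next I would construct inductively $\Igot$-invariant holomorphic functions $f_j$ on open $\Igot$-invariant neighborhoods $U_j$ of $K_j$ with $df_j$ nowhere zero on $K_j$ and $\|f_{j+1}-f_j\|_{0,K_j}<2^{-j}$. The inductive step rests on two $\Igot$-invariant analogues of classical tools on open Riemann surfaces: (i) a Runge approximation theorem for $\OI$, easily deduced from classical Runge approximation on $\Ncal$ by postcomposing approximants with the projection $\Pi_\Igot$ of \eqref{eq:projections}; and (ii) a Weierstrass-type interpolation theorem for $\OI(\Ncal)$, providing $\Igot$-invariant holomorphic functions on $\Ncal$ with prescribed $1$-jets at a finite $\Igot$-invariant set of points, obtained from the classical Weierstrass theorem on $\Ncal$ by the same $\Pi_\Igot$-symmetrization applied to a suitably chosen interpolant. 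Tool (i) is used to extend $f_j$ approximately from $K_j$ to $K_{j+1}$ in the noncritical case, while tool (ii) is then used to kill, by a small $\Igot$-invariant correction, any critical points of the extended function appearing on $K_{j+1}\setminus K_j$, without spoiling nonvanishing of the differential on $K_j$.

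The main obstacle is the critical inductive step, where $K_{j+1}$ is obtained from a neighborhood of $K_j$ by attachment of the $\Igot$-invariant pair of $1$-handles around $\{p_j,\Igot(p_j)\}$. Here I would extend $f_j$ first across one handle as a local holomorphic function chosen with nonvanishing differential, then propagate to the symmetric handle via $\overline{f_j\circ\Igot}$ to preserve $\Igot$-invariance, and finally apply tools (i) and (ii) to globalize the extension to an element of $\OI(U_{j+1})$ and to eliminate the finitely many critical points that appear on $K_{j+1}$. Controlling the approximation error by $2^{-j}$ at each step ensures convergence of $(f_j)$ in the compact-open topology to a function $f\in\OI(\Ncal)$, and the openness of the condition $df\ne 0$ together with the telescoping $2^{-j}$ bound guarantees that $df$ remains nowhere zero on each $K_j$, and hence on all of $\Ncal$.
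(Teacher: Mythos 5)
The paper does not prove this proposition; it imports it verbatim as Corollary~6.5 of \cite{AlarconLopez2015GT}, with the non-invariant version attributed to Gunning and Narasimhan. There is therefore no in-paper proof to compare against, and your argument has to stand on its own. The scaffolding you set up --- an $\Igot$-invariant Morse exhaustion by Runge domains, symmetrization of Runge and Weierstrass tools by the projection $\Pi_\Igot$, crossing $\Igot$-paired handles symmetrically, and the easy ``in particular'' clause --- is all reasonable.

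The sticking point is the step where you ``eliminate the finitely many critical points that appear on $K_{j+1}$'' by an additive $\Igot$-invariant correction $h$. This does not work, and it is the crux of the whole matter. The critical points of $\tilde f$ are the zeros of the holomorphic function $\di\tilde f/\theta$ on a neighborhood of $K_{j+1}$; adding $h$ replaces this by $\di\tilde f/\theta+\di h/\theta$. If $h$ is small on $K_{j+1}$ then Rouch\'e's theorem forces the zero count to persist: critical points merely move, they do not disappear. If instead you allow $h$ to be large on $K_{j+1}\setminus K_j$ while small on $K_j$, then prescribing the $1$-jet of $h$ at the known critical points (your tool (ii)) gives no control whatsoever over where $\di\tilde f/\theta+\di h/\theta$ vanishes elsewhere on $K_{j+1}$; new critical points can and generally will appear. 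Although you frame this as ``adapting the classical Gunning--Narasimhan argument,'' this is not actually their argument; they specifically avoid this trap by working at the $1$-form level and using \emph{multiplicative} twists $\omega\mapsto e^g\omega$ of a fixed nowhere-vanishing holomorphic $1$-form $\omega$. These manifestly never introduce zeros, and the entire difficulty is relocated to a period-killing problem --- arranging that $e^g\omega$ remain exact after extension to the larger domain --- which is soluble by Behnke--Stein/Runge arguments along the exhaustion. In the $\Igot$-invariant setting there is the further nontrivial point of producing a nowhere-vanishing $\Igot$-invariant $\omega_0$ to begin with (applying $\Pi_\Igot$ to a nonvanishing $1$-form can create zeros, so naive symmetrization does not suffice); your outline does not address this either. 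In short, the additive ``kill the critical points'' mechanism is the wrong one, and I do not see how to repair it without reorganizing the proof around the multiplicative scheme, which is what \cite{AlarconLopez2015GT} actually does.
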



\section{Homology basis and period map}
\label{sec:homology}

Given a non-orientable open surface  $\UnN$ with finite topology, we shall give an explicit 
geometric description of its two-sheeted orientable covering $(N,\Igot)$,
and we will find a homology basis $\Bcal$ for $H_1(\UnN;\Z)$ whose support $|\Bcal|$ satisfies 
$\Igot(|\Bcal|)=|\Bcal|$ and  such that $N\setminus |\Bcal|$ 
has no relatively compact connected components in $N$. 
For background on topology of surfaces, see e.g.\ \cite[Chapter 9]{Hirsch1994-book}.

Every  closed  non-orientable surface $\UnM$ is the connected sum $\UnM= \P^2\ssharp {\cdots}\ssharp \P^2$ of $g\ge 1$ 
copies of the real projective plane $\P^2$;  the number $g$ is the genus of $\UnM$.
(This is the maximal number of pairwise disjoint closed curves in $\UnM$ which 
reverse the orientation.) Furthermore, $\K=\P^2 \ssharp \P^2$ is the Klein bottle, and for any compact non-orientable surface 
$\UnM$ we have that $\UnM\ssharp \K=\UnM\ssharp \T$ where $\T$ is the torus.  
This gives the following dichotomy according to whether the genus $g$ is even or odd:
\begin{enumerate}[(I)]
\item $g=1+2k \ge 1$ is odd. In this case, $\UnM =\P^2\ssharp \overbrace{\t \ssharp \cdots \ssharp \t}^{k}$
and $k=0$ corresponds to the projective plane $\P^2$. 
\item $g=2+2k\ge 2$ is even. In this case,
$\UnM= \K \ssharp \overbrace{\t \ssharp \cdots \ssharp \t }^{k}$ and
$k=0$ corresponds to the Klein bottle $\K=\P^2\ssharp \P^2$.
\end{enumerate}

Let $\iota \colon M \to \UnM$ be the $2$-sheeted covering of $\UnM$ by a compact orientable surface $M$
and let $\Igot\colon M\to M$ be the deck transformation of $\iota$. 
Then $M$ is an orientable surface of genus $g-1$; the sphere $\s^2$ if $g=1$ and a 
connected sum of $g-1$ copies of the torus $\T$ if $g>1$.
We construct an explicit geometric model for $(M,\Igot)$ in $\r^3$.
Recall that $\s^2$ is the unit sphere in $\R^3$ centered at the origin. Let $\tau \colon \R^3\to\R^3$
denote the involution $\tau(x)=-x$.

%
%
{\bf Case (I):} $\UnM=\P^2\ssharp \overbrace{\t \ssharp \cdots \ssharp \t}^{k}$.
We take $M$  to be an embedded surface %
\[
	\big(\t_1^- \ssharp \cdots\ssharp \t_k^-\big)\ssharp \s^2\ssharp 
	\big(\t_1^+ \ssharp \cdots\ssharp \t_k^+\big)
\]	
of genus $g-1=2k$ in $\r^3$ which is invariant by the symmetry with respect to the origin  (i.e., $\tau(M)=M$), 
where $\t_j^-$, $\t_j^+$ are embedded tori in $\r^3$ with $ \tau(\t_j^-)=\t_j^+$ for all $j$.
(See Figure \ref{fig:I-basis1}.)
\begin{figure}[ht]
    \begin{center}
    \scalebox{0.22}{\includegraphics{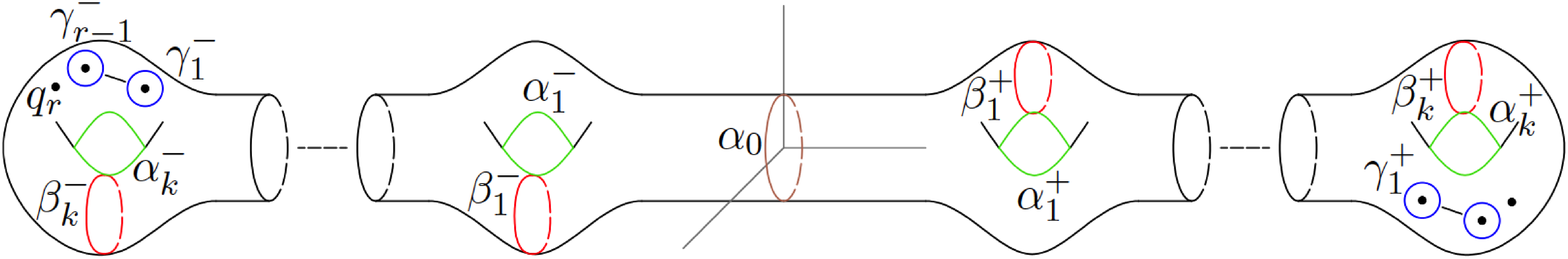}}
        \end{center}
\caption{$\Igot$-basis in case {\rm (I)}.}\label{fig:I-basis1}
\end{figure}
We also take $\Igot = \tau|_M\colon M\to M$.
If $k=0$, the model is the round sphere $\S^2$ with the orientation reversing antipodal map $\Igot$. 
We can split $M=M^-\cup C \cup M^+$, where $C\subset \s^2$ is a closed $\Igot$-invariant cylinder 
and $M^\pm$ are the closures of the two components of $M\setminus C$, 
both homeomorphic to the connected sum of $k$ tori minus an open disk. 
Obviously, $\Igot(M^-)=M^+$ and $M^-\cap M^+=\emptyset$.

%
%
{\bf Case (II):}  $\UnM= \K \ssharp \overbrace{\t \ssharp \cdots \ssharp \t  }^{k}$ where $\K$ is the
Klein bottle. The corresponding model for $(M,\Igot)$ is the following. 
Let $\T_0\subset \r^3$ be the standard torus of revolution centered at the origin and invariant under 
the antipodal map $\tau$.  Let $M$ be an embedded  $\tau$-invariant surface 
\[
	M= \big(\t_1^- \ssharp \cdots\ssharp \t_k^-\big)\ssharp \t_0\ssharp \big(\t_1^+ \ssharp \cdots\ssharp \t_k^+\big)
	\subset \R^3,
\]
where the tori $\T^\pm_j$ are as above, and set $\Igot= \tau|_{M}$. 
(See Figure \ref{fig:I-basis2}.)
\begin{figure}[ht]
    \begin{center}
    \scalebox{0.22}{\includegraphics{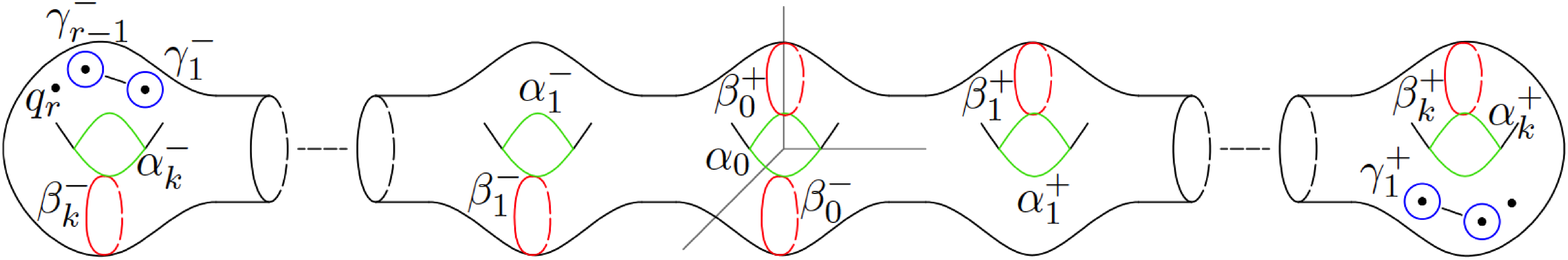}}
        \end{center}
\caption{$\Igot$-basis in case {\rm (II)}.}\label{fig:I-basis2}
\end{figure}
If $k=0$, the model is the torus $\T_0$ with the involution $\Igot=\tau|_{\T_0}$.
Write $M=M^-\cup K \cup M^+$, where $K\subset \t_0\subset \r^3$ is a $\Igot$-invariant torus minus two 
disjoint open disks, and $M^\pm$ are the closures of the two components of $M\setminus K$, 
both homeomorphic to the connected sum of $k$ tori minus an open disk. 
Obviously, $\Igot(M^-)=M^+$ and $M^-\cap M^+=\emptyset$.

\vspace{0.2cm}

If we remove $r\ge 1$ distinct points $p_1,\ldots,p_r$ from $\UnM$ and set 
\[
	\UnN=\UnM\setminus \{p_1,\ldots,p_r\},
\] 
then the geometric description of the oriented double covering $\iota \colon N\to \UnN$ is analogous to the previous case. 
Let $\{q_j,\Igot(q_j)\}=\iota^{-1}(p_j)$ $(j=1,\ldots,r)$. Set 
\[
	N=M \setminus\{q_1,\ldots,q_r,\Igot(q_1),\ldots,\Igot(q_r)\}
\]
and restrict the involution $\Igot$ to $N$.  Without loss of generality we can suppose that $q_1,\ldots,q_r\in M^-$ 
and $\Igot(q_1),\ldots,\Igot(q_r)\in M^+$. Set 
\begin{eqnarray*}
	N^- & = & N\cap M^- =
	M^-\setminus\{q_1,\ldots,q_r \},
\\  
	N^+ & = & N\cap M^+ =
	M^-\setminus\{\Igot(q_1),\ldots,\Igot(q_r)\}.
\end{eqnarray*}
A similar description holds if we remove from $\UnM$ pairwise disjoint disks (instead of points)
to get a compact non-orientable surface with smooth boundary.

\vspace{0.2cm}

{\bf Description of the homology basis.}
Consider for each torus $\t_j^-$ in either of the above descriptions its natural homology basis 
$\{\alpha_j^-,\beta_j^-\}$ of $H_1(\t_j^-;\z)$ (two closed embedded curves meeting at one point).   
Then, $\{\alpha_j^+,\beta_j^+\}=\{\Igot(\alpha_j^-),\Igot(\beta_j^-)\}$ is a basis of $H_1(\t_j^+;\z)$
for every $j=1,\ldots,k$.  Without loss of generality we can suppose that the following conditions hold:
\[
	\bigcup_{j=1}^k (\alpha_j^-\cup \beta_j^-)\subset N^-, \ \  \bigcup_{j=1}^k (\alpha_j^+\cup \beta_j^+)\subset N^+,
	\ \  (\alpha_j^-\cup \beta_j^-)\cap (\alpha_i^-\cup \beta_i^-)=\emptyset \ \ \text{for}\ i\neq j.
\]
Choose pairwise disjoint small circles $\gamma_j^-$ around the point $q_j$ $(j=1,\ldots,r-1)$  
lying in $N^-\setminus \cup_{j=1}^k (\alpha_j^-\cup \beta_j^-)$ and let $\gamma_j^+=\Igot(\gamma^-_j)\subset N^+$.   

\vspace{1mm}

{\bf Case (I)}: Let $\alpha_0$ denote the circle generating the homology of the cylinder $C$ 
such that $\Igot(\alpha_0)=\alpha_0$,  and notice that homologically $(\Igot|_{C})_*(\alpha_0)=\alpha_0$ 
in $H_1(C;\z)$ as well. (See Figure \ref{fig:I-basis1}.) The desired homology basis of $H_1(N;\Z)$ is
\begin{equation}\label{eq:basis1}
	\mathcal B=\{\alpha_0\} \cup \bigcup_{j=1}^k\{\alpha_j^\pm,\beta_j^\pm \} \cup 
	\bigcup_{j=1}^{r-1} \{\gamma_j^\pm\}.
\end{equation}

\vspace{1mm}

{\bf Case (II)}: Let $\{\alpha_0,\beta_0\}$ be embedded curves in  $K\subset \T_0$ generating $H_1(\t_0;\z)$ 
and satisfying 
\[
	\Igot(\alpha_0)=\alpha_0, \quad \Igot(\beta_0)\cap \beta_0 =\emptyset,
\]
and homologically in $H_1(\t_0;\z)$: 
\[
	(\Igot|_{\t_0})_*(\alpha_0)= \alpha_0, \quad ( \Igot|_{\t_0})_*(\beta_0)=-\beta_0.
\]
(See Figure \ref{fig:I-basis2}.)  For consistency of notation we write $\beta^+_0=\beta_0$ and $\beta^-_0=\Igot(\beta_0)$.
The desired homology basis of $H_1(N;\z)$ is now
\begin{equation}\label{eq:basis2}
	\mathcal B= \bigl\{\alpha_0, \beta^+_0,  \beta^-_0 \bigr\}  
	\cup  \bigcup_{j=1}^k\{\alpha_j^\pm,\beta_j^\pm \} 
	\cup  \bigcup_{j=1}^{r-1} \{\gamma_j^\pm\}.
\end{equation}
Note  that the curve $\beta^-_0$ is homologically independent from the other curves in $\Bcal$ due to the presence of 
the holes $\{q_r,\Igot(q_r)\}$, but small curves around these two holes  are homologically dependent on the curves in $\Bcal$.

A basis $\Bcal$ as in \eqref{eq:basis1} and \eqref{eq:basis2} will be called an {\em $\Igot$-basis of $H_1(N;\z)$}.

\begin{remark} \label{rem:basis}
If $S$ is an $\Igot$-invariant compact subset of $N$ which is a strong deformation retract of $N$, 
we may find an $\Igot$-basis $\Bcal$ of $H_1(N;\z)$  with support $|\Bcal|\subset S$. Obviously, $\Bcal$ is a basis  of 
$H_1(S;\z)$.  We will say that $\Bcal$ is an $\Igot$-basis of $S$.
\end{remark}

\vspace{1mm}

%
%

{\bf A period map.} 
Assume now that the surface $\UnN$ carries a conformal structure, 
so we denote it by $\UnNcal$. Its $2$-sheeted orientable covering, $\Ncal$, is then a Riemann surface
and the deck map $\Igot\colon \Ncal \to\UnNcal$  is an antiholomorphic involution. 
Since $\Ncal$ has finite topological type, a theorem of Stout \cite[Theorem 8.1]{Stout1965} shows that it 
is obtained by removing finitely many pairwise disjoint points and closed disks from a compact Riemann surface.  
However, from the topological point of view we need not distinguish between removing disks and points,
and the condition that the homology basis $\Bcal$ \eqref{eq:basis1}, \eqref{eq:basis2} 
for $H_1(\Ncal;\Z)$ be Runge in $\Ncal$ is independent of the choice of the conformal structure.

Write $\Bcal=\Bcal^+\cup \Bcal^-$, where
$\Bcal^+=\{\delta_1,\ldots,\delta_l\}$ and $\Bcal^-=\{\Igot(\delta_2),\ldots,\Igot(\delta_l)\}$
denote the following collections of curves (the curves $\beta^\pm_0$ appear only in Case (II)):
\begin{eqnarray}
\label{eq:B0}
	\Bcal^+ &= &  \{\delta_1=\alpha_0\} \cup 
	\bigl\{\alpha^+_1,\ldots,\alpha^+_m;\,\beta^+_0, \beta^+_1,\ldots,\beta^+_m;\ 
	\gamma^+_1,\ldots,\gamma^+_{r-1}\bigr\}, 
	\\
	\label{eq:B-}
	\Bcal^- &=& \bigl\{\alpha^-_1,\ldots,\alpha^-_m;\  \beta^-_0,\beta^-_1,\ldots,\beta^-_m;\ 
	\gamma^-_1,\ldots,\gamma^-_{r-1}\bigr\}.
\end{eqnarray}
We have set $\delta_1=\alpha_0$; the labeling of the remaining curves is unimportant. 
Note that $\Bcal^+\cap\Bcal^-=\emptyset$.
Fix an $\Igot$-invariant holomorphic $1$-form $\theta$ without zeros on $\Ncal$  
(cf.\ Proposition \ref{prop:Cor6.5}).  With $\Bcal^+=\{\delta_1,\ldots,\delta_l\}$ as above, let 
\[
	\Pcal=(\Pcal_1,\ldots,\Pcal_l)\colon\Oscr(\Ncal)\to\C^l
\] 
denote the period map whose $j$-th component is given by
\begin{equation}\label{eq:PB_0}
	\Pcal_j(f)= \int_{\delta_j} f\theta, \qquad f\in\Oscr(\Ncal),\ \ j=1,\ldots,l.
\end{equation}
Similarly, we define $\Pcal(\phi)=\bigl(\int_{\delta_j}\phi\bigr)_{j=1,\ldots,l}$ for a holomorphic $1$-form $\phi$.
Note that $\Pcal$ is a {\em partial period map} which does not control all periods of 
holomorphic $1$-forms. However, on $\Igot$-invariant $1$-forms the symmetry conditions \eqref{eq:invariance}
imply that $\Pcal$ completely determines the periods as shown by the following lemma.

\begin{lemma}\label{lem:partial-period}
Let $\phi$ be a $\Igot$-invariant holomorphic $1$-form on $\Ncal$. Then:
\begin{itemize} 
\item[\rm (a)] $\phi$ is exact if  and only if $\Pcal(\phi)=0$. 
\vspace{1mm}
\item[\rm (b)] The real part $\Re\phi$  is exact if and only if $\Re \Pcal(\phi)=0$.
\end{itemize}
\end{lemma}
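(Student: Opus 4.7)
The plan is to reduce both statements to the standard de Rham criterion for a closed $1$-form to be exact and then exploit the fact that the $\Igot$-invariance of $\phi$, via \eqref{eq:invariance}, links periods over $\Bcal^-$ to those over $\Bcal^+$. Since $\phi$ is holomorphic it is automatically closed; hence $\phi$ is exact on $\Ncal$ iff $\int_\gamma \phi = 0$ for every $\gamma$ in a basis of $H_1(\Ncal;\Z)$, and $\Re\phi$ is exact iff $\Re\int_\gamma \phi = 0$ for every such $\gamma$. Since the partial period map $\Pcal$ only integrates over the ``plus'' half of the basis $\Bcal = \Bcal^+ \cup \Bcal^-$, the task reduces to recovering the periods over $\Bcal^-$ from those over $\Bcal^+$.

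The decisive observation, immediate from the construction \eqref{eq:B0}--\eqref{eq:B-}, is that each curve $\delta \in \Bcal^-$ is of the form $\delta = \Igot(\delta')$ for a uniquely determined partner $\delta' \in \Bcal^+$ (the matched pairs being $\alpha_j^\pm$, $\beta_j^\pm$, $\gamma_j^\pm$, together with $\beta_0^\pm$ in case (II)), while the single unpaired generator $\alpha_0 \in \Bcal^+$ is itself $\Igot$-invariant and has no partner in $\Bcal^-$. Applying \eqref{eq:invariance} to the $\Igot$-invariant form $\phi$ then yields, for each matched pair,
\[
    \int_\delta \phi \;=\; \int_{\Igot_*\delta'} \phi \;=\; \int_{\delta'} \Igot^* \phi \;=\; \overline{\int_{\delta'} \phi},
\]
so the periods over $\Bcal^-$ are complex conjugates of the corresponding periods over $\Bcal^+$.

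Both parts now follow at once. For (a), the condition $\Pcal(\phi)=0$ makes every period over $\Bcal^+$ vanish, and the identity above then forces all remaining periods over $\Bcal^-$ to vanish as well, so every period of $\phi$ over the basis $\Bcal$ is zero and $\phi$ is exact; the converse is trivial. For (b), taking real parts annihilates the conjugation, giving $\Re\int_\delta \phi = \Re \int_{\delta'} \phi$ for each matched pair; hence $\Re\Pcal(\phi) = 0$ is equivalent to the vanishing of the real parts of all periods of $\phi$ over $\Bcal$, which by de Rham is exactly exactness of $\Re\phi$. The main step requiring attention is the bookkeeping that establishes the matching $\delta \leftrightarrow \delta'$ against the two topological cases (I) and (II) of Section \ref{sec:homology}, but no analytic difficulty arises beyond \eqref{eq:invariance}.
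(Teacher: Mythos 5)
Your proof is correct and follows essentially the same route as the paper: both rest on the identity $\int_{\Igot_*\delta}\phi = \int_\delta \overline\phi$ from \eqref{eq:invariance}, which shows that the periods over $\Bcal^-$ are the complex conjugates of those over $\Bcal^+$, so $\Pcal$ (resp.\ $\Re\Pcal$) already controls all periods (resp.\ all real periods) of $\phi$. Your explicit bookkeeping of the matching $\delta\leftrightarrow\delta'$ and the special role of the unpaired $\Igot$-invariant curve $\alpha_0$ is a slightly more detailed account of what the paper leaves implicit, but it is the same argument.
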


\begin{proof}
By \eqref{eq:invariance} we have that
\[
	\int_{\Igot_*\delta_j}\phi =   \int_{\delta_j}\overline \phi, \quad\  j=1,\ldots,l.
\]
Therefore, $\Pcal(\phi)=0$ implies that $\phi$ has vanishing periods over all curves in the homology basis 
$\Bcal=\Bcal^+ \cup\Bcal^-$ given by \eqref{eq:basis1} or \eqref{eq:basis2}, and hence is exact.  
The converse is obvious.
Likewise, $\Re\Pcal(\phi)=0$ implies that $\Re\phi$ has vanishing periods over all curves in $\Bcal$
and hence  is exact.  However, the imaginary periods $\Im\Pcal(\phi)$ (the flux of $\phi$)
can be arbitrary subject to the conditions \eqref{eq:FluxJ*}: 
$\int_{\Igot_*\delta_j}\Im \phi =  - \int_{\delta_j}\Im \phi$ for $j=1,\ldots,l$.
In particular, we have $\int_{\delta_1}\Im \phi=0$ since $\Igot_*\delta_1=\delta_1$.
\end{proof}


\section{Conformal minimal immersions of non-orientable surfaces}
\label{sec:CMI}
We recall the notion of {\em Weierstrass representation} of a conformal minimal immersion.
For the orientable case we refer to Osserman \cite{Osserman-book};
the non-orientable case has been treated, for instance, in \cite{Meeks1981DMJ,AlarconLopez2015GT}. 

Let $n\ge 3$ be an integer and $\Ncal$ be an open Riemann surface or a compact bordered 
Riemann surface. An immersion 
$X\colon \Ncal\to\R^n$ is said to be {\em conformal} if it preserves angles. In a local holomorphic coordinate
$z=x+\imath y$ on $\Ncal$ this is equivalent to asking that the partial derivatives $X_x=\di X/\di x$ and 
$X_y=\di X/\di y$ of $X$ are orthogonal to each other and have the same length:
\begin{equation}\label{eq:Xconf}
	X_x\cdotp X_y = 0,\quad |X_x|=|X_y|>0.
\end{equation}
(We always use the standard Euclidean metric on $\R^n$.) Let us write 
\begin{equation}\label{eq:diX}
	\di X=\Phi=(\phi_1,\ldots,\phi_n)\quad \text{with $\phi_j=\di X_j$ for $j=1,\ldots,n$.} 
\end{equation}
From $2\di X = (X_x - \imath X_y)dz$ we obtain
\[
	(\phi_1)^2 + (\phi_2)^2+\cdots + (\phi_n)^2 = \frac{1}{4} 
	\left( |X_x|^2-|X_y|^2 - 2\imath  X_x\cdotp X_y \right) (dz)^2.
\]
A comparison with \eqref{eq:Xconf} shows that $X$ is conformal if and only if
\begin{equation}\label{eq:null}
	(\phi_1)^2 + (\phi_2)^2+\cdots + (\phi_n)^2=0.
\end{equation}

An immersion $X\colon \Ncal\to\R^n$ is {\em minimal} if its mean curvature vector vanishes identically. 
By Meusnier, this holds if and only if the image of $X$ is locally area minimizing.
It is classical (see e.g.\ \cite[Lemma 4.2, p.\ 29]{Osserman-book}) that a smooth conformal immersion 
$X\colon \Ncal\to\R^n$ is minimal if and only if it is harmonic. Furthermore, from
\[
	\triangle X = 4\frac{\di^2}{\di z \di\bar z} X = 4 \frac{\di}{\di\bar z} \left(\di X/\di z\right)
\]
we see that a smooth conformal immersion $X$ is minimal if and only if the $(1,0)$-differential
$\di X=\Phi$ \eqref{eq:diX} is a {\em holomorphic} $1$-form satisfying the nullity condition \eqref{eq:null}.
Note that $dX=2\Re(\di X)=2\Re \Phi$ and hence the real periods of $\Phi$ vanish:
\begin{equation}\label{eq:vanishing-period}
	\Re \int_\gamma \Phi =0\quad \text{for every closed path $\gamma$ in $\Ncal$}.
\end{equation}
The periods of  the imaginary part $2\Im \Phi=d^c X$ (the {\em conjugate differential} of $X$,
see \eqref{eq:conjugatedif}) define the {\em flux homomorphism}  (cf.\ \eqref{eq:Flux} for the scalar case): 
\[
	\Flux_X \colon H_1(\Ncal;\Z)\to \R^n,\quad \  \Flux_X(\gamma) = 2 \int_\gamma \Im \Phi.
\]
If $\Flux_X=0$ then the map $Z=X+\imath Y \colon \Ncal\to \C^n$, defined by
$Z(p) = \int^p \Phi$ $(p\in\Ncal)$, is a {\em holomorphic null immersion} with  
$ 
	\Phi= dZ = \di Z=2\, \di X = 2\imath\, \di Y.
$
In this case, $X$ is said to be {\em flux-vanishing} and the associated family of conformal minimal immersions 
$X^t:=\Re(e^{\imath t}Z)\colon\Ncal\to\r^n$, $t\in[0,2\pi)$, is well defined.

Denote by $\pi\colon\c^n\setminus\{0\} \to\cp^{n-1}$ the canonical projection onto the projective space,
so $\pi(z_1,\ldots,z_n)=[z_1:\cdots:z_n]$ are homogeneous coordinates on $\CP^{n-1}$.
Since the $1$-form $\di X=\Phi=(\phi_1,\ldots,\phi_n)$ \eqref{eq:diX} does not vanish anywhere on $\Ncal$ 
(see the last equation in \eqref{eq:Xconf}), it determines a holomorphic map 
\begin{equation}\label{eq:Gauss-map}
	\Ncal\lra\cp^{n-1}, \quad p\longmapsto [\phi_1(p):\phi_2(p):\cdots:\phi_n(p)]
\end{equation}
of Kodaira type, called the {\em generalized Gauss map} of $X$. By \eqref{eq:null}, this map assumes 
values in the projection of $\Agot_*=\Agot\setminus\{0\}$ in $\cp^{n-1}$ (a smooth quadratic complex hypersurface
in $\cp^{n-1}$), where $\Agot$ is the {\em null quadric}
\begin{equation} \label{eq:null-quadric}
	\Agot= \{z=(z_1,\ldots,z_n)\in \C^n: z_1^2+z_2^2+\cdots + z_n^2 =0\}.
\end{equation} 

Conversely, given a nowhere vanishing holomorphic $1$-form $\Phi=(\phi_1,\ldots,\phi_n)$
on $\Ncal$ satisfying the nullity condition \eqref{eq:null} and with vanishing real periods
\eqref{eq:vanishing-period}, the map $X\colon \Ncal\to\R^n$ defined by
\begin{equation}
\label{eq:X}
	X(p)= 2 \int^p \Re(\Phi)\in \R^n,\quad p\in\Ncal
\end{equation}
is a conformal minimal immersion with $\di X=\Phi$. 

Let $\theta$ be a nowhere vanishing holomorphic $1$-form on $\Ncal$ (see \cite{GunningNarasimhan1967MA}).
Clearly, a holomorphic 1-form $\Phi=(\phi_1,\ldots,\phi_n)$ satisfies the 
nullity condition \eqref{eq:null} if and only if the map $f=\Phi/\theta\colon \Ncal\to \C^n$ 
assumes values in the null quadric $\Agot$ \eqref{eq:null-quadric}.
This reduces the construction of conformal minimal immersions $\Ncal\to \R^n$ to the 
construction of holomorphic maps $f\colon \Ncal\to \Agot_*$ such that
the holomorphic $1$-form $f\theta$ has vanishing real periods over all closed curves in $\Ncal$.

%
%
If $n=3$ and $\phi_1\not\equiv\imath\phi_2$, the triple $\Phi=(\phi_1,\phi_2,\phi_3)$ can be written in the form
\begin{equation}\label{eq:gphi3}
     \Phi=\left( \frac12\Big( \frac1{g}-g\Big) \,,\, \frac{\imath}2\Big( \frac1{g}+g\Big) \,,\, 1\right)\phi_3, \quad     
     g=\frac{\phi_3}{\phi_1-\imath\phi_2}.
\end{equation}
Here, $g$ is a meromorphic function on $\Ncal$ whose zeros and poles coincide with the zeros of $\phi_3$, 
with the same order. Moreover, $g$ agrees with the stereographic projection of the Gauss map $\Ncal\to\s^2$ of $X$ 
(see \cite{Osserman-book}); this is why $g$ is called the  {\em complex Gauss map} of $X$. The pair $(g,\phi_3)$ 
is known as the {\em Weierstrass data} of $X$.
Writing $\phi_3=f_3\theta$ where $\theta$ is as above, we get the following formula, analogous to \eqref{eq:gphi3}, 
for the holomorphic map $f=\Phi/\theta\colon \Ncal\to \Agot_*\subset\c^3$:
\begin{equation}\label{eq:Weierstrass}
     f=\left( \frac12\Big( \frac1{g}-g\Big) \,,\, \frac{\imath}2\Big( \frac1{g}+g\Big) \,,\, 1\right)f_3.
\end{equation}
This classical {\em Weierstrass representation formula}
has been the main tool in the construction of minimal surfaces in $\r^3$ over the past half-century.
It has allowed to give explicit examples by pointing out Riemann surfaces, pairs $(g,f_3)$, and 
nowhere vanishing holomorphic $1$-forms $\theta$ such that the resulting vectorial $1$-form $f\theta$, 
where $f$ is given by \eqref{eq:Weierstrass}, 
has vanishing real periods  (see e.g.\ \cite{Osserman-book,ChenGackstatter1982MA,Costa1984BSBM,HoffmanMeeks1990AM} 
for examples of complete conformal minimal surfaces of finite total curvature). 
This representation formula has also been fundamental in the global theory of minimal surfaces and was
exploited in the proof of general existence results of complete minimal surfaces in $\r^3$. L\'opez and Ros observed in 
\cite{LopezRos1991JDG} that if $(g,f_3\theta)$ 
are the Weierstrass data of a conformal minimal immersion on a  Riemann surface 
$\Ncal$ and $h\colon\Ncal\to\c\setminus\{0\}$ is a holomorphic function, then the pair $(hg,f_3\theta)$ also furnishes a 
conformal minimal immersion provided that the period vanishing condition is satisfied.  
Combining this simple transformation with Runge's approximation theorem \cite{Runge1885AM} has been the key 
in the construction of complete bounded minimal surfaces (see \cite{JorgeXavier1980AM,Nadirashvili1996IM} and the discussion of the 
Calabi-Yau problem in Sections \ref{sec:intro-Jordan} and \ref{sec:Jordan}), and of properly immersed minimal surfaces in $\r^3$ with 
hyperbolic conformal type (see \cite{Morales2003GAFA,AlarconLopez2012JDG} and the discussion of the Sullivan conjecture and the 
Schoen-Yau problem in Sections \ref{sec:Icomplete} and \ref{sec:Iproper}). 
An analogue of the formula \eqref{eq:Weierstrass} is not available in higher dimensions, 
and hence constructing holomorphic maps $\Ncal\to\Agot_*\subset\c^n$ for $n>3$ requires a different approach
presented in this paper.

To treat the problem on a non-orientable surface $\UnNcal$, let $\iota\colon \Ncal\to \UnNcal$ 
be a $2$-sheeted oriented covering by a Riemann surface $\Ncal$ with a fixed-point-free antiholomorphic 
involution $\Igot\colon \Ncal\to\Ncal$ (see Section \ref{sec:conformal}).
For every conformal minimal immersion $\UnX \colon \UnNcal \to \R^n$,
the composition $X=\UnX\circ \iota \colon \Ncal \to\R^n$ is a $\Igot$-invariant conformal minimal
immersion; conversely, a $\Igot$-invariant conformal minimal immersion $X\colon \Ncal\to\R^n$
passes down to a conformal minimal immersion  $\UnX=(\UnX_1,\ldots,\UnX_n) \colon \UnNcal \to \R^n$.

By Lemma \ref{lem:invariant}, every $\Igot$-invariant conformal minimal immersion
$X\colon\Ncal \to\R^n$ is of the form \eqref{eq:X}
where $\Phi=(\phi_1,\ldots,\phi_n)$ is a holomorphic $1$-form on $\Ncal$ that is $\Igot$-invariant
($\Igot^* \Phi=\overline \Phi$),  it satisfies the nullity condition \eqref{eq:null}, and 
has vanishing real periods \eqref{eq:vanishing-period}. Furthermore, any such $\Phi$ is of the form 
$\Phi=f\theta$ where $\theta$ is a nowhere vanishing $\Igot$-invariant 1-form furnished by Proposition \ref{prop:Cor6.5} 
and $f=(f_1,\ldots,f_n)\colon \Ncal\to \Agot_*$ is an $\Igot$-invariant holomorphic map 
(satisfying $f\circ \Igot =\bar f$) such that $\int_\gamma \Re (f\theta)=0$ for every closed loop 
$\gamma$ in $\Ncal$.

If $n=3$ then $\phi_1\not\equiv\imath\phi_2$ holds (otherwise $\Phi\equiv 0$ by the $\Igot$-invariancy of $\Phi$), 
and the meromorphic Gauss map $g$ given by \eqref{eq:gphi3} satisfies the symmetry condition
\begin{equation}\label{eq:gIg}
      g\circ\Igot=-\frac1{\bar g}.
\end{equation}
Similarly to the orientable case, any pair $(g,f_3)$ where $g$ is a meromorphic function on $\Ncal$ satisfying \eqref{eq:gIg} and 
$f_3\colon \Ncal\to\c$ is an $\Igot$-invariant holomorphic function such that the zeros of $f_3$ coincide with the zeros and poles 
of $g$, with the same order, furnishes an $\Igot$-invariant holomorphic function $f\colon \Ncal\to \Agot_*\subset\c^3$ by the 
Weierstrass formula \eqref{eq:Weierstrass}. Most examples of non-orientable minimal surfaces in $\r^3$ given in the last five 
decades rely on these formulas; see for instance \cite{Meeks1981DMJ,Kusner1987BAMS,Lopez1993} for some explicit 
complete examples of finite total curvature, and 
\cite{Lopez1988PAMS,LopezMartinMorales2006TAMS,FerrerMartinMeeks2012AM,AlarconLopez2015GT} for constructions 
exploiting the L\'opez-Ros transformation and Runge's theorem. 

Constructing $\Igot$-invariant holomorphic functions $\Ncal\to\Agot_*\subset\c^n$ for $n>3$ is a much more involved task.

Let us illustrate these notions on the following  well known example due to Meeks \cite[Theorem 2]{Meeks1981DMJ}.
See also Example \ref{ex:Mobius} in Section \ref{sec:Iproper} which gives a properly embedded minimal M\"obius strip in $\R^4$.

%
%
%
%
\begin{example}[Meeks's minimal M\"obius strip in $\R^3$] \label{ex:MeeksMobius}  
Let $\Ncal =\C_*:=\C\setminus\{0\}$, and let $\Igot\colon  \c_* \to\c_*$ be the fixed-point-free 
antiholomorphic involution given by 
\[
    	\Igot(\zeta)=-\frac1{\bar\zeta},\quad\  \zeta\in\c_*.
\]
Let $\theta$ be the $\Igot$-invariant nowhere vanishing holomorphic $1$-form on $\C_*$ given by
\[
   	\theta=\imath \, \frac{d\zeta}{\zeta}.
\] 
Consider also the functions
\[
	g(\zeta)=\zeta^2\,\dfrac{\zeta+1}{\zeta-1}, 
	\quad f_3(\zeta)= 2\, \dfrac{\zeta^2-1}{\zeta},\quad \zeta\in\C_*.
\]
Note that $f_3$ is holomorphic and $\Igot$-invariant, whereas $g$ is meromorphic and satisfies the symmetry condition \eqref{eq:gIg}. 
Since the zeros of $f_3$ coincide with the zeros and poles of $g$ with the same order, the pair $(g,f_3)$ determines an 
$\Igot$-invariant holomorphic map $f\colon\c_*\to\Agot_*\subset\c^3$ by the Weierstrass formula \eqref{eq:Weierstrass}. 
Moreover, it is easily seen that the holomorphic $1$-form $\Phi=f\theta$ is exact on $\c_*$ and hence determines 
a flux-vanishing $\Igot$-invariant conformal minimal immersion $X\colon\c_*\to\r^3$ by the expression \eqref{eq:X}. 
Therefore, $X$ induces a conformal minimal immersion $\UnX \colon \C_*/\Igot \to\R^3$. 
This is Meeks's complete immersed minimal M\"obius strip in $\R^3$ with finite total curvature $-6\pi$. 
See Figure \ref{fig:strip3}.
\begin{figure}[ht]
    \begin{center}
    \scalebox{0.4}{\includegraphics{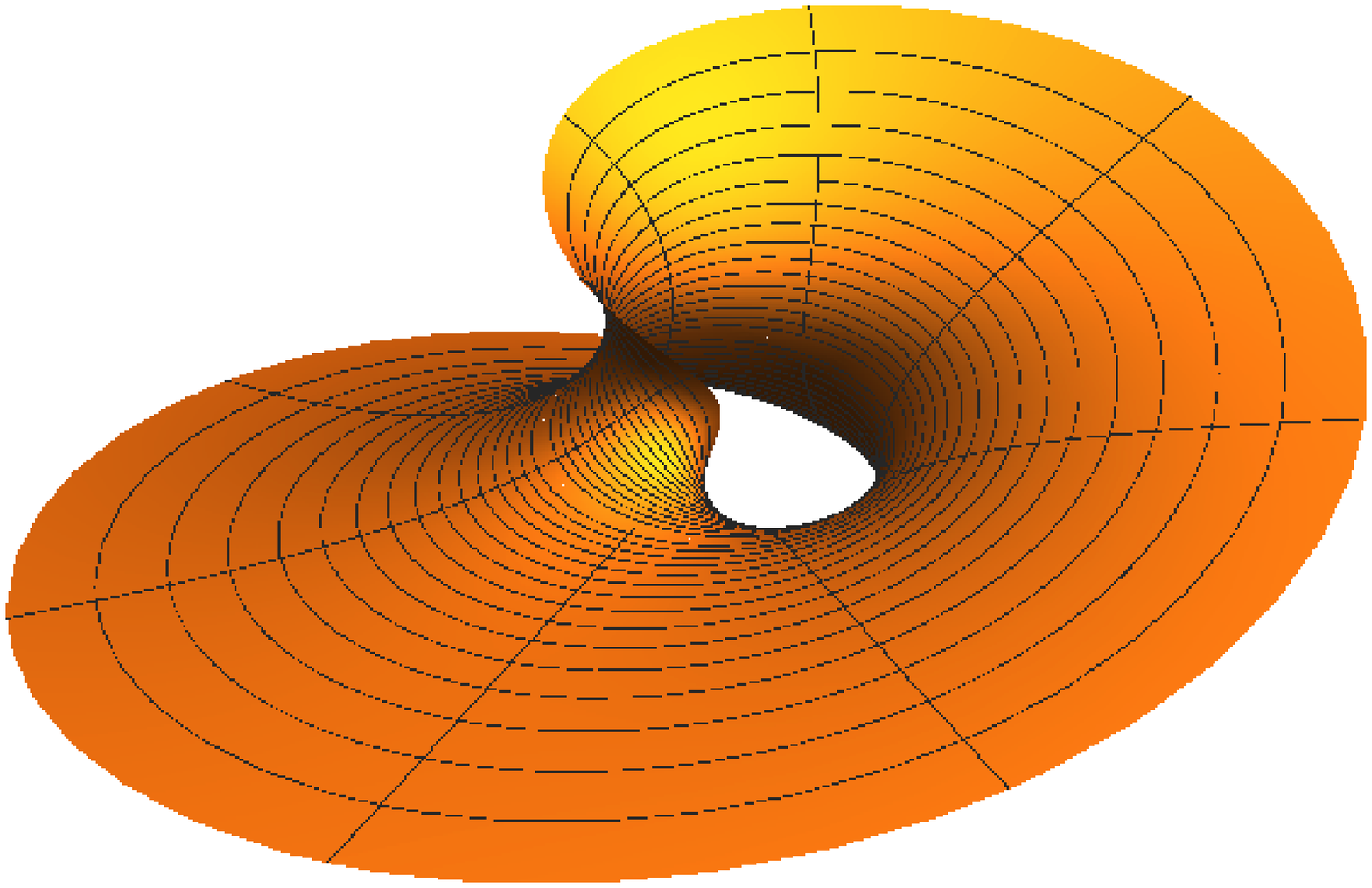}}
        \end{center}
\caption{Meeks's minimal M\"obius strip.}\label{fig:strip3}
\end{figure}
\qed\end{example}

Assume now that the Riemann surface $\Ncal$ has finite topology. Let $\Bcal$ be a homology basis
given by \eqref{eq:basis1},  \eqref{eq:basis2}, and let $\Bcal^+=\{\delta_1,\ldots,\delta_l\} \subset \Bcal$ 
be given by \eqref{eq:B0}. Fix a nowhere vanishing $1$-form $\theta\in \OmI(\Ncal)$.  
Given these choices, let
\[
	\Pcal=(\Pcal_1,\ldots, \Pcal_l)\colon \Oscr(\Ncal,\C^n)\to (\C^n)^l
\]
denote the associated {\em period map} 
\begin{equation}
\label{eq:periodmap}
	\Pcal(f) = \left( \int_{\delta_j} f\theta \right)_{j=1,\ldots,l} \in (\C^n)^l,\quad\  f\in \Oscr(\Ncal,\C^n).
\end{equation}
This is a vector-valued version of the map \eqref{eq:PB_0}. Lemma \ref{lem:partial-period} 
holds in this case as well by applying it componentwise.

%
%

\section{Notation}\label{sec:notation}
We conclude this chapter by introducing some notation.
 
If $\Ncal$ is an open Riemann surface and $n\ge 3$ is an integer, we shall denote by 
\[
	\CMI^n(\Ncal)
\]
the space of all conformal minimal immersions $\Ncal\to\R^n$ with the compact-open topology. 
If $\Ncal$ carries a fixed-point-free antiholomorphic involution, then 
\begin{equation}\label{eq:CMI}
	\CMI^n_\Igot(\Ncal) = \{X\in \CMI^n(\Ncal): X\circ \Igot =X\}
\end{equation}
denotes the space of all $\Igot$-invariant conformal minimal immersions $\Ncal\to\r^n$.

Given a locally closed complex submanifold $Z$ of $\C^n$ that is invariant 
under conjugation $z\mapsto \bar z$ (for example, the punctured null quadric $\Agot_*$ given by
\eqref{eq:null-quadric}), a compact $\Igot$-invariant subset $K\subset \Ncal$ and an integer $k\ge 0$, 
we set 
\begin{eqnarray*}
	\OI(\Ncal,Z) &=& \{f\in \Oscr(\Ncal,Z) : f\circ\Igot=\bar f\}, \\
	\Oscr_{\Igot}(K,Z) &=&  \{f\in \Oscr(K,Z) : f\circ\Igot=\bar f\}, \\
	\Ascr^k_{\Igot}(K,Z) &=&  \{f\in \Ascr^k(K,Z): f\circ\Igot=\bar f\}. 
\end{eqnarray*}
When $Z=\C$, we simply write $\OI(\Ncal,\C)=\OI(\Ncal)$, etc.

The complex Banach space $\Ascr^k(K,\C^n)$ decomposes as the direct sum
\[
	\Ascr^k(K,\C^n) = \Ascr_\Igot^k(K,\C^n)  \oplus  \imath\Ascr_\Igot^k(K,\C^n)
\]
of the real Banach subspaces $\Ascr_\Igot^k(K,\C^n)$ and 
\[
	\imath\Ascr_\Igot^k(K,\C^n) = \{\imath f\colon f\in \Ascr_\Igot^k(K,\c^n)\} = \{f\in \Ascr^k(K,\c^n) : f\circ\Igot=-\bar f\},
\]
with the projections on the respective factors given by \eqref{eq:projections}.

Choose a nowhere vanishing holomorphic $1$-form $\theta$ on $\Ncal$.
The discussion in Section \ref{sec:CMI} shows that $X\in \CMI^{n}(\Ncal)$ if and only if
\[
	f:=2\di X/\theta \in \Oscr(\Ncal,\Agot_*)\quad \text{and}\quad 
	\int_\gamma \Re(f\theta)=0\ \  \text{for all}\ \ \gamma\in H_1(\Ncal;\Z).
\]
Similarly, we have $X\in \CMI^{n}_\Igot(\Ncal)$ if and only if
\[
	f:=2\di X/\theta \in \Oscr_\Igot(\Ncal,\Agot_*)\quad \text{and}\quad 
	\int_\gamma \Re(f\theta)=0\ \  \text{for all}\ \ \gamma\in H_1(\Ncal;\Z).
\]
In both cases, we obtain $X$ from $f$ by integration, $X(p)=\int^p \Re(f\theta)$ $(p\in \Ncal)$.

The notation 
\begin{equation}\label{eq:triple} 
	(\Ncal,\Igot,\theta)
\end{equation} 
shall be used for a  {\em standard triple} consisting of a connected open Riemann surface $\Ncal$ endowed 
with a fixed-point-free antiholomorphic involution $\Igot\colon \Ncal\to\Ncal$ and an $\Igot$-invariant  
nowhere vanishing holomorphic $1$-form $\theta$ on $\Ncal$ (cf.\ Proposition \ref{prop:Cor6.5}). 
We shall also write 
\begin{equation}\label{eq:pair} 
(\Ncal,\Igot)
\end{equation}
and call it a {\em standard pair} when there is no need to emphasize the choice of $\theta$.


\chapter{Gluing $\Igot$-invariant sprays and applications}  \label{ch:sprays}

In this chapter, we develop the complex analytic methods that will be used in the paper.
They originate in modern Oka theory (cf.\ \cite[Chapter 5]{Forstneric2011-book}) 
and have already been exploited in the construction of conformal 
minimal immersions from Riemann surfaces; see \cite{AlarconDrinovecForstnericLopez2015PLMS,AlarconForstneric2014IM,AlarconForstneric2015MA,
AlarconForstnericLopez2016MZ}.
Here we adapt them to Riemann surfaces endowed with a fixed-point-free antiholomorphic involution $\Igot$. 

The main new technical results include Proposition \ref{prop:gluing}  on gluing pairs of $\Igot$-invariant sprays,
and Proposition \ref{prop:period-dominating-spray} on the existence of $\Igot$-invariant period dominating sprays.
In Section \ref{sec:structure} we use Proposition \ref{prop:period-dominating-spray} to 
show that the set of all conformal minimal immersions from a
compact bordered non-orientable surface is a real analytic Banach manifold.
These techniques are applied in Section \ref{sec:basic} to obtain basic approximation results for  
$\Igot$-invariant conformal minimal immersions which are in turn used in the 
proof of Mergelyan approximation theorems  for conformal minimal immersions from non-orientable surfaces
into $\R^n$ (cf.\ Theorems \ref{th:Mergelyan} and \ref{th:Mergelyan1}). 
The gluing techniques are also used in Section  \ref{sec:RH} to find approximate
solutions of certain Riemann-Hilbert type boundary value problems for $\Igot$-invariant 
conformal minimal immersions, generalizing what has been done in the orientable case
in \cite{AlarconDrinovecForstnericLopez2015PLMS,AlarconForstneric2015MA}. 
The Riemann-Hilbert method is an essential tool in the construction of complete non-orientable minimal surfaces
in Chapter \ref{ch:applications}.

For any $n\in\N$ we denote by $\Igot_0$ the 
anti\-holo\-morphic involution on $\C^n$ given by complex conjugation: $\Igot_0(z)=\bar z$. 
Note that $\Oscr_{\Igot_0}(\C^n)=\{f\in\Oscr(\C^n): f\circ \Igot_0 = \bar f\}$ 
is the algebra of all entire functions $f(z)=\sum c_\alpha z^\alpha$ with real coefficients $c_\alpha\in\R$.


\section{$\Igot$-invariant sprays}

\begin{definition}\label{def:spray}
Let $(\Ncal,\Igot)$ be a standard pair \eqref{eq:pair}, and let $Z$ be a locally closed $\Igot_0$-invariant 
complex submanifold of a complex Euclidean space $\C^n$. 
A spray of maps $\Ncal\to Z$ of class $\Oscr_{\Igot,\Igot_0}(\Ncal)$ is a holomorphic map
$F\colon \Ncal\times U\to Z$, where $U\subset \C^m$ is an open $\Igot_0$-invariant neighborhood 
of the origin in $\C^m$ for some $m\in \N$, which is $(\Igot,\Igot_0)$-invariant in the following sense:
\begin{equation}\label{eq:invariant}
	F(\Igot(p),\bar \zeta) = \overline{F(p,\zeta)},\quad\ p\in \Ncal,\ \zeta\in U.
\end {equation}
We shall also write $F\in \Oscr_{\Igot,\Igot_0}(\Ncal\times U, Z)$.
\end{definition}

%
%
\begin{lemma}\label{lem:composition}
If $F\colon \Ncal\times U\to Z$ is a $(\Igot,\Igot_0)$-invariant holomorphic spray and 
$\alpha\colon \Ncal \to U$ is a $\Igot$-invariant holomorphic map, then 
$\Ncal \ni p\mapsto F(p,\alpha(p))\in Z$ is a $\Igot$-invariant holomorphic map.
In particular, $F(\cdotp,\zeta)\colon \Ncal\to Z$ is $\Igot$-invariant when 
$\zeta\in\R^m\cap U$ is real.
\end{lemma}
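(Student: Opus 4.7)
The plan is to verify both conditions in Definition 2.1(b) for the composed map $G(p):=F(p,\alpha(p))$: namely, that $G$ is holomorphic from $\Ncal$ into $Z$, and that it satisfies the symmetry $G\circ\Igot=\bar G$. Holomorphicity is automatic, being just the composition of the holomorphic spray $F\colon\Ncal\times U\to Z$ with the holomorphic section $p\mapsto(p,\alpha(p))\in\Ncal\times U$; here one uses that $\alpha$ takes values in $U$, so the composition is well defined. The content of the lemma therefore lies entirely in checking the invariance relation.

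The main step is a one-line calculation that chains the two given symmetry hypotheses. Using the $\Igot$-invariance of $\alpha$ in the inner slot and then the $(\Igot,\Igot_0)$-invariance \eqref{eq:invariant} of $F$, we compute, for any $p\in\Ncal$,
\[
	G(\Igot(p))=F\bigl(\Igot(p),\alpha(\Igot(p))\bigr)=F\bigl(\Igot(p),\overline{\alpha(p)}\bigr)
	=\overline{F(p,\alpha(p))}=\overline{G(p)}.
\]
Thus $G\circ\Igot=\bar G$, which is exactly the $\Igot$-invariance condition for a holomorphic map into $Z\subset\C^n$. The one thing to keep straight is the order of the two applications: first eliminate $\Igot$ from $\alpha$ (producing a conjugate in the second argument), then apply the spray symmetry (which requires precisely a conjugate in the second argument to pull the involution out).

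For the final assertion, specialize $\alpha$ to the constant map $\alpha\equiv\zeta$ with $\zeta\in\R^m\cap U$. Since $\bar\zeta=\zeta$, the constant map satisfies $\alpha\circ\Igot=\alpha=\bar\alpha$, so it is $\Igot$-invariant, and the first part of the lemma applies to give $F(\cdot,\zeta)\in\Oscr_\Igot(\Ncal,Z)$. I do not expect any real obstacle here: the proof is essentially a bookkeeping exercise in the two definitions, and the only subtlety is to notice that $U$ must be $\Igot_0$-invariant (which is part of Definition 4.1) in order for $\overline{\alpha(p)}$ to lie in $U$ whenever $\alpha(p)$ does, so that the middle equality above is meaningful.
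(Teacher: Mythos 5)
Your proposal is correct and follows exactly the same one-line chain of equalities that the paper uses: eliminate $\Igot$ from $\alpha$ via $\alpha\circ\Igot=\bar\alpha$, then apply the $(\Igot,\Igot_0)$-invariance of $F$. The extra remarks on holomorphicity, on why $\overline{\alpha(p)}\in U$, and on the specialization to constant real $\zeta$ are sound and simply make explicit what the paper leaves implicit.
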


\begin{proof}
This follows from	
$F(\Igot(p),\alpha(\Igot(p))) = F(\Igot(p),\overline{\alpha(p)}) = \overline{F(p,\alpha(p))}$.
\end{proof}

%
%

\begin{example}\label{ex:invariant}
1.  If $f\colon \Ncal\to \C^n$ is a $\Igot$-invariant holomorphic map (i.e.,\ $f\circ \Igot=\bar f$)
and $g\colon \C^m\to\C^n$ is a holomorphic map sending $\R^m$ (the real subspace of $\C^m$)
into $\R^n$, then the holomorphic map $F\colon \Ncal\times\C^m \to\C^n$ given by 
$F(p,\zeta)=f(p)+g(\zeta)$ is $(\Igot,\Igot_0)$-invariant.

2. Assume that $V=\sum_{j=1}^n a_j(z)\frac{\di}{\di z_j}$ is a holomorphic vector field on $\C^n$
whose coefficients $a_j(z)$ assume real values on $\R^n\subset \C^n$.
The flow $\phi_t(z)$ of $V$ then preserves the real subspace $\R^n$ for all
$t\in \R$, and hence it satisfies the condition
\begin{equation}\label{eq:invariant-flow}
	\phi_{\bar t}(\bar z) = \overline{\phi_t(z)} 
\end{equation}
for all points $(z,t)$ in the fundamental domain of $V$.  In particular, if $V$ is complete then
$\{\phi_t\}_{t\in\R} \subset \Ocal_{\Igot_0}(\C^n,\C^n)$ is a $1$-parameter group 
consisting of $\Igot_0$-invariant holomorphic
automorphisms of $\C^n$. Given an $\Igot$-invariant holomorphic map 
$f\colon \Ncal\to\C^n$, the map $(p,t)\mapsto \phi_t(f(p))$ is $(\Igot,\Igot_0)$-invariant
on a neighborhood of $\Ncal\times \{0\}$ in $\Ncal\times \C$ by Lemma \ref{lem:composition}.
\qed \end{example}

Recall that a subset $A\subset \Ncal$ is said to be $\Igot$-invariant if $\Igot(A)=A$. 

%
%

\begin{definition} 
\label{def:invariant-spray}
Let $(\Ncal,\Igot)$ be a standard pair \eqref{eq:pair}, let $A$ be a compact $\Igot$-invariant domain in  $\Ncal$,  
and let $Z \subset \C^n$ be a locally closed $\Igot_0$-invariant complex submanifold of $\C^n$. 
A {\em spray of maps of class $\Ascr_{\Igot,\Igot_0}(A)$ with values in $Z$}  is a continuous $(\Igot,\Igot_0)$-invariant
map $F\colon A\times U\to Z$ (see \eqref{eq:invariant}) that is holomorphic on $\mathring A \times U$,
where $U\subset\C^m$ is an open $\Igot_0$-invariant neighborhood of the origin in some $\C^m$.
We shall also write $F\in \Ascr_{\Igot,\Igot_0}(A\times U, Z)$.
The map $f=F(\cdotp,0)\colon A\to Z$ is called the {\em core} of $F$. 
The spray $F$ is {\em dominating} over a subset $C\subset A$ if its {\em vertical differential}
\begin{equation}\label{eq:domination}
	\di_\zeta\big|_{\zeta=0} F(p,\zeta)  \colon \C^m \longrightarrow T_{F(p,0)} Z 
\end{equation}
is surjective for every point $p\in C$; $F$ is dominating if this holds on $C=A$. 
\end{definition}

Holomorphic sprays were introduced into Oka theory by Gromov \cite{Gromov1989JAMS},
although special cases have already been used by Grauert in his proof of the 
Oka-Grauert principle \cite{Grauert1957MA,Grauert1958MA}. See \cite{Forstneric2011-book}
for a more comprehensive account of the use of sprays in complex analysis.

The following type of sprays will be of main importance to us.

\begin{example}[Sprays defined by vector fields]
Assume that $Z$ is a locally closed $\Igot_0$-invariant  complex submanifold of $\C^n$. 
Let $V_1,\ldots, V_m$ be entire vector fields  on $\C^n$ 
which are tangent to $Z$ and whose coefficients are real on $\R^n$.
The flow $\phi^j_t(z)$ of $V_j$ then satisfies condition  \eqref{eq:invariant-flow}. 
Given a compact $\Igot$-invariant domain $A\subset \Ncal$
and a $\Igot$-invariant map $f\colon A\to Z$ of class $\Ascr(A)$, there is an $r>0$ such that
the map $F\colon A \times r\B^m\to \c^n$ given by
\begin{equation}\label{eq:flow-spray}
	F(p,t_1,\ldots,t_m)=\phi^1_{t_1}\circ \phi^2_{t_2}\circ\cdots\circ \phi^m_{t_m} (f(p)),
	\quad p\in A,
\end{equation}
is well defined for all $t=(t_1,\ldots,t_m)\in r\B^m$; we can take $r=+\infty$ if the vector 
fields $V_j$ are complete. Cleary, $F$ is a $(\Igot,\Igot_0)$-invariant spray of class $\Ascr_{\Igot,\Igot_0}(A)$
with values in $Z$ and with the core $f$. (Compare with Example \ref{ex:invariant}.) From 
\[
	\frac{\di}{\di t_j}\bigg|_{t=0} F(p,t)= V_j(f(p))\quad \text{for $j=1,\ldots,m$}
\] 
we see that the spray $F$ is dominating on $A$  if and only if the vectors 
$V_1(z),\ldots,V_m(z)$ span the tangent space $T_z Z$ at every point $z=f(p)$, $p\in A$.

More generally, if $h_1,\ldots, h_m\in \OI(A)$ then the map $F\colon A \times \C^m\to \C^n$, 
\begin{equation}\label{eq:flow-spray2}
	F(p,t_1,\ldots,t_m) =
	\phi^1_{t_1h_1(p)}\circ \phi^2_{t_2 h_2(p)}\circ\cdots\circ \phi^m_{t_m h_m(p)} (f(p))
\end{equation}
is a $(\Igot,\Igot_0)$-invariant spray with values in $Z$ satisfying
\[
	\frac{\di}{\di t_j}\bigg|_{t=0} F(p,t)=   h_j(p)\, V_j(f(p))\quad \text{for $j=1,\ldots,m$}.
\] 
Hence, $F$ is dominating at every point $p\in A$ where $h_j(p)\ne 0$ for all 
$j=1,\ldots,m$ and the vectors $V_1(f(p)),\ldots,V_m(f(p))$ span $T_{f(p)} Z$.

The most relevant case for us is $Z=\Agot_*=\Agot\setminus\{0\}$, where $\Agot\subset \c^n$ is the null quadric
\eqref{eq:null-quadric}. The linear holomorphic vector fields 
\begin{equation}
\label{eq:Vjk}
	V_{j,k}=z_j\frac{\di}{\di z_k} - z_k\frac{\di}{\di z_j},\quad 1\le j\ne k\le n,
\end{equation}
are $\C$-complete, their coefficients are real on $\R^n$,  they are tangent to $\Agot_*$, 
and they span the tangent space $T_z\Agot_*$ at each point $z\in\Agot_*$.  
\qed\end{example}

%
%

\section{Gluing $\Igot$-invariant sprays on $\Igot$-invariant Cartan pairs}
\label{sec:gluing}

%
%
\begin{definition}[$\Igot$-invariant Cartan pairs] \label{def:Cartan}
Let $(\Ncal,\Igot)$ be a standard pair \eqref{eq:pair}. A pair $(A,B)$ of compact sets in $\Ncal$ is said to be 
a {\em $\Igot$-invariant Cartan pair} if the following conditions hold:
\begin{itemize}
\item[\rm (a)] the sets $A,B, A\cap B$ and $A\cup B$ are $\Igot$-invariant with $\Cscr^1$ boundaries, and 
\vspace{1mm}
\item[\rm (b)]
$\overline{A\setminus B} \cap \overline{B\setminus A}=\emptyset$  (the separation property). 
\end{itemize}
A $\Igot$-invariant Cartan pair $(A,B)$ is {\em special} if $B=B'\cup \Igot(B')$, where
$B'$ is a compact set with $\Cscr^1$ boundary in $\Ncal$ and $B'\cap \Igot(B')=\emptyset$.

A special Cartan pair $(A,B)$ is {\em very special} if the sets $B'$ and $A\cap B'$ are disks
(hence, $\Igot(B')$ and $A\cap \Igot(B')$ are also disks).
See Figure \ref{fig:Cartan}.
\end{definition}
\begin{figure}[ht]
    \begin{center}
    \scalebox{0.16}{\includegraphics{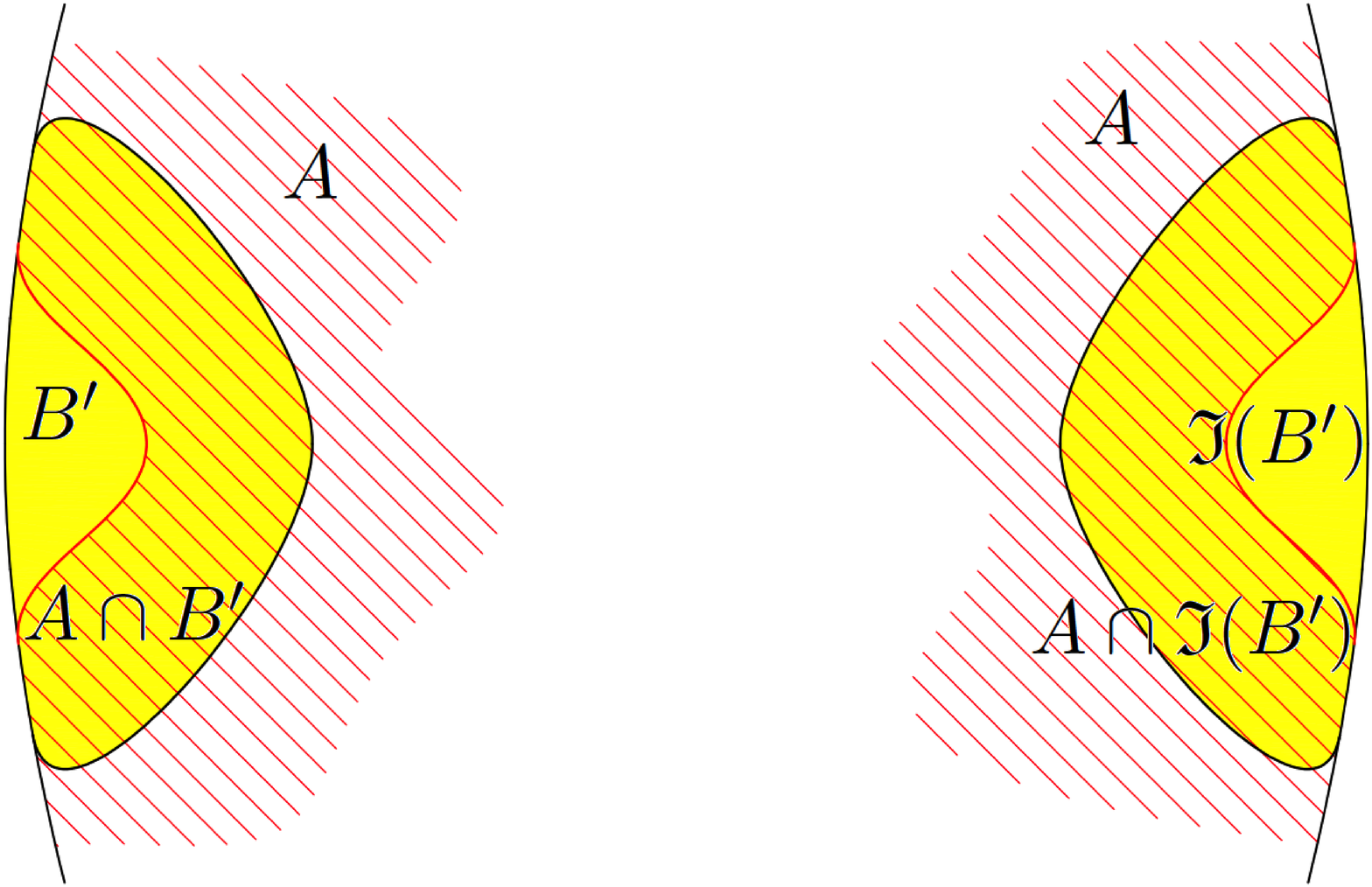}}
        \end{center}
\caption{A very special Cartan pair.}\label{fig:Cartan}
\end{figure}

The following is one of our main technical tools in the construction
of $\Igot$-invariant conformal minimal immersions. The corresponding gluing lemma in several variables
(without $\Igot$-invariance) is given by \cite[Proposition 5.9.2, p.\ 216]{Forstneric2011-book}.

%
%
\begin{proposition} [Gluing pairs of $\Igot$-invariant sprays] 
\label{prop:gluing}
Assume that $(\Ncal,\Igot)$ is a standard pair \eqref{eq:pair}, $(A,B)$ is a special $\Igot$-invariant Cartan pair in $\Ncal$,
$Z$ is a locally closed $\Igot_0$-invariant complex submanifold of $\C^n$ (where $\Igot_0(z)=\bar z$ is the conjugation 
on $\C^n$), $r>0$ is a real number, and $F\colon A\times r\B^m \to Z$ is a spray  of class $\Ascr_{\Igot,\Igot_0}(A)$ 
which is dominating over the set $C=A\cap B$.  Then, there is a number $r'\in (0,r)$ depending on $F$ with the following property.
For every $\epsilon>0$ there exists $\delta>0$ such that, given a spray $G\colon B\times r\B^m\to Z$ of class 
$\Ascr_{\Igot,\Igot_0}(B)$ satisfying $||F-G||_{0,C\times r\B^m} <\delta$, there exists a spray 
$H\colon (A\cup B) \times r'\B^m \to Z$ of class $\Ascr_{\Igot,\Igot_0}(A\cup B)$ 
satisfying $||H-F||_{0,A \times r'\B^m} <\epsilon$.
\end{proposition}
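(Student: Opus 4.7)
My plan is to follow the classical proof of the gluing lemma for sprays in several variables (see \cite[Proposition 5.9.2]{Forstneric2011-book}) and carry out each step $(\Igot,\Igot_0)$-equivariantly. The argument has three stages: (a) use the domination of $F$ over $C=A\cap B$ to reparametrize $G$ in the form $G(p,\zeta)=F(p,\gamma(p,\zeta))$ for some $(\Igot,\Igot_0)$-equivariant near-identity map $\gamma$; (b) split $\gamma$ as $\gamma=\beta\circ\alpha^{-1}$ on $C\times r'\B^m$, with $\alpha\in\Ascr_{\Igot,\Igot_0}(A\times r'\B^m,\C^m)$ and $\beta\in\Ascr_{\Igot,\Igot_0}(B\times r'\B^m,\C^m)$ both close to the identity map $(p,\zeta)\mapsto\zeta$; (c) define $H$ by $H(p,\zeta):=F(p,\alpha(p,\zeta))$ on $A$ and $H(p,\zeta):=G(p,\beta(p,\zeta))$ on $B$, the two definitions matching on $C$ by the splitting identity.

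For stage (a), the domination hypothesis that $\di_\zeta F(p,0)\colon\C^m\to T_{F(p,0)}Z$ is surjective on $C$ lets me solve $F(p,\gamma)=G(p,\zeta)$ for $\gamma$ fiberwise by the holomorphic implicit function theorem, uniformly in $p\in C$ and for $\zeta$ in a ball $r''\B^m$ with some $r''\in(0,r)$ depending only on $F$. The resulting $\gamma$ lies in $\Ascr(C\times r''\B^m,\C^m)$, depends continuously on $G$, and satisfies $\|\gamma(p,\zeta)-\zeta\|\to 0$ as $\|F-G\|_{0,C\times r\B^m}\to 0$. $(\Igot,\Igot_0)$-equivariance of $\gamma$ is immediate from the uniqueness part of the implicit function theorem together with equivariance of $F$ and $G$.

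Stage (b) is the heart of the proof. Following the classical Newton iteration, one reduces the nonlinear splitting problem to the linear additive one: given a small $c\in\Ascr(C,\C^m)$, find small $a\in\Ascr(A,\C^m)$ and $b\in\Ascr(B,\C^m)$ with $b-a=c$ on $C$. Such a decomposition is provided by a bounded linear solution operator built from a Cauchy-Green type kernel adapted to the Cartan pair $(A,B)$. The key new point is that this operator can be chosen to respect the $(\Igot,\Igot_0)$-symmetry, which is easiest to see using the special structure $B=B'\sqcup\Igot(B')$ with disjoint closures: the set $C=(A\cap B')\sqcup(A\cap\Igot(B'))$ is a disjoint union of two pieces interchanged by $\Igot$, so a $(\Igot,\Igot_0)$-equivariant $c$ is determined by its restriction $c'$ to $A\cap B'$; solving the ordinary non-equivariant splitting $b'-a'=c'$ for the pair $(A,B')$, then defining $b(p):=b'(p)$ for $p\in B'$ and $b(p):=\overline{b'(\Igot(p))}$ for $p\in\Igot(B')$, and symmetrizing $a$ by $a(p):=\frac{1}{2}\bigl(a'(p)+\overline{a'(\Igot(p))}\bigr)$ yields an equivariant solution $(a,b)$ with bounds comparable to $\|c\|$. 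Iterating this linear step via Newton's method preserves $(\Igot,\Igot_0)$-equivariance at every stage and converges provided $\|\gamma-\mathrm{id}\|$ is small enough, producing $\alpha$ and $\beta$ with the required properties.

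Stage (c) is then a formal check: on $C\times r'\B^m$ we have
\[
 F(p,\alpha(p,\zeta))=F\bigl(p,\gamma(p,\beta(p,\zeta))\bigr)=G(p,\beta(p,\zeta)),
\]
so $H$ is unambiguously defined on $(A\cup B)\times r'\B^m$, lies in $\Ascr_{\Igot,\Igot_0}((A\cup B)\times r'\B^m, Z)$ because all of $F$, $G$, $\alpha$, $\beta$ are equivariant, and satisfies $\|H-F\|_{0,A\times r'\B^m}<\epsilon$ once $\delta$ is chosen small enough that $\alpha$ is sufficiently close to the identity. The main technical obstacle I anticipate is the careful bookkeeping of estimates in the Newton iteration of stage (b), particularly verifying that each iterate remains within the domain of definition of the linearized splitting operator while preserving equivariance through the nonlinear corrections; the separation hypothesis $\overline{A\setminus B}\cap\overline{B\setminus A}=\emptyset$ together with $B'\cap\Igot(B')=\emptyset$ is precisely what makes this bookkeeping reducible to the classical, non-equivariant Cartan splitting for the pair $(A,B')$.
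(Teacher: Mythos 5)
Your three-stage outline (reparametrize, split, amalgamate) matches the paper's strategy, and your uniqueness argument for the $(\Igot,\Igot_0)$-equivariance of the transition map $\gamma$ in stage (a) is a clean alternative to the paper's construct-on-$C'$-and-reflect approach. But the key new step, stage (b), has a genuine gap.

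You propose to obtain an equivariant additive Cousin splitting $b-a=c$ by first solving the ordinary problem $b'-a'=c'$ on the smaller pair $(A,B')$ using only the restriction $c'=c|_{(A\cap B')\times r_1\B^m}$, then extending $b$ to $B$ by reflection and \emph{symmetrizing} $a$ via $a=\tfrac12\bigl(a'+\overline{a'\circ\Igot}\,\bigr)$. This does not produce a splitting of $c$. On $C'=A\cap B'$ one gets
\[
b(p,\zeta)-a(p,\zeta)=b'(p,\zeta)-\tfrac12\bigl(a'(p,\zeta)+\overline{a'(\Igot(p),\bar\zeta)}\bigr)
=c(p,\zeta)+\tfrac12\bigl(a'(p,\zeta)-\overline{a'(\Igot(p),\bar\zeta)}\bigr),
\]
and the error term $\tfrac12\bigl(a'-\overline{a'\circ\Igot}\bigr)$ is generically nonzero because there is no reason for the non-equivariant solution $a'$ to already be $(\Igot,\Igot_0)$-invariant. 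A symmetric computation shows the same obstruction on $\Igot(C')$. So the identity you feed into the Newton iteration is simply false, and the iteration never gets started. The structure $B=B'\sqcup\Igot(B')$ does \emph{not} let you reduce the splitting to a half-problem on $(A,B')$.

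The fix (and what the paper does in Lemma \ref{lem:splitting}) is to solve the ordinary, non-equivariant Cousin problem on the \emph{full} pair $(A,B)$ with the full data $c$ on $C=A\cap B$, obtaining $a'\in\Ascr(A\times r_1\B^m,\C^m)$ and $b'\in\Ascr(B\times r_1\B^m,\C^m)$ with $b'-a'=c$, and then to symmetrize \emph{both} of them:
\[
a=\tfrac12\bigl(a'+\overline{a'\circ\Igot}\,\bigr)\ \text{on } A,\qquad
b=\tfrac12\bigl(b'+\overline{b'\circ\Igot}\,\bigr)\ \text{on } B.
\]
Because $c$ is itself $(\Igot,\Igot_0)$-invariant, $c=\tfrac12\bigl(c+\overline{c\circ\Igot}\,\bigr)$, and averaging both sides of $b'-a'=c$ and $\overline{b'\circ\Igot}-\overline{a'\circ\Igot}=\overline{c\circ\Igot}$ gives $b-a=c$ with the required symmetry and norm bounds. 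The disjointness $B'\cap\Igot(B')=\emptyset$ is not what makes the splitting equivariant; it is only needed elsewhere (e.g.\ to reflect $\gamma$ from $C'$ to $C$ if you take the paper's route, and in the approximation step of Lemma \ref{lem:bumping}).

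A smaller point: you state the splitting as $\gamma=\beta\circ\alpha^{-1}$, but your stage-(c) verification
$F(p,\alpha(p,\zeta))=F(p,\gamma(p,\beta(p,\zeta)))=G(p,\beta(p,\zeta))$ uses $\alpha=\gamma\circ\beta$, i.e.\ $\gamma=\alpha\circ\beta^{-1}$. Since $\gamma$ is near the identity either form of the splitting is achievable, so this is just a bookkeeping slip, but it should be made consistent with your convention $G=F\circ\gamma$ in stage (a). (The paper uses the opposite normalization, $F=G\circ\gamma$ and $\gamma\circ\alpha=\beta$.)
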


The main step in the proof of Proposition \ref{prop:gluing} is the following splitting lemma.
See \cite[Proposition 5.8.1, p.\ 211]{Forstneric2011-book} for a more general geometric setting.
The new aspect here is $\Igot$-invariance.
 
%
%
\begin{lemma}[A splitting lemma for $\Igot$-invariant maps] 
\label{lem:splitting}
Assume that $(\Ncal,\Igot)$ and $(A,B)$ are as in Proposition \ref{prop:gluing}, and let $C=A\cap B$. 
Given numbers $\epsilon>0$ and $0<r_2<r_1$, there exists a number $\delta>0$
with the following property. Let $\gamma\colon C\times r_1\B^m \to C\times \C^m$
be a map  of the form 
\begin{equation}\label{eq:transition}
	\gamma(p,\zeta) = \bigl(p,\zeta+c(p,\zeta)\bigr),\quad p\in C,\ \zeta\in r_1\B^m,
\end{equation}
where $c\in \Ascr_{\Igot,\Igot_0}(C\times r_1\B^m,\C^m)$ satisfies $||c||_{0,C\times r_1\B^m}<\delta$. 
Then there exist maps
\[
	\alpha\colon A\times r_2\B^m\lra A\times \C^m,\quad 
	\beta \colon B\times r_2\B^m\lra B\times \C^m
\]
of the form 
\begin{equation}\label{eq:alpha-pzeta}
	\alpha(p,\zeta)=\left(p,\zeta+a(p,\zeta)\right), \quad \beta(p,\zeta)=\left(p,\zeta+b(p,\zeta)\right),
\end{equation} 
where the maps $a\in  \Ascr_{\Igot,\Igot_0}(A\times r_2\B^m,\C^m)$ and $b\in  \Ascr_{\Igot,\Igot_0}(B\times r_2\B^m,\C^m)$ 
are uniformly $\epsilon$-close to $0$ on their respective domains, such that 
\begin{equation}\label{eq:split}
	\gamma \circ \alpha =\beta\quad \text{holds on}\ C\times r_2\B^m.
\end{equation}
\end{lemma}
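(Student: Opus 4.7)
The plan is to linearize \eqref{eq:split} into a sequence of linear Cousin-I problems on the $\Igot$-invariant Cartan pair $(A,B)$, solve each by an $\Igot$-equivariant splitting operator, and close the loop with a shrinking-domain contraction iteration. In outline, this follows the proof of \cite[Proposition 5.8.1]{Forstneric2011-book}, with the new ingredient being the equivariance.

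The first step is an $\Igot$-equivariant linear splitting: I claim that for every $h\in \Ascr_{\Igot,\Igot_0}(C\times r_1\B^m,\C^m)$ there exist $h_A\in \Ascr_{\Igot,\Igot_0}(A\times r_1\B^m,\C^m)$ and $h_B\in \Ascr_{\Igot,\Igot_0}(B\times r_1\B^m,\C^m)$ with $h_B-h_A=h$ on $C\times r_1\B^m$ and bounds $\|h_A\|_{0},\|h_B\|_{0}\le M\|h\|_0$ for a constant $M$ depending only on $(A,B)$ and $r_1$. To see this, apply the classical non-equivariant Cousin-I splitting for the Cartan pair $(A,B)$ with $\zeta\in r_1\B^m$ as a holomorphic parameter (this is precisely the content of \cite[Proposition 5.8.1]{Forstneric2011-book}) to get bounded linear operators $T_A,T_B$ with $T_B(h)-T_A(h)=h$, and then post-compose with the projection $\Pi_\Igot$ from \eqref{eq:projections} applied pointwise in $\zeta$. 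Because $A$, $B$ are $\Igot$-invariant and $r_1\B^m$ is $\Igot_0$-invariant, $\Pi_\Igot$ sends $\Ascr(A\times r_1\B^m)$ to $\Ascr_{\Igot,\Igot_0}(A\times r_1\B^m)$ and similarly for $B$; and since $h$ is itself $(\Igot,\Igot_0)$-invariant, linearity of $\Pi_\Igot$ yields $\Pi_\Igot(T_B(h))-\Pi_\Igot(T_A(h))=\Pi_\Igot(h)=h$, so we may set $h_A:=\Pi_\Igot(T_A(h))$, $h_B:=\Pi_\Igot(T_B(h))$.

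Next I would recast \eqref{eq:split} as a fixed-point equation. Writing out $\gamma\circ\alpha=\beta$ using \eqref{eq:transition}, \eqref{eq:alpha-pzeta}, the condition on $C$ becomes
\[
b(p,\zeta)-a(p,\zeta)=c(p,\zeta+a(p,\zeta)).
\]
Fix a decreasing sequence of radii $r_1=s_0>s_1>\cdots>s_\infty=r_2$ with $s_k-s_{k+1}$ summable to $r_1-r_2$. Set $a_0=b_0=0$ and inductively define
\[
g_k(p,\zeta):=c(p,\zeta+a_k(p,\zeta))-\bigl(b_k(p,\zeta)-a_k(p,\zeta)\bigr),
\]
apply the equivariant splitting of the previous paragraph on $s_k\B^m$ to obtain $g_k=g_k^B-g_k^A$, and let $a_{k+1}:=a_k+g_k^A$, $b_{k+1}:=b_k+g_k^B$. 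By design $b_{k+1}-a_{k+1}=c(p,\zeta+a_k(p,\zeta))$, hence
\[
g_{k+1}(p,\zeta)=c(p,\zeta+a_{k+1}(p,\zeta))-c(p,\zeta+a_k(p,\zeta)),
\]
which by the Cauchy estimate on the shrinking ball is controlled by a constant times $\|c\|\cdot\|a_{k+1}-a_k\|/(s_k-s_{k+1})$. For $\delta=\|c\|_{0,C\times r_1\B^m}$ small enough in terms of $M$ and the chosen radii decrements, the iteration is a geometric contraction, and the limits $a\in\Ascr_{\Igot,\Igot_0}(A\times r_2\B^m,\C^m)$, $b\in\Ascr_{\Igot,\Igot_0}(B\times r_2\B^m,\C^m)$ solve \eqref{eq:split} with $\|a\|_0,\|b\|_0<\epsilon$.

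The main obstacle, compared with the orientable case, is guaranteeing that every iterate stays in the $(\Igot,\Igot_0)$-invariant class without sacrificing the norm control. The symmetrization trick via $\Pi_\Igot$ resolves the linear half, and the nonlinear half follows from the observation that if $a_k$ is $(\Igot,\Igot_0)$-invariant, then so is $(p,\zeta)\mapsto c(p,\zeta+a_k(p,\zeta))$, because
\[
c\bigl(\Igot(p),\bar\zeta+a_k(\Igot(p),\bar\zeta)\bigr)=c\bigl(\Igot(p),\overline{\zeta+a_k(p,\zeta)}\bigr)=\overline{c\bigl(p,\zeta+a_k(p,\zeta)\bigr)}
\]
by the $(\Igot,\Igot_0)$-invariance of $c$ and of $a_k$. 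Thus the equivariance propagates through the iteration, and the contraction estimate is identical to the orientable setting.
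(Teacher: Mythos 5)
Your overall architecture matches the paper's proof. The paper also obtains the $(\Igot,\Igot_0)$-invariant Cousin-I splitting by symmetrizing a classical solution, writing $a(p,\zeta)=\tfrac12\bigl(a'(p,\zeta)+\overline{a'(\Igot(p),\bar\zeta)}\bigr)$ and similarly for $b$, which is precisely the fiberwise projection $\Pi_\Igot$ you invoke; and the paper likewise notes that composition of maps of type \eqref{eq:alpha-pzeta} preserves the $(\Igot,\Igot_0)$-invariant class, which is exactly what carries the symmetry through the iteration.

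However, your convergence estimate has a genuine flaw. You bound $\|g_{k+1}\|$ by a constant times $\|c\|\,\|a_{k+1}-a_k\|/(s_k-s_{k+1})$. Since the radii $s_k$ must descend to $r_2$ in infinitely many steps, the gaps $s_k-s_{k+1}$ tend to $0$, so the contraction ratio $M\|c\|/(s_k-s_{k+1})$ grows without bound; no fixed $\delta>0$ can keep it below $1$ for all $k$, and the product $\prod_k M\|c\|/(s_k-s_{k+1})$ diverges (super-exponentially for geometrically decaying gaps), so the scheme is not a geometric contraction. A linear-correction iteration on shrinking domains simply cannot converge this way. The fix is to notice that no iterated domain loss is actually incurred: $c$ is holomorphic on all of $C\times r_1\B^m$ and the Cousin-I operators preserve the $\zeta$-domain, so after a \emph{single} shrink from $r_1$ to $r_2$ (needed so that $\zeta+a(p,\zeta)\in r_1\B^m$ whenever $\|a\|<r_1-r_2$) the Lipschitz constant of $\zeta\mapsto c(p,\zeta)$ on the relevant range is bounded by a constant times $\|c\|/(r_1-r_2)$, uniformly in $k$. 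Run the fixed-point iteration on the fixed Banach spaces $\Ascr_{\Igot,\Igot_0}(A\times r_2\B^m,\C^m)$ and $\Ascr_{\Igot,\Igot_0}(B\times r_2\B^m,\C^m)$, where for $M\|c\|/(r_1-r_2)$ small enough the map is a genuine contraction; alternatively, set it up as an application of the implicit function theorem on these Banach spaces, which is the route the paper points to via \cite[Proposition 5.8.1]{Forstneric2011-book}.
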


\begin{proof} 
The proof is similar to that of \cite[Proposition 5.8.1]{Forstneric2011-book}. 
We begin by considering the following Cousin-I problem:
given a $(\Igot,\Igot_0)$-invariant map $c\colon C\times r_1\B^m \to \C^m$
of class $\Ascr(C\times r_1\B^m)$, we claim that it splits as a difference
\begin{equation}\label{eq:CousinI}
	c=b-a, 
\end{equation}
where $a\colon A\times r_1\B^m \to \C^m$ and $b\colon B\times r_1\B^m \to \C^m$
are $(\Igot,\Igot_0)$-invariant maps of class $\Ascr(A\times r_1\B^m)$ and $\Ascr(B\times r_1\B^m)$, 
respectively, satisfying the estimates
\begin{equation}\label{eq:CIestimate}
	||a||_{0,A\times r_1\B^m} \le M ||c||_{0,C\times r_1\B^m}, \quad
	||b||_{0,B\times r_1\B^m} \le M ||c||_{0,C\times r_1\B^m}
\end{equation}
for some constant $M>0$ depending only on the pair $(A,B)$. In particular, $a$ and $b$ are 
uniformly $\epsilon$-close to $0$ on their respective domains provided that 
$||c||_{0,C\times r_1\B^m}$ is sufficiently small. To find such splitting, we first solve the  
standard Cousin-I problem to find maps $a'$ and $b'$ as above 
(not necessarily $(\Igot,\Igot_0)$-invariant) satisfying $c=b'-a'$  on $C\times r_1\B^m$ and the 
estimates  \eqref{eq:CIestimate}. This is classical and uses a bounded solution operator
for the $\dibar$-equation on $A\cup B$; the variable $\zeta\in r_1\B^m$ plays
the role of a parameter. Since $c$ is $(\Igot,\Igot_0)$-invariant and hence satisfies 
\[
	c(p,\zeta) =\frac{1}{2}\left(c(p,\zeta) + \overline{ c(\Igot(p),\bar\zeta)}\right),
	\quad p\in C,\ \zeta\in r_1\B^m,
\]
it follows that the $(\Igot,\Igot_0)$-invariant $\C^m$-valued maps
\[
	a(p,\zeta) =\frac{1}{2}\left(a'(p,\zeta) + \overline{ a'(\Igot(p),\bar\zeta)}\right),
	\quad
	b(p,\zeta) =\frac{1}{2}\left(b'(p,\zeta) + \overline{ b'(\Igot(p),\bar\zeta)}\right),
\]
defined for $p\in A$ and $p\in B$, respectively, 
satisfy \eqref{eq:CousinI} and the estimates \eqref{eq:CIestimate}.

By using this Cousin-I problem, the proof  is then completed  
by an iterative process exactly as in \cite[proof of Theorem 3.2]{DrinovecForstneric2007DMJ}.
(One can also apply the implicit function theorem approach, cf.\ 
\cite[proof of Proposition 5.8.1]{Forstneric2011-book}.) It is easily seen (cf.\ Lemma \ref{lem:composition}) 
that the composition $\alpha_1 \circ \alpha_2$ of two maps of type \eqref{eq:alpha-pzeta} is still of the same type; 
in particular, its second component  is $(\Igot,\Igot_0)$-invariant if this holds for $\alpha_1$ and $\alpha_2$. 
Hence, the maps $\alpha$ and $\beta$ obtained in the limit have $(\Igot,\Igot_0)$-invariant second components.
\end{proof}

\begin{proof}[Proof of Proposition \ref{prop:gluing}]
We claim that there is a number $r_1\in (0,r)$, depending only on $F$, such that the following holds.
Assuming that $||F-G||_{0,C\times r\B^m}$  is small enough, there is a map
$\gamma\colon C\times r_1\B^m \to C\times r\B^m$ of class $\Ascr(C\times r_1\B^m)$ and of the form
\eqref{eq:transition}, with $(\Igot,\Igot_0)$-invariant second component, satisfying an estimate 
\begin{equation}\label{eq:est-gamma}
	||\gamma-\Id||_{0,C\times r_1 \B^m} \le const \, \cdotp  ||F-G||_{0,C\times r \B^m}
\end{equation}
(with a constant independent of $G$) and the identity 
\begin{equation}\label{eq:FGgamma}
	F=G\circ \gamma\quad \text{on $C\times r_1\B^m$}. 
\end{equation}
Without the $(\Igot,\Igot_0)$-invariance condition, such {\em transition map} $\gamma$ is furnished 
by \cite[Lemma 5.9.3, p.\ 216]{Forstneric2011-book}. (The proof relies on the implicit function theorem,
using the domination property of the spray $F$; this is why the number $r_1$ may be chosen to depend only on $F$.)
In the case at hand, the cited lemma  furnishes a map 
\[
	\gamma(p,\zeta)=(p,\zeta+c(p,\zeta)),\quad  p\in C'=A\cap B',\  \zeta\in r_1\B^m
\]
satisfying \eqref{eq:FGgamma} on $C'\times r_1\B^m$.
We extend $c$ (and hence $\gamma$) to $\Igot(C')=A\cap \Igot(B')$ by setting
\[
	c(p,\zeta) = \overline{c(\Igot(p),\bar\zeta)},\quad p\in \Igot(C'),\ \zeta\in r_1\B^m.
\]
This implies that the resulting map $c\colon C\times r_1\B^m\to \C^m$ is $(\Igot,\Igot_0)$-invariant.
Since $F$ and $G$ are $(\Igot,\Igot_0)$-invariant, $\gamma$ satisfies condition \eqref{eq:FGgamma} 
also on $\Igot(C')\times r_1\B^m$. 

Pick a number $r_2$ with $0<r_2<r_1$; for example, we may take $r_2=r_1/2$.
Assuming that $||F-G||_{0,C\times r \B^m}$ is small enough, the map $\gamma$ is so close to the 
identity map on $C\times r_1\B^m$ (see \eqref{eq:est-gamma}) that Lemma \ref{lem:splitting} applies. 
Let $\alpha$ and $\beta$ be maps of the form \eqref{eq:alpha-pzeta} satisfying
condition \eqref{eq:split}.  It follows that 
\begin{equation}\label{eq:FaGb}
	F \circ \alpha = G \circ \gamma\circ \alpha = G\circ \beta\quad\text{on}\ C\times r_2\B^n.
\end{equation}
Hence, the sprays $F \circ \alpha$ and $G\circ \beta$ amalgamate into
a spray $H\colon (A\cup B)\times r_2\B^m\to Z$  of class $\Ascr_{\Igot,\Igot_0}(A\cup B)$. Since $\alpha$ is close to 
the identity on $A\times r_2\B^m$ and the composition operator
$\alpha\mapsto F\circ \alpha$ is continuous for $\alpha$ near the identity, 
the map $H|_{A\times r_2\B^m}= F\circ \alpha$ is close to $F$ on $A\times r_2\B^m$.
Hence, Proposition \ref{prop:gluing} holds with $r'=r_2$. Note that, by the construction,
$r'$ depends only $F$.
\end{proof}

%
%
\begin{remark}[On the estimates] \label{rem:estimates}
Note that the sup-norms of the maps $\alpha$ and $\beta$ in \eqref{eq:FaGb}
depend only on the norm of $\gamma$, and hence on $||F-G||_{0,C\times r \B^m}$.
This gives (not necessarily linear) estimates of $||H-F||_{0,A\times r_2\B^m}$ and 
$||H-G||_{0,B\times r_2\B^m}$ in terms of $||F-G||_{0,C\times r \B^m}$
and the sup norms of $F$ and $G$ on their respective domains.
In applications, the first spray $F$ is fixed and hence we can estimate 
$||H-F||_{0,A\times r_2\B^m}$ solely in terms of $||F-G||_{0,C\times r \B^m}$.
However,  the second spray $G$ is typically obtained by Runge approximation of $F$ over $A\cap B$,
and hence one cannot estimate $||H-G||_{0,B\times r_2\B^m}$ solely by $||F-G||_{0,C\times r \B^m}$.
An exception with better estimates will occur in the proof of  Lemma \ref{lem:step1},
due to a special situation which allows approximation of $F$ by $G$ on a bigger set.

We wish to point out that the splitting and gluing results presented above also work 
for sprays of class $\Ascr^k$ for any $k\in\Z_+$, with the same proofs.
\qed\end{remark}

%
%

\section{$\Igot$-invariant period dominating sprays}
\label{sec:period-dominating}

In this section, we construct $(\Igot,\Igot_0)$-invariant period dominating sprays with values
in the punctured null quadric $\Agot_*$  (see Proposition \ref{prop:period-dominating-spray}). 

The following notion will play an important role; cf.\ \cite[Definition 2.2]{AlarconForstnericLopez2016MZ}.

\begin{definition}
\label{def:nonflat}
Let $A$ be a compact connected smoothly bounded domain in an open Riemann surface $\Ncal$. 
A holomorphic map $f\colon A \to \Agot_*$ is {\em flat} if the image $f(A)$ is contained in 
a complex ray $\C\nu$ $(\nu \in \Agot_*)$ of the null cone $\Agot$ \eqref{eq:null-quadric};
otherwise $f$ is {\em nonflat}.
\end{definition}

By \cite[Lemma 2.3]{AlarconForstnericLopez2016MZ}, a holomorphic map $f\colon A \to \Agot_*$ is nonflat
if and only if the tangent spaces $T_{f(p)}\Agot_*$ over all points $p\in A$ span  $\C^n$.
This condition and the following simple lemma are used in the proof of Proposition \ref{prop:period-dominating-spray}.

\begin{lemma}\label{lem:nonflat}
Let $(\Ncal,\Igot)$ be a standard pair \eqref{eq:pair} and let $A$ be a compact $\Igot$-invariant domain  in $\Ncal$.
Every $\Igot$-invariant holomorphic map $f\colon A\to \Agot_*$ is nonflat.
\end{lemma}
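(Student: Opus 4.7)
The plan is to argue by contradiction, exploiting the interplay between the $\Igot$-invariance condition and the nullity condition defining $\Agot$. Suppose $f$ is flat, so $f(A)\subset \C\nu$ for some $\nu\in\Agot_*$. Since $A$ is connected and $\C\nu$ is a complex line through the origin, I would write $f=g\nu$ for a holomorphic function $g\colon A\to\C$, and observe that $g$ is nowhere zero because $f$ takes values in $\Agot_*=\Agot\setminus\{0\}$.

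Next, I would plug this representation into the $\Igot$-invariance relation $f\circ\Igot=\bar f$, obtaining
\[
   g(\Igot(p))\,\nu \;=\; \overline{g(p)}\,\bar\nu \qquad \text{for all }p\in A.
\]
Evaluating at any $p_0\in A$ and using that $g(\Igot(p_0))\ne 0$ and $g(p_0)\ne 0$, I conclude that $\bar\nu$ is a nonzero scalar multiple of $\nu$, i.e.\ $\bar\nu=\lambda\nu$ for some $\lambda\in\C_*$.

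The last step is where the actual contradiction is produced. Using the bilinear form $Q(z,w)=\sum_{j=1}^n z_jw_j$, the defining equation of $\Agot$ reads $Q(\nu,\nu)=0$. Then the Hermitian norm of $\nu$ may be rewritten as
\[
   \|\nu\|^2 \;=\; \sum_{j=1}^n \nu_j\,\overline{\nu_j} \;=\; \sum_{j=1}^n \nu_j\,(\lambda\nu_j) \;=\; \lambda\,Q(\nu,\nu) \;=\; 0,
\]
which forces $\nu=0$, contradicting $\nu\in\Agot_*$. Hence $f$ cannot be flat.

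I do not expect any genuine obstacle here; the argument is essentially a one-line computation once the flatness hypothesis is converted via $\Igot$-invariance into the symmetry $\bar\nu=\lambda\nu$. The only point requiring a brief justification is the global factorization $f=g\nu$ on the (possibly non-simply connected) domain $A$, but this is automatic because $\C\nu$ is a trivial line bundle and $A$ is connected, so one simply takes $g$ to be any coordinate of $f$ relative to $\nu$ times a fixed rescaling.
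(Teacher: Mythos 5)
Your proof is correct and follows essentially the same route as the paper's: both reduce flatness via $\Igot$-invariance to the relation $\bar\nu\in\C_*\nu$, and both then derive a contradiction from the fact that no nonzero null vector can be (a scalar rotation of) a real vector. The paper phrases the final step geometrically (writing $\nu=e^{2\imath t}\bar\nu$ and observing that $e^{-\imath t}\nu$ would be a real vector in $\Agot_*$, which is impossible), while you compute $\|\nu\|^2=\lambda\,Q(\nu,\nu)=0$ directly; these are the same observation in different packaging. One minor simplification: you need not invoke the global factorization $f=g\nu$; it suffices to evaluate $f(\Igot(p_0))=\overline{f(p_0)}$ at a single point to see that $\bar\nu$ is a nonzero multiple of $\nu$.
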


\begin{proof}
Since $f$ is $\Igot$-invariant, we have $f(\Igot (p))= \overline{f(p)}$ for all $p\in A$. If $f(A)\subset \C\nu$ for some $\nu\in \Agot_*$, 
then the above condition implies $\bar \nu\in\C \nu$, and hence $\nu=e^{2\imath t}\bar \nu$ for some $t\in\R$. This means that 
the vector $e^{-\imath t}\nu\in \Agot_*$ is real, a contradiction since the punctured null quadric does not contain any real vectors.
\end{proof}

Clearly, a conformal minimal immersion $X\colon A\to \R^n$ is flat, in the sense that 
the image $X(A)$ is contained in an affine $2$-plane in $\R^n$,
if and only if the map $f=2\di X/\theta\colon A\to \Agot_*$ is flat; here 
$\theta$ is any nowhere vanishing holomorphic $1$-form on $\Ncal$. 
(See Section \ref{sec:CMI}.) Note that an immersion from a non-orientable surface
is never flat. Hence, if $X\colon A\to\R^n$ is an $\Igot$-invariant conformal minimal immersion
then $X$ and its derivative $f=2\di X/\theta \colon A\to \Agot_*$ are nonflat.

%
%

\begin{proposition}[Existence of $\Igot$-invariant period dominating sprays]
\label{prop:period-dominating-spray}
Assume that  $(\Ncal,\Igot,\theta)$ is a standard triple \eqref{eq:triple},
$A\subset\Ncal$ is a compact $\Igot$-invariant domain with $\Cscr^1$ boundary in $\Ncal$,
$\Bcal \subset H_1(A;\Z)$ is a homology basis of the form \eqref{eq:basis1} or \eqref{eq:basis2},
and $\Bcal^+=\{\delta_1,\ldots,\delta_l\} \subset \Bcal$ is given by \eqref{eq:B0}. 
Let $\Pcal\colon\Ascr(A,\C^n)\to (\C^n)^l$ denote the period map \eqref{eq:periodmap} 
associated to $\theta$ and $\Bcal^+$.
Given a $\Igot$-invariant  map $f\colon A\to \Agot_*$ of class $\Ascr(A)$, there exists a ball 
$U\subset\C^m$ around $0\in \C^m$ for some $m\in\N$ and a dominating $(\Igot,\Igot_0)$-invariant spray 
$F\colon A\times U\to \Agot_*$ of class $\Ascr(A)$ with core $F(\cdotp,0)=f$ 
which is also {\em period dominating}, in the sense that the differential
\begin{equation}\label{eq:dif-period}
	\di_\zeta\big|_{\zeta=0}  \Pcal(F(\cdotp,\zeta)) : \C^m \lra (\C^n)^l 
\end{equation}
maps $\R^m$ (the real part of $\C^m$) surjectively onto $\R^n\times (\C^n)^{l-1}$. 
\end{proposition}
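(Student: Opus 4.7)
The plan is to take $F$ as a composition of flows of the standard vector fields $V_{j,k}$ from \eqref{eq:Vjk}, each flowed for a time of the form $t_\nu h_\nu(p)$ with $t_\nu\in\R$ and $h_\nu\in\OI(A)$. Following the template \eqref{eq:flow-spray2}, set
\[
F(p,t)=\phi^{j_1,k_1}_{t_1 h_1(p)}\circ\cdots\circ \phi^{j_m,k_m}_{t_m h_m(p)}\bigl(f(p)\bigr), \qquad t=(t_1,\ldots,t_m)\in\C^m.
\]
Since every $V_{j,k}$ is $\C$-complete on $\Agot_*$, has real coefficients on $\R^n$, and satisfies $\phi^{j,k}_{\bar s}(\bar z)=\overline{\phi^{j,k}_s(z)}$ by \eqref{eq:invariant-flow}, and since $h_\nu\in\OI(A)$, the map $F$ is well-defined on $A\times U$ for some $\Igot_0$-invariant neighborhood $U$ of $0\in\C^m$, takes values in $\Agot_*$, has core $f$, and satisfies $F(\Igot(p),\bar t)=\overline{F(p,t)}$, i.e., $F\in\Ascr_{\Igot,\Igot_0}(A\times U,\Agot_*)$. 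Domination on $A$ is then easy: the vertical derivative at $t=0$ along the $\nu$-th coordinate equals $h_\nu(p)V_{j_\nu,k_\nu}(f(p))$, and $\{V_{j,k}(z)\}_{j<k}$ already spans $T_z\Agot_*$ at every $z\in\Agot_*$, so including every pair $(j,k)$ with $1\le j<k\le n$ and choosing the $h_\nu$'s with no common zero on $A$ (for instance by including the constant $1\in\OI(A)$) makes $F$ dominating on $A$.

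The essential technical step is arranging period domination with target $\R^n\times(\C^n)^{l-1}$. First, for any $\Igot$-invariant holomorphic $1$-form $\phi$ the identity $\Igot_*\delta_1=\delta_1$ combined with \eqref{eq:invariance} forces $\int_{\delta_1}\phi\in\R^n$, so the image of the differential \eqref{eq:dif-period} automatically lies in $\R^n\times(\C^n)^{l-1}$. I would then prove, via a Hahn-Banach duality argument, that as $V$ ranges over $\{V_{j,k}\}_{j<k}$ and $h$ over $\OI(A)$, the real-linear span of the period vectors $\bigl(\int_{\delta_j}hV(f)\theta\bigr)_{j=1,\ldots,l}$ fills all of $\R^n\times(\C^n)^{l-1}$; finitely many such pairs then supply the required data and fix $m$.

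The hard part will be this surjectivity. Assuming it fails, some nonzero real functional $\mu=(\mu_1,\ldots,\mu_l)$ on $\R^n\times(\C^n)^{l-1}$ annihilates every such period. Using the projection $\Pi_\Igot$ of \eqref{eq:projections} to pass from $\OI(A)$ to $\Ascr(A)$, and then localizing $h$ near arbitrary points $p_0\in|\delta_j|$ (observing that for $j\ge 2$ such a localization can be made $\Igot$-symmetric and disjoint from the other curves in $\Bcal^+$, and that for $j=1$ the reality of the $\delta_1$-integral matches the fact that $\mu_1$ is a functional on $\R^n$), one deduces that $\mu_j$ annihilates $V(f(p))$ for every $p\in|\delta_j|$ and every $V\in\{V_{j,k}\}$; hence $\mu_j$ annihilates the entire tangent space $T_{f(p)}\Agot_*$ along $|\delta_j|$. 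But as noted above, nonflatness of $f$ (Lemma \ref{lem:nonflat}) means that these tangent spaces jointly span $\C^n$, so $\mu_j=0$ for every $j$, contradicting $\mu\neq 0$. The real subspace spanned by the finitely many chosen directions therefore equals $\R^n\times(\C^n)^{l-1}$, and the corresponding spray $F$ satisfies all conclusions of the proposition.
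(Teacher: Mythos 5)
Your construction follows the paper's own strategy almost exactly: the spray is a composition of flows of the vector fields $V_{j,k}$ from \eqref{eq:Vjk} with $\Igot$-invariant coefficient functions $h_\nu\in\OI(A)$, giving automatic $(\Igot,\Igot_0)$-invariance and $\Agot_*$-valuedness; domination is trivial once all pairs $(j,k)$ and a nonvanishing coefficient are included. Where the paper constructs explicit continuous coefficient functions supported on single curves (with the $\Igot$-symmetric modification on $\delta_1=\sigma\cup\Igot(\sigma)$) and then Runge-approximates them by elements of $\OI(A)$, you run a Hahn--Banach duality argument: assume a nonzero annihilating functional $\mu$ and localize to derive a contradiction. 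These two are essentially the same argument in different packaging, since the duality route implicitly needs exactly the same localization and the same observation that $\OI(A)$-restrictions are dense among suitable continuous $\Igot$-symmetric functions on $|\Bcal^+|$.

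One step in your contradiction argument is stated too quickly. From $\mu_j$ annihilating each $T_{f(p)}\Agot_*$ along $|\delta_j|$ you invoke Lemma \ref{lem:nonflat} to say these tangent spaces jointly span $\C^n$; but that lemma (via \cite[Lemma 2.3]{AlarconForstnericLopez2016MZ}) gives the spanning of $\{T_{f(p)}\Agot_* : p\in A\}$, not a priori of $\{T_{f(p)}\Agot_*: p\in|\delta_j|\}$. The missing one-liner is the identity theorem: the curves $\delta_j$ lie in $\mathring A$, so if $f$ were flat along an arc of $\delta_j$ of positive length it would be flat on the connected domain $A$, contradicting Lemma \ref{lem:nonflat}; thus $f$ is nonflat along each $|\delta_j|$ (for $\delta_1$ one can alternatively observe directly that $f|_{\delta_1}$ is itself $\Igot$-invariant and apply the same proof as in Lemma \ref{lem:nonflat}). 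With that spelled out, your argument closes and matches the paper's.
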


%
%
\begin{remark}\label{rem:domination}
This notion of period domination is stronger than the one in \cite{AlarconForstneric2014IM} which merely
asks that the differential \eqref{eq:dif-period} be  surjective.
The reason for introducing this stronger condition is that we shall use maps 
$F(\cdotp,\zeta)\colon \Ncal\to \Agot_*$ for real values of the parameter $\zeta$ since these  
are $\Igot$-invariant (see Lemma \ref{lem:composition}).
The first factor on the right hand side is $\R^n$  (and not $\C^n$) since the first curve $\delta_1=\alpha_0$ in 
the homology basis $\Bcal^+$ satisfies $\Igot_*\delta_1=\delta_1$, 
and hence $\int_{\delta_1} \Im \phi=0$ for any $\Igot$-invariant
holomorphic $1$-form $\phi$ (see \eqref{eq:invariance}). 
\qed\end{remark}

\begin{proof}[Proof of Proposition \ref{prop:period-dominating-spray}]
We follow the proof of \cite[Lemma 5.1]{AlarconForstneric2014IM} with certain modifications. 
We shall find a desired spray $F$ of the form \eqref{eq:flow-spray2}:
\begin{equation}\label{eq:flow-spray3}
	F(p,\zeta_1,\ldots,\zeta_m) =
	\phi^1_{\zeta_1h_1(p)}\circ \phi^2_{\zeta_2 h_2(p)}\circ\cdots\circ \phi^m_{\zeta_m h_m(p)} (f(p)),
	\quad p\in A,
\end{equation}
where $\phi^i$ is the flow of some holomorphic vector field $V_i$ of type \eqref{eq:Vjk} on $\C^n$ 
and $\zeta=(\zeta_1,\ldots,\zeta_m)\in \C^m$. Any such spray is $(\Igot,\Igot_0)$-invariant and has range in $\Agot_*$.  

We begin by constructing continuous complex-valued functions $h_1,\ldots,h_m$ (for a suitably big $m\in \N$) on the 
union of the curves $\delta_1,\ldots,\delta_l$ so that the domination condition in 
Proposition \ref{prop:period-dominating-spray} 
holds. (This condition clearly makes sense for sprays $F$ defined over the union of these curves.)
Each of the functions $h_i$ is supported on only one of the curves $\delta_j$, their supports
are pairwise disjoint, and they  do not contain any of the intersection points of these curves.
Such functions can be found by following the proof of \cite[Lemma 5.1]{AlarconForstneric2014IM},
except for the first curve $\delta_1$ where we need to modify the construction as explained below.
In the construction of the $h_i$'s it is crucial that $f$ is nonflat (cf.\ Lemma \ref{lem:nonflat}) 
which ensures that the tangent spaces $T_{f(p)}\Agot_*$ over all point $p\in A$ span $\C^n$; 
see \cite[Lemma 2.3]{AlarconForstnericLopez2016MZ}.

In order to get a $(\Igot,\Igot_0)$-invariant spray, the functions $h_i$ in \eqref{eq:flow-spray3} 
must be $\Igot$-invariant. Recall that $\Igot(\delta_1)=\delta_1$ while $\Igot(\delta_j)\cap\delta_j=\emptyset$
for $j=2,\ldots,l$. Each of the functions $h_i$ is extended to the curves $\Igot(\delta_j)$
for $j=2,\ldots,l$ by setting $h_i(\Igot(p))= \overline{h_i(p)}$ for $p\in \delta_j$.
The first curve $\delta_1=\alpha_0$ is of the form $\delta_1=\sigma\cup \Igot(\sigma)$,
where $\sigma$ is an embedded Jordan arc intersecting $\Igot(\sigma)$ only 
in both endpoints $q$, $\Igot(q)$.  We construct the functions $h_i$ first on $\sigma$, 
satisfying $h_i(\Igot(q))=\overline{h_i(q)}$, and extend them to $\Igot(\sigma)$  by setting 
$h_i(\Igot(p))= \overline{h_i(p)}$ for $p\in \sigma$, so they are  $\Igot$-invariant. 
(This only concerns the functions $h_i$ supported on $\delta_1$.)

Recall that the support $|\Bcal|$ of the union of the curves in $\Bcal$ is Runge in $A$;
hence we can approximate each $h_i$ by a holomorphic function $g_i\in\Oscr(A)$.
The holomorphic function $\tilde h_i = \frac{1}{2}\left(g_i+\overline{g_i\circ \Igot}\right)$ on $A$
is then $\Igot$-invariant and it approximates $h_i$ uniformly on $|\Bcal|$.
Replacing $h_i$ by $\tilde h_i$ in \eqref{eq:flow-spray3} furnishes a $(\Igot,\Igot_0)$-invariant 
holomorphic spray on $A$ with values in $\Agot_*$. If the approximations of $h_i$ by $\tilde h_i$ 
were sufficiently close, then this spray still satisfies the period domination property
in Proposition \ref{prop:period-dominating-spray}. 

Finally, to get the domination property of $F$ we insert additional flow terms into the definition of $F$
\eqref{eq:flow-spray3}. For further details, see \cite[Lemma 5.1]{AlarconForstneric2014IM}.
\end{proof}

%
%

\section{Banach manifold structure of the space $\CMI_\Igot^n(\Ncal)$}
\label{sec:structure}

A {\em compact bordered Riemann surface} is a compact Riemann surface, $\Ncal$, with nonempty boundary 
$b\Ncal\ne\emptyset$ consisting of finitely many Jordan curves. The interior $\mathring \Ncal=\Ncal\setminus b\Ncal$ 
of such an $\Ncal$ is said to be a {\em bordered Riemann surface}. By standard results
(see e.g.\ Stout \cite{Stout1965}), $\Ncal$ embeds as a compact domain with 
smooth real analytic boundary in an open (or closed) Riemann surface $\wt\Ncal$. In particular,
we can compactify $\Ncal$ by gluing a complex disk along each boundary curve of $\Ncal$
so that their complex structures match on the boundary.
Furthermore, a  fixed-point-free antiholomorphic involution $\Igot\colon \Ncal\to \Ncal$ 
extends by the reflection principle to an involution of the same kind on 
a neighborhoood of $\Ncal$ in $\wt \Ncal$.

Given $(\Ncal,\Igot)$ as above and an integer $n\geq 3$, we shall denote by 
\[
	\CMI_\Igot^n(\Ncal)
\]
the set of $\Igot$-invariant conformal immersions $\Ncal\to\r^n$ of class $\Cscr^1(\Ncal)$ 
which are harmonic on the interior $\mathring \Ncal$. (Compare with \eqref{eq:CMI}.)  
With a minor abuse of language, maps $X\in \CMI_\Igot^n(\Ncal)$ will be called 
$\Igot$-invariant conformal minimal immersions on $\Ncal$. Given an integer $r\in\N$, we set 
\[
	\CMI_\Igot^{n,r}(\Ncal)= \CMI_\Igot^n(\Ncal)\cap \Cscr^r(\Ncal,\R^n).
\]
By \cite[Theorem 3.1 (a)]{AlarconForstnericLopez2016MZ}, every $X\in \CMI_\Igot^{n,r}(\Ncal)$
can be approximated in the $\Cscr^r(\Ncal)$ topology by conformal minimal immersions
in a neighborhood of $\Ncal$ in any open Riemann surface $\wt \Ncal$ containing
$\Ncal$ as a smoothly bounded domain. By Corollary \ref{cor:noncritical}, the approximating
immersions may be chosen $\Igot$-invariant.

In this section, we prove the following result.

%
%

\begin{theorem}\label{th:structure}
Let $(\Ncal,\Igot)$ be a compact bordered Riemann surface with a fixed-point-free antiholomorphic involution.
Then, $\CMI_\Igot^{n,r}(\Ncal)$ is a real analytic Banach manifold for any $n\ge 3$ and $r\in\N$.
\end{theorem}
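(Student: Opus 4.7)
The plan is to reduce to showing that the set
\[
    \Mcal = \bigl\{ f \in \Ascr^{r-1}_\Igot(\Ncal,\Agot_*) : \Re\Pcal(f) = 0 \bigr\}
\]
is a real analytic Banach manifold, where $\Pcal$ is the period map \eqref{eq:periodmap} associated to a fixed $\Igot$-invariant nowhere vanishing holomorphic $1$-form $\theta$ on $\Ncal$ (Proposition~\ref{prop:Cor6.5}). Fixing a base point $p_0\in\Ncal$, the map $X \mapsto \bigl(X(p_0),\, 2\di X/\theta\bigr)$ is a real analytic bijection $\CMI_\Igot^{n,r}(\Ncal) \to \R^n \times \Mcal$ with real analytic inverse $(c,f) \mapsto c + \int_{p_0}^{\cdotp} 2\Re(f\theta)$, the $\Igot$-invariance of the primitive being guaranteed by Lemma~\ref{lem:invariant}(a). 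I would then equip $\Mcal$ with a Banach manifold structure by two successive applications of the implicit function theorem.

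For the first step, note that $\Ascr^{r-1}_\Igot(\Ncal,\C^n)$ is a real Banach space (the image of the projection $\Pi_\Igot$ from \eqref{eq:projections}) and that $Q(f) = \sum_{j=1}^n f_j^2$ is a polynomial, hence real analytic, map $Q : \Ascr^{r-1}_\Igot(\Ncal,\C^n) \to \Ascr^{r-1}_\Igot(\Ncal)$ with differential $dQ_f(g) = 2\sum_{j=1}^n f_j g_j$. The crux is that when $f$ takes values in $\Agot_*$, the components $f_1,\ldots,f_n$ have no common zero on $\Ncal$, and the $\Igot$-equivariant ideal theorem (Lemma~\ref{lem:maximal-ideals}) forces them to generate the unit ideal of $\Ascr^{r-1}_\Igot(\Ncal)$; hence $dQ_f$ is surjective. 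The implicit function theorem then realizes $\Ascr^{r-1}_\Igot(\Ncal,\Agot_*)$, cut out by $Q=0$ intersected with the open condition $f\neq 0$, as a real analytic Banach submanifold of $\Ascr^{r-1}_\Igot(\Ncal,\C^n)$.

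For the second step, the real period condition must be shown to be transverse. Because $\Igot_*\delta_1=\delta_1$, the identity \eqref{eq:invariance} forces $\int_{\delta_1}\Im(f\theta)=0$ for every $f\in\Ascr^{r-1}_\Igot(\Ncal,\Agot_*)$, so $\Pcal$ takes values in $\R^n\times(\C^n)^{l-1}$ on this submanifold, and Lemma~\ref{lem:partial-period}(b) identifies $\Mcal$ with $(\Re\Pcal)^{-1}(0)$. At each $f\in\Mcal$, Proposition~\ref{prop:period-dominating-spray} supplies an $(\Igot,\Igot_0)$-invariant dominating spray $F(p,\zeta)$ with core $f$ whose real-parameter variations $\frac{d}{dt}\big|_{t=0} F(\cdotp,t\bv)$ ($\bv\in\R^m$), by Lemma~\ref{lem:composition}, belong to the tangent space $T_f \Ascr^{r-1}_\Igot(\Ncal,\Agot_*)$ and map under $d\Pcal_f$ onto all of $\R^n\oplus(\C^n)^{l-1}$. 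Composing with $\Re$ yields surjection onto $\R^{nl}$, so $\Re\Pcal$ is a real analytic submersion to a finite-dimensional target, and $\Mcal$ is a closed real analytic Banach submanifold of finite codimension, completing the proof.

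The principal obstacle, to my mind, is the $\Igot$-equivariant ideal theorem invoked in the first step. The classical result for $\Ascr(\Ncal)$ (that a finite tuple of functions without common zeros generates the whole algebra) is standard; the passage to the $\Igot$-invariant algebra should proceed by symmetrization, for given any non-equivariant solution $\sum f_j g_j = h$ with $h$ and all $f_j$ being $\Igot$-invariant, the averaged coefficients $\widetilde g_j(p) := \frac{1}{2}\bigl(g_j(p) + \overline{g_j(\Igot(p))}\bigr)$ lie in $\Ascr^{r-1}_\Igot(\Ncal)$ and still satisfy $\sum f_j \widetilde g_j = h$, as one checks using $f_j\circ\Igot=\bar f_j$ and $h\circ\Igot=\bar h$. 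The careful formulation at the level of maximal ideals in the $\Igot$-invariant algebra (the content of the cited Lemma~\ref{lem:maximal-ideals}) is the one place where the argument genuinely departs from the orientable case treated in \cite{AlarconForstnericLopez2016MZ}.
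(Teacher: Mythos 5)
Your proposal reproduces the paper's own argument: the paper proves Theorem~\ref{th:structure} via Proposition~\ref{prop:Banach-structure}, which establishes exactly the two implicit-function-theorem steps you describe, using Lemma~\ref{lem:maximal-ideals} (with the same $\Igot$-symmetrization of the ideal generators) to get surjectivity of $dQ_f$ and Proposition~\ref{prop:period-dominating-spray} to get surjectivity of $d(\Re\Pcal)_f$ onto $\R^n\times(\C^n)^{l-1}$. The only slip is the factor of $2$ in your inversion formula: with $f=2\di X/\theta$ one has $dX=\Re(f\theta)$, so the inverse is $(c,f)\mapsto c+\int_{p_0}^{\cdot}\Re(f\theta)$, not $c+\int_{p_0}^{\cdot}2\Re(f\theta)$; this is immaterial to the Banach manifold structure.
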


An equivalent formulation of Theorem \ref{th:structure} is the following.

\begin{corollary}\label{cor:structure}
Let $\underline \Ncal$ be a compact non-orientable bordered surface endowed with a conformal structure.
The set of all conformal minimal immersions $\underline \Ncal\to\R^n$ of class $\Cscr^r(\UnNcal,\R^n)$ 
is a real analytic Banach manifold for any $n\ge 3$ and $r\in\n$. 
\end{corollary}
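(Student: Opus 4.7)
The plan is to identify $\CMI_\Igot^{n,r}(\Ncal)$ with the product of $\R^n$ and a real analytic Banach submanifold of the real Banach space $\Ascr^{r-1}_\Igot(\Ncal,\C^n)$. First, fix a nowhere vanishing $\Igot$-invariant holomorphic $1$-form $\theta$ on a neighborhood of $\Ncal$ (which exists by Proposition \ref{prop:Cor6.5} applied to a slight Riemann surface thickening of $\Ncal$) and fix a base point $p_0\in\Ncal$. The Weierstrass correspondence of Section \ref{sec:CMI}, combined with Lemma \ref{lem:invariant}, shows that the linear map
\[
    \Psi \colon \CMI_\Igot^{n,r}(\Ncal) \lra \R^n \times \Ascr^{r-1}_\Igot(\Ncal,\Agot_*), \qquad X\longmapsto \bigl(X(p_0),\,2\di X/\theta\bigr),
\]
is a bicontinuous bijection onto $\R^n\times \Mscr$, where
\[
    \Mscr = \bigl\{ f \in \Ascr^{r-1}_\Igot(\Ncal,\Agot_*) \, : \, \Re\,\Pcal(f\theta) = 0 \bigr\}
\]
and $\Pcal$ is the partial period map of \eqref{eq:periodmap} associated to an $\Igot$-basis $\Bcal^+=\{\delta_1,\ldots,\delta_l\}$ as in \eqref{eq:B0}. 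It therefore suffices to endow $\Mscr$ with a real analytic Banach manifold structure.

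Step one is to show that $\Ascr^{r-1}_\Igot(\Ncal,\Agot_*)$ is a real analytic Banach submanifold of $\Ascr^{r-1}_\Igot(\Ncal,\C^n)$. I would apply the implicit function theorem to the quadratic, real analytic map
\[
    Q \colon \Ascr^{r-1}_\Igot(\Ncal,\C^n) \lra \Ascr^{r-1}_\Igot(\Ncal),\qquad Q(f)=\sum_{j=1}^n f_j^2,
\]
whose zero set, restricted to the open subset of nowhere vanishing maps, coincides with $\Ascr^{r-1}_\Igot(\Ncal,\Agot_*)$ (note that $0\notin\Agot_*$). At any $f\in \Ascr^{r-1}_\Igot(\Ncal,\Agot_*)$ the differential $dQ_f(g)=2\sum_j f_j g_j$ must be shown to possess a bounded linear right inverse. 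Since the components $f_1,\dots,f_n$ have no common zero on $\Ncal$, the $\Igot$-equivariant version of the classical ideal theorem for the uniform algebra $\Ascr(\Ncal)$ (the content of Lemma \ref{lem:maximal-ideals}) yields $\Igot$-invariant functions $g_1^0,\dots,g_n^0 \in \Ascr^{r-1}_\Igot(\Ncal)$ satisfying $\sum_j f_j g_j^0 \equiv 1$, and then $R(h) := \tfrac12\,(h g_1^0,\dots,h g_n^0)$ is the desired bounded right inverse. The implicit function theorem then equips $\Ascr^{r-1}_\Igot(\Ncal,\Agot_*)$ with a real analytic Banach submanifold structure with tangent space $\ker dQ_f$ at $f$.

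Step two is the period condition. By Lemma \ref{lem:partial-period}, the vanishing of the real periods of $f\theta$ for all closed curves in $\Ncal$ is equivalent, for $\Igot$-invariant $f$, to the linear condition $\Re\,\Pcal(f\theta)=0$ valued in $\R^n\times (\C^n)^{l-1}$ (a finite-dimensional real vector space of total real dimension $nl$), the first factor being $\R^n$ because $\Igot_*\delta_1=\delta_1$ forces $\int_{\delta_1} f\theta\in\R^n$ by \eqref{eq:invariance}. Given $f_0\in \Ascr^{r-1}_\Igot(\Ncal,\Agot_*)$, Proposition \ref{prop:period-dominating-spray} produces a $(\Igot,\Igot_0)$-invariant spray $F\colon \Ncal\times U\to\Agot_*$ with core $f_0$ such that $d_\zeta|_{\zeta=0}\Pcal(F(\cdotp,\zeta))$ maps $\R^m$ onto $\R^n\times(\C^n)^{l-1}$. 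The assignment $\zeta\mapsto F(\cdotp,\zeta)$ is a real analytic map of a neighborhood of $0\in\R^m$ into $\Ascr^{r-1}_\Igot(\Ncal,\Agot_*)$, and postcomposition with $\Re\,\Pcal$ is a real analytic map with surjective differential at $0$. Hence $\Re\,\Pcal$ is a real analytic submersion of finite corank $nl$ on $\Ascr^{r-1}_\Igot(\Ncal,\Agot_*)$, and $\Mscr = (\Re\,\Pcal)^{-1}(0)$ inherits the structure of a real analytic Banach submanifold.

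The main obstacle in this plan is the surjectivity of $dQ_f$ in step one: one must know that the ideal generated by $f_1,\ldots,f_n$ in $\Ascr^{r-1}_\Igot(\Ncal)$ is the whole algebra as soon as the $f_j$ have no common zero. In the non-equivariant setting this is a consequence of the Arens--Bishop description of the maximal ideals of $\Ascr(\Ncal)$ as evaluation ideals at points of $\Ncal$, and one must verify that this description transfers to the $\Igot$-invariant subalgebra; this is precisely the content of Lemma \ref{lem:maximal-ideals}. Once this input is available, the rest is a direct application of the Banach implicit function theorem, with Proposition \ref{prop:period-dominating-spray} supplying the surjectivity of the period differential.
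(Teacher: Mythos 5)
Your proposal is correct and follows essentially the same route as the paper's proof (Theorem~\ref{th:structure} via Proposition~\ref{prop:Banach-structure}): reduce to the Weierstrass correspondence, show $\Ascr^{r-1}_\Igot(\Ncal,\Agot_*)$ is a real analytic Banach submanifold of $\Ascr^{r-1}_\Igot(\Ncal,\C^n)$ by the implicit function theorem using Lemma~\ref{lem:maximal-ideals} to invert the differential of the quadratic defining map, and then cut out the real-period-vanishing locus via the period dominating sprays of Proposition~\ref{prop:period-dominating-spray}. One small wording slip: the target of $\Re\,\Pcal$ is $\R^{nl}$ (the real-part projection of $\R^n\times(\C^n)^{l-1}$) rather than $\R^n\times(\C^n)^{l-1}$ itself, which has real dimension $n(2l-1)$; your stated corank $nl$ is nonetheless the correct one.
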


The corresponding result in the orientable case is \cite[Theorem 3.2 (b)]{AlarconForstnericLopez2016MZ};
however, flat immersions were excluded for technical reasons. This is not necessary
here since every immersion of a non-orientable surface to $\R^n$ is nonflat.

Let $\theta\in\Omega_\Igot(\Ncal)$ be a nowhere vanishing $\Igot$-invariant $1$-form on $\Ncal$,
and let $\Pcal$ denote the period map \eqref{eq:periodmap} associated to $\theta$
and a homology basis $\Bcal^+$ (see \eqref{eq:B0}). By the discussion in 
Section \ref{sec:CMI}, we have the following equivalence:
\[
	\text{$X\in \CMI_\Igot^{n,r}(\Ncal)$ if and only if  
	$f:=2\di X/\theta \in \Ascr^{r-1}_\Igot(\Ncal,\Agot_*)$ and $\Re \Pcal(f)=0$.}
\]
We obtain $X$ from $f$ by integration, $X(p)=\int^p \Re(f\theta)$ $(p\in \Ncal)$. 
Assuming as we may that $\Ncal$ is connected, this gives for every $r\in\N$ an isomorphism  
\[
	\CMI_\Igot^{n,r}(\Ncal) \cong \R^n \oplus \Ascr^{r-1}_{\Igot,0}(\Ncal,\Agot_*),
\]
where the first summand accounts for the choice of the initial value at some point $p_0\in\Ncal$
and the second summand equals 
\begin{equation}\label{eq:derivative-space}
	\Ascr^{r-1}_{\Igot,0}(\Ncal,\Agot_*)=\{f\in \Ascr^{r-1}_\Igot(\Ncal,\Agot_*):  \Re \Pcal(f)=0\}.
\end{equation}
This shows that Theorem \ref{th:structure} is an immediate consequence of the following result.

\begin{proposition}\label{prop:Banach-structure}
Let $(\Ncal,\Igot,\theta)$ be a standard triple \eqref{eq:triple}, where $\Ncal$ is a compact bordered 
Riemann surface with smooth boundary. Let $n\ge 3$ and $r\in \Z_+$ be integers, 
and let $\Agot$ denote the quadric \eqref{eq:null} in $\C^n$. Then, the following hold:
\begin{itemize}
\item[\rm (1)] The space $\Ascr^r(\Ncal,\Agot_*)$ is a locally closed direct complex Banach submanifold
of the complex Banach space $\Ascr^r(\Ncal,\C^n)$.
\vspace{1mm}
\item[\rm (2)] The space $\Ascr^r_{\Igot}(\Ncal,\Agot_*)$ is a locally closed direct real analytic Banach submanifold
of the real Banach space $\Ascr^r_{\Igot}(\Ncal,\C^n)$.
\vspace{1mm}
\item[\rm (3)] The space $\Ascr^{r-1}_{\Igot,0}(\Ncal,\Agot_*)$ (see \eqref{eq:derivative-space})  
is a closed real analytic Banach submanifold of finite codimension in the Banach manifold $\Ascr^r_{\Igot}(\Ncal,\Agot_*)$.
\end{itemize}
\end{proposition}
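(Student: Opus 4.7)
All three statements will follow from the Banach implicit function theorem applied to natural defining maps. For (1) and (2) the defining map is $\Phi(f)=\sum_{j=1}^n f_j^2$, and the key input is a Nullstellensatz for the algebra $\Ascr^r(\Ncal)$ in both the complex and the $\Igot$-invariant versions; this is precisely what Lemma \ref{lem:maximal-ideals} (referenced in the introduction) supplies. For (3) the defining map is $\Re\Pcal$, and the surjectivity of its differential on the tangent space is exactly the period-dominating property of the spray constructed in Proposition \ref{prop:period-dominating-spray}.

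\emph{Part (1).} The polynomial map $\Phi\colon \Ascr^r(\Ncal,\C^n)\to\Ascr^r(\Ncal)$, $\Phi(f)=\sum_j f_j^2$, is holomorphic between complex Banach spaces, and $\Ascr^r(\Ncal,\Agot_*)$ is open in $\Phi^{-1}(0)$. Its Fr\'echet differential at $f$ is $d\Phi_f(g)=2\sum_j f_j g_j$. When $f(\Ncal)\subset\Agot_*$ the components $f_1,\dots,f_n$ have no common zero, so Lemma \ref{lem:maximal-ideals} produces $h_1,\dots,h_n\in\Ascr^r(\Ncal)$ with $\sum_j f_j h_j=1$, depending holomorphically on $f$ in a neighborhood. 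Then $u\mapsto(\tfrac{1}{2}uh_1,\dots,\tfrac{1}{2}uh_n)$ is a bounded right inverse of $d\Phi_f$, and the holomorphic implicit function theorem yields (1).

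\emph{Part (2).} Since $\Phi(\overline{f\circ\Igot})=\overline{\Phi(f)\circ\Igot}$, the map $\Phi$ restricts to a real-analytic polynomial $\Phi_\Igot\colon \Ascr^r_\Igot(\Ncal,\C^n)\to\Ascr^r_\Igot(\Ncal)$. Given $f\in\Ascr^r_\Igot(\Ncal,\Agot_*)$ and multipliers $h_j$ from Part (1), symmetrize via the projection \eqref{eq:projections} to
\[
\tilde h_j:=\tfrac{1}{2}\bigl(h_j+\overline{h_j\circ\Igot}\bigr)\in\Ascr^r_\Igot(\Ncal).
\]
Using $f\circ\Igot=\bar f$ one checks that $\sum_j f_j\tilde h_j=\tfrac{1}{2}\bigl(1+\bar 1\bigr)=1$, so the same formula as in Part (1), with $\tilde h_j$ in place of $h_j$, gives a bounded right inverse of $d(\Phi_\Igot)_f$ landing inside $\Ascr^r_\Igot(\Ncal,\C^n)$. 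The real-analytic implicit function theorem yields (2).

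\emph{Part (3).} The period map $\Pcal$ is bounded and complex-linear, so $\Re\Pcal$ is bounded and real-linear. By Lemma \ref{lem:partial-period} and \eqref{eq:FluxJ*} its image on $\Ascr^r_\Igot(\Ncal,\C^n)$ automatically lands in the finite-dimensional space $\R^n\times(\C^n)^{l-1}$, since $\Igot_*\delta_1=\delta_1$ forces the imaginary part of the first period to vanish. To apply the implicit function theorem inside the Banach manifold of (2), I need the restriction of $\Re\Pcal$ to the tangent space $T_f\Ascr^r_\Igot(\Ncal,\Agot_*)$ to surject onto $\R^n\times(\C^n)^{l-1}$ at every point $f$. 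This is precisely what Proposition \ref{prop:period-dominating-spray} provides: a $(\Igot,\Igot_0)$-invariant period-dominating spray $F\colon \Ncal\times U\to\Agot_*$ with core $f$ yields, through $\xi\mapsto \di_\zeta|_{\zeta=0}F(\cdot,\xi)$ for $\xi\in U\cap\R^m$, a finite-dimensional family of tangent vectors in $T_f\Ascr^r_\Igot(\Ncal,\Agot_*)$ whose image under $\Re\Pcal$ exhausts $\R^n\times(\C^n)^{l-1}$. Finite-dimensional linear algebra supplies a bounded right inverse, and (3) follows; the codimension equals the real dimension $n(2l-1)$ of the target.

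\emph{Main obstacle.} The substantive ingredient is the $\Igot$-equivariant Nullstellensatz for $\Ascr^r(\Ncal)$ (Lemma \ref{lem:maximal-ideals}): one must know that the $h_j$ can be chosen in $\Ascr^r$ rather than merely continuously, and that they depend holomorphically on $f$ in a neighborhood so as to furnish a bounded right inverse of $d\Phi_f$ locally rather than just pointwise. Once this lemma is established, the proof proceeds by routine use of the implicit function theorem combined with the symmetrization trick of Part (2) and the spray machinery of Chapter \ref{ch:sprays}; the period part (3) is then automatic.
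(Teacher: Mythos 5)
Your proposal follows the paper's own route almost exactly: the defining map $\Phi(f)=\sum_j f_j^2$, the Corona-type Lemma~\ref{lem:maximal-ideals} to construct the splitting of $d\Phi_f$ (the paper packages it as the direct-sum decomposition $\Ascr^r(\Ncal,\C^n)=\ker\Theta\oplus E$ with $E=\{\psi g\}$, which is equivalent to your bounded right inverse), the $\Igot$-symmetrization of the multipliers for part (2), and the period-dominating spray of Proposition~\ref{prop:period-dominating-spray} for part (3). Two small points deserve a comment.

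First, the concern you raise at the end --- that the $h_j$ should depend holomorphically on $f$ "so as to furnish a bounded right inverse of $d\Phi_f$ locally rather than just pointwise" --- is not actually needed. The Banach implicit function theorem requires only that $d\Phi_f$ be split surjective \emph{at the single point} $f$ under consideration; the decomposition $\Ascr^r(\Ncal,\C^n)=\ker d\Phi_f\oplus E$ with the fixed complement $E=\{\psi g:g\in\Ascr^r(\Ncal)\}$ suffices, and one does not need to track how the splitting varies with $f$. The paper applies the theorem in exactly this pointwise form.

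Second, part (3) contains a small slip in the accounting. You write that $\Re\Pcal$ should surject onto $\R^n\times(\C^n)^{l-1}$, but $\Re\Pcal$ takes values in $(\R^n)^l$; the space $\R^n\times(\C^n)^{l-1}$ is where the full map $\Pcal$ lands on $\Igot$-invariant elements (the first period being automatically real). Since the differential of $\Pcal$ restricted to real spray parameters surjects onto $\R^n\times(\C^n)^{l-1}$, taking real parts gives surjectivity of $d(\Re\Pcal)$ onto $(\R^n)^l$, and the codimension of the submanifold $\{\Re\Pcal=0\}$ is therefore $nl$, not $n(2l-1)$. This does not affect the conclusion --- the paper only asserts finite codimension --- but the number you give is for the zero set of $\Pcal$, not of $\Re\Pcal$.
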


The  notion of a direct Banach submanifold will be explained in the proof. Although 
part (1) is known (see \cite{Forstneric2007AJM}), it is included because we give here 
a different proof which also applies to obtain parts (2) and (3).

\begin{proof}
Consider the map $\Psi : \Ascr^r(\Ncal,\C^n) \to \Ascr^r(\Ncal,\C)=\Ascr^r(\Ncal)$ given by
\[
	\Psi(f)=\Psi(f_1,f_2,\ldots,f_n) =\sum_{j=1}^n (f_j)^2.
\]
Clearly, $\Psi$ is a holomorphic map of complex Banach spaces which takes the closed real subspace
$\Ascr^r_{\Igot}(\Ncal,\C^n)$ of $\Ascr^r(\Ncal,\C^n)$ into the real subspace
$\Ascr^r_{\Igot}(\Ncal,\C)$ of the space  $\Ascr^r(\Ncal,\C)$. Note that 
\begin{eqnarray*}
	\Ascr^r_{\Igot}(\Ncal,\Agot_*)    &=&  \{f\in \Ascr^r_{\Igot}(\Ncal,\C^n \setminus\{0\}) : \Psi(f)=0\}, \\
	\Ascr^r_{\Igot,0}(\Ncal,\Agot_*) &=& \{f\in \Ascr^r_{\Igot,0}(\Ncal,\C^n \setminus\{0\}): \Psi(f)=0\}.
\end{eqnarray*}
Fix $f=(f_1,f_2,\ldots,f_n)\in \Ascr^r(\Ncal,\C^n)$. The differential 
\[
	\Theta:=D\Psi_f\colon \Ascr^r(\Ncal,\C^n) \to \Ascr^r(\Ncal)
\] 
is given on any tangent vector $g=(g_1,\ldots,g_n)\in \Ascr^r(\Ncal,\C^n)$ by
\begin{equation}\label{eq:Theta}
	\Theta(g) = \frac{\di}{\di t}\bigg|_{t=0} \Psi(f+tg) = 2 \sum_{j=1}^n f_j g_j.
\end{equation}
Note that $\Theta(hg)=h\Theta(g)$ for any $h\in   \Ascr^r(\Ncal)$.

Assume now that $f=(f_1,f_2,\ldots,f_n) \in \Ascr^r(\Ncal,\Agot_*)$. Then, the component functions
$f_j \in \Ascr^r(\Ncal)$ have no common zeros on $\Ncal$. We shall need the 
following result. (We thank E.\ L.\ Stout for pointing out the relevant 
references in a private communication.)

%
%
\begin{lemma}\label{lem:maximal-ideals}
Let $\Ncal$ be a compact bordered Riemann surface with smooth boundary,
and let $r\in\Z_+$. Given functions $f_1,\ldots, f_n\in\Ascr^r(\Ncal)$ without 
common zeros, there exist functions $g_1,\ldots, g_n\in\Ascr^r(\Ncal)$ such that
\begin{equation}\label{eq:sumfjgj}
	\sum_{j=1}^n f_j \, g_j =1. 
\end{equation}
If $\Ncal$ admits a fixed-point-free antiholomorphic involution $\Igot$
and $f_1,\ldots, f_n\in\Ascr^r_\Igot(\Ncal)$ have no common zeros, then there exist
$g_1,\ldots, g_n\in\Ascr^r_\Igot(\Ncal)$ satisfying \eqref{eq:sumfjgj}.
\end{lemma}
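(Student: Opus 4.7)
The plan is to reduce the $\Igot$-invariant assertion to the first via a symmetrization trick, so the substance of the lemma lies in the non-invariant version.

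For that version, my approach is to identify the maximal ideal space of the commutative Banach algebra $\Ascr^r(\Ncal)$ with $\Ncal$ itself. This is classical for $r=0$, where $\Ascr(\Ncal)$ is a uniform algebra on the bordered Riemann surface $\Ncal$. For $r\ge 1$, any character $\phi$ of $\Ascr^r(\Ncal)$ satisfies $|\phi(f)|\le \lim_{k\to\infty} \|f^k\|_{\Cscr^r}^{1/k} = \|f\|_{\infty}$ by the Leibniz inequality, so it is continuous in the sup-norm and extends, along the dense inclusion $\Ascr^r(\Ncal)\hookrightarrow \Ascr(\Ncal)$, to a character of $\Ascr(\Ncal)$; hence $\phi$ is a point evaluation at some $p\in\Ncal$. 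Consequently, if $f_1,\ldots,f_n\in\Ascr^r(\Ncal)$ have no common zero in $\Ncal$, the ideal they generate lies in no maximal ideal and so equals the unit ideal: $1=\sum_j f_j g_j$ with $g_j\in\Ascr^r(\Ncal)$.

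Granting the non-invariant version, the $\Igot$-equivariant case is obtained by averaging. Produce $g'_1,\ldots,g'_n\in\Ascr^r(\Ncal)$ with $\sum_j f_j g'_j = 1$, and set
\[
	g_j(p) \;:=\; \tfrac{1}{2}\bigl(g'_j(p) + \overline{g'_j(\Igot(p))}\bigr),\qquad p\in\Ncal.
\]
The map $p\mapsto \overline{g'_j(\Igot(p))}$ is holomorphic on $\mathring\Ncal$ because $\Igot$ is antiholomorphic, and of class $\Cscr^r$ on $\Ncal$, so $g_j\in\Ascr^r(\Ncal)$; moreover $g_j\circ\Igot=\overline{g_j}$ by construction, hence $g_j\in\Ascr^r_\Igot(\Ncal)$. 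Using the defining relation $\overline{f_j(p)}=f_j(\Igot(p))$, we compute
\[
	\sum_j f_j(p)\,\overline{g'_j(\Igot(p))} \;=\; \overline{\sum_j f_j(\Igot(p))\, g'_j(\Igot(p))} \;=\; \overline{1}\;=\;1,
\]
so combining with $\sum_j f_j g'_j=1$ gives $\sum_j f_j g_j=1$, as required.

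The main obstacle is justifying the maximal ideal space identification in the first step; once that is in hand, the symmetrization is routine. An attractive constructive alternative, should the algebraic route feel indirect, is to begin from the non-holomorphic ``partition of unity'' $\sigma_j := \bar f_j/\sum_k|f_k|^2 \in \Cscr^r(\Ncal)$, which satisfies $\sum_j f_j\sigma_j=1$, and to seek $g_j=\sigma_j-u_j$ by solving $\dibar u = \dibar\sigma$ for a $\Cscr^r$ section $u$ of the rank-$(n-1)$ holomorphic kernel bundle $\ker f$ over $\mathring\Ncal$; this $\dibar$-problem has a solution smooth up to $b\Ncal$ because $\Ncal$ embeds as a smoothly bounded domain in an open Stein Riemann surface, and the averaging above again yields the $\Igot$-invariant conclusion.
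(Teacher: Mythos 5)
Your proof is correct, and the symmetrization step is identical to the paper's. The difference lies in how you pass from $r=0$ to $r\ge 1$. The paper takes a hands-on route: it solves $\sum f_j g_j=1$ with $g_j\in\Ascr(\Ncal)$ using the $r=0$ theory of Arens, approximates each $g_j$ by $\tilde g_j\in\Ascr^r(\Ncal)$ (via Mergelyan-type density), observes that $\phi=\sum f_j\tilde g_j$ is uniformly close to $1$ and hence nowhere vanishing, and then divides: $G_j=\tilde g_j/\phi$. You instead identify the maximal ideal space of $\Ascr^r(\Ncal)$ with $\Ncal$ once and for all, by showing via the spectral radius formula $\lim_k\|f^k\|_{\Cscr^r}^{1/k}=\|f\|_\infty$ that every character of $\Ascr^r(\Ncal)$ is sup-norm continuous, hence extends across the dense inclusion $\Ascr^r(\Ncal)\hookrightarrow\Ascr(\Ncal)$ to a point evaluation. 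Both reductions use the same inputs ($\Ascr^r$ dense in $\Ascr$ in sup norm, and the Arens description of the maximal ideals of $\Ascr(\Ncal)$), but your argument is more structural and yields the stronger statement that the character space of $\Ascr^r(\Ncal)$ is $\Ncal$ rather than only the corona-type conclusion needed. The paper's division trick is more elementary in that it avoids invoking the spectral radius formula and the Banach-algebra structure of $\Ascr^r(\Ncal)$, but it delivers only what the lemma asks for. Your sketched $\dibar$-alternative is also a standard and viable route (solving the Koszul complex with estimates up to the boundary), though as stated it is only a sketch and would need the boundary regularity of the $\dibar$-solution operator made precise.
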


\begin{proof} 
When $\Ncal=\overline\D$ is the closed unit disk in the plane and $r=0$, 
this is an immediate consequence of the classical result that the maximal ideal space of 
the disk algebra $\Ascr(\D)$ equals $\overline\D$ (see e.g.\ Hoffman \cite[Corollary, p.\ 88]{Hoffman1962-book}). 
Precisely, every maximal ideal in $\Ascr(\D)$ equals $I_p=\{f\in  \Ascr(\D): f(p)=0\}$
for some point $p\in \overline\D$. (For the theory of closed ideals in $\Ascr(\D)$, see Rudin \cite{Rudin1957CJM}.)
The collection of all elements on the left hand side of  the equation \eqref{eq:sumfjgj} over all possible
choices of $g_1,\ldots,g_n\in\Ascr(\D)$ is an ideal of $\Ascr(\D)$. Since the functions $f_j$ 
have no common zeros, this ideal is not contained in any of the maximal ideals $I_p$ for $p\in\overline \D$,
and hence it equals $\Acal(\D)$.

The case of a general bordered Riemann surface $\Ncal$ follows from the disk case
as shown by Arens \cite{Arens1958RCMP}; here is a brief outline (still for the case $r=0$). 
Choose an Ahlfors map $h \colon \Ncal \to \overline\D$ (see Ahlfors \cite{Ahlfors1950CMH}). 
The map $h$ is a proper branched covering, unbranched over the boundary $b\D$. 
With only finitely many exceptions, each fiber $h^{-1}(z)$ for $z\in\overline \D$ has precisely $d$ points 
for some fixed integer $d\in\N$. The algebra $\Ascr(\Ncal)$ is integral over 
$h^*\Acal(\D)$, meaning that each $g\in \Acal(\Ncal)$ satisfies monic polynomial equation 
of degree $d$ with coefficients from $h^*\Acal(\D)$. The homomorphism 
$\lambda \colon \Acal(\D)\to \Acal(\Ncal)$ given by $\lambda(g)=g\circ h$ induces a dual 
map $\lambda^*$ from the maximal ideal space of $\Ascr(\Ncal)$ onto the maximal ideal space 
of $\Acal(\D)$ which equals $\overline\D$. The integrality condition implies that fibers of $\lambda$ contain at 
most $d$ points. The map $\lambda$ is an extension of the Ahlfors map $h$, so for $z\in \overline\D$, 
the fibers $\lambda^{-1}(z)$ and $h^{-1}(z)$ have the same number of points. They therefore coincide,
which shows that the maximal ideal space of $\Acal(\Ncal)$ is $\Ncal$. The existence of a solution
to the equation \eqref{eq:sumfjgj} then follows as in the case of the disk.

Assume now that $r>0$. By Mergelyan's theorem \cite{Mergelyan1951DAN},
$\Ascr^r(\Ncal)\subset \Ascr(\Ncal)$ is dense in $\Ascr(\Ncal)$. 
Given functions $f_1,\ldots, f_n\in\Ascr^r(\Ncal)$ without common zeros,  
choose $g_1,\ldots, g_n\in\Ascr(\Ncal)$ such that \eqref{eq:sumfjgj} holds.
Approximating each $g_j$ sufficiently closely by a function $\tilde g_j \in \Ascr^r(\Ncal)$,
the function $\phi=\sum_{j=1}^n f_j\tilde g_j\in \Ascr^r(\Ncal)$ has no zeros on $\Ncal$, and hence 
the functions $G_j=\tilde g_j/\phi\in  \Ascr^r(\Ncal)$ satisfy $\sum_{j=1}^n f_j G_j=1$.

Assume now that $f_1,\ldots, f_n\in\Ascr^r_\Igot(\Ncal)$.
Let $g_1,\ldots, g_n\in\Ascr^r(\Ncal)$ be chosen such that \eqref{eq:sumfjgj} holds.
Since $f_j=\bar f_j\circ \Igot$, the functions $G_j=\frac{1}{2}\left(g_j+ \bar g_j\circ\Igot \right) \in \Ascr^r_\Igot(\Ncal)$
also satisfy $\sum_{j=1}^n f_j G_j=1$ and we are done.
\end{proof}

We continue with the proof of Proposition \ref{prop:Banach-structure}. Lemma \ref{lem:maximal-ideals}
shows that for every $f=(f_1,\ldots,f_n) \in \Ascr^r(\Ncal,\Agot_*)$ 
there exists $\psi=(\psi_1,\ldots,\psi_n)\in \Ascr^r(\Ncal,\C^n)$ such that $\Theta(\psi) =1$ on $\Ncal$.
It follows that 
\begin{equation}\label{eq:iso2nd}
	\Theta(h \psi)=h\quad \text{for every $h \in  \Ascr^r(\Ncal)$}.
\end{equation}
The set
\[	
	E =  \{\psi g: g\in \Ascr^r(\Ncal)\} \subset \Ascr^r(\Ncal,\C^n)
\]
is a closed subspace of $\Ascr^r(\Ncal,\C^n)$, and we have a direct sum decomposition
\begin{equation}\label{eq:directsum}
	\Ascr^r(\Ncal,\C^n) = (\ker \Theta) \oplus E,\quad g=\left(g- \Theta(g)\psi\right) + \Theta(g)\psi.
\end{equation}
Equation \eqref{eq:iso2nd}  shows that the differential $\Theta= D\Psi_f$ maps $E$ 
isomorphically onto $\Ascr^r(\Ncal)$.
Hence, the implicit function theorem for maps of Banach spaces provides 
an open neighborhood $U$ of $f$ in $\Ascr^r(\Ncal,\C^n\setminus\{0\})$ such that
$U\cap \Ascr^r(\Ncal,\Agot_*)$ is a closed complex Banach submanifold
of $U$ whose tangent space at $f$ equals $\ker\Theta$. (We can locally represent
this submanifold as a graph over $\ker\Theta$.) This proves part (1) of Proposition \ref{prop:Banach-structure}. 

The notion of a {\em direct submanifold} refers to the fact that at every point of the submanifold, 
its tangent space admits a closed direct sum complement in the ambient Banach space.
This always holds for submanifolds of finite codimension, but in our case 
the normal direction corresponds to the infinite dimensional space $E\cong \Ascr^r(\Ncal)$.

Exactly the same proof gives part (2). Indeed, 
for any $f\in \Ascr^r_{\Igot}(\Ncal,\C^n\setminus\{0\})$ the differential $\Theta= D\Psi_f$ (see \eqref{eq:Theta}) 
maps $\Ascr^r_{\Igot}(\Ncal,\C^n)$ to $\Ascr^r_{\Igot}(\Ncal)$. 
Choosing $f\in \Ascr^r_{\Igot}(\Ncal,\Agot_*)$, Lemma \ref{lem:maximal-ideals}
gives  $\psi \in \Ascr^r_\Igot(\Ncal,\C^n\setminus\{0\})$ such that $\Theta(\psi)=1$.  It follows as before 
that $\Ascr^r_\Igot(\Ncal,\C^n) = (\ker \Theta) \oplus E$ as in \eqref{eq:directsum}, where 
\[
	E = \{\psi g : g\in \Ascr^r_\Igot(\Ncal)\} \subset \Ascr^r_\Igot(\Ncal,\C^n).
\]
The rest of the argument is exactly as in part (1).

It remains to prove part (3). Recall from \eqref{eq:derivative-space} that
\[
	\Ascr^{r}_{\Igot,0}(\Ncal,\Agot_*)= \{f\in \Ascr^{r}_\Igot(\Ncal,\Agot_*):  \Re \Pcal(f)=0\}.
\]
Note that the equation $\Re \Pcal(f)=0$ is linear on the Banach space  $\Ascr^{r}_\Igot(\Ncal,\C^n)$, 
and hence real analytic on $\Ascr^{r}_\Igot(\Ncal,\Agot_*)$.
Fix $f\in \Ascr^r_{\Igot,0}(\Ncal,\Agot_*)$. By Proposition \ref{prop:period-dominating-spray},
$f$ is the core $f=F(\cdotp,0)$ of a period dominating $(\Igot,\Igot_0)$-invariant
spray $F\colon \Ncal\times B\to \Agot_*$ for some ball $0\in B\subset\C^N$.
Hence, the equation $\Re \Pcal(f)=0$, defining $\Ascr^{r}_{\Igot,0}(\Ncal,\Agot_*)$ as a subset of 
the real analytic Banach manifold $\Ascr^{r}_\Igot(\Ncal,\Agot_*)$, has maximal rank at $f$. 
Since this amounts to finitely many scalar equations and every closed subspace 
of finite codimension in a Banach space is complemented, the implicit function 
theorem completes the proof. See \cite[proof of Theorem 3.2]{AlarconForstnericLopez2016MZ}
for more details. 
\end{proof}

%
%

\section{Basic approximation results}
\label{sec:basic}

The main result of this section is a basic Runge approximation theorem for
$\Igot$-invariant conformal minimal immersions; see Proposition \ref{prop:noncritical}.
This is the non-critical case in the proof of the Mergelyan theorems \ref{th:Mergelyan} and \ref{th:Mergelyan0}.
Its proof is based on the following lemma concerning the approximation of $\Igot$-invariant 
holomorphic maps into the punctured null quadric $\Agot_*$ \eqref{eq:null-quadric} 
with control of the periods.

%
%

\begin{lemma}[Approximate extension to a bump] \label{lem:bumping}
Let $(\Ncal,\Igot,\theta)$ be a standard triple \eqref{eq:triple},
and let $(A,B)$ be a very special { $\Igot$-invariant} Cartan pair in $\Ncal$ (see Definition \ref{def:Cartan}).
Let $\Pcal$ denote the period map \eqref{eq:periodmap} for the domain $A$.
Then, every map $f\in \Ascr_{\Igot}(A,\Agot_*)$ can be approximated uniformly on $A$ by 
maps $\tilde f\in \Ascr_{\Igot}(A\cup B,\Agot_*)$ satisfying $\Pcal(\tilde f)=\Pcal(f)$.
Furthermore, $\tilde f$ can be chosen such that there exists a homotopy $f_t\in \Ascr_{\Igot}(A,\Agot_*)$  
$(t\in [0,1])$, with $f_0=f$ and $f_1=\tilde f|_A$, such that $f_t$ is uniformly close to $f$ on $A$ and 
$\Pcal(f_t)=\Pcal(f)$ for each $t\in[0,1]$.
\end{lemma}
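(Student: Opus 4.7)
The plan is to produce $\tilde f$ in four stages: attach to $f$ a period-dominating $(\Igot,\Igot_0)$-invariant spray over $A$, extend it across the bump by Oka-theoretic approximation on the disk $B'$ combined with $\Igot$-symmetrization, glue the two sprays on the Cartan pair via Proposition~\ref{prop:gluing}, and finally correct the periods exactly by adjusting the spray parameter. Throughout write $C := A \cap B$; the very-special hypothesis gives $B = B' \cup \Igot(B')$ with $B' \cap \Igot(B') = \emptyset$ and $B'$, $C' := A \cap B'$ disks, so $C = C' \cup \Igot(C')$ with disjoint union.

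I would first use Proposition~\ref{prop:period-dominating-spray} to attach to $f$ a dominating and period-dominating $(\Igot,\Igot_0)$-invariant spray $F \in \Ascr_{\Igot,\Igot_0}(A \times U, \Agot_*)$ with core $F(\cdot,0) = f$, where $U \subset \C^m$ is an $\Igot_0$-symmetric ball about the origin. Because $\Agot_*$ is an Oka manifold (\cite[Example 4.4]{AlarconForstneric2015MA}) and $C' \subset B'$ is a pair of disks with $C'$ Runge in a neighborhood of $B'$, the restricted spray $F|_{C' \times U}$ can be approximated uniformly by a holomorphic spray $G' : W \times U \to \Agot_*$ on an open neighborhood $W$ of $B'$; concretely one approximates the holomorphic coefficients $h_i$ in the flow representation \eqref{eq:flow-spray2} of $F$ on $C'$ by holomorphic functions on $W$. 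The $\Igot$-symmetric extension
\[
G(p,\zeta) := \begin{cases} G'(p,\zeta), & p \in B',\\ \overline{G'(\Igot(p),\bar\zeta)}, & p \in \Igot(B'),\end{cases}
\]
is a well-defined $(\Igot,\Igot_0)$-invariant spray on $B \times U$, and since $F$ is itself $(\Igot,\Igot_0)$-invariant, $G$ approximates $F$ uniformly on all of $C \times U$. Applying Proposition~\ref{prop:gluing} to the special $\Igot$-invariant Cartan pair $(A,B)$ and the sprays $F$, $G$ then yields, for any prescribed $\epsilon > 0$, a $(\Igot,\Igot_0)$-invariant spray $H \in \Ascr_{\Igot,\Igot_0}((A \cup B) \times r'\B^m, \Agot_*)$ with $\|H - F\|_{0,A \times r'\B^m} < \epsilon$, provided the approximation of $F$ by $G$ was tight enough.

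To fix the periods I would consider the real-analytic map
\[
\Psi : r'\B^m \cap \R^m \longrightarrow \R^n \times (\C^n)^{l-1}, \qquad \Psi(\zeta) := \Pcal(H(\cdot,\zeta)),
\]
which is well-defined because Lemma~\ref{lem:composition} makes $H(\cdot,\zeta)$ $\Igot$-invariant for real $\zeta$, forcing its $\delta_1$-period to be real (cf.\ Remark~\ref{rem:domination}). By the gluing estimate, both $d\Psi(0)$ and $\Psi(0)$ are $O(\epsilon)$-close to the corresponding values for $F$, so by the period-domination of $F$ the differential $d\Psi(0)$ is surjective for $\epsilon$ small, and the implicit function theorem yields $\zeta_0 \in r'\B^m \cap \R^m$ close to $0$ with $\Psi(\zeta_0) = \Pcal(f)$. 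Setting $\tilde f := H(\cdot,\zeta_0)$, Lemma~\ref{lem:composition} gives $\tilde f \in \Ascr_\Igot(A \cup B, \Agot_*)$; the gluing estimate gives uniform closeness of $\tilde f$ to $f$ on $A$; and by construction $\Pcal(\tilde f) = \Pcal(f)$.

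For the homotopy I would invoke the Banach-manifold structure of Proposition~\ref{prop:Banach-structure} applied to the compact bordered Riemann surface $A$, adapted (by the same period-dominating-spray argument as in the proof of part (3) there) to the level sets of the full period map: the set $\mathcal{M}_f := \{g \in \Ascr_\Igot(A, \Agot_*) : \Pcal(g) = \Pcal(f)\}$ is a closed real analytic Banach submanifold of finite codimension in $\Ascr_\Igot(A, \Agot_*)$, containing both $f$ and $\tilde f|_A$. In a local chart around $f$ (constructed from the period-dominating spray $F$ exactly as in the proof of Proposition~\ref{prop:Banach-structure}), $\tilde f|_A$ corresponds to a point close to the origin by uniform closeness, so a straight-line segment in the chart pulls back to a smooth path $\{f_t\}_{t \in [0,1]}$ in $\mathcal{M}_f$ joining $f_0 = f$ to $f_1 = \tilde f|_A$ with every $f_t$ uniformly close to $f$. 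The main technical obstacle is calibrating the quantitative constants so that the gluing is fine enough that (i) the implicit function theorem applies to $\Psi$ near $0$ to supply a period-preserving $\zeta_0$, and simultaneously (ii) the resulting $\tilde f|_A$ lies inside a single Banach chart of $\mathcal{M}_f$ around $f$ so that a straight-line homotopy is available. This is the step at which the $(\Igot,\Igot_0)$-invariant gluing of Proposition~\ref{prop:gluing}, the period-dominating spray of Proposition~\ref{prop:period-dominating-spray}, and the real-structure constraint (that $\Igot$-invariance forces $\int_{\delta_1}\Im\phi = 0$) all have to cooperate.
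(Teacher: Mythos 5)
Your proposal is correct and follows essentially the same route as the paper: attach a period-dominating $(\Igot,\Igot_0)$-invariant spray, approximate on the disk $B'$ by the Oka property of $\Agot_*$, extend by $\Igot$-symmetrization, glue via Proposition~\ref{prop:gluing}, and pick a real parameter $\zeta_0$ via the implicit function theorem to restore the periods. The one place you deviate is the homotopy: you pass to the Banach submanifold $\mathcal{M}_f=\{g\in\Ascr_\Igot(A,\Agot_*):\Pcal(g)=\Pcal(f)\}$ and take a short path in a chart, whereas the paper stays in the spray's parameter space $\Ascr_\Igot(A,r\B^m)$, observes from the gluing formula that $\tilde f|_A=F(\cdotp,\alpha(\cdotp))$ with $\alpha(p)=\zeta_0+a(p,\zeta_0)$, shows the level set $\{\alpha:\Pcal(F_\alpha)=\Pcal(f)\}$ is a local Banach submanifold by the same IFT argument, and sets $f_t=F(\cdotp,\alpha_t(\cdotp))$ along a path $\alpha_t$ there. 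Both routes work; the paper's is marginally more self-contained because it uses only the spray $F$ and the splitting map $a$ already in hand, while yours leans on a variant of Proposition~\ref{prop:Banach-structure}(3) (with the full, not just real, period constraint), which you correctly note needs the same period-dominating-spray input. One small imprecision: in the approximation stage over $B'$ you mention only approximating the flow coefficients $h_i$, but you must simultaneously approximate the core $f|_{C'}$ by a holomorphic map $B'\to\Agot_*$ (possible since $\Agot_*$ is Oka and $C'$ is Runge in $B'$); the paper covers both at once by approximating the whole spray map into $\Agot_*$.
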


\begin{proof}
Let $\Bcal^+=\{\delta_1,\ldots,\delta_l\}$ be the period basis \eqref{eq:B0} for the domain $A$. 
Since $A$ contains all nontrivial topology of $A\cup B$, $\Bcal^+$ is then also a period basis for $A\cup B$. 
Let $\Pcal$ denote the associated period map \eqref{eq:periodmap}.

By Proposition \ref{prop:period-dominating-spray} there exist a ball $r\B^m \subset \C^m$
for some $m\in\N$ and $r>0$ and a dominating and period dominating spray 
$F\colon A\times r\B^m \to\Agot_*$ of class $\Ascr_{\Igot,\Igot_0}(A)$ 
with the core $F(\cdotp,0)=f$.  Note that $\Pcal_1(f)=\int_{\delta_1} f\theta \in \R^n$ since $f\theta$ is $\Igot$-invariant. 

By the definition of a very special Cartan pair (see Definition \ref{def:Cartan}), we have
$B=B'\cup \Igot(B')$ where the sets $B'$ and $C'=A\cap B'$ are disks and $B'\cap \Igot(B')=\emptyset$.
Pick a number $r'\in (0,r)$. Since $\Agot_*$ is an Oka manifold, we can 
approximate $F$ as closely as desired uniformly on $C'\times r'\B^m$  by a holomorphic spray 
$G\colon  B'\times r'\B^m\to \Agot_*$ (cf.\ \cite[Theorem 5.4.4, p.\ 193]{Forstneric2011-book}).
We extend $G$ to $\Igot(B')\times r'\B^m$ by setting
\begin{equation}\label{eq:simmetrization}
	G(p,\zeta) = \overline{G\big(\Igot(p),\bar\zeta\big)} \quad 
	\text{ for $p\in \Igot(B')$ and $\zeta\in r'\B^m$.}
\end{equation}
Hence, $G$ is a $(\Igot,\Igot_0)$-invariant holomorphic spray on $B\times r'\B^m$ with values in $\Agot_*$.

By Proposition \ref{prop:gluing} (see also Remark \ref{rem:estimates}) there is a number $r''\in (0,r')$
with the following property: For any spray $G$ as above such that 
$||F-G||_{0,C\times r \B^m}$ is small enough there is a spray 
$H \colon (A\cup B)\times r''\B^m \to \Agot_*$ of class $\Ascr_{\Igot,\Igot_0}(A\cup B)$ 
(obtained by gluing $F$ and $G$), and we can estimate $||H-F||_{0,A\times r''\B^m}$ 
in terms of $||F-G||_{0,C\times r' \B^m}$. 
If $||H-F||_{0,A\times r''\B^m}$ is small enough, the period domination property of $F$ 
gives a point $\zeta_0 \in r''\B^m \cap \R^m$ close to $0$ such that the $\Igot$-invariant map 
\begin{equation}\label{eq:tildef}
	\tilde f = H(\cdotp,\zeta_0) \colon A\cup B\to \Agot_*  
\end{equation}
of class $\Ascr(A\cup B)$ approximates $f$ uniformly on $A$ and satisfies $\Pcal(f)=\Pcal(\tilde f)$.

It remains to explain the existence of a homotopy $f_t$ in the lemma.
Consider the space $\Ascr_{\Igot}(A,r \B^m)$ of all  $\Igot$-invariant maps $\alpha\colon A\to r \B^m$;
this is an open neighborhood of the origin in the real Banach space $\Ascr_{\Igot}(A,\C^m)$. 
Let $F\colon A\times r\B^m \to\Agot_*$ be a $(\Igot,\Igot_0)$-invariant spray as above. 
For every $\alpha\in \Ascr_{\Igot}(A,r\B^m)$ the map
\[
	F_\alpha\colon A\to \Agot_*,\quad F_\alpha(p)=F(p,\alpha(p))\ \ (p\in A)
\]
is $\Igot$-invariant according to Lemma \ref{lem:invariant}, and hence
\[
	\Pcal(F_\alpha) \in \R^n\times (\C^n)^{l-1} \quad \text{for all}\ \  \alpha \in \Ascr_{\Igot}(A,r\B^m).
\]
(The first factor on the right hand side is $\R^n$ and not $\C^n$; cf.\ Remark \ref{rem:domination}.)
Since $\Ascr_{\Igot}(A,r\B^m)$ contains all constant real maps $A\to \zeta \in \R^m\cap r\B^m$,
the period domination property of $F$ (see \eqref{eq:dif-period}) implies that the map
\[
	\Ascr_{\Igot}(A,r \B^m) \ni \alpha \longmapsto \Pcal(F_\alpha) \in \R^n\times (\C^n)^{l-1} 
\]
is submersive at $\alpha=0$. It follows from the implicit function theorem in Banach spaces
that, locally near $\alpha=0$, the set
\begin{equation}\label{eq:constantperiod}
	 \Ascr_{\Igot,f}(A,r \B^m)  = \{\alpha \in \Ascr_{\Igot}(A,r \B^m) : \Pcal(F_\alpha)=\Pcal(F_0)=\Pcal(f)\}
\end{equation}
is a real analytic Banach submanifold of the Banach space $\Ascr_{\Igot}(A,\C^m)$.
More precisely, we can represent it locally as a graph over the kernel of the differential of the period map
at $\alpha=0$; the latter is a closed linear subspace of $\Ascr_{\Igot}(A,\C^m)$ of finite codimension, 
and hence it admits a closed complementary subspace.

By the proof of Proposition \ref{prop:gluing} (see in particular \eqref{eq:FaGb}), 
the spray $H$ obtained by gluing $F$ and $G$ is of the following form over the domain $A$:
\[
	H(p,\zeta) = F(p,\zeta+a(p,\zeta)),\quad p\in A,\ \zeta\in r''\B^m,
\]
where $a\in \Ascr_{\Igot,\Igot_0}(A\times r''\B^m,\C^m)$ is close to the zero map. 
Let $\zeta_0 \in r'' \B^m \cap \R^m$ be the point determining the map $\tilde f$ (see \eqref{eq:tildef}),
and set 
\[	
	\alpha(p)=\zeta_0 + a(p,\zeta_0),\quad  p\in A. 
\]
Since $\Pcal(\tilde f)=\Pcal(f)$ by the choice of $\zeta_0$, the map
$\alpha$ belongs to the space $\Ascr_{\Igot,f}(A, r\B^m)$ \eqref{eq:constantperiod}. 
Assuming as we may that $\alpha_0$ is sufficiently close to the origin 
where this space is a local Banach manifold, we may connect it to the origin by 
a path $\alpha_t\in \Ascr_{\Igot,f}(A,r \B^m)$ $(t\in [0,1])$ such that $\alpha_0=0$ and $\alpha_1=\alpha$. Set
\[
	f_t(p) = F(p,\alpha_t(p)),\quad  p\in A,\ t\in[0,1].
\]
By the construction, the homotopy $f_t\colon A \to \Agot_*$ consists of maps of class 
$\Acal_\Igot(A)$ which are close to $f=f_0$ and satisfy $\Pcal(f_t)=\Pcal(f)$ for all $t\in[0,1]$.
\end{proof}

Let $(\Ncal,\Igot)$ be a standard pair \eqref{eq:pair}, and let 
$\iota \colon \Ncal \to  \UnNcal=\Ncal/\Igot$ denote the quotient projection. By pulling back a strongly subharmonic 
exhaustion function $\underline{\rho}\colon \UnNcal\to\R$, we get a strongly subharmonic exhaustion function 
$\rho=\underline{\rho}\circ\iota\colon \Ncal\to\R$ which is $\Igot$-invariant in the sense that $\rho\circ\Igot=\rho$.

%
%

\begin{proposition}
\label{prop:noncritical}
Let $(\Ncal,\Igot,\theta)$ be a standard triple \eqref{eq:triple}.
Assume that $\rho\colon \Ncal\to \R$ is a smooth $\Igot$-invariant exhaustion function
which has no critical values in $[a,b]$ for some $a<b$. Set $A=\{\rho\le a\}$ and $\wt A=\{\rho\le b\}$. 
Let $n\geq 3$ and let $\Agot_*\subset\c^n$ be the null quadric \eqref{eq:null-quadric0}. 
Then, every map $f\in\Ascr_\Igot(A,\Agot_*)$ can be approximated uniformly on $A$ 
by maps $\tilde f\in\Ascr_\Igot(\wt A,\Agot_*)$ such that $(\tilde f-f)\theta$ is exact on $A$.
Furthermore, $\tilde f$ can be chosen such that there is a homotopy 
$f_t\in\Ascr_\Igot(A,\Agot_*)$ $(t\in [0,1])$ satisfying $f_0=f$, $f_1=\tilde f|_A$, 
and for every $t\in [0,1]$ the difference $(f_t-f)\theta$ is uniformly small and exact on $A$.
\end{proposition}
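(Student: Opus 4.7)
The plan is to reduce the proposition to a finite iteration of the bumping Lemma \ref{lem:bumping}. Because $\rho$ has no critical values on $[a,b]$, the set $\wt A$ deformation-retracts onto $A$ along the gradient flow of $\rho$ with respect to an $\Igot$-invariant Riemannian metric (obtained by averaging any smooth metric under $\{\Id,\Igot\}$). Consequently the inclusion $A\hookrightarrow\wt A$ induces an isomorphism on $H_1$, and a period basis $\Bcal^+$ as in \eqref{eq:B0} for $A$ also serves as one for every intermediate compactum. By Lemma \ref{lem:partial-period}(a), the $\Igot$-invariance of $(\tilde f-f)\theta$ reduces the required exactness on $A$ to the finite-dimensional condition $\Pcal(\tilde f)=\Pcal(f)$, where $\Pcal$ is the period map \eqref{eq:periodmap} associated to $(\Ncal,\Igot,\theta)$.

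\textbf{Step 1: decomposition into $\Igot$-symmetric bumps.} Using the $\Igot$-equivariant gradient flow of $\rho$ and the fact that $\Igot$ is fixed-point-free, I cover $\overline{\wt A\setminus\mathring A}$ by finitely many small topological disks $B'_1,\ldots,B'_N\subset\Ncal$ such that each $B'_k$ is disjoint from its image $\Igot(B'_k)$, and such that if $A_0:=A$ and $A_k:=A_{k-1}\cup B'_k\cup\Igot(B'_k)$, then at each stage $A_{k-1}\cap B'_k$ is a disk and $(A_{k-1},\,B'_k\cup\Igot(B'_k))$ is a very special $\Igot$-invariant Cartan pair in the sense of Definition \ref{def:Cartan}, with $A_N=\wt A$.

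\textbf{Step 2: iterative bumping.} Starting from $f_{(0)}:=f$, apply Lemma \ref{lem:bumping} inductively: at stage $k$ one obtains $f_{(k)}\in\Ascr_\Igot(A_k,\Agot_*)$ approximating $f_{(k-1)}$ uniformly on $A_{k-1}$ within $\epsilon/2^k$ of a prescribed tolerance $\epsilon>0$, satisfying $\Pcal(f_{(k)})=\Pcal(f_{(k-1)})=\Pcal(f)$, together with a homotopy $f^t_{(k)}\in\Ascr_\Igot(A_{k-1},\Agot_*)$ from $f_{(k-1)}$ to $f_{(k)}|_{A_{k-1}}$ along which $\Pcal$ is constant. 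After $N$ steps, $\tilde f:=f_{(N)}\in\Ascr_\Igot(\wt A,\Agot_*)$ is $\epsilon$-close to $f$ on $A$ with $\Pcal(\tilde f)=\Pcal(f)$, as required. Concatenating and reparametrizing the restrictions to $A$ of the paths $f^t_{(k)}$ yields the homotopy $f_t$ on $A$ with $\Pcal(f_t)\equiv\Pcal(f)$.

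The main obstacle is Step 1, specifically the treatment of boundary components of $A$ that are setwise preserved by $\Igot$ (such as the curve $\alpha_0$ arising in Section \ref{sec:homology}). On such a circle $C$, the restriction $\Igot|_C$ is a fixed-point-free orientation-reversing involution of a circle, so the disks $B'_k$ attached across $C$ must be chosen inside a fundamental domain for $\Igot|_C$ in a collar of $C$; near the two endpoints of the fundamental domain one instead uses a single $\Igot$-symmetric bump $B'_k\cup\Igot(B'_k)$ consisting of two small disjoint disks exchanged by $\Igot$, which automatically forms a very special pair. Boundary components that are swapped in $\Igot$-pairs are handled by constructing bumps on one component and transporting them by $\Igot$ to the other, which is routine.
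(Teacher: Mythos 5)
Your proposal is correct and takes essentially the same approach as the paper: decompose $\overline{\wt A\setminus\mathring A}$ into a finite sequence of very special $\Igot$-invariant Cartan bump pairs and iterate Lemma \ref{lem:bumping}. The only (cosmetic) difference is in how the bump decomposition is produced: the paper simply invokes the standard noncritical bump lemma for the induced exhaustion function $\underline{\rho}$ on the quotient surface $\UnNcal=\Ncal/\Igot$ and then pulls back under the double cover — each simply connected bump downstairs lifts automatically to a disjoint pair $B'\cup\Igot(B')$ upstairs — which sidesteps the case analysis you carry out for $\Igot$-invariant boundary circles; your direct construction on $\Ncal$ is also fine but fiddlier.
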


\begin{proof}
By elementary arguments we can find an increasing sequence of $\Igot$-invariant 
compact sets $A=A_0\subset A_1\subset\cdots\subset A_k=\wt A$ such that for every 
$i=0,\ldots, k-1$ we have $A_{i+1}=A_i\cup B_i$, where $(A_i,B_i)$ is a very special
Cartan pair (see Definition \ref{def:Cartan}). 
We can get such a sequence for example by using \cite[Lemma 5.10.3, p.\ 218]{Forstneric2011-book}
with the function $\underline{\rho}$ in the quotient surface $\UnNcal=\Ncal/\Igot$.
It remains to apply Lemma \ref{lem:bumping} to each pair $(A_i,B_i)$.
\end{proof}

%
%

\begin{corollary}[Mergelyan approximation theorem in the noncritical case]
\label{cor:noncritical}
Let $(\Ncal,\Igot)$ and $A\subset \wt A\subset \Ncal$ be as in Proposition \ref{prop:noncritical}.
Every $\Igot$-invariant conformal minimal immersion $X\colon A \to \R^n$
of class $\Cscr^1(A)$ can be approximated in the $\Cscr^1(A)$ topology by 
$\Igot$-invariant conformal minimal immersions $\wt X\colon \wt A \to \R^n$
satisfying $\Flux_{\wt X}=\Flux_X$. Furthermore, $\wt X$ can be chosen such that
there exists a homotopy of $\Igot$-invariant conformal minimal immersion $X_t\colon A \to \R^n$ 
$(t\in [0,1])$, with $X_0=X$ and $X_1=\wt X$, such that for every $t\in [0,1]$ the 
map $X_t$ is close to $X$ in $\Cscr^1(A,\R^n)$ and satisfies $\Flux_{X_t}=\Flux_X$.
\end{corollary}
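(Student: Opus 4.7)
The plan is to reduce the corollary to Proposition~\ref{prop:noncritical} via the Weierstrass correspondence of Section~\ref{sec:CMI}. Set $f = 2\di X/\theta$. Since $X$ is an $\Igot$-invariant conformal minimal immersion, $f \in \Ascr_\Igot(A,\Agot_*)$. Apply Proposition~\ref{prop:noncritical} to $f$ to obtain an approximating $\tilde f\in\Ascr_\Igot(\wt A,\Agot_*)$ together with a homotopy $f_t\in\Ascr_\Igot(A,\Agot_*)$ from $f_0=f$ to $f_1=\tilde f|_A$, such that $(f_t-f)\theta$ is uniformly small and exact on $A$ for every $t\in[0,1]$.

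Next, integrate. Recall that $dX=\Re(f\theta)$, and for every closed curve $\gamma$ in $A$ one has $\Re\int_\gamma f\theta=0$ and $\Im\int_\gamma f\theta=\tfrac{1}{2}\Flux_X(\gamma)$ (modulo the normalization in \eqref{eq:Flux}). Since $(\tilde f-f)\theta$ is exact on $A$, all its periods vanish on cycles in $A$. Because $\rho$ has no critical values in $[a,b]$, the inclusion $A\hookrightarrow\wt A$ is a homotopy equivalence, so $H_1(A;\Z)\to H_1(\wt A;\Z)$ is an isomorphism; hence the holomorphic $1$-form $(\tilde f-f)\theta$ has vanishing periods on all cycles in $\wt A$ and is therefore exact on $\wt A$. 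Consequently, the real periods of $\tilde f\theta$ vanish on $\wt A$ and its imaginary periods equal those of $f\theta$. Fix a basepoint $p_0\in A$ and define
\[
	\wt X(p) = X(p_0)+\int_{p_0}^p \Re(\tilde f\theta),\qquad p\in\wt A.
\]
By Lemma~\ref{lem:invariant}(a), $\wt X$ is a well-defined $\Igot$-invariant harmonic map on $\wt A$ with $2\di\wt X=\tilde f\theta$; since $\tilde f$ takes values in $\Agot_*$, the discussion in Section~\ref{sec:CMI} shows that $\wt X\in\CMI^n_\Igot(\wt A)$. Moreover, $\Flux_{\wt X}=\Flux_X$ by the period computation above.

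For the homotopy, set analogously
\[
	X_t(p) = X(p_0)+\int_{p_0}^p \Re(f_t\theta),\qquad p\in A,\ t\in[0,1].
\]
Exactness of $(f_t-f)\theta$ on $A$ ensures that the real periods of $f_t\theta$ vanish on $A$ and that $\Flux_{X_t}=\Flux_X$ for every $t$; Lemma~\ref{lem:invariant}(a) again yields that each $X_t$ is a $\Igot$-invariant conformal minimal immersion on $A$, with $X_0=X$ and $X_1=\wt X|_A$. The $\Cscr^1(A)$-closeness of $X_t$ (and of $\wt X|_A$) to $X$ follows from the uniform smallness of $f_t-f$ on $A$: the derivatives satisfy $dX_t-dX=\Re((f_t-f)\theta)$, which is uniformly small, and the common basepoint normalization controls the $\Cscr^0$-difference by integration over paths in the compact set $A$.

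The substantive work is entirely contained in Proposition~\ref{prop:noncritical} (and ultimately in Lemma~\ref{lem:bumping}, where $\Igot$-invariant sprays are glued across a very special $\Igot$-invariant Cartan pair with period control provided by Proposition~\ref{prop:period-dominating-spray}). The only remaining potential obstacle is the translation from approximation of derivatives to $\Cscr^1$-approximation of the immersions themselves together with flux preservation; this is handled by combining the noncritical topological identification $H_1(A;\Z)=H_1(\wt A;\Z)$ with the automatic $\Igot$-invariance of the primitive produced by Lemma~\ref{lem:invariant}(a).
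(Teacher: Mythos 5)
Your proof is correct and follows essentially the same route as the paper: reduce to the map $f=2\di X/\theta$, invoke Proposition \ref{prop:noncritical}, and recover $\wt X$ and the homotopy $X_t$ by integrating the real parts, using the fact that $A\hookrightarrow\wt A$ induces an isomorphism on $H_1$. One small slip in phrasing: you write that ``the holomorphic $1$-form $(\tilde f-f)\theta$ has vanishing periods on all cycles in $\wt A$ and is therefore exact on $\wt A$,'' but $(\tilde f-f)\theta$ is only defined on $A$ (where $f$ lives); what you mean---and what the argument actually requires---is that every cycle in $\wt A$ is homologous to one in $A$, so $\Re\int_\gamma \tilde f\theta=\Re\int_\gamma f\theta=0$ and $\Im\int_\gamma\tilde f\theta=\Im\int_\gamma f\theta$ for all $\gamma\in H_1(\wt A;\Z)$, which is exactly what is needed for $\wt X$ to be well defined on $\wt A$ with $\Flux_{\wt X}=\Flux_X$. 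The paper reaches the same conclusion by phrasing the exactness condition via the period map $\Pcal$ and Lemma \ref{lem:partial-period}; the two formulations are equivalent for $\Igot$-invariant data, so the proofs are in substance identical.
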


\begin{proof}
Let $\theta\in \OmI(\Ncal)$ be a nowhere vanishing $\Igot$-invariant 
$1$-form,  $\Bcal^+ \subset \Bcal$ be a period basis for $H_1(A;\Z)$ (cf.\ \eqref{eq:B0}),
and $\Pcal\colon\Ascr(\Ncal,\C^n)\to (\C^n)^l$ be the period map \eqref{eq:periodmap} 
associated to $\Bcal^+$ and $\theta$. 
The map $f=2\di X/\theta \colon A\to \Agot_*$  is of class $\Ascr_\Igot(A)$.
Proposition \ref{prop:noncritical} furnishes a map $\tilde f\colon \wt A\to \Agot_*$ 
of class $\Ascr_\Igot(\wt A)$ which approximates $f$ as closely as desired uniformly on $A$
and satisfies $\Pcal (\tilde f)=\Pcal(f)$; in particular, $\Re\Pcal(\tilde f)=0$. 
Note that $A$ is a strong deformation retract of $\wt A$ and hence these two domains
have the same topology. 
By Lemma  \ref{lem:partial-period}, the $1$-form $\Re(\tilde f\theta)$ integrates to a  
$\Igot$-invariant conformal minimal immersion $\wt X\colon \wt A\to \R^n$ with the same flux as $X$.
Similarly, for each map in the homotopy 
$f_t\colon A\to \Agot_*$ $(t\in [0,1])$ furnished by Proposition \ref{prop:noncritical}, 
the $1$-form $\Re(\tilde f_t \theta)$ integrates to a $\Igot$-invariant conformal minimal 
immersion $X_t \colon A\to \R^n$ with the same flux as $X$. Since $f_0=f$ and $f_1=\tilde f$, 
a suitable choice of constants of integration ensures that $X_0=X$ and $X_1=\wt X|_A$.
\end{proof}

\begin{corollary}[A local Mergelyan theorem for conformal minimal immersions]
\label{cor:local-Mergelyan}
Let $(\Ncal,\Igot)$ be a standard pair \eqref{eq:pair} and $K$ be a compact $\Igot$-invariant domain 
with $\Cscr^1$ boundary in $\Ncal$. Every $\Igot$-invariant  conformal minimal immersion 
$X\in \CMI^n_\Igot(K)$ $(n\ge 3)$ can be approximated in the $\Cscr^1(K)$-topology by $\Igot$-invariant 
conformal minimal immersions in an open neighborhood of $K$ in $\Ncal$.
\end{corollary}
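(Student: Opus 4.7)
The plan is to realize $K$ as a smooth $\Igot$-invariant sublevel set $\{\rho \le 0\}$ of a suitable function $\rho$ on $\Ncal$, and then invoke Corollary \ref{cor:noncritical} to extend $X$ across a slightly larger sublevel set, which will provide the desired open neighborhood of $K$.

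First, I would construct a smooth $\Igot$-invariant function $\rho\colon \Ncal \to \R$ such that $K = \{\rho \le 0\}$, $d\rho$ is nonvanishing on $bK$, and $\{\rho \le \epsilon_0\}$ is compact in $\Ncal$ for some $\epsilon_0 > 0$. The $\Igot$-equivariance is most cleanly handled by passing to the quotient: $\underline K := \iota(K)$ is a compact smoothly bounded domain in $\UnNcal = \Ncal/\Igot$, so a tubular neighborhood of $b\underline K$ provides a smooth local defining function, which I would extend smoothly to $\UnNcal$ so that some sublevel set $\{\underline\rho \le \epsilon_0\}$ is compact (for instance by gluing outside a neighborhood of $\underline K$ against a smooth exhaustion of $\UnNcal$). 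Since $\iota$ is a proper local diffeomorphism, the pullback $\rho := \underline\rho \circ \iota$ is $\Igot$-invariant and inherits the desired properties: $\{\rho \le 0\} = \iota^{-1}(\underline K) = K$, $d\rho = \iota^* d\underline\rho$ does not vanish on $bK = \iota^{-1}(b\underline K)$, and $\{\rho \le \epsilon_0\}$ is compact. If $bK$ is only $\Cscr^1$ rather than smooth, a harmless preliminary $\Cscr^1$-small $\Igot$-invariant smoothing of $\underline K$ permits the same construction without affecting the conclusion, since we are approximating on open neighborhoods of $K$.

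Next, by nonvanishing of $d\rho$ on the compact set $bK$, continuity yields some $\epsilon \in (0, \epsilon_0]$ such that $\rho$ has no critical values in $[0, \epsilon]$. Set $\widetilde K := \{\rho \le \epsilon\}$; then $\mathring{\widetilde K} = \{\rho < \epsilon\}$ is an open $\Igot$-invariant neighborhood of $K$ in $\Ncal$, and the data $(\Ncal, \Igot, \rho)$ with $A = K$ and $\wt A = \widetilde K$ fulfill the hypotheses of Proposition \ref{prop:noncritical}. Applying Corollary \ref{cor:noncritical} directly to $X \in \CMI^n_\Igot(K)$ produces $\Igot$-invariant conformal minimal immersions $\widetilde X \colon \widetilde K \to \R^n$ that approximate $X$ to arbitrary precision in the $\Cscr^1(K)$ topology; restricting each such $\widetilde X$ to the open neighborhood $\mathring{\widetilde K}$ of $K$ yields the desired approximating maps.

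The only nontrivial ingredient—and the principal obstacle of the argument—is the equivariant construction of the smooth function $\rho$ realizing $K$ as a noncritical sublevel set. This is standard differential topology that nonetheless must be carried out compatibly with $\Igot$, and passing to the quotient surface $\UnNcal$ reduces it to the familiar orientation-free setting. Everything else is a direct application of the noncritical Mergelyan theorem already established in Corollary \ref{cor:noncritical}.
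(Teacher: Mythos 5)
Your overall strategy — realize $K$ as the zero sublevel set of an $\Igot$-invariant exhaustion function with no critical values just above $0$, and then quote Corollary \ref{cor:noncritical} — is exactly the route the paper intends; the corollary is stated without proof precisely because it is meant as an immediate consequence of the noncritical Mergelyan theorem, and passing to the quotient $\UnNcal$ to build the invariant function $\rho$ and then pulling back by $\iota$ is the right equivariant device.

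The one place where you are too casual is the handling of the $\Cscr^1$ (as opposed to smooth) boundary of $K$. A domain whose boundary is merely $\Cscr^1$ cannot be realized exactly as $\{\rho\le 0\}$ for a \emph{smooth} function $\rho$ with $d\rho\ne 0$ on the boundary, because any such sublevel set automatically has smooth boundary. Your proposed remedy — ``a harmless preliminary $\Cscr^1$-small $\Igot$-invariant smoothing of $\underline K$'' — is circular as stated: if you enlarge $K$ to a smooth $K'\supset K$ you no longer have $X$ defined there (and extending $X$ across $bK$ is precisely what is being proved), while if you shrink to a smooth $K''\subset\mathring K$ you only obtain $\Cscr^1(K'')$-approximation and lose control of $\wt X - X$ on the collar $K\setminus K''$. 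The clean fix is to bypass the sublevel-set realization altogether: since Definition \ref{def:Cartan} explicitly allows $\Cscr^1$ boundaries, one may choose a compact $\Igot$-invariant domain $\wt K$ with $\Cscr^1$ boundary, $K\subset\mathring{\wt K}$, and $K$ a strong deformation retract of $\wt K$, and then produce a chain $K=A_0\subset A_1\subset\cdots\subset A_k=\wt K$ with each $(A_i,B_i)$ a very special $\Igot$-invariant Cartan pair directly by the elementary topological argument invoked in the proof of Proposition \ref{prop:noncritical}; applying Lemma \ref{lem:bumping} along this chain gives the conclusion without ever needing $\rho$ (and in particular without needing $bK$ smooth). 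Once you make this adjustment, your proof is complete.
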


%
%

\section{The Riemann-Hilbert method for non-orientable minimal surfaces}
\label{sec:RH}

In this section, we prove the following version of the Riemann-Hilbert method for $\Igot$-invariant
conformal minimal immersions of bordered Riemann surfaces. Its main application will be to 
conformal minimal immersions of non-orientable bordered surfaces. 
For background and motivation, see Section \ref{sec:intro-Runge}.

%
%

\begin{theorem} 
\label{th:RH}
Let $(\Ncal,\Igot)$ be a compact bordered Riemann surface with an antiholomorphic involution without fixed points. 
Let $I_1,\Igot(I_1),\ldots,I_k,\Igot(I_k)$ be a finite collection of  pairwise disjoint compact subarcs of $b\Ncal$ 
which are not connected components of $b\Ncal$. Set $I':=\bigcup_{i=1}^k I_i$ and $I:=I'\cup\Igot(I')$. 
Choose a thin annular neighborhood  $R \subset \Ncal$ of $b\Ncal$ (a collar) and a smooth retraction 
$\rho\colon R \to b\Ncal$. Let $n\geq 3$ be an integer. Assume that
\begin{itemize}
\item[\rm (a)]  $X\colon \Ncal \to\r^n$ is a conformal minimal immersion of class $\Cscr^1(\Ncal)$;
\vspace{1mm}
\item[\rm (b)] $r \colon b\Ncal \to \r_+$ is a continuous nonnegative $\Igot$-invariant  function supported on  $I$;
\vspace{1mm}
\item[\rm (c)] $\alpha \colon I \times\overline{\d}\to\r^n$ is a map of class $\Cscr^1$ such that
for every $p\in I$ the map $\cd \ni \zeta \mapsto \alpha(p,\zeta)\in\r^n$ is a conformal minimal 
immersion satisfying $\alpha(p,0)=0$ and $\alpha(\Igot(p),\zeta)=\alpha(p,\zeta)$;
\vspace{1mm}
\item[\rm (d)]  if $n>3$, we assume in addition that $\alpha$ is planar of the form
\[
	\alpha(p,\zeta) = \Re \sigma(p,\zeta) \bu_i+ \Im \sigma(p,\zeta)  \bv_i, \quad
	p \in I_i\cup\Igot(I_i),\;  i\in\{1,\ldots,k\},
\]
where for each $i\in\{1,\ldots,k\}$, 
$\bu_i,\bv_i\in \r^n$ is a pair of orthogonal vectors satisfying 
$\|\bu_i\|=\|\bv_i\|>0$, and $\sigma \colon I \times\overline{\d}\to\c$ is a function of class $\Cscr^1$ such that
for every $p\in I$ the function $\cd\ni \zeta \mapsto \sigma(p,\zeta)$ is holomorphic on $\d$,
$\sigma(p,0)=0$, the partial derivative $\di \sigma(p,\zeta)/\di \zeta$ is nowhere vanishing on $\cd$,
and $\sigma(\Igot(p),\zeta)=\sigma(p,\zeta)$ for all $(p,\zeta)\in I\times\cd$.
\end{itemize}
Consider  the map $\varkappa\colon b\Ncal \times \overline{\d}\to \r^n$ given by
\begin{equation}\label{eq:varkappa}
	\varkappa(p,\zeta)=X(p) +  \alpha(p,r(p)\zeta),
\end{equation}
where we take $\varkappa(p,\zeta)=X(p)$ for $p \in b\Ncal\setminus I$.
Given a number $\epsilon>0$, there exist an arbitrarily small neighborhood $\Omega\subset R$ 
of  $I$  and an $\Igot$-invariant conformal minimal immersion  
$Y\in\CMI_\Igot^n(\Ncal)$ (see \eqref{eq:CMI}) satisfying  the following conditions:
\begin{enumerate}[\it i)]
\item $\dist(Y(p),\varkappa(p,\t))<\epsilon$ for all $p \in b\Ncal$;
\item $\dist(Y(p),\varkappa(\rho(p),\overline{\d}))<\epsilon$ for all $p \in \Omega$;
\item $Y$ is $\epsilon$-close to $X$ in the $\Cscr^1$ norm on $\Ncal\setminus \Omega$;
\item $\Flux_Y=\Flux_X$.
\end{enumerate}
\end{theorem}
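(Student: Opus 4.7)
The plan is to reduce Theorem \ref{th:RH} to the orientable Riemann-Hilbert theorem \cite[Theorem 3.6]{AlarconDrinovecForstnericLopez2015PLMS} by passing to derivatives and then applying the $\Igot$-invariant spray machinery developed in this chapter. Fix a nowhere vanishing $\Igot$-invariant holomorphic $1$-form $\theta$ on a neighborhood of $\Ncal$ and set $f = 2\di X/\theta \in \Ascr_\Igot(\Ncal,\Agot_*)$; the target $Y$ will be obtained by integrating $\Re(\tilde f\,\theta)$ from a fixed base point for a suitable $\tilde f\in\Ascr_\Igot(\Ncal,\Agot_*)$, and conditions (i)--(iv) will translate into quantitative statements about $\tilde f$.

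Next, choose an $\Igot$-symmetric family of pairwise disjoint smoothly bounded disk neighborhoods $D_i'\subset R$ of the arcs $I_i$, with $D_i'\cap\Igot(D_i')=\emptyset$ for $i=1,\ldots,k$. Set $B'=\bigcup_i D_i'$, $B=B'\cup\Igot(B')$, and let $A$ be a slight $\Igot$-invariant thickening of $\Ncal\setminus B$ so that $(A,B)$ is a special $\Igot$-invariant Cartan pair in the sense of Definition \ref{def:Cartan}, with $A\cup B=\Ncal$ and $A\cap B$ contained in thin collars around the inner boundaries of the $D_i',\Igot(D_i')$, disjoint from $I$. On each disk $D_i'$ apply the orientable Riemann-Hilbert theorem \cite[Theorem 3.6]{AlarconDrinovecForstnericLopez2015PLMS} to $X|_{D_i'}$ with boundary data $(r|_{I_i},\alpha|_{I_i\times\cd})$ (using the null disks from (c) when $n=3$ and the planar data from (d) when $n>3$), yielding a conformal minimal immersion $Y_i\colon D_i'\to\R^n$ whose derivative $g_i=2\di Y_i/\theta\in\Ascr(D_i',\Agot_*)$ satisfies the local analogues of (i)--(iii) over $I_i$. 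Extend to $\Igot(D_i')$ by $g_i(\Igot(p))=\overline{g_i(p)}$; the $\Igot$-symmetry hypothesis $\alpha(\Igot(p),\zeta)=\alpha(p,\zeta)$ together with $\Igot$-invariance of $X$ and $r$ makes the resulting map $f_B\in\Ascr_\Igot(B,\Agot_*)$ well defined on $B$ and uniformly close to $f$ on $A\cap B$, since the Riemann-Hilbert modification of $X$ is arbitrarily small away from $I$.

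Now use Proposition \ref{prop:period-dominating-spray} to construct a dominating and period-dominating $(\Igot,\Igot_0)$-invariant spray $F\colon A\times r\B^m\to\Agot_*$ with core $f$. Since $\Agot_*$ is an Oka manifold, $F|_{A\cap B'}$ extends to a holomorphic spray over $B'$ uniformly close to $F$ on $A\cap B'$ and with core close to $f_B|_{B'}$; extending by \eqref{eq:simmetrization} produces a $(\Igot,\Igot_0)$-invariant spray $G\colon B\times r'\B^m\to\Agot_*$ close to $F$ on $A\cap B$. Proposition \ref{prop:gluing} then furnishes a $(\Igot,\Igot_0)$-invariant spray $H\colon \Ncal\times r''\B^m\to\Agot_*$ which is close to $F$ on $A\times r''\B^m$ and whose restriction to $B\times r''\B^m$ is close to $G$ (hence to $f_B$ at parameter $0$). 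By the period-domination property of $F$ (Remark \ref{rem:domination}), there exists a real parameter $\zeta_0\in r''\B^m\cap\R^m$ close to $0$ such that $\tilde f:=H(\cdot,\zeta_0)\in\Ascr_\Igot(\Ncal,\Agot_*)$ satisfies $\Pcal(\tilde f)=\Pcal(f)$. Lemma \ref{lem:partial-period} then implies that the $\Igot$-invariant $1$-form $\tilde f\,\theta$ has vanishing real periods and flux equal to $\Flux_X$, so integration from $p_0\in\Ncal$ produces $Y\in \CMI_\Igot^n(\Ncal)$ with $\Flux_Y=\Flux_X$. Property (iii) follows from closeness of $\tilde f$ to $f$ on $A$; properties (i) and (ii) follow from closeness of $\tilde f$ to $g_i$ over each $D_i'$ and its $\Igot$-image, combined with the corresponding Riemann-Hilbert conclusions for $Y_i$ on the disks.

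The main obstacle I expect is making the local Riemann-Hilbert step and the subsequent spray gluing plus period correction mutually compatible: the perturbations introduced by Proposition \ref{prop:gluing} and by the choice of $\zeta_0$ must not spoil the small Riemann-Hilbert errors on the arcs $I$. This is essentially a bookkeeping issue resolved by fixing $\epsilon>0$ first and then choosing, in reverse order, the radius $r$ of the spray, the disk neighborhoods $D_i'$, and the tolerances in the orientable Riemann-Hilbert step so that the accumulated error on $A\cup B$ stays below $\epsilon$; the same balance as in the orientable case of \cite{AlarconDrinovecForstnericLopez2015PLMS}, carried out equivariantly under $\Igot$.
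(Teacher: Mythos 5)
Your proposal follows the same overall strategy as the paper's proof (spray with period domination, Riemann--Hilbert modification on disks around the arcs, symmetrization, gluing via Proposition~\ref{prop:gluing}, choice of a real parameter $\zeta_0$, integration and flux matching via Lemma~\ref{lem:partial-period}), but you reorder two key steps and that reordering opens a gap.

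Concretely: you first apply the orientable Riemann--Hilbert theorem to the \emph{core} map $X|_{D_i'}$ to produce $Y_i$ and the modified derivative $f_B$, and only afterwards build the period-dominating spray $F\colon A\times r\B^m\to\Agot_*$ and assert that $F|_{(A\cap B')\times r\B^m}$ ``extends to a holomorphic spray over $B'$ uniformly close to $F$ on $A\cap B'$ and with core close to $f_B|_{B'}$.'' This is not a routine Oka extension. Proposition~\ref{prop:gluing} needs $\|F-G\|_{0,(A\cap B)\times r\B^m}<\delta$ on the \emph{entire} parameter ball, not merely at $\zeta=0$; simultaneously, the $\zeta=0$ slice of $G$ on all of $B'$ must match $f_B$ to within the RH tolerance. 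These are two constraints of different types (approximation of a full spray on a boundary collar, prescription of a single slice in the interior), and while they are compatible because $f_B\approx f=F(\cdot,0)$ on $A\cap B'$, you do not explain how to produce such a $G$. The paper sidesteps this by constructing $F$ first on all of $\Ncal$ and then applying \cite[Lemma 3.1]{AlarconDrinovecForstnericLopez2015PLMS} (for $n=3$) or \cite[Lemma 3.3]{AlarconDrinovecForstnericLopez2015PLMS} (for $n\ge 3$ with planar data) \emph{to the spray $F$ itself}; these are precisely spray-valued Riemann--Hilbert lemmas, returning a spray $G$ over $B'$ that is uniformly close to $F$ on $(A\cap B')\times U$ while its $\zeta=0$ slice integrates to the RH-modified immersion. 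In effect, your "Oka extension with prescribed core" step is a restatement of what those lemmas provide; citing the full Theorem 3.6 of \cite{AlarconDrinovecForstnericLopez2015PLMS} gives you only a modified map, not a modified spray, so you are left to reconstruct the spray version by hand. If you replace that one step by a direct invocation of \cite[Lemmas 3.1, 3.3]{AlarconDrinovecForstnericLopez2015PLMS} applied to $F$, the rest of your argument (symmetrization by \eqref{eq:simmetrization}, gluing, real $\zeta_0$ from period domination in the sense of Remark~\ref{rem:domination}, integration and $\Flux_Y=\Flux_X$ from $\Im\Pcal(\tilde f)=\Im\Pcal(f)$) matches the paper and is correct.
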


Note that the conditions on the vectors $\bu_i,\ \bv_i\in\r^n$ in part (d) imply that 
$\bw_i=\bu_i - \imath \bv_i \in\Agot_*\subset \C^n$ is a null vector for every $i=1,\ldots,k$. 
Hence, the map $\alpha(p,\cdotp)\colon\cd\to \R^n$ in (d) is a planar conformal minimal immersion
for every $p\in I$.

\begin{proof}
This is an adaptation of 
\cite[Theorem 3.2]{AlarconDrinovecForstnericLopez2015AJM} (for $n=3)$ and of 
\cite[Theorem 3.6]{AlarconDrinovecForstnericLopez2015PLMS} (for any $n\ge 3$) 
to the case of $\Igot$-invariant maps. The latter result is essentially a consequence of 
\cite[Theorem 3.5]{AlarconDrinovecForstnericLopez2015PLMS} which gives the 
Riemann-Hilbert method for null holomorphic immersions $\Ncal\to\C^n$. This method was first developed 
for null curves in dimension $n=3$ by Alarc\'on and Forstneri\v c in \cite[Theorem 4]{AlarconForstneric2015MA}. 

It suffices to explain the proof for $k=1$, i.e., when $I=I' \cup \Igot(I')$ is the union of a pair of
disjoint $\Igot$-symmetric arcs; the general case follows by applying this special case 
to each pair $(I_i,\Igot(I_i))$ for $i=1,\ldots, k$.

Pick an $\Igot$-invariant holomorphic $1$-form $\theta$ without zeros on $\Ncal$ (cf.\ Proposition \ref{prop:Cor6.5}). 
Let $\Bcal \subset H_1(\Ncal;\Z)$ be a homology basis of the form \eqref{eq:basis1} or \eqref{eq:basis2}
(depending on the parity of the genus of $\Ncal$), 
with support $|\Bcal|\subset\mathring\Ncal$,
and let $\Bcal^+=\{\delta_1,\ldots,\delta_l\} \subset \Bcal$ be given by \eqref{eq:B0}. 
Let $\Pcal\colon\Ascr(\Ncal,\C^n)\to (\C^n)^l$ denote the period map \eqref{eq:periodmap} 
associated to $\Bcal^+$ and $\theta$. Then, $2\di X=f\theta$ where $f\colon \Ncal\to \Agot_*$ 
is a nonflat map of class $\Ascr_{\Igot}(\Ncal,\Agot_*)$ satisfying $\Re\Pcal(f)=0$. 

We can write $\Ncal=A\cup B$ where $(A,B)$ is a very special Cartan pair (see Definition
\ref{def:Cartan}) such that $|\Bcal^+|\subset A$ and the set $B=B'\cup \Igot(B') \subset R$ 
is the disjoint union of a pair of small closed disks $B'$ (a neighborhood of $I'$) and  
$\Igot(B')$ (a neighborhood of $\Igot(I')$). Set $C'=A\cap B'$ and $C=A\cap B = C'\cup \Igot(C')$. 

By Proposition \ref{prop:period-dominating-spray},
there exist a ball $U \subset\C^m$ around $0\in \C^m$ for some $m\in\N$ 
and a dominating and period dominating $(\Igot,\Igot_0)$-invariant spray 
$F\colon \Ncal \times U\to \Agot_*$ of class $\Ascr(\Ncal)$ (cf.\ \eqref{eq:dif-period}).
Pick a point $p_0\in C'=A\cap B'$. 
After shrinking $U$ slightly around $0$ if necessary, we can 
apply \cite[Lemma 3.1]{AlarconDrinovecForstnericLopez2015PLMS}
(when $n=3$) or \cite[Lemma 3.3]{AlarconDrinovecForstnericLopez2015PLMS}
(when $n\ge 3$ and $\alpha$ satisfies Condition (d) in the theorem)
to approximate the spray $F$ as closely as desired uniformly on $C'\times U$ 
by a spray $G\colon B' \times U\to \Agot_*$ such that for every fixed value
of $\zeta\in U$, the integral
\[
	X'(p,\zeta) = X(p_0) + \Re \int_{p_0}^p G(\cdotp, \zeta)\theta,\quad p\in B' ,
\]
is a conformal minimal immersion $B'\to \R^n$, and $X'(\cdotp,0)$ satisfies 
Conditions (i)--(iii) in Theorem \ref{th:RH} with $\Ncal$ replaced by $B'$. 
(The integral  is independent of the choice of path in the simply connected domain $B'$.)
A spray $G$ with these properties is obtained by deforming $F$ near each point $p\in I$ 
in the direction of the null holomorphic disk in $\C^n$ whose real part equals
 $r(p)\alpha(p,\cdotp)\colon \cd\to \C^n$. 

We extend $G$ to $\Igot(B')\times U$ by simmetrization (cf.\ \eqref{eq:simmetrization}) 
so that the resulting spray $G\colon B \times U\to\Agot_*$ is $(\Igot,\Igot_0)$-invariant. 
Since the spray $F$ is $(\Igot,\Igot_0)$-invariant, it follows that $G$ approximates 
$F$ also on $\Igot(C')\times U$.  
Assuming that the approximation of $F$ by $G$ is close enough on $C\times U$,
we can apply Proposition \ref{prop:gluing} to glue the sprays $F|_{A\times U}$ and $G$ into a 
$(\Igot,\Igot_0)$-invariant spray $H\colon \Ncal\times U' \to \Agot_*$ of class $\Acal(\Ncal\times U')$,
where $U' \subset \C^m$ is a smaller neighborhood of the origin, such that
$H|_{A\times U'}$ is uniformly close to  $F|_{A\times U'}$. If the approximation of $F$ by $H$
is close enough, 
the period domination property of $F$ implies that there exists a parameter value $\zeta_0\in U'$ close to $0$ such that the map
$h=H(\cdotp,\zeta_0) \colon \Ncal\to \Agot_*$ satisfies $\Pcal(h)=\Pcal(f)$; 
in particular, $\Re \Pcal(h)=0$. Fix a point $p_0\in \Ncal$. The map $Y\colon \Ncal\to \c^n$ given by 
\[
	Y(p) = X(p_0) + \Re \int_{p_0}^p  h \theta,\quad p \in \Ncal,
\]
is then a well defined $\Igot$-invariant conformal minimal immersion satisfying the conclusion 
of Theorem \ref{th:RH} provided that all approximations were sufficiently close.
In particular, from $\Im \Pcal(h)=\Im\Pcal(f)$ it follows that $\Flux_Y=\Flux_X$, so Condition (iv) holds.
For further details, we refer to \cite[proof of Theorems 3.5 and 3.6]{AlarconDrinovecForstnericLopez2015PLMS}
or \cite[proof of Theorem 4]{AlarconForstneric2015MA}.
\end{proof}

%
%

\chapter{Approximation theorems for non-orientable minimal surfaces}
\label{ch:Runge}

The aim of this chapter is to provide general Runge-Mergelyan type theorems for non-orientable conformal minimal 
immersions into $\r^n$ for any $n\geq 3$. The main results are Theorems \ref{th:Mergelyan}, \ref{th:Mergelyan0}, 
\ref{th:Mergelyan1}, and \ref{th:Mergelyan2}; these include Theorem \ref{th:intro-Runge}
stated in the Introduction as a special case.  For the corresponding approximation results in the orientable case, 
see \cite{AlarconForstnericLopez2016MZ} and the references therein.

We assume that $(\Ncal,\Igot)$ is a standard pair \eqref{eq:pair}, that is,
an open Riemann surface $\Ncal$ with a fixed-point-free  antiholomorphic involution $\Igot$.

We begin by introducing suitable subsets and maps for Mergelyan approximation;
compare with \cite[Definition 5.1]{AlarconForstnericLopez2016MZ} in the orientable case.

%
%

\begin{definition}\label{def:admissible}
A nonempty compact subset $S$ of $\Ncal$ is {\em $\Igot$-admissible} if it is Runge in $\Ncal$, 
$\Igot$-invariant ($\Igot(S)=S$), and of the form $S=K\cup \Gamma$, where $K=\overline{\mathring S}$ 
is a union of finitely many pairwise disjoint  smoothly bounded compact domains in $\Ncal$ 
and $\Gamma=\overline{S\setminus K}=\Gamma'\cup\Igot(\Gamma')$, where $\Gamma'\cap\Igot(\Gamma')=\emptyset$ 
and $\Gamma'$ is a a finite union of pairwise disjoint smooth closed curves disjoint from $K$ and smooth Jordan arcs
meeting $K$ only in their endpoints (or not at all) such that their intersections with the boundary 
$bK$ of $K$ are transverse.
\end{definition}

Given a complex submanifold $Z$ of $\C^n$ which is invariant under conjugation $\Igot_0(z)= \bar z$
(note that the null quadric  $\Agot_*$ \eqref{eq:null} is such),  we denote by 
\begin{equation}\label{eq:Fscr}
	\Fscr_\Igot(S,Z)
\end{equation}
the set of all smooth $\Igot$-invariant functions $f\colon S\to Z$, i.e. $f\circ \Igot=\overline f$, 
that are holomorphic on an  open neighborhood of $K$ depending on the map. We  write 
\[	
	\Fscr_\Igot(S)=\Fscr_\Igot(S,\c).
\]

%
%

\begin{definition}\label{def:GCMI}
Let $S=K\cup\Gamma \subset \Ncal$ be an $\Igot$-admissible set, and let $n\ge 3$. 
A {\em generalized $\Igot$-invariant conformal minimal immersion} $S\to\r^n$ is a pair $(X,f\theta)$, 
where $\theta\in\Omega_\Igot(\Ncal)$ has no zeros (hence $(\Ncal,\Igot,\theta)$ is a standard triple \eqref{eq:triple}), 
$f\in\Fscr_\Igot(S,\Agot_*)$ (cf.\ \eqref{eq:Fscr}) and $X\colon S\to\r^n$ is a smooth $\Igot$-invariant map 
such that
\begin{equation}\label{eq:XpXq}
	X(p)=X(q)+\int_{q}^p \Re(f\theta)
\end{equation}
holds for all pairs of points $p,q$ in the same connected component of $S$ and for any choice of path of integration 
in $S$ from $p$ to $q$
%
%
(i.e., $f\theta$ has no real periods).

The group homomorphism 
\begin{equation}\label{eq:GFlux}
	\Flux_{(X,f \theta)}\colon H_1(S;\z)\to \r^n, \quad  
	\Flux_{(X,f \theta)}(\gamma)=\int_\gamma \Im (f \theta)
\end{equation}
is called the {\em flux map} of $(X,f \theta)$.
\end{definition}

Except for  $\Igot$-invariance, Definition \ref{def:GCMI} coincides with 
\cite[Definition 5.2]{AlarconForstnericLopez2016MZ}. 
It implies that $X$ extends to a conformal minimal immersion in an open neighborhood of $K$
such that $2\di X=f\theta$ there, and for any smooth path $\alpha$ parametrizing a curve in $\Gamma$ we have
$d(X\circ \alpha)=\Re(\alpha^*(f\theta))$.

We denote by 
\begin{equation}\label{eq:GCMI}
	 \GCMII^n(S)
\end{equation} 
the set of all generalized $\Igot$-invariant conformal minimal immersions $S\to\r^n$.

%
%

We shall use the following notion of $\Cscr^1$ approximation of generalized conformal minimal immersions.

\begin{definition}\label{def:appGCMI}
Let $S\subset\Ncal$ be a $\Igot$-admissible set, and let $(X,f\theta)\in \GCMII^n(S)$ for some $n\ge 3$
(see \eqref{eq:GCMI}). We say that {\em $(X,f\theta)$ may be approximated in the $\Cscr^1(S)$-topology
by $\Igot$-invariant conformal minimal immersions on $\Ncal$} if the map $X\colon S\to\R^n$ may be approximated 
as closely as desired in the $\Cscr^1(S)$-topology by $\Igot$-invariant conformal minimal immersions 
$Y\colon\Ncal\to\r^n$ such that the function $2\partial Y/\theta\colon S\to \C^n$ approximates $f$ uniformly on $S$.
\end{definition}

We add a comment about this approximation condition. Let $S=K\cup \Gamma$ be an admissible set and $(X,f\theta)\in \GCMII^n(S)$. 
If a conformal minimal immersion $Y\colon \Ncal \to\R^n$ approximates $X$ in $\Cscr^1(K)$, then $2\di Y$ approximates 
$2\di X=f\theta$ uniformly on $K$, so $2\di Y/\theta$ approximates $f$. On the other hand, on the curves in $\Gamma$ the above
approximation condition implies that the full $1$-jet of $Y$ along $\Gamma$ (also in the direction
transverse to $\Gamma$) uniformly approximates the $1$-jet determined by $f\theta$. 

%
%

\section{A Mergelyan approximation theorem} \label{sec:Mergelyan0}

The following is the first main result of this section; this is a more precise version of 
Theorem \ref{th:intro-Runge} stated in the Introduction.

%
%
\begin{theorem}[Mergelyan theorem for conformal non-orientable minimal surfaces in $\R^n$]\label{th:Mergelyan}
Let $(\Ncal,\Igot,\theta)$ be a standard triple \eqref{eq:triple}), 
and let $S=K\cup\Gamma\subset\Ncal$ be a $\Igot$-admissible set (cf.\ Definition \ref{def:GCMI}). 
Assume that $(X,f\theta)\in \GCMII^n(S)$ for some $n\ge 3$ (see \eqref{eq:GCMI}), and let $\pgot\colon H_1(\Ncal;\z)\to \r^n$ 
be a group homomorphism such that $\pgot\circ \Igot_*=-\pgot$ and $\pgot|_{H_1(S;\z)}=\Flux_{(X,f \theta)}$
(cf.\ \eqref{eq:GFlux}). Then, $(X,f \theta)$ may be approximated in the $\Cscr^1(S)$-topology by $\Igot$-invariant 
conformal minimal immersions  $Y\colon \Ncal\to\r^n$ with $\Flux_Y=\pgot$. 
\end{theorem}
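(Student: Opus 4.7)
The plan is to follow the standard Mergelyan induction scheme, adapted carefully to the $\Igot$-invariant setting. First, I would reduce the admissible set to a smoothly bounded one: thicken $S=K\cup\Gamma$ to a compact smoothly bounded $\Igot$-invariant domain $M_0\subset\Ncal$ which is Runge, $\Igot$-invariantly deformation retracts onto $S$, and such that $|\Bcal|\subset M_0$ for an $\Igot$-basis $\Bcal$ of $H_1(M_0;\Z)$ (cf.\ Remark \ref{rem:basis}). Each arc in $\Gamma'$ gets thickened to a thin ribbon $D'$, and $\Igot(\Gamma')$ to its mirror $\Igot(D')$. On $M_0$, one approximates $(X,f\theta)$ by a genuine $\Igot$-invariant conformal minimal immersion $Y_0\in\CMI_\Igot^n(M_0)$ as follows: extend $f$ from $S$ to an $\Igot$-invariant holomorphic map into $\Agot_*$ on a neighborhood of $M_0$ (use Oka-theoretic extension across each ribbon $D'$, then symmetrize via $\Igot$ as in \eqref{eq:simmetrization}), and correct the periods by embedding the resulting map into a $(\Igot,\Igot_0)$-invariant period-dominating spray (Proposition \ref{prop:period-dominating-spray}) so that $\Re\Pcal=0$ and $\Im\Pcal$ matches $\pgot$ on the basis $\Bcal^+$. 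The compatibility of this period adjustment with the imaginary part on the full basis $\Bcal$ is precisely ensured by the hypothesis $\pgot\circ\Igot_*=-\pgot$ together with \eqref{eq:FluxJ*}.

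Next, I would exhaust $\Ncal$ by an increasing sequence $M_0\subset M_1\subset M_2\subset\cdots$ of smoothly bounded $\Igot$-invariant compact Runge domains. Concretely, pull back a Morse strongly subharmonic exhaustion function from $\UnNcal=\Ncal/\Igot$ to get an $\Igot$-invariant strongly subharmonic Morse exhaustion $\rho\colon\Ncal\to\R$; its critical points occur in $\Igot$-symmetric pairs $\{p,\Igot(p)\}$ with equal critical values, and one chooses regular values $c_0<c_1<\cdots\to+\infty$ with at most one critical pair between consecutive levels. Then one constructs inductively a sequence $Y_j\in\CMI_\Igot^n(M_j)$ such that $Y_j$ approximates $Y_{j-1}$ in $\Cscr^1(M_{j-1})$ and $\Flux_{Y_j}=\pgot|_{H_1(M_j;\Z)}$; the limit $Y=\lim_jY_j$ will be the desired $\Igot$-invariant conformal minimal immersion on $\Ncal$, provided the approximations are taken sufficiently fast.

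When $\rho$ has no critical values between $M_{j-1}$ and $M_j$, the inductive step is given directly by Corollary \ref{cor:noncritical}, which already preserves the flux. The delicate step is the critical case, in which a pair $\{p,\Igot(p)\}$ of Morse critical points of $\rho$ has to be crossed. Following the topological description in Section \ref{sec:homology}, one distinguishes two geometric subcases according to whether the index-$1$ stable manifolds emanating from the pair form a single $\Igot$-invariant arc (analogue of the curve $\alpha_0$ in Case (I)/(II)) or a pair of disjoint $\Igot$-conjugate arcs (analogue of $\beta_0^{\pm}$ or $\gamma_j^{\pm}$). In each case one extends $Y_{j-1}$ across the pair of attached handles by first defining it on an $\Igot$-symmetric pair of arcs connecting $M_{j-1}$ to the new handles, using the Weierstrass integral \eqref{eq:integralofPhi} and prescribing the increments so that the new generators of $H_1$ receive the flux dictated by $\pgot$. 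This produces a generalized immersion in $\GCMII^n$ on an enlarged admissible set $S'\supset M_{j-1}$; one then thickens the new arcs to an $\Igot$-symmetric pair of ribbons as in the first step, applies Corollary \ref{cor:local-Mergelyan} to approximate on the thickened set, and finishes by Corollary \ref{cor:noncritical} up to $M_j$.

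The hard part is the critical case bookkeeping: one must track precisely which new $H_1$-generators are $\Igot$-fixed and which are $\Igot$-swapped, because a $\Igot$-fixed generator $\delta$ satisfies $\int_\delta\Im\phi=0$ for every $\Igot$-invariant $\phi$ (by \eqref{eq:FluxJ*}), so the prescribed flux $\pgot(\delta)$ must vanish on such classes — which is automatic from $\pgot\circ\Igot_*=-\pgot$. On $\Igot$-swapped generators the period-dominating spray of Proposition \ref{prop:period-dominating-spray} provides the freedom to hit any prescribed value in $\R^n\oplus(\C^n)^{l-1}$, exactly matching the real/complex splitting of the target period space dictated by the $\Igot$-invariance of $\theta$. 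Once these homological matchings are in place in each critical step, the iteration closes and the limit $Y$ satisfies the conclusion of the theorem. The second assertion of Theorem \ref{th:intro-Runge}, fixing $n-2$ harmonic components, will then follow by carrying out the same induction within the subspace of $\Igot$-invariant maps into $\Agot_*$ whose last $n-2$ components are prescribed, which is the content of Theorem \ref{th:Mergelyan1} proved in Section \ref{sec:Mergelyan1}.
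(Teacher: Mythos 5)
Your proposal follows essentially the same strategy as the paper's proof: reduce to a smoothly bounded situation, exhaust by an $\Igot$-invariant strongly subharmonic Morse exhaustion, handle the noncritical steps by Corollary~\ref{cor:noncritical}, and handle the critical steps by first extending across an $\Igot$-symmetric pair of attaching arcs with prescribed periods and then thickening and approximating. The paper carries out the induction at the level of the derivative map $f=2\di X/\theta\colon S\to\Agot_*$ (Theorem~\ref{th:Mergelyan0}) rather than the immersion itself, but that is only a bookkeeping convenience. Two places in your plan, however, need repair.

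First, in the critical step you invoke Corollary~\ref{cor:local-Mergelyan} to pass from the generalized immersion on the enlarged admissible set $S'$ to an honest immersion on its thickening. That corollary is stated for a compact $\Igot$-invariant \emph{domain} $K$, not an admissible set, and, more importantly, it makes no assertion about flux preservation. Applied literally it would not control the periods over the new $H_1$-generators you just introduced. What is needed at this point is a period-controlled $\Igot$-invariant Mergelyan approximation on admissible sets -- the paper isolates this as Lemma~\ref{lem:step1}, which uses a period-dominating $(\Igot,\Igot_0)$-invariant spray on $S'$ plus the Cartan-pair gluing (Proposition~\ref{prop:gluing}) so that the passage to a neighborhood keeps $\Pcal(f)$ fixed.

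Second, the topological bookkeeping in the index-$1$ critical step is coarser than the argument actually requires. The attaching arcs always occur as a disjoint $\Igot$-symmetric pair $\{\alpha,\Igot(\alpha)\}$; they are never a single $\Igot$-invariant arc. The relevant case distinction (cases (i.1), (i.2), (ii.1), (ii.2) in Lemma~\ref{lem:step2}) is whether the endpoints of $\alpha$ lie in one or two connected components of $M_{j-1}$, and whether those components are $\Igot$-invariant or swapped by $\Igot$. Depending on this configuration, the number and $\Igot$-equivariance type of the new $H_1$-generators differs; in particular case (ii.2) produces no new generator at all, a possibility absent from your dichotomy. Your observation that $\pgot$ must vanish on $\Igot$-fixed classes (automatic from $\pgot\circ\Igot_*=-\pgot$ and \eqref{eq:FluxJ*}), while $\Igot$-swapped classes give the degrees of freedom supplied by Proposition~\ref{prop:period-dominating-spray}, is the right conceptual point; but the extension of $f$ across $\alpha\cup\Igot(\alpha)$ with prescribed periods must be defined case by case, and the reduction to $S$ connected with $\Gamma$ containing no closed curves (which the paper performs at the outset and which you do not mention) is what makes this casework finite and manageable.
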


By passing to the quotient $\UnNcal=\Ncal/\Igot$, Theorem \ref{th:Mergelyan}
gives the corresponding Mergelyan approximation theorem for conformal minimal
immersions from non-orientable surfaces; see Corollary \ref{cor:intro-Runge}.
The case $n=3$ is proved in \cite{AlarconLopez2015GT} by using the L\'opez-Ros transformation 
for minimal surfaces in $\r^3$,  a tool which is not available in higher dimensions. 

Theorem \ref{th:Mergelyan} follows easily  from the following result which pertains to derivatives
of generalized $\Igot$-invariant conformal minimal immersions.

%
%
%
%
\begin{theorem} \label{th:Mergelyan0}
Let $(\Ncal,\Igot,\theta)$ be a standard triple \eqref{eq:triple},
let $S=K\cup\Gamma\subset\Ncal$ be a $\Igot$-admissible set (cf.\ Definition \ref{def:GCMI}),
let $f\colon S\to \Agot_*\subset \c^n$ ($n\geq 3$) be a map in $\Fscr_\Igot(S, \Agot_*)$ (see \eqref{eq:Fscr}), 
and let $\qgot\colon H_1(\Ncal;\z)\to\c^n$ be a group homomorphism such that 
\[
	\qgot\circ \Igot_*=\overline\qgot \quad \text{and} \quad 
	\qgot(\gamma)=\int_\gamma f\theta\ \ \text{for every $\gamma \in H_1(S;\z)$}.
\]
Then, $f$ may be approximated uniformly on $S$ by maps $h\in \Oscr_\Igot(\Ncal,\Agot_*)$ such that 
$\int_\gamma h\theta=\qgot(\gamma)$ for all $\gamma \in H_1(\Ncal;\z)$.
Furthermore, $h$ can be chosen such that there exists a homotopy $f_t\in \Fscr_\Igot(S, \Agot_*)$ $(t\in [0,1])$ 
from $f_0=f$ to $f_1=h|_S$ such that for every $t\in [0,1]$, $f_t$ is holomorphic in a neighborhood of $K$
(independent of $t$), uniformly close to $f_0=f$ on $S$ (also uniformly in $t\in [0,1]$), and 
\begin{equation}\label{eq:qgot}
	\qgot(\gamma)=\int_\gamma f_t\theta\quad  \text{for every $\gamma \in H_1(S;\z)$}.
\end{equation}
\end{theorem}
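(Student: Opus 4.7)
The plan is to combine the approximation techniques of Chapter~\ref{ch:sprays} with an inductive exhaustion of $\Ncal$ by $\Igot$-invariant Runge smoothly bounded compact domains. As a first step I would reduce to the case $\Gamma=\emptyset$. Writing $\Gamma=\Gamma'\cup\Igot(\Gamma')$ with $\Gamma'\cap\Igot(\Gamma')=\emptyset$, I thicken $\Gamma'\setminus K$ to a finite disjoint union of closed topological disks $D'$ attached smoothly to $K$ and set $\wt K=K\cup D'\cup\Igot(D')$; this is an $\Igot$-invariant smoothly bounded domain that is a strong deformation retract of $S$, so $H_1(\wt K;\Z)\cong H_1(S;\Z)$. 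Since $\Agot_*$ is an Oka manifold, the smooth map $f|_{\Gamma'}$ extends, up to uniform approximation on $\Gamma'$, to a holomorphic map on a neighborhood of $D'$; extending by $p\mapsto\overline{f(\Igot(p))}$ on $\Igot(D')$ and gluing with $f|_K$ via the $\Igot$-invariant Cartan pair machinery of Proposition~\ref{prop:gluing}, I obtain $\tilde f\in\Oscr_\Igot(\wt K,\Agot_*)$ uniformly close to $f$ on $S$. A $\Igot$-invariant period-dominating spray with core $\tilde f$ (Proposition~\ref{prop:period-dominating-spray}), evaluated at a nearby real parameter furnished by the implicit function theorem, further arranges $\int_\gamma \tilde f\theta=\qgot(\gamma)$ for every $\gamma$ in a $\Igot$-basis of $\wt K$; the hypothesis $\qgot\circ\Igot_*=\bar\qgot$ places the target of the period correction exactly in the image $\R^n\times(\C^n)^{l-1}$ identified in Remark~\ref{rem:domination}.

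Next I would exhaust $\Ncal$ by $\Igot$-invariant Runge smoothly bounded domains $\wt K=L_0\subset L_1\subset\cdots$ with $\bigcup_j L_j=\Ncal$, obtained from the sublevel sets of an $\Igot$-invariant strongly subharmonic Morse exhaustion $\rho\colon\Ncal\to\R$ pulled back from $\UnNcal=\Ncal/\Igot$. Critical points of $\rho$ occur in $\Igot$-pairs at symmetric critical values. Inductively I would build $h_j\in\Oscr_\Igot(L_j,\Agot_*)$ with $h_0=\tilde f$, $\int_\gamma h_j\theta=\qgot(\gamma)$ for $\gamma\in H_1(L_j;\Z)$, and $\|h_j-h_{j-1}\|_{0,L_{j-1}}$ summable. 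Between consecutive critical values the extension is supplied by Proposition~\ref{prop:noncritical}, and $\Igot$-pairs of index-$0$ or index-$2$ critical points introduce no new homology. Index-$1$ crossings split into two geometric situations dictated by the covering structure of Section~\ref{sec:homology}: either an $\Igot$-symmetric pair of orientable handles is attached in $\Ncal$, producing either a conjugate pair $\{\alpha_j^\pm,\beta_j^\pm\}$ or the pair $\{\beta_0^+,\beta_0^-\}$ of new generators, or a M\"obius handle attached in $\UnNcal$ lifts to an $\Igot$-invariant cycle $\alpha_0$ with $\Igot_*\alpha_0=\alpha_0$. In each case I would enlarge the domain by a (special or very special) $\Igot$-invariant Cartan pair, extend $h_j$ across the handle by approximating on a disjoint pair of disks in $\Agot_*$ and gluing via Proposition~\ref{prop:gluing}, and finally correct the new periods to match $\qgot$ by a $\Igot$-invariant period-dominating spray on $L_{j+1}$.

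The homotopy $f_t$ is produced as in the proof of Lemma~\ref{lem:bumping}: the parameters of a period-dominating spray that preserve the target periods form, near the origin, a real analytic Banach submanifold of $\Ascr_\Igot$-maps (cf.~\eqref{eq:constantperiod}); joining the chosen correction parameter to $0$ by a path in that submanifold yields a uniformly small homotopy within $\Fscr_\Igot(S,\Agot_*)$ that preserves the period constraint \eqref{eq:qgot}. Concatenating these partial homotopies across the induction and passing to the uniform limit $h=\lim_j h_j\in\Oscr_\Igot(\Ncal,\Agot_*)$ completes the construction. The principal obstacle is the M\"obius critical-crossing: the new generator $\alpha_0$ is $\Igot$-fixed, and any $\Igot$-invariant $1$-form $h\theta$ automatically has real period over $\alpha_0$, so the prescribed value $\qgot(\alpha_0)\in\R^n$ can be attained only because Proposition~\ref{prop:period-dominating-spray} produces sprays whose period differential surjects onto $\R^n\times(\C^n)^{l-1}$ rather than $(\C^n)^l$. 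Adapting the orientable Mergelyan scheme to this exceptional $\Igot$-invariant handle is the genuinely new technical point relative to the treatment in \cite{AlarconForstneric2014IM,AlarconForstnericLopez2016MZ}.
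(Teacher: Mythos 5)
Your proposal follows essentially the same route as the paper: reduce to $\Gamma=\emptyset$ (the paper's Lemma~\ref{lem:step1}, Case~2, using a special $\Igot$-invariant Cartan pair $(K,B)$ around $\Gamma$), then exhaust $\Ncal$ by $\Igot$-invariant sublevel sets of an $\Igot$-invariant strongly subharmonic Morse function, treat noncritical strata via Proposition~\ref{prop:noncritical}, handle $\Igot$-pairs of critical points by attaching arcs/disks and gluing via Proposition~\ref{prop:gluing}, correct periods with Proposition~\ref{prop:period-dominating-spray} evaluated at a nearby real parameter, and obtain the homotopy from the constant-period Banach submanifold as in Lemma~\ref{lem:bumping}. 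The key observation you flag --- that the $\Igot$-invariant generator $\alpha_0$ forces real periods, and that Proposition~\ref{prop:period-dominating-spray} surjects onto $\R^n\times(\C^n)^{l-1}$ rather than $(\C^n)^l$ --- is exactly what the paper records in Remark~\ref{rem:domination}, and the hypothesis $\qgot\circ\Igot_*=\bar\qgot$ is what guarantees $\qgot(\alpha_0)\in\R^n$.

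Two small imprecisions. First, the reference to index-$2$ crossings is vacuous: a strongly subharmonic exhaustion has no interior local maxima, so its Morse indices are $0$ or $1$ only. Second, your characterization of the M\"obius crossing as ``the genuinely new technical point'' overstates its role: because $S$ can be reduced to a connected $\Igot$-invariant set at the outset, the generator $\alpha_0$ is already present in $M_1$ and is corrected in the base step (Lemma~\ref{lem:step1}); the $\R^n$-factor in the period domination is thus used uniformly throughout, not only at a distinguished inductive crossing. The genuinely delicate inductive content is rather the exhaustive four-way case analysis of the index-$1$ crossings in Lemma~\ref{lem:step2} (whether the arc endpoints land in one or two components, and whether those components are individually or jointly $\Igot$-invariant), which you sketch only coarsely; your scheme would work, but you should make explicit how the new closed curves (if any) are classified and how $\wt f_j$ is extended over the added arcs so that \eqref{eq:qgot} holds before re-entering the noncritical machinery.
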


Note that \eqref{eq:qgot} implies that the $1$-form $(f-f_t)\theta$ is exact on $S$ for every $t\in [0,1]$;
in particular, $(f-h)\theta$ is exact on $S$.

\begin{proof}[Proof of Theorem \ref{th:Mergelyan} assuming Theorem \ref{th:Mergelyan0}]
We may clearly assume that $\Ncal$ is connected. 

We begin by reducing the problem to the case 
when  $\Gamma=\overline{S\setminus K}$ does not contain any closed curves. 
Indeed, for every closed curve $\gamma$ in $\Gamma$ (and its symmetric image $\Igot(\gamma)$) 
we add to $K$ a pair $D\cup \Igot(D)$, where $D\subset \Ncal\setminus K$
is a small closed disk  centered at a point of $\gamma$ such that $D\cap \Igot(D)=\emptyset$
and $D\cap \Gamma= D\cap \gamma$.  
Let $K'$ be the new compact set obtained by adding to $K$ pairs of such disks for all closed curves in $\Gamma$.
Let $\Gamma'=\overline{\Gamma \setminus K'}$ and $S'=K'\cup \Gamma'$. Then, 
$S'$ is an $\Igot$-admissible set and $\Gamma'$ does not contain any closed curves. 
Futhermore, choosing the disks $D$ as above small enough, we can approximate 
$(X,f\theta)\in \GCMII^n(S)$ in the $\Cscr^1(S)$ topology by $(X',f'\theta)\in \GCMII^n(S')$ simply by choosing 
$X'$ to be a conformal minimal immersion on a small neighborhood of each of the new disks $D$ which 
approximates $X$ in the $\Cscr^1$ topology on the arc $D\cap \Gamma$ and 
suitably redefining the map $f$. 

Consider first the case when $S$ is connected. 
Let $\qgot\colon H_1(\Ncal;\z)\to \imath\, \r^n\subset \c^n$ be the group homomorphism given by 
$\qgot(\gamma)=\imath\, \pgot(\gamma)$ for all $\gamma\in H_1(\Ncal;\z)$. 
Clearly, we have that $\qgot\circ \Igot_*=\overline\qgot$ and $\qgot(\gamma)=\int_\gamma f\theta$ for every 
closed curve $\gamma\subset S$. By Theorem \ref{th:Mergelyan0} we may approximate $f$ uniformly 
on $S$ by maps $h\in \Oscr_\Igot(S,\Agot_*)$ satisfying the condition $\int_\gamma h\theta=\imath\, \pgot(\gamma)$ 
for all $\gamma \in H_1(\Ncal;\z)$; in particular $\Re(h\theta)$ is exact. Fix a point $p_0\in S$. The $\Igot$-invariant 
conformal minimal immersions $Y\colon \Ncal\to\r^n$ given by 
$Y(p)=X(p_0)+\int_{p_0}^p \Re(h\theta)$ satisfies the conclusion of the theorem.
Indeed, the integral is independent of the choice of path of integration from $p_0$ to $p$,  and for points $p\in S$ 
we may choose the path to lie entirely within the connected set $S$.

The case when $S$ is not connected can be reduced to the former case as follows.
Choose a connected $\Igot$-admissible set $\wt S=K\cup\wt \Gamma$ in $\Ncal$ such that 
$\Gamma\subset\wt \Gamma$; i.e., $\wt S$ is obtained by adding finitely many smooth Jordan arcs to $S$.
(See \cite[Claim 5.7]{AlarconLopez2015GT} for this argument. It is important that $\Gamma$ does not
contain any closed curves, since the added arcs must be attached with their endpoints to $bK\setminus \Gamma$.)
 Consider any extension $\wt f\in \Fscr_\Igot(\wt S, \Agot_*)$ 
of $f\in\Fscr_\Igot(S, \Agot_*)$ such that $X(p)=X(q)+\int_{q}^p \Re(\wt f\theta)$ for all $p,q\in S$ and 
$\int_\gamma \wt f\theta=\imath \pgot(\gamma)$ for every closed curve $\gamma\subset \wt S$. 
To find a smooth map $\wt f$ satisfying these properties, it suffices to define it in a suitable way 
on the arcs $\wt \Gamma\setminus\Gamma$; this can be done as in \cite[Theorem 5.6]{AlarconLopez2015GT} 
where the details are given for $n=3$, or by symmetrizing the argument in 
\cite[Lemma 5.5]{AlarconForstnericLopez2016MZ}. 
Fix a point $p_0\in S$ and set $\wt X(p)=X(p_0)+\int_{p_0}^p \Re(\wt f\theta)$ 
for all $p\in \wt S$. It follows that $(\wt X,\wt f\theta)\in \GCMII^n(\wt S)$, $(\wt X,\wt f\theta)|_S=(X,f\theta)$, 
and $\Flux_{(\wt X,\wt f \theta)}=\pgot|_{H_1(\wt S;\z)}$. It remains to apply the previous case to 
$(\wt X,\wt f\theta)$ on the connected $\Igot$-admissible set $\wt S$.
\end{proof}

\begin{proof}[Proof of Theorem \ref{th:Mergelyan0}]
%
%
%
By the argument in the proof of Theorem \ref{th:Mergelyan} we may assume that the $\Igot$-invariant set 
$S=K\cup \Gamma$ is connected and $\Gamma$ does not contain any closed curves. 

Since $S$ is Runge in $\Ncal$, there exists a smooth  $\Igot$-invariant strongly subharmonic
Morse exhaustion function $\rho \colon \Ncal \to \r$  
such that $0$ is a regular value of $\rho$,  $S\subset \{\rho<0\}$, and $S$ is a strong deformation retract of 
the set $M_1:=\{\rho\leq 0\}$. (For the existence of such a function $\rho$ see the short discussion preceding 
Proposition \ref{prop:noncritical}.) Thus, the inclusion map $S\hookrightarrow M_1$ induces an isomorphism
$H_1(S;\z)\cong H_1(M_1;\z)$. 

Let $p_1, \Igot(p_1), p_2, \Igot(p_2),\ldots$ be the critical points 
of $\rho$ in $\Ncal \setminus M_1$; recall that $\rho\circ \Igot=\rho$. We may assume that $0 < \rho (p_1) < \rho (p_2) < \cdots$.
Choose an increasing divergent sequence $c_1 < c_2 < c_3 < \cdots$ such that $\rho(p_i) < c_i <  \rho(p_{i+1})$ holds 
for $i = 1, 2, \ldots$. If there are only finitely many $p_i$'s, then we choose the remainder of the
sequence $c_i$ arbitrarily subject to the condition $\lim_{i\to\infty} c_i =+\infty$. Set $M_i:= \{\rho \leq c_i\}$. It follows that 
\[
	S\Subset M_1\Subset M_2\Subset\cdots\Subset \bigcup_{i=1}^\infty M_i=\Ncal
\] 
and each $M_i$ is a (not necessarily connected) compact $\Igot$-invariant Runge domain in $\Ncal$
whose connected components are compact bordered Riemann surfaces. 

We shall inductively construct a sequence of maps $f_j\in \Oscr_\Igot(M_j,\Agot_*)$ 
which converges uniformly on compacts to a map $h\in \Oscr_\Igot(\Ncal,\Agot_*)$
approximating $f$ uniformly on $S$ and satisfying $\int_\gamma h\theta=\qgot(\gamma)$ 
for all $\gamma \in H_1(\Ncal;\z)$. 

The basis of the induction is given by the following lemma which will be applied to the domain $M=M_1$, and the resulting 
map $\tilde f$ will be taken as the first element $f_1$ in the sequence. The inductive step is provided by Lemma \ref{lem:step2}.

%
%
%
%
\begin{lemma}\label{lem:step1}
Assume that  $S$ is an $\Igot$-admissible set in $\Ncal$ (see Definition \ref{def:admissible}) and
$M\subset \Ncal$ is a compact smoothly bounded $\Igot$-invariant domain with $S\subset \mathring M$
such that $S$ is strong deformation retract of $M$. Then, any map $f\in \Fscr_\Igot(S, \Agot_*)$ 
can be approximated uniformly on $S$ by maps $\tilde f\in \Oscr_\Igot(M,\Agot_*)$ 
such that the $1$-form $(f-\tilde f)\theta$ is exact on $S$, and there is a homotopy $f_t\in \Fscr_\Igot(S, \Agot_*)$ 
with $f_0=f$ and $f_1=\tilde f|_{S}$ such that for every $t\in [0,1]$
the map $f_t$ is holomorphic on a neighborhood of $K$ (independent of $t$), 
arbitrarily uniformly close to $f$, and it satisfies \eqref{eq:qgot}.
\end{lemma}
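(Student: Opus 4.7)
The plan is to reduce the approximation problem on the admissible set $S = K \cup \Gamma$ to the non-critical Mergelyan statement of Proposition \ref{prop:noncritical} on a smoothly bounded neighborhood, with period corrections supplied by the $\Igot$-invariant period-dominating sprays of Proposition \ref{prop:period-dominating-spray}. First I would choose a smoothly bounded compact $\Igot$-invariant domain $N$ with $S \subset \mathring N \Subset M$ such that $S$ is a strong deformation retract of $N$ and $N$ is a strong deformation retract of $M$, obtained by thickening the arcs in $\Gamma$ into narrow $\Igot$-symmetric tubes inside $M$. The inclusions then induce isomorphisms $H_1(S;\Z) \cong H_1(N;\Z) \cong H_1(M;\Z)$, so periods over the homology basis $\Bcal^+$ (cf.\ \eqref{eq:B0}) are tracked consistently across all three sets.

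The main preliminary task is to produce a map $g \in \Oscr_\Igot(N, \Agot_*)$ that approximates $f$ uniformly on $S$. Since $f$ is already holomorphic on a neighborhood of $K$, the content is approximation across the arcs of $\Gamma$. Applying classical Mergelyan componentwise on the Runge admissible set $S$ yields a map $h \in \Ascr(N, \C^n)$ close to $f$ on $S$, and its symmetrization $\tfrac{1}{2}(h + \overline{h \circ \Igot})$ is $\Igot$-invariant and still close to $f$. To land back inside $\Agot_*$, I would use the flow-spray built from the complete vector fields $V_{j,k}$ of \eqref{eq:Vjk}, which are tangent to $\Agot_*$, have real coefficients, and span $T_z\Agot_*$ at every $z \in \Agot_*$: the transverse component can be nulled out by composing with small $\Igot$-invariant flow parameters chosen via the implicit function theorem, producing $g \in \Oscr_\Igot(N, \Agot_*)$ close to $f$ on $S$.

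The periods $\Pcal(g)$ will in general deviate slightly from $\Pcal(f)$. I would then invoke Proposition \ref{prop:period-dominating-spray} to build a dominating, period-dominating $(\Igot, \Igot_0)$-invariant spray $G \colon N \times U \to \Agot_*$ with core $g$. By the argument following \eqref{eq:constantperiod} in the proof of Lemma \ref{lem:bumping}, there is a real parameter $\zeta_0 \in U \cap \R^m$ close to $0$ such that $\tilde f_N := G(\cdot, \zeta_0) \in \Oscr_\Igot(N, \Agot_*)$ satisfies $\Pcal(\tilde f_N) = \Pcal(f)$; by Lemma \ref{lem:partial-period} this is equivalent to the $1$-form $(f - \tilde f_N)\theta$ being exact on $S$. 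To promote $\tilde f_N$ from $N$ to $M$, I would apply Proposition \ref{prop:noncritical} (non-critical case, since $N$ is a strong deformation retract of $M$), obtaining $\tilde f \in \Oscr_\Igot(M, \Agot_*)$ with $\tilde f$ uniformly close to $\tilde f_N$ on $N$ and $(\tilde f - \tilde f_N)\theta$ exact on $N$. Transitivity then gives the desired $\tilde f$ with $(\tilde f - f)\theta$ exact on $S$.

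The homotopy $f_t$ will be assembled by concatenation. A short initial segment from $f$ to $g|_S$ is obtained through $\Fscr_\Igot(S, \Agot_*)$ by a period-preserving flow-spray deformation, valid because $g$ is arbitrarily close to $f$ on $S$. On the middle segment, a path $\alpha_t$ inside the constant-period level set \eqref{eq:constantperiod} of the spray $G$ joins the parameter $0$ (giving $g$) to $\zeta_0$ (giving $\tilde f_N$), producing a homotopy $G(\cdot, \alpha_t)|_S$ from $g|_S$ to $\tilde f_N|_S$ consisting of maps that are $\Igot$-invariant by Lemma \ref{lem:composition}, automatically holomorphic on the fixed interior neighborhood $\mathring N$ of $K$, and that satisfy \eqref{eq:qgot} throughout by the defining property of the level set. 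The final segment from $\tilde f_N|_S$ to $\tilde f|_S$ is supplied by the homotopy built into Proposition \ref{prop:noncritical}. The main obstacle is enforcing \eqref{eq:qgot} at every intermediate $t$, not just at the endpoints; this is overcome by the real-analytic Banach submanifold structure of the constant-period locus, which allows smooth sliding between the parameter values relevant to each segment while keeping the approximation of $f$ uniform on $S$.
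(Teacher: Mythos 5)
There is a genuine gap in your construction of the intermediate map $g \in \Oscr_\Igot(N,\Agot_*)$. You propose to approximate $f$ by a map $h\in \Ascr(N,\C^n)$ via classical Mergelyan applied componentwise, symmetrize, and then ``null out the transverse component'' by composing with small flow parameters of the fields $V_{j,k}$. This cannot work: the vector fields $V_{j,k}$ of \eqref{eq:Vjk} are \emph{tangent} to $\Agot_*$, so their flows preserve $\Agot_*$ and move points within it, but they do not move points that lie off $\Agot_*$ onto $\Agot_*$. Once the ambient approximant $h$ has strayed off the quadric (which it generically will), no composition with these flows brings it back; you would need a holomorphic retraction of a neighborhood of $\Agot_*$ in $\C^n$ onto $\Agot_*$, which is not available globally and is not what the spray machinery provides. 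The correct mechanism, and the one the paper uses, is to approximate \emph{within} the target $\Agot_*$ throughout: since $\Agot_*$ is an Oka manifold, the local Mergelyan theorem for maps into an arbitrary complex manifold (cf.\ \cite[Theorem 3.7.2]{Forstneric2011-book}) together with the Runge approximation theorem for maps into Oka manifolds (cf.\ \cite[Theorem 5.4.4]{Forstneric2011-book}) gives uniform approximation of $f$ on $S\cap B'$ by holomorphic maps $B'\to\Agot_*$, where $B'$ is a small disk neighborhood of an arc of $\Gamma'$. There is no detour through $\C^n$.

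Beyond that flaw, your overall architecture also differs from the paper's in a way worth noting. You first pass to a tubular thickening $N$ of $S$, approximate there, then correct periods, then extend to $M$. The paper instead constructs a special $\Igot$-invariant Cartan pair $(K,B)$ with $B=B'\cup\Igot(B')$ a disjoint union of disk neighborhoods of the arcs, builds a period-dominating $(\Igot,\Igot_0)$-invariant spray $F$ on $S$ with core $f$ (Proposition \ref{prop:period-dominating-spray}), approximates $F$ on $B'$ in the Oka-theoretic sense just described, symmetrizes to $\Igot(B')$, and then glues $F$ and $G$ via Proposition \ref{prop:gluing} to obtain a spray $H$ on $K\cup B$. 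The period correction and the homotopy then come simultaneously from choosing a real parameter $\zeta_0$ in the period-dominating spray and from the constant-period level set \eqref{eq:constantperiod}, exactly as in the proof of Lemma \ref{lem:bumping}; and the extension from $K\cup B$ to $M$ is a single application of Proposition \ref{prop:noncritical}. The paper also treats separately the case of $\Igot$-symmetric pairs of components $C\cup\Igot(C)$ with $C\cap\Igot(C)=\emptyset$, where one simply invokes the orientable result on $C$ and symmetrizes; your tube construction would presumably handle this uniformly, but it is worth making explicit. If you replace your ambient-approximation-plus-flow-correction step with the Oka-theoretic Mergelyan approximation for maps into $\Agot_*$, your outline becomes essentially the paper's proof reorganized.
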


\begin{proof}
We begin by considering two special cases.

\noindent{\em Case 1.} 
Assume that $S=C\cup \Igot(C)$, where $C$ is a connected component of $S$ and $C\cap \Igot(C)=\emptyset$.

Let $E$ be the connected component of $M$ containing $C$; then $E\cap \Igot(E)=\emptyset$ and 
$M=E\cup \Igot(E)$. The proof of \cite[Theorem 5.3]{AlarconForstnericLopez2016MZ} shows that $f|_C$ can be 
approximated uniformly on $C$ by holomorphic maps $g\colon E\to \Agot_*$ such that  $(f-g)\theta$ is exact on $C$. 
Setting $\tilde f=g$ on $E$ and $\tilde f=\overline g \circ \Igot$ on $\Igot(E)$ we are done.
 
\vspace{1mm}
 
\noindent{\em Case 2.} The $\Igot$-admissible set $S=K\cup \Gamma$ is connected,
and hence $K\neq \emptyset$. 

If $\Gamma=\emptyset$, it suffices to apply Proposition \ref{prop:noncritical} to the pair of domains $K\subset M$.

Assume now that $\Gamma\neq \emptyset$. By the definition of an admissible set, 
we have $\Gamma=\Gamma'\cup \Igot(\Gamma')$ where 
$\Gamma'$ is union of connected components  of $\Gamma$ and $\Gamma'\cap \Igot(\Gamma')=\emptyset$. 
Take a small compact neighborhood $B'$ of $\Gamma'$ such 
that $\Gamma'$ is a strong deformation retract of $B'$, $B'\cap \Igot(B')=\emptyset$ and, 
setting $B=B'\cup \Igot(B')$, $(K,B)$ is a special $\Igot$-invariant Cartan pair
(see Definition \ref{def:Cartan}). In particular, $K\cup B$ is a strong deformation retract of $M$ 
and each connected component of $B$ is a closed disk enclosing a unique connected component 
of $\Gamma$. See Figure \ref{fig:KB}.
\begin{figure}[ht]
    \begin{center}
    \scalebox{0.2}{\includegraphics{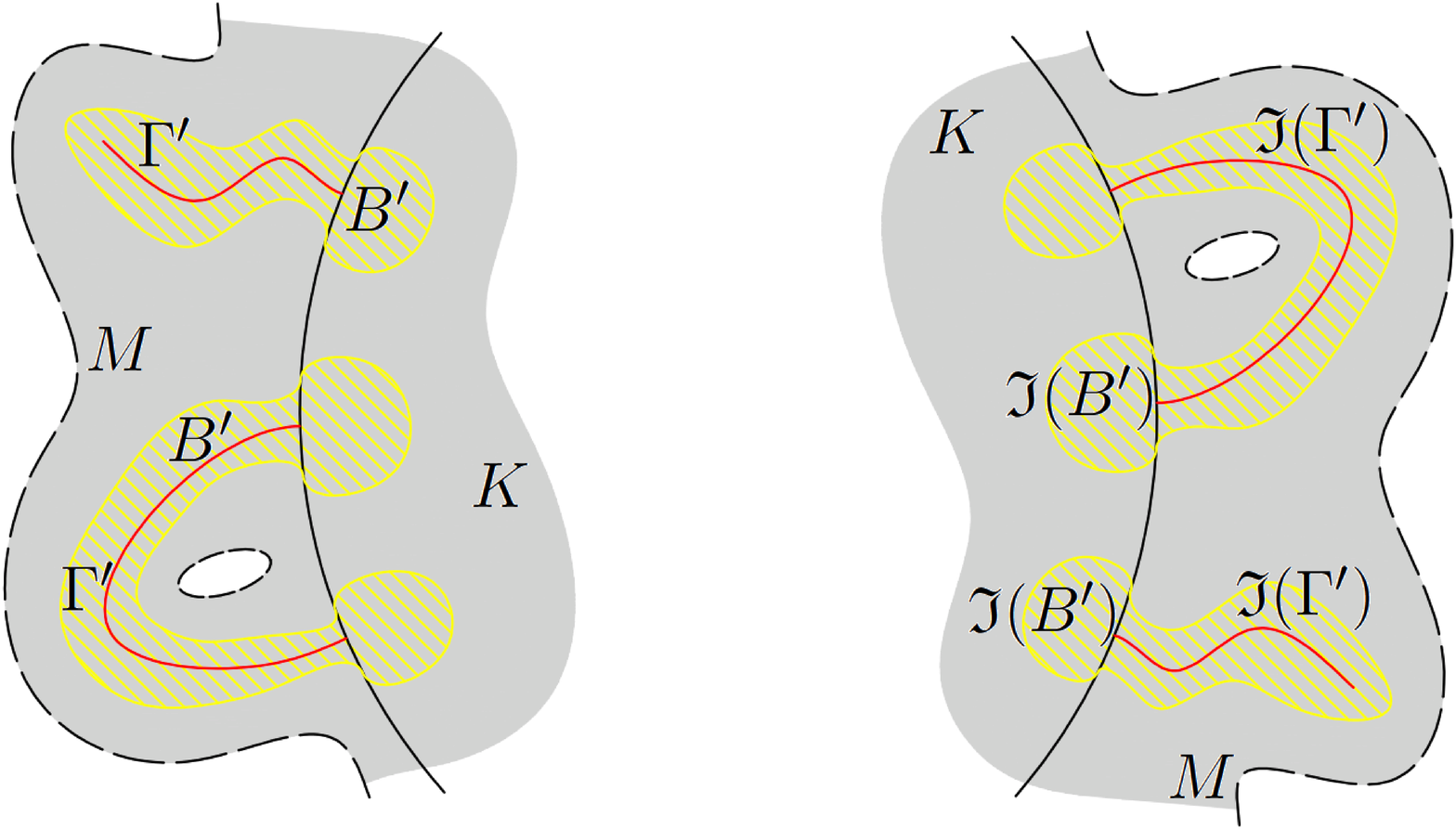}}
        \end{center}
\caption{The Cartan pair $(K,B)$.}\label{fig:KB}
\end{figure}

Fix a number $\epsilon>0$. We wish to find $\tilde f\in \Oscr_\Igot(M,\Agot_*)$ such that $||f-\tilde f||_{0,S}<\epsilon$ 
and $(f-\tilde f)\theta$ is exact on $S$; furthermore, there is a homotopy $f_t\in \Oscr_\Igot(M,\Agot_*)$ $(t\in [0,1])$ from $f_0=f$ 
to $f_1=\tilde f$ with the same properties.
It suffices to explain how to find a map $\tilde f\in \Oscr_\Igot(K\cup B,\Agot_*)$ with the stated properties
since we can extend the solution (by approximation) from $K\cup B$ to $M$ by Proposition \ref{prop:noncritical}.

Reasoning as in the proof of Proposition \ref{prop:period-dominating-spray}, we construct a dominating and 
period dominating $(\Igot,\Igot_0)$-invariant holomorphic spray $F\colon S\times r\b^m\to \Agot_*$ 
of the form \eqref{eq:flow-spray3} with the core $F(\cdotp,0)=f$ for some $r>0$ and $m\in\n$. 
The period map $\Pcal$, to which the period domination of the spray $F$ refers to, is associated 
to the $1$-form $\theta$ and a $\Igot$-basis $\Bcal^+$ of $H_1(S;\z)$; cf.\ \eqref{eq:periodmap}.
Decreasing $r>0$ if necessary, we may assume that 
\begin{equation}\label{eq:Ff}
	||F(\cdot,\zeta)-f||_{0,S}<\epsilon/2\quad \text{for all $\zeta\in r\b^m$}.
\end{equation}

We claim that for every $\delta>0$  there exists a spray $G \colon B \times r\B^m\to \Agot_*$ of class 
$\Ascr_{\Igot,\Igot_0}(B)$ satisfying
\begin{equation}\label{eq:FG}
	||F-G||_{0,(S\cap B)\times r\b^m} < \delta.
\end{equation}
To see this, note that the pair $S\cap B' \subset B'$
satisfies the classical Mergelyan approximation theorem, i.e., every continuous function 
on $S\cap B'$ that is holomorphic on $\mathring S\cap B'= \mathring K\cap B'$
is a uniform limit of functions holomorphic on $B'$. Furthermore, since  $B'$
deformation retracts onto $S\cap B'$, the same holds for maps to any Oka manifold,
in particular, to the null quadric $\Agot_*$. (This follows by combining the
local Mergelyan theorem for maps to an arbitrary complex manifold,
cf.\ \cite[Theorem 3.7.2]{Forstneric2011-book}, and the Runge approximation theorem 
for maps to Oka manifolds, cf.\ \cite[Theorem 5.4.4]{Forstneric2011-book}.) 
In the case at hand, we can get approximating sprays $G$ simply by approximating
the core map $f$ and the coefficient functions $h_i$ in \eqref{eq:periodmap}
by holomorphic maps on $B'$. 
We extend $G$ to $\Igot(B')\times r\B^m$ by simmetrization (cf.\ \eqref{eq:simmetrization}) so as to make
it $(\Igot,\Igot_0)$-invariant. 

Let $r'\in (0,r)$ be as in Proposition \ref{prop:gluing}; 
recall that $r'$ can be chosen to depend only on $F$. Pick a number $\eta\in (0,\epsilon/2)$. 
If $\delta>0$ in \eqref{eq:FG} is chosen sufficiently small, then Proposition \ref{prop:gluing} 
furnishes a $(\Igot,\Igot_0)$-invariant spray $H\colon (K\cup B)\times r'\b^m\to\Agot_*$ of class 
$\Ascr(K\cup B)$ satisfying 
\begin{equation}\label{eq:HFe2}
	||H-F||_{0,K\times r' \b^m}< \eta.
\end{equation} 
Furthermore, the proof of Proposition \ref{prop:gluing} shows that $H|_{B\times r'\B^m}$ is of the form
\[
	H(p,\zeta)=G(p,\zeta+b(p,\zeta)), \quad (p,\zeta)\in B\times r'\b^m
\]
for some holomorphic map $b\colon B\times r'\b^m\to \c^m$ with small sup-norm 
depending only on the constant $\delta$ in \eqref{eq:FG}. Since $G$ is $\delta$-close to $F$
over $S\times r\b^m $ (cf.\ \eqref{eq:FG}), it follows that for $\delta>0$ small enough we also have 
\begin{equation}\label{eq:HFe3}
	||H-F||_{0,(S\cap B) \times r' \b^m} < \eta.
\end{equation}
Since the spray $F$ is period dominating on $S$, the estimates \eqref{eq:HFe2} and \eqref{eq:HFe3}
show that for $\eta>0$ small enough there is  a point $\zeta_0\in \r^m\cap r'\b^m$ such that the map 
$\tilde f= H(\cdot,\zeta_0)\in \Ascr_{\Igot}(K\cup B,\Agot_*)$ satisfies
$\Pcal(\tilde f)=\Pcal(f)$, which means that the $1$-form $(\tilde f -f)\theta$ is exact on $S$. 
By combining the estimates \eqref{eq:Ff}, \eqref{eq:HFe2}, \eqref{eq:HFe3} and $\eta<\epsilon/2$,
we conclude that $||f-\tilde f||_{0,S}<\epsilon$.
The existence of a homotopy $f_t\in \Fscr_\Igot(S, \Agot_*)$ connecting $f_0=f$ to $f_1=\tilde f|_{S}$
can be seen as in the proof of Lemma \ref{lem:bumping}. 
This completes the proof in Case 2.

The case of a general $\Igot$-admissible set 
$S$ follows by combining the above two cases. Indeed, the argument in Case 1 applies to every 
pair of connected components $C$, $\Igot(C)$ of $S$ such that $C\cap\Igot(C)=\emptyset$, whereas the 
argument in Case 2 applies to every $\Igot$-invariant component of $S$.
\end{proof}

%
%

The inductive step in the proof of Theorem \ref{th:Mergelyan0} is given by the following lemma.

\begin{lemma}\label{lem:step2}
Let $j\in\n$. Each map $f_j\in \Oscr_\Igot(M_j,\Agot_*)$ satisfying $\int_\gamma f_j\theta=\qgot(\gamma)$ for all 
$\gamma \in H_1(M_j;\z)$ can be approximated uniformly on $M_j$ by maps $f_{j+1}\in \Oscr_\Igot(M_{j+1},\Agot_*)$ 
satisfying $\int_\gamma f_{j+1}\theta=\qgot(\gamma)$ for all $\gamma \in H_1(M_{j+1};\z)$.
\end{lemma}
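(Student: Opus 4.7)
The plan is to reduce the inductive step from $M_j$ to $M_{j+1}$ to an application of Lemma \ref{lem:step1}, by constructing an $\Igot$-invariant admissible set $S_{j+1}\subset M_{j+1}$ that is a strong deformation retract of $M_{j+1}$ and on which $f_j$ extends with the prescribed periods. The change in topology is governed by the $\Igot$-symmetric pair of critical points $\{p_{j+1},\Igot(p_{j+1})\}$ of $\rho$ lying in the open annular region $\{c_j<\rho<c_{j+1}\}$. Since $\rho$ is strongly subharmonic and Morse, the Morse indices at these points are $0$ or $1$, and $\Igot$ being a diffeomorphism that intertwines the two points forces both to have the same index and the same critical value.

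First I would apply Proposition \ref{prop:noncritical} to enlarge $M_j$ slightly to an $\Igot$-invariant compact set $M_j^+\subset\{\rho<\rho(p_{j+1})\}$ having $M_j$ as a strong deformation retract, extending $f_j$ to a map in $\Oscr_\Igot(M_j^+,\Agot_*)$ uniformly close to $f_j$ on $M_j$ and with the same periods. This reduces the problem to crossing the critical level.

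Next I would split into cases by Morse index. If both critical points have index $0$, then $M_{j+1}$ is obtained from $M_j^+$ by adjoining two disjoint disks $D\cup\Igot(D)$, and I would define an extension $f$ on $D$ by any holomorphic map into $\Agot_*$ and set $f:=\overline{f\circ\Igot}$ on $\Igot(D)$ to obtain $\Igot$-invariance; since the new disks contribute no new $H_1$, the period condition is automatic, and Lemma \ref{lem:step1} produces $f_{j+1}$. If both critical points have index $1$, I would attach an $\Igot$-symmetric pair of short smooth arcs $E'\cup \Igot(E')$ inside $\overline{M_{j+1}\setminus M_j^+}$ passing through $p_{j+1}$ and $\Igot(p_{j+1})$, with endpoints on $bM_j^+$, chosen so that $S_{j+1}:=M_j^+\cup E'\cup \Igot(E')$ is $\Igot$-admissible in the sense of Definition \ref{def:admissible} and a strong deformation retract of $M_{j+1}$. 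I would then extend $f_j$ smoothly along $E'$ to a map into $\Agot_*$ so that the integrals $\int f\theta$ over the new homology generators match the prescribed values of $\qgot$, and extend by symmetrization to $\Igot(E')$. The hypothesis $\qgot\circ\Igot_*=\overline{\qgot}$ guarantees that the symmetrized periods over the $\Igot_*$-images of the new cycles are also correct. With $(X,f\theta)\in\GCMII^n(S_{j+1})$ so constructed, Lemma \ref{lem:step1} applied to $S_{j+1}\subset M_{j+1}$ yields the desired $f_{j+1}\in \Oscr_\Igot(M_{j+1},\Agot_*)$.

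The hard part will be the index-$1$ case when the two arcs $E'$ and $\Igot(E')$ cannot be chosen disjoint, for instance when $p_{j+1}$ and $\Igot(p_{j+1})$ lie in the same connected component near the critical level, or when the attaching handles force the arcs to share endpoints on $bM_j^+$, or amalgamate into a single $\Igot$-invariant arc passing through both critical points. The geometric possibilities correspond to the distinct configurations encoded in the $\Igot$-basis of Section \ref{sec:homology}, namely the distinction between new cycles of type $\alpha_0$ (fixed setwise by $\Igot$), $\beta_0^+\cup\beta_0^-$ (exchanged by $\Igot$), $\alpha_j^\pm,\beta_j^\pm$ (genus handles in the $\Igot$-conjugate halves), and $\gamma_j^\pm$ (small loops around punctures). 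In each configuration the available freedom in the extension of $f$ along the arcs is constrained by the $\Igot$-symmetry requirement, and one must verify, case by case, that the remaining freedom still suffices to match the prescribed values of $\qgot$ on the new homology classes. This is the ``considerably finer analysis than in the orientable case'' that the introduction alludes to.
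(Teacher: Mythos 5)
Your overall strategy -- reduce from $M_j$ to $M_{j+1}$ by distinguishing Morse indices, attach an $\Igot$-symmetric pair of disks or arcs, extend $f_j$ freely on one member and define it on the $\Igot$-image by symmetrization, then invoke Lemma~\ref{lem:step1} on the resulting $\Igot$-admissible set -- is exactly the paper's approach, and your observations that the indices are $0$ or $1$ (strong subharmonicity) and that $\Igot$-conjugate critical points share both value and index are correct. The preliminary application of Proposition~\ref{prop:noncritical} to push $M_j$ up to a neighborhood of the critical level is a harmless restructuring; the paper folds that step into the construction of the attaching set.

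However, the concern you single out as the ``hard part'' does not arise. You worry that $E'$ and $\Igot(E')$ might be forced to share endpoints or to amalgamate into a single $\Igot$-invariant arc through both critical points. This cannot happen: since $\Igot$ has no fixed points and $\rho\circ\Igot=\rho$, the descending arcs at the two conjugate critical points $p_{j+1}$ and $\Igot(p_{j+1})$ lie in disjoint neighborhoods (a gradient flowline cannot join two critical points at the same level), and the four endpoints on $bM_j$ can be made pairwise distinct by an arbitrarily small $\Igot$-equivariant perturbation. The paper therefore takes as a standing fact that $\alpha$ and $\Igot(\alpha)$ are disjoint. The ``considerably finer analysis'' the introduction refers to is something else: once the arcs are disjoint, there is still a four-way case distinction -- (i.1), (i.2), (ii.1), (ii.2) in the paper's Lemma~\ref{lem:step2} -- according to whether the endpoints $p_\alpha,q_\alpha$ of $\alpha$ lie in one or two connected components of $M_j$, and whether those components are $\Igot$-invariant or are exchanged with their $\Igot$-images. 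This governs how many new generators appear in $H_1(S_j;\z)$ (two, one, or none). In every case, the disjointness of $\alpha$ from $\Igot(\alpha)$ is precisely what makes the period-matching easy: one extends $f_j$ along $\alpha$ with arbitrary prescribed integral, then symmetrizes to $\Igot(\alpha)$, and the compatibility hypothesis $\qgot\circ\Igot_*=\overline{\qgot}$ automatically enforces the correct period over the $\Igot$-conjugate cycle. So the missing piece of your argument is not to handle an amalgamated arc, but to run the four-fold attaching case analysis and check the homology bookkeeping; once that is done, Lemma~\ref{lem:step1} finishes the step as you intended.
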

\begin{proof}
We distinguish cases.

\vspace{1mm}

\noindent{\em The noncritical case:} $\rho$ has no critical points in $M_{j+1}\setminus \mathring M_j$. 
In this case, $M_j$ is a strong deformation retract of $M_{j+1}$ and the result is given by Proposition \ref{prop:noncritical}.

\vspace{1mm}

\noindent{\em The critical case, index $0$:}  $\rho$ has critical points $p_j$ and $\Igot(p_j)$ in 
$M_{j+1}\setminus \mathring M_j$ with Morse index equal to $0$. In this case, two new simply connected 
components $C$ and $\Igot(C)$ of the sublevel set $\{\rho\leq t\}$ appear at $p_j$ and $\Igot(p_j)$ when $\rho$ passes the 
value $\rho(p_j)=\rho(\Igot(p_j))$, with $C\cap\Igot(C)=\emptyset$. Define $\wt f_j$ on $M_j$ by $\wt f_j=f_j$,
on $C$ as any holomorphic map into $\Agot_*$, and on $\Igot(C)$ so as to make it $\Igot$-invariant, 
i.e. $\wt f_j(p)=\overline{\wt f_j(\Igot(p))}$ for all $p\in \Igot(C)$. This reduces the proof to the noncritical case.

\vspace{1mm}

\noindent{\em The critical case, index $1$:}  
$\rho$ has two critical points $p_j$ and $\Igot(p_j)$ in $\mathring M_{j+1}\setminus M_j$ with Morse index equal to $1$. 
In this case, the change of topology of the sublevel set 
$\{\rho\leq t\}$ at $p_j$ and $\Igot(p_j)$ is described by attaching to $M_j$ smooth disjoint arcs $\alpha$ and $\Igot(\alpha)$ 
in $\mathring M_{j+1}\setminus \mathring M_j$, with endpoints in $bM_j$ and otherwise disjoint from $M_j$. Set 
\begin{equation}\label{eq:Sj}
	  S_j:=M_j\cup\alpha\cup\Igot(\alpha)\Subset M_{j+1}.
\end{equation}
Let $p_\alpha,q_\alpha\in bM_j$ denote the endpoints of $\alpha$; hence, $\Igot(p_\alpha),\Igot(q_\alpha)$ are the 
endpoints of $\Igot(\alpha)$. 

There are four possibilities for the change of topology from $M_j$ to $S_j$, depending 
on the number and character of the connected components of $M_j$ that the arc $\alpha$ is attached to:
\begin{enumerate}[\rm (i)]
\item Assume that $\{p_\alpha,q_\alpha\}$ is contained in a connected component $C$ of $M_j$.
\vspace{1mm}
\begin{enumerate}[\rm ({i}.1)] 
\item If $C$ is $\Igot$-invariant, then $\{\Igot(p_\alpha),\Igot(q_\alpha)\}\subset C$ and $H_1(S_j;\z)$ contains two new 
closed curves which do not lie in  $H_1(M_j;\z)$. A detailed discussion of the topology of $S_j$ in this case can be found 
in \cite[Remark 5.8 and Figures 2, 3, and 4]{AlarconLopez2015GT}.
\vspace{1mm}
\item If $C$ is not $\Igot$-invariant, then $\{\Igot(p_\alpha),\Igot(q_\alpha)\}\subset \Igot(C)$ and $H_1(S_j;\z)$ contains 
two new closed curves which do not lie in  $H_1(M_j;\z)$, one contained in $H_1(C\cup\alpha;\z)$ and the other 
one in $H_1(\Igot(C)\cup\Igot(\alpha);\z)$.  See Figure \ref{fig:i2}.
\end{enumerate}
\vspace{2mm}
\item Assume that the endpoints $p_\alpha$ and $q_\alpha$ of $\alpha$  lie in different connected components 
$C_1$ and $C_2$ of $M_j$. 
\begin{enumerate}[\rm ({ii}.1)] 
\vspace{1mm}
\item If either $C_2=\Igot(C_1)$ or $C_1$ and $C_2$ are $\Igot$-invariant, then $C_1\cup C_2$ is $\Igot$-invariant and 
it follows that $H_1(S_j;\z)$ contains a single new closed curve which does not lie in  $H_1(M_j;\z)$. See Figure \ref{fig:ii1}.
\vspace{1mm}
\item If $C_2\neq \Igot(C_1)$ and either $C_1$ or $C_2$ (or both) is not $\Igot$-invariant, then $C_1\cup C_2$ is not 
$\Igot$-invariant. In this case, no new closed curves appear and the homology of $S_j$ coincides with the one of $M_j$.  
See Figure \ref{fig:ii2}.
\end{enumerate}
\end{enumerate} 
\begin{figure}[ht]
    \begin{center}
    \scalebox{0.4}{\includegraphics{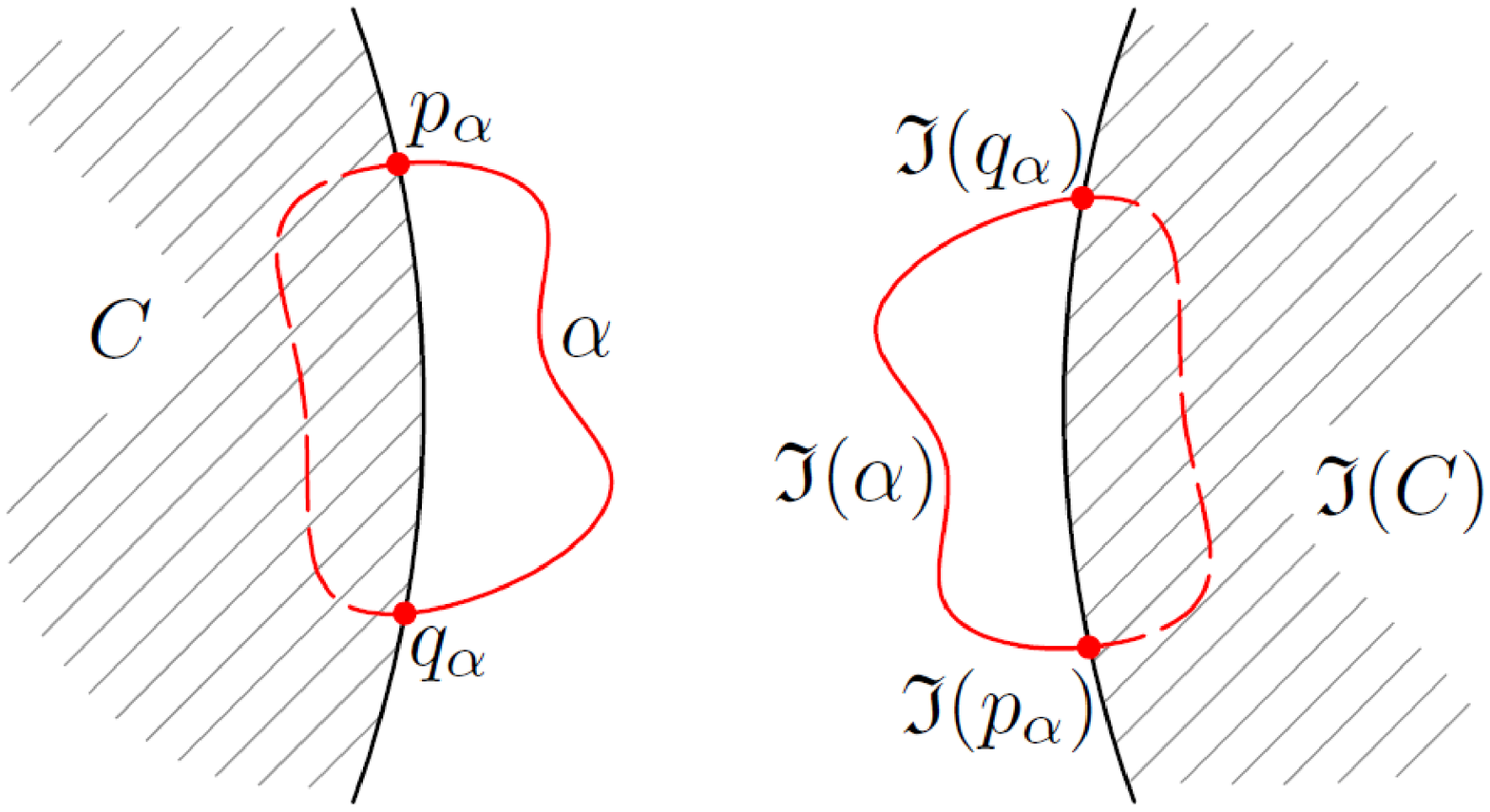}}
        \end{center}
\caption{Case {\rm (i.2).}}\label{fig:i2}
\end{figure}
\begin{figure}[ht]
    \begin{center}
    \scalebox{0.4}{\includegraphics{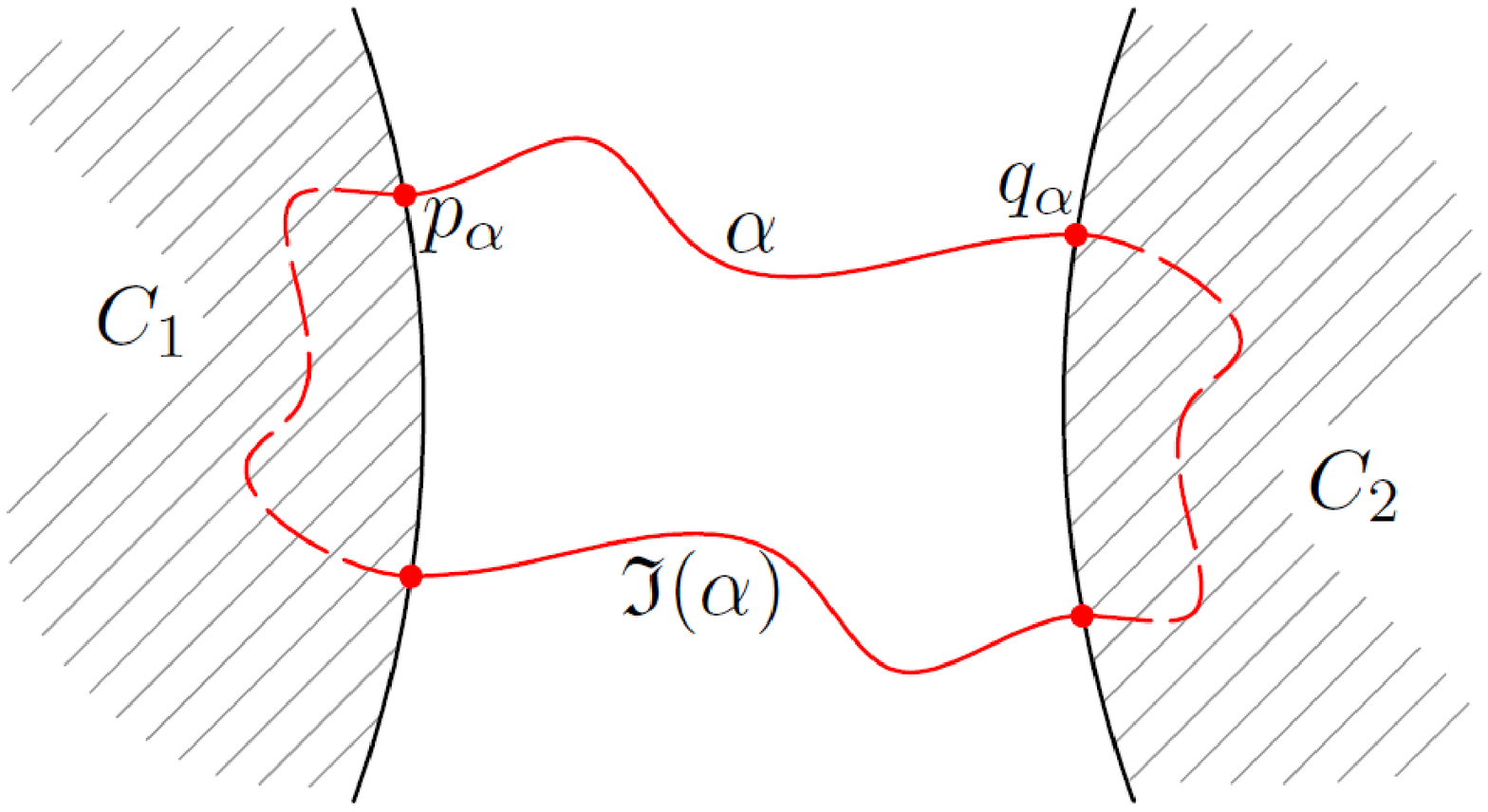}}
        \end{center}
\caption{Case {\rm (ii.1).}}\label{fig:ii1}
\end{figure}
\begin{figure}[ht]
    \begin{center}
    \scalebox{0.34}{\includegraphics{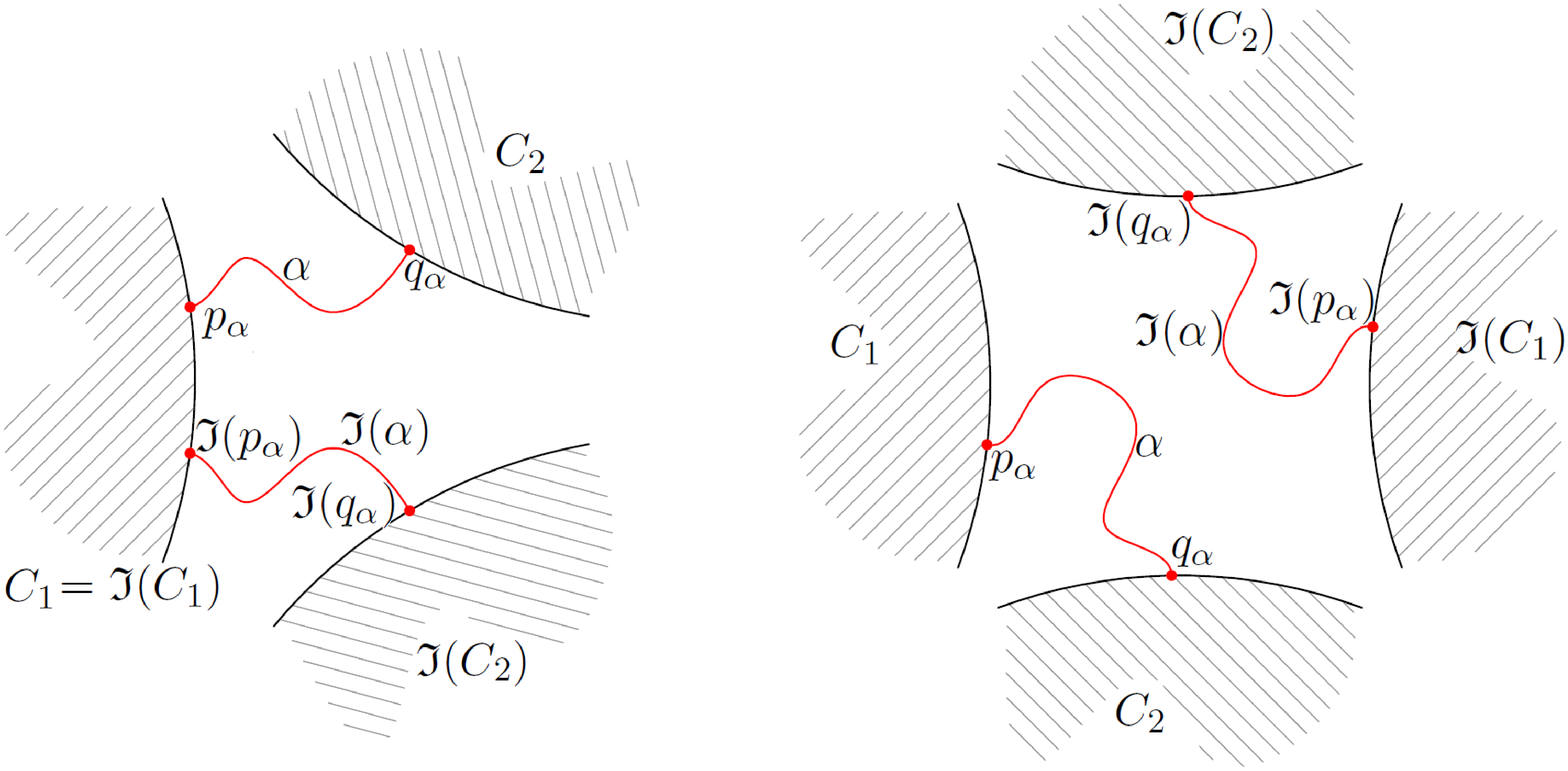}}
        \end{center}
\caption{Case {\rm (ii.2)}. 
Left: $C_1=\Igot(C_1)$ and $C_2\neq\Igot(C_2)$. 
Right: $C_1\neq\Igot(C_1)$ and $C_2\neq\Igot(C_2)$.}\label{fig:ii2}
\end{figure}

In any of the cases described above, the set $S_j$  defined by \eqref{eq:Sj} is a strong deformation retract of $M_{j+1}$, 
and we may further assume that it is $\Igot$-admissible in the sense of Definition \ref{def:admissible}.

Now, we extend $f_j$ to a function $\wt f_j\in \Fscr_\Igot(S_j,\Agot_*)$ such that $\int_\gamma \wt f_j\theta=\qgot(\gamma)$ 
for every closed curve $\gamma\subset S_j$. (In case {\rm (ii.2)} the latter condition holds for any extension 
$\wt f_j\in \Fscr_\Igot(S_j,\Agot_*)$ of $f_j$.) For that, we fix an orientation on $\alpha$, extend $f_j$ to $\alpha$ so that 
$\int_\alpha \wt f_j\theta$ has the required value (see  \cite[proof of Theorem 5.3]{AlarconForstnericLopez2016MZ}), 
and define $\wt f_j(p)=\overline{\wt f_j(\Igot(p))}$ for all $p\in \Igot(\alpha)$. Reasoning as in the proof of 
Lemma \ref{lem:step1} we conclude the proof.
\end{proof}

By using Lemmas \ref{lem:step1} and \ref{lem:step2}, it is straightforward to find a sequence  
$f_j\in \Oscr_\Igot(M_j,\Agot_*)$ converging to a map $h\in \Oscr_\Igot(\Ncal,\Agot_*)$ which 
approximates $f$ on $S$ as close as desired and satisfies $\int_\gamma h\theta=\qgot(\gamma)$ 
for all $\gamma \in H_1(\Ncal;\z)$. Likewise, the homotopies from $f_j$ to $f_{j+1}$ obtained in each step
can be used to get a homotopy $f_t\in \Fscr_\Igot(S, \Agot_*)$ $(t\in [0,1])$ satisfying the conclusion
of Theorem \ref{th:Mergelyan0}.  
\end{proof}

%
%

\section{A Mergelyan theorem with fixed components}
\label{sec:Mergelyan1}

In this section, we prove the following version of Mergelyan theorem for conformal non-orientable minimal surfaces in $\R^n$, 
$n\geq 3$, in which $n-2$ components of the immersion are preserved provided that they extend harmonically to 
the source Riemann surface. This will be useful in subsequent applications. 

%
%

\begin{theorem} 
\label{th:Mergelyan1}
Assume that  $(\Ncal,\Igot,\theta)$ is a standard triple \eqref{eq:triple},
$S=K\cup\Gamma$ is an $\Igot$-admissible subset of $\Ncal$, and $(X,f\theta)\in \GCMII^n(S)$ 
is a generalized $\Igot$-invariant conformal minimal immersion for some $n\ge 3$ (cf.\ Definition \ref{def:GCMI}).
Write $X=(X_j)_{j=1}^n$ and $f=(f_j)_{j=1}^n$. Assume that  for each $j=3,\ldots,n$ the function
$X_j$ extends harmonically to $\Ncal$, $f_j=2\partial X_j/\theta\in\Oscr_\Igot(\Ncal)$, 
and the function $\sum_{j=3}^n f_j^2$   does not vanish identically on $\Ncal$ and has no zeros on $\Gamma$.
Let $\pgot\colon H_1(\Ncal;\z)\to\r^n$ be a group homomorphism such that 
\[
	\pgot\circ \Igot_*=-\pgot \quad \text{and}\quad  
        \pgot(\gamma)=\Flux_{(X,f\theta)}(\gamma)\  \  \text{for every closed curve $\gamma\subset S$}.
\]   
Then, $(X,f\theta)$ can be approximated in the $\Cscr^1(S)$-topology (see Definition \ref{def:appGCMI}) 
by $\Igot$-invariant conformal minimal immersions $\wt X=(\wt X_j)_{j=1}^n\colon \Ncal \to \r^n$ such that  
$\wt X_j=X_j$ on $\Ncal$ for $j=3,\ldots,n$ and $\Flux_{\wt X}=\pgot$.
\end{theorem}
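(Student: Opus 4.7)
The plan is to adapt the proof of Theorem \ref{th:Mergelyan0}, replacing the period-dominating spray from Proposition \ref{prop:period-dominating-spray} by one whose flow fixes the last $n-2$ coordinates of $\C^n$ and therefore preserves $f_3, \ldots, f_n$. Put $F := \sum_{j=3}^n f_j^2 \in \OI(\Ncal)$; by hypothesis $F \not\equiv 0$, so $\{F = 0\}$ is a discrete $\Igot$-invariant subset of $\Ncal$ which is disjoint from $\Gamma$. I would work with the complete holomorphic vector field $V_{1,2} = z_1\di_{z_2} - z_2\di_{z_1}$ on $\C^n$, which is tangent to $\Agot$ and has real coefficients, so by Example \ref{ex:invariant} its flow is $\Igot_0$-compatible, and which has no $\di_{z_j}$-component for $j \ge 3$, so $\phi^{V_{1,2}}_t$ fixes $(z_3, \ldots, z_n)$ pointwise.

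The main technical step is a restricted analogue of Proposition \ref{prop:period-dominating-spray}. Given $\Igot$-invariant holomorphic functions $h_1, \ldots, h_m \in \OI(\Ncal)$, the spray
\[
	G(p, \zeta) = \phi^{V_{1,2}}_{\sum_i \zeta_i h_i(p)}(f(p)),\qquad (p, \zeta)\in S\times\C^m,
\]
is $(\Igot, \Igot_0)$-invariant with values in $\Agot_*$ and core $f$, and its component functions $G_j$ equal $f_j$ identically in $\zeta$ for $j = 3, \ldots, n$. Its vertical derivative at $\zeta = 0$ along the $\R^m$-direction has periods
\[
	\int_{\delta_k} h_i \cdot (-f_2,\, f_1,\, 0,\, \ldots,\, 0)\, \theta, \qquad i = 1, \ldots, m,\ k = 1, \ldots, l,
\]
on the first two components, with image in $\R^2 \oplus (\C^2)^{l-1}$ (the first factor real because $\Igot_* \delta_1 = \delta_1$). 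Choosing the $\Igot$-basis to avoid the isolated zero locus of $(f_1, f_2) \subset \{F = 0\}$ --- possible since $F \not\equiv 0$ and $\{F=0\}\cap\Gamma=\emptyset$ --- and then invoking Runge approximation in $\OI(\Ncal)$ (cf.\ Proposition \ref{prop:Cor6.5}) together with the symmetrization trick used in the proof of Proposition \ref{prop:period-dominating-spray}, one obtains enough $h_i$ so that this differential surjects onto $\R^2 \oplus (\C^2)^{l-1}$. The components $j \ge 3$ of the period map are constant in $\zeta$ and automatically match the $j \ge 3$ components of $\pgot$ by the hypothesis that $X_3, \ldots, X_n$ extend harmonically to $\Ncal$.

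With this restricted spray in hand, the scheme of the proof of Theorem \ref{th:Mergelyan0} carries through. I would first reduce, as in the proof of Theorem \ref{th:Mergelyan}, to the case where $S$ is connected and $\Gamma$ contains no closed curves. Then, fixing a $\Igot$-invariant strongly subharmonic Morse exhaustion $\rho$ of $\Ncal$ with $S \subset \{\rho < 0\}$, the basic Mergelyan step (analogue of Lemma \ref{lem:step1}) is carried out by gluing via Proposition \ref{prop:gluing} using the new spray in place of the old, and the inductive argument along $M_j = \{\rho \le c_j\}$ proceeds via the noncritical case (Proposition \ref{prop:noncritical} adapted to the restricted spray) and the four index-$0$/$1$ subcases of Lemma \ref{lem:step2}, with the extension of the core across any new arc chosen to land in the fiber $\{\tilde f_j = f_j : j \ge 3\} \subset \Agot_*$. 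This yields a limit $\tilde f \in \OI(\Ncal, \Agot_*)$ with $\tilde f_j = f_j$ for $j \ge 3$, close to $f$ on $S$, and with correct periods; integration via Lemma \ref{lem:partial-period} with a suitable base point then gives the desired $\wt X\in \CMI^n_\Igot(\Ncal)$.

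The hard part is the restricted period-dominating construction: the constraint of fixing $f_3, \ldots, f_n$ forces the use of a single vector field $V_{1,2}$, leaving no room for maneuver at zeros of $V_{1,2}(f) = (-f_2, f_1, 0, \ldots, 0)$, so the hypothesis $F \not\equiv 0$ is essential to make these zeros isolated and the $\Igot$-basis of $H_1(\Ncal;\z)$ arrangeable to avoid them. The companion hypothesis that $F$ has no zeros on $\Gamma$ is in turn used in the arc case $\Gamma \ne \emptyset$ to ensure that the approximation in $\Cscr^1(S)$ along the arcs does not degenerate and that the extensions of the core across arcs in Lemma \ref{lem:step2}-type steps remain in $\Agot_*$.
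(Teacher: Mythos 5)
Your proposal correctly isolates the key players: the single vector field $V_{1,2}=z_1\di_{z_2}-z_2\di_{z_1}$ whose flow fixes the last $n-2$ coordinates, and the hypotheses that $\sum_{j\ge3}f_j^2\not\equiv0$ (making $\{f_1=f_2=0\}$ discrete) and that this sum is nonzero on $\Gamma$. But the plan to "carry through the scheme of Theorem \ref{th:Mergelyan0}" by gluing restricted sprays on bumps breaks at the gluing step. A spray built from $V_{1,2}$ alone has vertical differential spanning only the line $\C\cdot V_{1,2}(f(p))\subset T_{f(p)}\Agot_*$, which is $1$-dimensional, whereas $\dim_\C T_{f(p)}\Agot_*=n-1\ge2$. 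So the restricted spray is \emph{not} dominating over $C=A\cap B$ in the sense required by Proposition \ref{prop:gluing}, and that proposition simply does not apply. Independently, the bump-approximation step (the appeal to $\Agot_*$ being an Oka manifold in Lemma \ref{lem:step1}) cannot preserve $f_3,\ldots,f_n$: one would need Oka-type approximation for sections of the conic fibration $\{z_1^2+z_2^2=h(p)\}$ over $B$, and that fibration degenerates at zeros of $h=-\sum_{j\ge3}f_j^2$, which the bumps of Proposition \ref{prop:noncritical} cannot in general be arranged to miss. Neither issue is addressed in the proposal, and neither is cosmetic.

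The paper takes a different route precisely to avoid both obstacles. Theorem \ref{th:Mergelyan1} is reduced to Theorem \ref{th:Mergelyan2}, whose engine is Lemma \ref{lem:Mergelyan1}: one introduces auxiliary $(f_3,f_4)\in\Oscr_\Igot(\Ncal,\C^2\setminus\{0\})$ with $f_3^2+f_4^2=-h$ (Lemma \ref{lem:h}) so as to view $(f_1,f_2,f_3,f_4)$ as a map to $\Agot_*\subset\C^4$; builds the $V_{1,2}$-spray only to obtain period domination in the first two components; and then, rather than gluing, approximates the single scalar function $F_1-\imath F_2$ on all of $\Ncal\times U$ by a holomorphic $G$ with the prescribed divisor via the Oka property with interpolation (for $\C$-valued maps this is Cartan extension plus Oka--Weil, needing no domination). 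It reconstructs $(\wt F_1,\wt F_2)$ from $G$ through $H(p,\zeta)=G(p,\zeta)\,\overline{G}(\Igot(p),\bar\zeta)$, whose divisor equals $(h)$, and a square root of $h/H$, exactly preserving the relation $\wt F_1^2+\wt F_2^2=h$ and the zero locus. This factorization-and-square-root device is the missing ingredient in your argument: it is what replaces the dominating spray in the gluing machinery, and it is where the $\Igot$-invariance of $h$ and the control of its zero set are actually used. You would need to supply either this device or a genuine Oka principle for the degenerate conic fibration before the exhaustion scheme you describe could close.
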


Note that the set $\Gamma$ could be empty; hence, the assumption that $\sum_{j=3}^n f_j^2$ does not vanish anywhere 
on $\Gamma$ need not imply that $\sum_{j=3}^n f_j^2$ is not identically zero on $\Ncal$. 
We point out that the latter condition is necessary in the theorem. Indeed, if $\sum_{j=3}^n f_j^2=0$ 
identically on $\Ncal$ then the arising functions $\wt X_1$ and $\wt X_2$ would be  constant on $\Ncal$ 
(since the only map $(\partial\wt X_1/\theta,\partial\wt X_2/\theta)\in\Oscr_\Igot(\Ncal,\c^2)$ satisfying
$(\partial\wt X_1/\theta)^2+(\partial\wt X_2/\theta)^2=-4\sum_{j=3}^n f_j^2=0$  
identically on $\Ncal$ is the constant $(0,0)\in\c^2$ in view of \eqref{eq:direct sum}).
Hence, their fluxes equal to zero, thereby imposing extra a priori conditions on $X$ and $\pgot$. 

Theorem \ref{th:Mergelyan1} was proved for $n=3$ in \cite{AlarconLopez2015GT} by using the L\'opez-Ros transformation 
for conformal minimal surfaces in $\r^3$, a tool which is not available in higher dimensions. 
On the other hand, a result analogous to Theorem \ref{th:Mergelyan1} in the orientable framework was proved in 
\cite{AlarconFernandezLopez2013CVPDE} for any $n\ge 3$. The deformation procedure used in the latter paper
does not work in the non-orientable case and hence we need here a different approach.

We begin with some preparations.

%
%
\begin{lemma}\label{lem:h}
Let $(\Ncal,\Igot)$ be a standard pair \eqref{eq:pair}.
Every $h\in\Oscr_\Igot(\Ncal) \setminus \{0\}$ can be written in the form 
$h=f_1^2+f_2^2$ with $(f_1,f_2)\in\Oscr_\Igot(\Ncal,\c^2\setminus \{0\})$.  
\end{lemma}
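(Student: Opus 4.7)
The plan is to factor $f_1^2+f_2^2=(f_1+if_2)(f_1-if_2)$. Setting $g:=f_1+if_2$, the $\Igot$-invariance of $f_1,f_2$ forces $f_1-if_2=\overline{g\circ\Igot}$, so the problem reduces to finding $g\in\Oscr(\Ncal)$ with
\[
    g\cdot\overline{g\circ\Igot}=h
\]
and such that $g$ and $\overline{g\circ\Igot}$ share no zero. Then $f_1:=(g+\overline{g\circ\Igot})/2$ and $f_2:=(g-\overline{g\circ\Igot})/(2i)$ both lie in $\OI(\Ncal)$, satisfy $f_1^2+f_2^2=h$, and cannot vanish simultaneously.

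First I would use the freeness of $\Igot$ to split the zero divisor of $h$. Since $h\in\OI(\Ncal)$, $\mathrm{div}(h)$ is $\Igot$-invariant and, as $\Igot$ has no fixed points, its support decomposes into two-point orbits $\{p,\Igot(p)\}$ with equal multiplicities. Picking one point from each orbit yields a divisor $D$ on $\Ncal$ with $\mathrm{div}(h)=D+\Igot^* D$ and $\mathrm{supp}(D)\cap\mathrm{supp}(\Igot^*D)=\emptyset$. By the classical theorem that every divisor on an open Riemann surface is principal, there exists $g_0\in\Oscr(\Ncal)$ with $\mathrm{div}(g_0)=D$. A direct check using $h\circ\Igot=\bar h$ shows that $w:=h/(g_0\cdot\overline{g_0\circ\Igot})$ is a nowhere vanishing holomorphic function belonging to $\OI(\Ncal)$.

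Next, the task is to find a nowhere vanishing $s\in\Oscr(\Ncal)$ with $s\cdot\overline{s\circ\Igot}=w$; setting $g:=sg_0$ then produces the sought factorization, with $g$ still having divisor $D$ (hence disjoint from $\mathrm{div}(\overline{g\circ\Igot})=\Igot^*D$). The natural attempt is to produce an $\Igot$-invariant logarithm $\phi\in\OI(\Ncal)$ of $w$ and put $s:=e^{\phi/2}$, giving $s\cdot\overline{s\circ\Igot}=e^{(\phi+\overline{\phi\circ\Igot})/2}=e^{\phi}=w$. The $1$-form $dw/w$ is $\Igot$-invariant and holomorphic on $\Ncal$, so $\phi=\int dw/w$ exists once its periods are controlled; the anti-invariance identity $\int_{\Igot_*\gamma}dw/w=\overline{\int_\gamma dw/w}$ places the obstruction in the cokernel of $(1-\Igot^*)$ acting on the $(-1)$-eigenspace of $\Igot^*$ on $H^1(\Ncal,\Z)$. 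When a globally invariant logarithm fails to exist outright, I would correct $g_0$ by multiplying it by a meromorphic function of divisor $n_p[\Igot(p)]-n_p[p]$ (which swaps representatives of a single $\Igot$-orbit in $D$), changing $w$ by a factor of the form $r\cdot\overline{r\circ\Igot}$; iterating this, and using the explicit $\Igot_*$-action on $H_1(\Ncal,\Z)$ from Section \ref{sec:homology}, the obstruction can be killed.

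The hard part will be this last step — controlling the cohomological/logarithmic obstruction to the factorization $w=s\cdot\overline{s\circ\Igot}$. Once $g$ has been produced, the remainder is bookkeeping: $f_1,f_2$ are $\Igot$-invariant by construction, $f_1^2+f_2^2=g\cdot\overline{g\circ\Igot}=h$, and a common zero $(f_1(p),f_2(p))=(0,0)$ would force both $g(p)=0$ and $g(\Igot(p))=0$, contradicting the disjointness of $\mathrm{supp}(D)$ and $\mathrm{supp}(\Igot^*D)$.
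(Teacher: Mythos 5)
Your proposal starts from the same reduction as the paper: write $g=f_1+\imath f_2$, so that $f_1-\imath f_2=\overline{g\circ\Igot}$ and the problem becomes finding $g$ with $g\cdot(\overline{g\circ\Igot})=h$ and with the zeros of $g$ and $\overline{g\circ\Igot}$ disjoint. The divisor splitting $\mathrm{div}(h)=D+\Igot^*D$ with disjoint supports, and the passage to a nowhere-vanishing $\Igot$-invariant unit $w=h/(g_0\cdot\overline{g_0\circ\Igot})$, also match the paper exactly. Where you diverge is in how you propose to factor $w$ as $s\cdot\overline{s\circ\Igot}$: the paper sidesteps a cohomological analysis by first choosing the curves in an $\Igot$-basis $\Bcal$ so that $h$ is nonvanishing on $|\Bcal|$, explicitly writing down a continuous ``near-factorization'' $v$ on $|\Bcal|$, and then Runge-approximating so that the correction factor $\varphi$ is uniformly close to $1$ on $|\Bcal|$ and hence has a tautological $\Igot$-invariant square root. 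Your route instead tries to identify and then kill a cohomological obstruction directly. That is a legitimate alternative in principle, but as written it has two genuine gaps.

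First, the proposed correction mechanism is ineffective as stated. Replacing $g_0$ by $g_0 r$ with $r$ meromorphic changes $w$ by the factor $1/\bigl(r\cdot\overline{r\circ\Igot}\bigr)$, and the winding-number class of $r\cdot\overline{r\circ\Igot}$ is $(1-\Igot_*^*)m_r$ where $m_r\in H^1(\Ncal;\Z)$ is the winding class of $r$. In other words, your correction only shifts the class $n(w)$ by elements of the image of $(1-\Igot_*^*)$ — precisely the subgroup that defines the cokernel. So iterating it cannot change the cokernel class of the obstruction; it can kill the obstruction only if the class was already zero. What one actually has to verify is that the map $(1-\Igot_*^*)\colon H^1(\Ncal;\Z)\to H^1(\Ncal;\Z)$ surjects onto the whole $(-1)$-eigenspace. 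This is true for the specific $\Igot_*$-action arising from a fixed-point-free involution (for $\alpha_0$ the constraint $n(\alpha_0)=-n(\alpha_0)$ forces $n(\alpha_0)=0$, matching the vanishing of $(1-\Igot_*^*)m$ on $\alpha_0$; for the swapped pairs $\delta_j^\pm$ one may simply set $m(\delta_j^+)=n(\delta_j^+)$, $m(\delta_j^-)=0$), but you do not carry out this verification, and it is the heart of the matter — it replaces the paper's construction of $v$ and the approximation step.

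Second, you say nothing about the sign ambiguity that survives even after the winding numbers have been killed. When $w$ has trivial winding class, a logarithm $\phi_0=\log w$ exists; then $\phi_0-\overline{\phi_0\circ\Igot}=2\pi\imath k$ for some $k\in\Z$, and the $\Igot$-symmetrization $\phi=\phi_0-\pi\imath k$ satisfies $e^\phi=(-1)^k w$. If $k$ is odd, your $s=e^{\phi/2}$ gives $s\cdot\overline{s\circ\Igot}=-w$ rather than $w$, and no amount of adding $\Igot$-invariant constants or winding-number adjustments fixes this. (Concretely: on $\C_*$ with $\Igot(\zeta)=-1/\bar\zeta$, the $\Igot$-invariant unit $w\equiv-1$ has trivial winding class but no $\Igot$-invariant logarithm.) The paper resolves exactly this dichotomy in the Claim inside the proof of Lemma~\ref{lem:h}: it invokes a function $g\in\Oscr(\Ncal)$ with $g\cdot(\overline{g\circ\Igot})=-1$ from \cite[Lemma 4.6]{AlarconLopez2015GT} to reduce the ``$\imath h_0$ invariant'' case to the ``$h_0$ invariant'' case. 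Your argument needs the analogous ingredient — an $\Igot$-skew unit — and cannot be completed without it.
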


Note that the lemma fails for the zero function $h=0$ since by \eqref{eq:direct sum} 
the only map $(f_1,f_2)\in\Oscr_\Igot(\Ncal,\c^2)$ satisfying $f_1^2+f_2^2=0$ identically on $\Ncal$
is the constant $(0,0)\in\c^2$. 

\begin{proof}
For simplicity of notation we shall identify a path $\gamma$ in $\Ncal$ with its support $|\gamma|$ when
considering maps defined on the set $|\gamma|$.

Fix a Runge $\Igot$-basis $\Bcal=\{\alpha_0,\delta_2,\Igot(\delta_2),\ldots\delta_l,\Igot(\delta_l)\}$ of $H_1(\Ncal;\z)$ as 
in \eqref{eq:basis1}, \eqref{eq:basis2}, \eqref{eq:B0}; in particular, $|\Bcal|$ is a strong deformation retract of $\Ncal$.
Recall that $\alpha_0$ is the only curve in $\Bcal$ such that $\Igot(\alpha_0)=\alpha_0$, and we have 
\begin{equation}\label{eq:disjoint} 
	\text{$\alpha_0\cap \delta_j=\emptyset$,\ \ $\alpha_0\cap \Igot(\delta_j)=\emptyset$,\ \ and
	$\delta_j\cap \Igot(\delta_j)=\emptyset$\ \ for $j=2,\ldots,l$.}
\end{equation}
Since $h$ is holomorphic and does not vanish identically on $\Ncal$, we may assume 
by general position (choosing the curves in $\Bcal$ in a suitable way)
that $h$ has no zeros on $|\Bcal|$. 
Fix an orientation on the closed curve $\alpha_0$. Denote by $k\in \Z$ the topological degree (i.e., the winding number
around the origin) of the function $h|_{\alpha_0} \colon \alpha_0\to \C\setminus\{0\}$.
Since $\Igot\colon \alpha_0\to \alpha_0$ is a diffeomorphism without a fixed point, it has topological degree one. 
Hence, the topological degree of 
$h\circ\Igot\colon \alpha_0\to \C\setminus\{0\}$ equals the degree of $h$ which is $k$. 
However, since $h\circ\Igot=\bar h$ which has topological degree $-k$, we infer that $k=0$.
This implies that $h|_{\alpha_0}\colon\alpha_0\to\c\setminus\{0\}$ has a well defined square root 
$h_0\colon \alpha_0\to \c\setminus\{0\}$. Since $h$ is $\Igot$-invariant and $h_0^2=h|_{\alpha_0}$,
we have that either $h_0\circ \Igot=\overline{h}_0$ (i.e., $h_0$  is $\Igot$-invariant) or 
$h_0\circ \Igot=-\overline{h}_0$ (i.e., $\imath h_0$  is $\Igot$-invariant).

\begin{claim} \label{cla:G}
It suffices to prove the lemma under the assumption that $h_0$ is $\Igot$-invariant.
\end{claim}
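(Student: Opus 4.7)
The plan is to reduce the skew-invariant case $h_0\circ\Igot=-\bar h_0$ to the invariant case via the substitution $h\rightsquigarrow -h$. If $h_0$ is $\Igot$-skew-invariant on $\alpha_0$, then $\imath h_0$ is a $\Igot$-invariant square root of $-h|_{\alpha_0}$, and since $-h\in\Oscr_\Igot(\Ncal)\setminus\{0\}$, the lemma---assumed in the invariant-square-root case---applied to $-h$ yields
\[
    -h=\tilde f_1^2+\tilde f_2^2,\qquad (\tilde f_1,\tilde f_2)\in\Oscr_\Igot(\Ncal,\c^2\setminus\{0\}).
\]

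To convert this into a factorization of $h$ itself, I would exhibit an auxiliary pair $a,b\in\Oscr_\Igot(\Ncal)$ with $a^2+b^2=-1$ on $\Ncal$, and invoke the Brahmagupta--Fibonacci identity
\[
    (a^2+b^2)(\tilde f_1^2+\tilde f_2^2)=(a\tilde f_1-b\tilde f_2)^2+(a\tilde f_2+b\tilde f_1)^2
\]
to obtain $h=f_1^2+f_2^2$ with $f_1:=a\tilde f_1-b\tilde f_2$ and $f_2:=a\tilde f_2+b\tilde f_1$ both in $\Oscr_\Igot(\Ncal)$. The pair $(f_1,f_2)$ is nowhere zero, because the pointwise linear transformation sending $(\tilde f_1,\tilde f_2)$ to $(f_1,f_2)$ has determinant $a^2+b^2=-1\neq 0$, hence carries a nowhere-vanishing pair to a nowhere-vanishing pair.

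The heart of the argument, and the main obstacle, is the construction of $(a,b)$. The affine conic $\{(z_1,z_2)\in\c^2:z_1^2+z_2^2=-1\}$ is biholomorphic to $\c^*$ via $w\mapsto\bigl(\frac{1}{2}(w-1/w),-\frac{\imath}{2}(w+1/w)\bigr)$, and under this biholomorphism the restriction of complex conjugation on $\c^2$ corresponds to the involution $w\mapsto -1/\bar w$ on $\c^*$---precisely the involution appearing in Example \ref{ex:MeeksMobius}. Hence exhibiting $(a,b)$ is equivalent to producing a holomorphic function $w\in\Oscr(\Ncal,\c^*)$ satisfying $w\circ\Igot=-1/\bar w$, or equivalently, writing $w=e^g$, a $g\in\Oscr(\Ncal)$ solving $g\circ\Igot+\bar g\equiv \imath\pi$ modulo $2\pi\imath\z$. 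I would construct such $g$ from the $\Igot$-invariant nonconstant holomorphic function furnished by Proposition \ref{prop:Cor6.5}, combining it with an additive harmonic correction that installs the required phase twist along $\Igot$-cycles, in analogy with the M\"obius-strip Weierstrass data of Example \ref{ex:MeeksMobius}. Once $(a,b)$ is in hand, the rest of the reduction is purely algebraic.
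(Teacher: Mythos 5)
Your algebraic reduction is correct and is in substance the same as the paper's. The paper's auxiliary object is a function $g\in\Oscr(\Ncal)$ with $g\cdot(\overline{g}\circ\Igot)=-1$, which is exactly your $w$ (the relation is equivalent to $g\circ\Igot=-1/\bar g$); setting $a=\frac{1}{2}\big(g-1/g\big)$ and $b=\frac{\imath}{2}\big(g+1/g\big)$ gives a pair in $\Oscr_\Igot(\Ncal,\c^2)$ with $a^2+b^2=-1$, and the paper's explicit formulas for $(f_1,f_2)$ in terms of $(u_1,u_2)$ and $g$ unwind to your Brahmagupta--Fibonacci composition $(au_1-bu_2,\,au_2+bu_1)$. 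So the whole burden of the claim is the existence of $g$ (equivalently of $w$, or of $(a,b)$), for which the paper cites Lemma~4.6 of~\cite{AlarconLopez2015GT}.

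This is precisely where your proposal has a genuine gap, and the plan you sketch cannot close it. If $w=e^g$ with $g\in\Oscr(\Ncal)$ single-valued, then $\phi:=g\circ\Igot+\overline{g}$ is continuous on the connected surface $\Ncal$; the condition $e^\phi=-1$ forces $\phi$ to take values in the discrete set $\imath\pi+2\pi\imath\z$ and hence to be a constant $c$. But $\phi\circ\Igot=g+\overline{g\circ\Igot}=\overline{\phi}$, so $c=\bar c\in\R$, which is incompatible with $c\in\imath\pi+2\pi\imath\z$. Equivalently: parametrize the $\Igot$-invariant cycle $\alpha_0$ by $t\in[0,1]$ with $\Igot(\alpha_0(t))=\alpha_0(t+\frac{1}{2})$ (modulo $1$; recall $\Igot|_{\alpha_0}$ has degree one). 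Lifting the relation $w(\alpha_0(t+\frac{1}{2}))=-1/\overline{w(\alpha_0(t))}$ through $\exp\colon\c\to\c\setminus\{0\}$ shows that the winding number of $w|_{\alpha_0}$ about $0$ equals $1+2k$ for some $k\in\z$, hence is odd. Thus $w$ is forced to wind an odd number of times along $\alpha_0$ and admits no global holomorphic logarithm; the ``phase twist'' you allude to is a topological obstruction to the ansatz $w=e^g$, not a correction one can install afterwards. Until you produce $w$ by some other means (the paper delegates this to the cited lemma), the claim is not established.
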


\begin{proof} 
Assume for a moment that the lemma holds whenever $h_0\circ \Igot=\overline{h}_0$, and suppose that 
$h_0\circ \Igot=-\overline{h}_0$. By \cite[Lemma 4.6]{AlarconLopez2015GT} there exists 
a function $g\in\Oscr(\Ncal)$ satisfying $g\cdotp (\overline g\circ \Igot)=-1$ on $\Ncal$.
Since  $\imath h_0$ is a $\Igot$-invariant square root of $-h|_{\alpha_0}$, by our assumption there exists a 
map $(u_1,u_2)\in\Oscr_\Igot(\Ncal,\c^2\setminus \{0\})$ satisfying $u_1^2+u_2^2=-h$ on $\Ncal$. 
A calculation then shows that the map $(f_1,f_2)$ given by
\begin{eqnarray*}
	f_1 & = & \frac12 \big((u_1-\imath u_2) g+(u_1+\imath u_2)(\overline{g}\circ \Igot) \big),    \\
	f_2 & = & \frac{\imath}{2} \big((u_1-\imath u_2) g-(u_1+\imath u_2) (\overline{g}\circ \Igot) \big)
\end{eqnarray*}
belongs to $\Oscr_\Igot(\Ncal,\c^2\setminus\{0\})$ and satisfies $f_1^2+f_2^2=h$. This proves the claim.
\end{proof}

Assume now that $h_0$ is an $\Igot$-invariant square root of $h|_{\alpha_0}$. 
Consider the smooth function $v\colon|\Bcal|\to\c$ defined as follows:
\[
	v|_{\alpha_0}=h_0,\quad v|_{\bigcup_{j=2}^l \delta_j}=h,\quad v|_{\bigcup_{j=2}^l \Igot(\delta_j)}=1.
\]
Note that $v$ is well defined in view of \eqref{eq:disjoint} and it satisfies the following equation:
\begin{equation}\label{eq:v}
	v \cdotp (\overline v\circ\Igot)=h|_{|\Bcal|}.
\end{equation}
Since $h$ is $\Igot$-invariant and  does not vanish anywhere on $|\Bcal|$, the divisor of $h$ can be written in the form 
$(h)=D +\Igot(D)$ where
\begin{equation}\label{eq:noceros}
	\text{$\supp(D)\cap \supp(\Igot(D))=\emptyset$\ \ and\ \ $\big(\supp(D)\cup \supp(\Igot(D))\big) \cap |\Bcal|=\emptyset$.}
\end{equation}
By \eqref{eq:v}, $v$ has no zeros on $|\Bcal|$. Since $|\Bcal|$ is Runge in $\Ncal$, 
we may approximate $v$ uniformly on $|\Bcal|$ by holomorphic functions $w\in \Oscr(\Ncal)$ with $(w)=D$. 
It follows that the function 
\[
	\varphi:=\frac{w \cdotp (\overline w \circ \Igot)}{h} \in \Oscr_\Igot(\Ncal,\C\setminus\{0\})
\]
does not vanish anywhere on $\Ncal$ and is close to $1$ on $|\Bcal|$ in view of \eqref{eq:v}. 
If the approximation of $v$ by $w$ is close enough on $|\Bcal|$, 
then $\varphi|_{|\Bcal|}$ is so close to $1$ that it has an $\Igot$-invariant square root. 
As $|\Bcal|$ is a strong deformation retract of $\Ncal$ and $\varphi$ has no zeros on $\Ncal$, 
$\varphi$ has a holomorphic $\Igot$-invariant square root $\varphi_0\colon \Ncal\to \c\setminus\{0\}$ as well. 
Hence, the function $w_0:=w/\varphi_0 \in \Oscr(\Ncal)$ satisfies $w_0 \cdotp (\overline{w_0} \circ \Igot)=h$.
Equivalently, the $\Igot$-invariant holomorphic functions on $\Ncal$ given by
\[
	f_1= \frac12 \big(w_0+\overline{w}_0\circ \Igot \big), \quad 
	f_2=\frac{\imath}{2} \big(w_0-\overline{w}_0\circ \Igot \big),
\]
satisfy $f_1^2+f_2^2=h$. Finally, since $(w_0)=(w)=D$ and $(\overline{w}_0\circ \Igot)=(\overline{w}\circ \Igot)=\Igot(D)$, 
\eqref{eq:noceros} ensures that $f_1$ and $f_2$ have no common zeros. 
\end{proof}

%
%

\begin{lemma} \label{lem:Mergelyan1}
Assume that $(\Ncal,\Igot,\theta)$ is a standard triple \eqref{eq:triple} and the Riemann surface $\Ncal$ has finite topology.
Let $S=K\cup\Gamma\subset \Ncal$ be an $\Igot$-admissible set, and let  $(f_1,f_2)\in \Fscr_\Igot(S,\c^2)$ 
(cf.\ \eqref{eq:Fscr}) be such that the function $h:=f_1^2+f_2^2$ does not vanish anywhere on $bS=\Gamma \cup bK$ 
and  it extends to a function $h\in \OI(\Ncal)$. 
Then $(f_1,f_2)$ can be approximated uniformly on $S$ by $\Igot$-invariant holomorphic maps 
$(\wt f_1,\wt f_2)\colon \Ncal \to \c^2$ satisfying the following conditions: 
\begin{enumerate}[(i)]
\item the $1$-form $(\wt f_j-f_j)\,\theta$ is exact on $S$ for $j=1,2$;
\vspace{1mm}
\item $\wt f_1^2+\wt f_2^2=h$ holds identically on $\Ncal$;
\vspace{1mm}
\item the zero locus of $(\wt f_1,\wt f_2)$ on $\Ncal$ coincides with the zero locus of $(f_1,f_2)$ on $S$. 
In particular, $(\wt f_1,\wt f_2)$ does not vanish anywhere on $\Ncal \setminus \mathring K$.
\end{enumerate}
\end{lemma}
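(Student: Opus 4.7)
The plan is to reduce the two-variable problem to a scalar one and then adapt the divisor-and-square-root construction from the proof of Lemma \ref{lem:h}. Introduce $u_0 := f_1 + \imath f_2$ and $v_0 := f_1 - \imath f_2$ on $S$. The $\Igot$-invariance of $(f_1,f_2)$ is equivalent to $v_0 = \overline{u_0 \circ \Igot}$, and the identity $f_1^2+f_2^2 = h$ on $S$ is equivalent to $u_0 \cdot \overline{u_0 \circ \Igot} = h|_S$. It therefore suffices to produce $u \in \Oscr(\Ncal)$ such that $u \cdot \overline{u \circ \Igot} = h$ identically on $\Ncal$, uniformly close to $u_0$ on $S$, with $\int_\gamma(u - u_0)\theta = 0$ for every $\gamma \in H_1(S;\z)$, and whose zero divisor on $\Ncal$ restricts to $(u_0)|_{\mathring K}$ on $S$. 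Then $\tilde f_1 := (u + \overline{u\circ\Igot})/2$ and $\tilde f_2 := (u - \overline{u\circ\Igot})/(2\imath)$ are $\Igot$-invariant and satisfy (i), (ii), and (iii); the exactness of $(\tilde f_2 - f_2)\theta$ follows from that of $(\tilde f_1-f_1)\theta$ via $v_0 = \overline{u_0 \circ \Igot}$, so only a single period condition on $u$ needs to be enforced.

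First I fix the zero divisor of $u$. Let $E := (u_0)|_{\mathring K}$. Since $u_0 v_0 = h$ on $S$ and $v_0 = \overline{u_0 \circ \Igot}$, one has $E + \Igot_* E = (h)|_{\mathring K}$. The $\Igot$-invariant divisor $(h)|_{\Ncal \setminus \mathring K}$ has empty support on $bS$ by hypothesis, and since $\Igot$ is fixed-point-free, it splits as $D' + \Igot_* D'$ with disjoint supports by selecting one member of each $\Igot$-pair of zeros of $h$ outside $\mathring K$. Setting $D := E + D'$ produces an effective divisor on $\Ncal$ with $D + \Igot_* D = (h)$, $D|_S = E$, and $\supp D \cap \supp \Igot_* D = \supp E \cap \Igot(\supp E)$. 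With this divisor, condition (iii) is automatic: if $u \in \Oscr(\Ncal)$ has zero divisor exactly $D$, then $\{p \in \Ncal : u(p) = u(\Igot(p)) = 0\}$ equals $\supp D \cap \Igot(\supp D) = \supp E \cap \Igot(\supp E)$, which is the zero locus of $(f_1, f_2)$ on $S$.

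To construct $u$, choose $\phi_D \in \Oscr(\Ncal)$ with $(\phi_D) = D$, which exists since $\Ncal$ is Stein. The quotient $\psi_0 := u_0/\phi_D$ is continuous on $S$, holomorphic on $\mathring K$, and nowhere zero. Because $\c_* := \c \setminus \{0\}$ is an Oka manifold and $S$ is Runge in $\Ncal$, the Mergelyan theorem for maps to Oka manifolds yields $\psi \in \Oscr(\Ncal, \c_*)$ uniformly close to $\psi_0$ on $S$, so that $u := \psi \phi_D \in \Oscr(\Ncal)$ has divisor exactly $D$ and approximates $u_0$ on $S$. The function $\varphi := u \cdot \overline{u \circ \Igot}/h$ is then holomorphic and nowhere vanishing on $\Ncal$ (the divisors $D + \Igot_* D$ and $(h)$ cancel), is $\Igot$-invariant by direct computation using $h \circ \Igot = \bar h$, and is close to $1$ on $S$, hence on a Runge $\Igot$-basis $|\Bcal| \subset \mathring K$ (which can be placed inside $S$ since $S$ is Runge in $\Ncal$). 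As in the proof of Lemma \ref{lem:h}, the periods of $d\log\varphi$ along $|\Bcal|$ lie in $2\pi\imath\z$ and are simultaneously small, hence vanish; so $\varphi$ admits an $\Igot$-invariant holomorphic square root $\sqrt{\varphi} \in \OI(\Ncal, \c_*)$ close to $1$ on $S$. Replacing $u$ by $u/\sqrt{\varphi}$ preserves both the divisor and the uniform closeness to $u_0$ on $S$, and yields the exact identity $u \cdot \overline{u \circ \Igot} = h$ on all of $\Ncal$.

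It remains to enforce the period condition. For every $\beta \in \OI(\Ncal)$, the substitution $u \mapsto u \cdot e^{\imath\beta}$ preserves both the divisor $(u) = D$ and the constraint $u \cdot \overline{u \circ \Igot} = h$, since $e^{\imath\beta} \cdot \overline{e^{\imath\beta} \circ \Igot} = e^{\imath\beta} \cdot e^{-\imath\beta} = 1$ using $\beta \circ \Igot = \bar\beta$. This gives an infinite-dimensional $\Igot$-compatible family of admissible deformations inside the nonlinear constraint set. In analogy with Proposition \ref{prop:period-dominating-spray}, I construct an $\Igot$-invariant period-dominating spray of the form $U(p,\zeta) := u(p)\exp\bigl(\imath\sum_k \zeta_k h_k(p)\bigr)$ with $h_k \in \OI(\Ncal)$ supported near distinct cycles of the homology basis, then apply the implicit function theorem to choose a small real parameter $\zeta$ for which $\int_\gamma U(\cdot,\zeta)\theta = \int_\gamma u_0 \theta$ holds for every $\gamma$ in the basis, while preserving the uniform closeness to $u_0$ on $S$ and the algebraic constraint (after a further small $\sqrt{\varphi}$-correction as above). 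The main obstacle is precisely the set-up of this period-dominating spray within the nonlinear set $\{u \cdot \overline{u\circ\Igot} = h\}$ and compatibly with the $\Igot$-symmetry: one must choose the generators $h_k$ so that the linearization of the associated period map at $\zeta=0$ surjects onto the appropriate real-complex subspace dictated by the symmetry (with a real factor coming from the $\Igot$-fixed cycle $\delta_1$, cf.\ \eqref{eq:invariance}), and this is handled by a careful choice of supports and coefficients in the spirit of the proof of Proposition \ref{prop:period-dominating-spray}.
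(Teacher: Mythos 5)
Your approach is correct and takes a somewhat leaner route than the paper's, though the core mechanism is the same. The paper's proof adjoins, via Lemma~\ref{lem:h}, auxiliary components $(f_3,f_4)\in\Oscr_\Igot(\Ncal,\c^2\setminus\{0\})$ with $f_3^2+f_4^2=-h$, packages $f=(f_1,f_2,f_3,f_4)$ as a map into $\Agot_*\subset\c^4$, and applies the machinery of Proposition~\ref{prop:period-dominating-spray} using only the single complete field $V=z_1\partial_{z_2}-z_2\partial_{z_1}$. In the variables $(u,v)=(z_1+\imath z_2,z_1-\imath z_2)$, the flow of $V$ is exactly $u\mapsto u e^{\imath t}$, $v\mapsto v e^{-\imath t}$, and the $(\Igot,\Igot_0)$-invariance of the paper's spray becomes precisely your constraint $U(\cdot,\zeta)\,\overline{U(\Igot(\cdot),\zeta)}=h$ for real $\zeta$; so your one-parameter deformations $u\mapsto u\,e^{\imath\beta}$, $\beta\in\OI(\Ncal)$, are the same thing in disguise. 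Your divisor surgery is likewise parallel: the paper fixes the divisor $D$ of $F_1-\imath F_2$, approximates by an Oka-Weil theorem with prescribed zeros, and corrects by an $\Igot$-invariant square root of the near-unit $h/H$; you divide by a fixed $\phi_D$, approximate the quotient in $\Oscr(\Ncal,\c_*)$, and correct by $\sqrt{\varphi}$. Dropping $(f_3,f_4)$ and working directly with the scalar $u$ and the functional constraint $u\,\overline{u\circ\Igot}=h$ is a genuine simplification, since those auxiliary components play no role beyond fitting the general $\Agot_*$-spray framework.

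Two small inaccuracies worth flagging, neither fatal. First, the parenthetical ``after a further small $\sqrt{\varphi}$-correction as above'' is unnecessary: as your own computation $e^{\imath\beta}\,\overline{e^{\imath\beta}\circ\Igot}=1$ shows, the deformation $u\mapsto u\,e^{\imath\sum\zeta_k h_k}$ with $\zeta\in\R^m$ and $h_k\in\OI(\Ncal)$ preserves $u\,\overline{u\circ\Igot}=h$ \emph{exactly}; no post-hoc correction is needed. Second, be careful about the period bookkeeping: since $u$ is not itself $\Igot$-invariant, the period $\int_{\delta_1}(u-u_0)\theta$ over the $\Igot$-fixed cycle $\delta_1$ is a genuinely complex quantity (the realness condition of Remark~\ref{rem:domination} applies to $\Igot$-invariant data such as $(\wt f_1,\wt f_2)$, not to $u$), and you must kill $\int_\gamma(u-u_0)\theta$ over the \emph{full} $\Igot$-basis $\Bcal=\Bcal^+\cup\Bcal^-$, which amounts to $2l-1$ complex conditions. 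This is equivalent to $\Re\Pcal((\wt f_1,\wt f_2)-(f_1,f_2))=0$ \emph{and} $\Im\Pcal(\cdots)=0$ over $\Bcal^+$, so the real dimension count ($4l-2$) matches the paper's target $\R^2\times(\c^2)^{l-1}$; but the derivative of the period map couples $\gamma$ and $\Igot_*\gamma$ through the factor $h/u^2$ rather than through simple conjugation, and that coupling needs to be checked when choosing supports for the $h_k$. Both you and the paper ultimately defer this to the technique of Proposition~\ref{prop:period-dominating-spray}, so the level of rigor is comparable, but the structure of the constrained target is slightly different from what you describe.
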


\begin{proof}
Without loss of generality, we may assume that $S$ is a strong deformation retract of $\Ncal$, hence connected. 
Otherwise, we replace $S$ by an $\Igot$-admissible subset $\wt S=S\cup \wt \Gamma$, where $\wt \Gamma$ consists of 
a collection of Jordan arcs such that $\wt S$ is a strong deformation retract of $\Ncal$ and $h$ does not vanish anywhere on 
$\wt \Gamma$ (this is possible by general position). 
Then, we extend $f_1$ and $f_2$ to $\wt S$ as arbitrary functions of class $\Fscr_\Igot(\wt S)$ 
satisfying $f_1^2+f_2^2 =h$.

Since the function  $h\in \OI(\Ncal)$ has no zeros on $bS=bK\cup\Gamma$, there exists a Runge 
$\Igot$-basis $\Bcal$ of $H_1(\Ncal;\z)$ such that $|\Bcal| \subset S$ and  
$h$ has no zeros on $|\Bcal|$ (see \eqref{eq:basis1} and \eqref{eq:basis2}).
Recall that $h|_S=f_1^2+f_2^2$. Lemma \ref{lem:h} provides  a map $(f_3,f_4)\in \Oscr_\Igot(\Ncal,\c^2\setminus\{0\})$ 
such that $f_3^2+f_4^2=-h$.  Set $f =(f_1,f_2,f_3,f_4) \in\Fscr_\Igot(S,\Agot_*)$, where $\Agot_*$ is the 
punctured null quadric in $\C^4$. Note that $f(|\Bcal|)\subset \Agot_*\setminus \{z_1=z_2=0\}$ since $h$ has no
zeros on $|\Bcal|$. The holomorphic vector field
\[
	V=z_1\frac{\partial}{\partial z_2}-z_2\frac{\partial}{\partial z_1}
\]
on $\C^4$ (which is one of the vector fields in \eqref{eq:Vjk}) is complete, tangent to $\Agot_*\subset\c^4$, 
orthogonal to $\partial/\partial z_k$ for $k=3,4$, and it spans the horizontal tangent space
\[
	T_z\Agot_* \cap \{(v_1,v_2,v_3,v_4)\in\c^4 : (v_3,v_4)=0\}
\]
at every point $z\in \Agot_*\setminus \{z_1=z_2=0\}$. The last property is seen by observing
that the above space is $1$-dimensional at every point $z\in \Agot_*\setminus \{z_1=z_2=0\}$ and $V$ does not
vanish anywhere on this set. Let $\phi$ denote the flow of $V$. 
Reasoning as in the proof of Proposition \ref{prop:period-dominating-spray}, 
we find $\Igot$-invariant holomorphic functions $h_1,\ldots,h_m\colon S\to\c$ for some $m\in\n$ such that the spray  
$F=(F_1,F_2,F_3,F_4) \colon S\times U\to\Agot_*$ with the core $f$, given by
\[
	F(p,\zeta_1,\ldots,\zeta_m)=\phi_{_{\sum_{j=1}^m \zeta_j h_j(p)}}(f(p)),\quad p\in S,\; \zeta\in U,
\]
where $U\subset \c^m$ is a ball around $0\in\c^m$, satisfies the following conditions:
\begin{enumerate}[{\rm (i)}]
\item $F$ is smooth, $(\Igot,\Igot_0)$-invariant, and of class $\Ascr(S)$.
\vspace{1mm}
\item $F_j(\cdot,\zeta)=f_j$ for all $\zeta\in U$ and $j\in\{3,4\}$; hence 
\begin{equation}\label{eq:F1F2}	
	F_1(\cdot,\zeta)^2+F_2(\cdot,\zeta)^2=f_1^2+f_2^2=h|_S\quad \text{for all}\ \ \zeta\in U.
\end{equation}
\item $(F_1,F_2)(p,\zeta)=0$ if and only if $(f_1,f_2)(p)=0$ for all $(p,\zeta)\in S\times U$. 

\vspace{1mm}
\item $F$ is {\em period dominating in the first two components}, in the sense that the partial differential
\[
	\di_\zeta \big|_{\zeta=0}  \Pcal((F_1,F_2)(\cdotp,\zeta)) \colon\C^m\to (\c^2)^l 
\]
maps $\R^m$ (the real part of $\C^m$) surjectively onto $\R^2\times (\c^2)^{l-1}$. Here, $\Pcal$ is the period map 
\eqref{eq:periodmap} with respect to the $1$-form $\theta$ and the $\Igot$-basis $\Bcal$.
\end{enumerate}

Recall that the function $h|_S=f_1^2+f_2^2=(f_1-\imath f_2)(f_1+\imath f_2)$ is $\Igot$-invariant and does not 
vanish anywhere on $bS$. 
Let  $D_1=(f_1-\imath f_2)$ denote the divisor of $f_1-\imath f_2$ on $S$; note that $D_1$ is supported on $\mathring K$. 
Since $(f_1-\imath f_2)\circ\Igot = \overline{f_1+\imath f_2}$, we have $(f_1+\imath f_2)=\Igot(D_1)$ and 
\begin{equation}\label{eq:D0}
	D_0: = \supp(D_1)\cap \supp(\Igot(D_1))=\{p\in S \colon f_1(p)=f_2(p)=0\} \subset \mathring K.
\end{equation}
In particular, $(h|_S) = D_1 +\Igot(D_1)$. Since $h$ is $\Igot$-invariant and $\Igot(S)=S$, 
we also have  $(h|_{\Ncal\setminus S})=D_2+\Igot(D_2)$ with $\supp(D_2) \cap \supp(\Igot(D_2)) = \emptyset$. 
Setting $D=D_1+D_2$, we have  $(h)=D +\Igot(D)$ and $\supp(D)\cap \supp(\Igot(D))=D_0$ (cf.\ \ref{eq:D0}).

It follows from \eqref{eq:F1F2} that for every $\zeta\in U$ we have a factorization
\[
	h|_S = 
	\left(F_1(\cdot,\zeta)-\imath F_2(\cdot,\zeta)\right) \cdotp \left(F_1(\cdot,\zeta) + \imath F_2(\cdot,\zeta)\right). 
\]
Since the divisors $D_{1,\zeta} := (F_1(\cdot,\zeta) - \imath F_2(\cdot,\zeta))$ and 
$D_{2,\zeta} := (F_2(\cdot,\zeta) + \imath F_2(\cdot,\zeta))$ on $S$
depend continuously on $\zeta\in U$ and we have $D_{1,\zeta} + D_{2,\zeta} = (h|_S) =D_1+\Igot(D_1)$ 
and $D_{1,0}=D_1$, $D_{2,0}=\Igot(D_1)$, we conclude that
\[
	(F_1(\cdot,\zeta)-\imath F_2(\cdot,\zeta))=D_1 \ \ \text{and} \ \ (F_1(\cdot,\zeta)+\imath F_2(\cdot,\zeta))
	= \Igot(D_1)  \ \ \text{for all $\zeta\in U$}.
\]
By the Oka Property with Approximation and Interpolation (cf.\ \cite[Theorem 5.4.4]{Forstneric2011-book}), 
we may approximate the function $F_1-\imath F_2\colon S\times U\to\c$ uniformly on $S\times U$ by a 
holomorphic function $G\colon \Ncal\times U \to \c$ with the divisor $(G(\cdot,\zeta))=D$ for every $\zeta\in U$.
(The parameter ball $U\subset \C^m$ is allowed to shrink slightly. 
For maps to $\C$, this result reduces to the combination of the classical Cartan extension theorem and the 
Oka-Weil approximation theorem for holomorphic functions on Stein manifolds.) 
By $\Igot$-invariance, it follows that the function $(p,\zeta) \mapsto \overline{G}(\Igot(p),\overline \zeta)$
approximates $F_1+\imath F_2$ uniformly on $S\times U$ and satisfies
$(\overline{G}(\Igot(\cdotp),\bar\zeta))=\Igot(D)$ for every $\zeta\in U$. 
Consider the $(\Igot,\Igot_0)$-invariant holomorphic function $H\colon\Ncal\times U\to\c$ defined by 
\[
	H(p,\zeta)=G(p,\zeta) \cdotp \overline{G}(\Igot(p),\overline\zeta)). 
\]
By the construction, the divisor of $H(\cdotp,\zeta)$ equals $D+\Igot(D)=(h)$ for every $\zeta\in U$.
Hence, the function $\Ncal\times U \ni (p,\zeta)\mapsto h(p)/H(p,\zeta)$ is holomorphic, it 
does not vanish anywhere on $\Ncal\times U$, and it is uniformly close to $1$ on $S\times U$ provided 
that  $G$ is close to $F_1-\imath F_2$ there. Assuming that the approximation is close enough,
the function $h/H$ has a well defined $(\Igot,\Igot_0)$-invariant holomorphic square root $H_0$ on $S\times U$, 
and hence also on $\Ncal\times U$ since $S$ is a strong deformation retract of $\Ncal$. 
For $p\in\Ncal$ and $\zeta\in U$ we set
\begin{eqnarray*}
	\wt F_1(p,\zeta) & = & \frac12 H_0(p,\zeta) \big(G(p,\zeta)+\overline G(\Igot(p),\overline \zeta)\big),            \\
	\wt F_2(p,\zeta) & = & \frac{\imath}2 H_0(p,\zeta) \big( G(p,\zeta)- \overline G(\Igot(p),\overline \zeta)\big). \\
\end{eqnarray*}
Clearly,  $(\wt F_1,\wt F_2)$ is an $(\Igot,\Igot_0)$-invariant holomorphic map satisfying 
\[
	\wt F_1(\cdot,\zeta)^2+\wt F_2(\cdot,\zeta)^2=h \ \ \text{on $\Ncal$}.
\] 
Furthermore, for every $\zeta \in U$ the zero set of the pair $\big(\wt F_1(\cdot,\zeta),\wt F_2(\cdot,\zeta) \big)$ coincides 
with the one of $\big(G(\cdot,\zeta),(\overline G\circ \Igot)(\cdot,\overline \zeta)\big)$, namely $D_0$ (cf.\ \eqref{eq:D0}). 
In other words, the  common zeros of $\wt F_1(\cdot,\zeta)$ and $\wt F_2(\cdot,\zeta)$ on $\Ncal$ are exactly those 
of $(f_1,f_2)$ on $S$ for every $\zeta \in U$.  By the construction, the $(\Igot,\Igot_0)$-invariant spray 
\[
	\wt F=(\wt F_1,\wt F_2, F_3,F_4)\colon\Ncal\times U\to\Agot_*\subset\c^4
\]
approximates $F$ uniformly on $S\times U$. Recall that $F$ is period dominating in the first two
components. Hence, if all approximations made in the process are close enough, 
there is a point $\zeta_0 \in U\cap\r^m$ close to $0$ such that the $\Igot$-invariant holomorphic map 
$(\wt f_1,\wt f_2)=(\wt F_1(\cdot,\zeta_0),\wt F_2(\cdot,\zeta_0)) \colon\Ncal\to \C^2$ 
satisfies the conclusion of Lemma \ref{lem:Mergelyan1}.
\end{proof}

Theorem \ref{th:Mergelyan1} will be obtained as an easy application of the following result
whose proof amounts to a recursive application of Lemma \ref{lem:Mergelyan1}.

\begin{theorem} \label{th:Mergelyan2}
Let $(\Ncal,\Igot,\theta)$ be a standard triple \eqref{eq:triple} and $S=K\cup\Gamma\subset \Ncal$ be an 
$\Igot$-admissible subset. Assume that $f=(f_1,f_2)\in\Fscr_\Igot(S,\c^2)$ is such that $f_1^2+f_2^2$ extends 
to a holomorphic function $h\in \Oscr_\Igot(\Ncal)$ which  does vanish identically on $\Ncal$ and has no zeros on 
$bK\cup \Gamma$, and let $\qgot\colon H_1(\Ncal;\z)\to\c^2$ be a group homomorphism satisfying 
\[
	\qgot\circ \Igot_*=\overline\qgot\quad \text{and} \quad 
	\qgot(\gamma)=\int_\gamma f\theta\ \ \text{for every closed curve $\gamma\subset S$}.
\]
Then, $f$ can be approximated uniformly on $S$ by $\Igot$-invariant holomorphic maps 
$\wt f=(\wt f_1,\wt f_2)\colon \Ncal \to  \c^2$ such that  $\wt f_1^2+\wt f_2^2=h$ on $\Ncal$,  
$\int_\gamma \wt f\theta=\qgot(\gamma)$ for every closed curve $\gamma\subset\Ncal$, 
and  the zero locus  of $\wt f$ on $\Ncal$ equals the zero locus  of $f$ on $S$. 
In particular, $\wt f$ does not vanish anywhere on $\Ncal \setminus \mathring K$.
\end{theorem}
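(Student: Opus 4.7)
The plan is to exhaust $\Ncal$ by $\Igot$-invariant Runge subdomains of finite topology and apply Lemma \ref{lem:Mergelyan1} inductively, handling the change of topology and periods in the spirit of the proof of Theorem \ref{th:Mergelyan0}. First I would enlarge $S$ if necessary by attaching a collection of $\Igot$-symmetric smooth Jordan arcs to obtain a connected $\Igot$-admissible set (still denoted $S$) such that the extended pair $(f_1,f_2)$ and the function $h$ still satisfy the hypotheses of the theorem; along the new arcs one has enough freedom to prescribe $f_1,f_2$ subject to $f_1^2+f_2^2=h$ and the required values of $\qgot$ on the new cycles, exactly as in the reduction used in the proof of Theorem \ref{th:Mergelyan}. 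Next, using a smooth $\Igot$-invariant strongly subharmonic Morse exhaustion $\rho\colon\Ncal\to\R$ whose $0$-sublevel deformation-retracts onto $S$, I would build a normal exhaustion $S\Subset M_1\Subset M_2\Subset\cdots$ by $\Igot$-invariant smoothly bounded Runge compact subdomains of finite topology, with at most one $\Igot$-symmetric pair of critical points of $\rho$ in each layer $M_{j+1}\setminus\mathring M_j$.

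For the base case, $S$ is a strong deformation retract of $M_1$, so the hypothesis on periods implies that the targeted periods on $M_1$ are already realized by $f\theta$. Applying Lemma \ref{lem:Mergelyan1} to the pair $S\subset M_1$ (which has finite topology, as required by the lemma) produces $f^{(1)}\in\Oscr_\Igot(M_1,\C^2)$ approximating $f$ uniformly on $S$, satisfying $(f^{(1)}_1)^2+(f^{(1)}_2)^2=h$ on $M_1$, preserving the zero locus, and with $(f^{(1)}-f)\theta$ exact on $S$; the latter combined with $H_1(S;\Z)\cong H_1(M_1;\Z)$ gives $\int_\gamma f^{(1)}\theta=\qgot(\gamma)$ for every $\gamma\in H_1(M_1;\Z)$.

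For the inductive step, assuming $f^{(j)}\in\Oscr_\Igot(M_j,\C^2)$ with $(f^{(j)}_1)^2+(f^{(j)}_2)^2=h$ on $M_j$ and $\int_\gamma f^{(j)}\theta=\qgot(\gamma)$ for $\gamma\in H_1(M_j;\Z)$, I would separate the noncritical and critical cases exactly as in Lemma \ref{lem:step2}. In the noncritical case $M_j$ is a strong deformation retract of $M_{j+1}$ and Lemma \ref{lem:Mergelyan1} applied to the pair $M_j\subset M_{j+1}$ delivers $f^{(j+1)}$. In the critical case (Morse index $0$ or $1$), the change of topology is described by a pair of $\Igot$-symmetric arcs $\alpha,\Igot(\alpha)$ attached to $M_j$, yielding an $\Igot$-admissible set $S_{j+1}=M_j\cup\alpha\cup\Igot(\alpha)\Subset M_{j+1}$ which is a strong deformation retract of $M_{j+1}$. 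I would extend $f^{(j)}$ to $S_{j+1}$ by choosing a smooth extension of $(f_1^{(j)},f_2^{(j)})$ along $\alpha$, taking values in the smooth level set $\{z_1^2+z_2^2=h(p)\}$ (which is regular wherever $h\ne 0$, in particular on the new arcs if they have been chosen to avoid the zero set of $h$), so that the integral of the extension times $\theta$ over each new cycle equals the prescribed value of $\qgot$; on $\Igot(\alpha)$ the extension is then forced by $\Igot$-invariance. Applying Lemma \ref{lem:Mergelyan1} to $S_{j+1}\subset M_{j+1}$ produces the desired $f^{(j+1)}\in\Oscr_\Igot(M_{j+1},\C^2)$.

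Choosing the approximations at the $j$-th step to be summable on $M_j$, the sequence $f^{(j)}$ converges uniformly on compacts to a map $\wt f\in\Oscr_\Igot(\Ncal,\C^2)$ approximating $f$ on $S$; passage to the limit preserves $\wt f_1^2+\wt f_2^2=h$ and the period identities. The zero-locus assertion follows because Lemma \ref{lem:Mergelyan1} preserves the common zero set at each step, and because $h$ is nonvanishing on $bK\cup\Gamma$ (and on the new arcs chosen above), so in any small neighborhood of $\Ncal\setminus\mathring K$ the divisors of $\wt f_1-\imath\wt f_2$ and $\wt f_1+\imath\wt f_2$ are uniquely determined (up to the $\Igot$-symmetric splitting of $(h)$) by proximity to those of $f_1-\imath f_2$ and $f_1+\imath f_2$. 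The main obstacle I anticipate is the book-keeping in the critical case when the attached arc $\alpha$ is itself $\Igot$-symmetric (case (i.1) of Lemma \ref{lem:step2}), where one must prescribe the extension of $(f_1,f_2)$ on $\alpha$ compatibly with the $\Igot$-symmetry at its endpoints, with the quadratic constraint $f_1^2+f_2^2=h$, and with the prescribed $\qgot$-values on the new $\Igot$-invariant cycles simultaneously; this requires carefully exploiting the freedom to deform inside the smooth fibers $\{z_1^2+z_2^2=h(p)\}$ along $\alpha$.
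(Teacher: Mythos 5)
Your proposal follows essentially the same route as the paper's proof: exhaust $\Ncal$ by $\Igot$-invariant Runge compact domains of finite topology, reduce to connected $S$, apply Lemma~\ref{lem:Mergelyan1} for the noncritical steps, extend $(f_1,f_2)$ along new $\Igot$-symmetric pairs of arcs chosen by general position to avoid the zeros of $h$ in the critical steps, and pass to the limit. One small clarification: the obstacle you anticipate in case~(i.1) does not actually arise, because in the construction of Lemma~\ref{lem:step2} the arcs $\alpha$ and $\Igot(\alpha)$ are always disjoint (only the resulting new \emph{cycle} can be $\Igot$-invariant), so one simply chooses the extension freely on $\alpha$ to hit the prescribed period and then defines it on $\Igot(\alpha)$ by symmetrization, exactly as you describe.
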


\begin{proof} 
Let $S\Subset M_1\Subset M_2\Subset\cdots\Subset \bigcup_{i=1}^\infty M_i =\Ncal$ 
be an exhaustion of $\Ncal$ as in the proof of Theorem \ref{th:Mergelyan0}. 
Arguing as in the proof of Theorem \ref{th:Mergelyan0} but using Lemma \ref{lem:Mergelyan1} 
instead of Proposition \ref{prop:noncritical}, we may construct a sequence of maps
$g_k:=(g_{k,1},g_{k,2})\in \Oscr_\Igot(M_k,\c^2)$ such that for all $k\in \n$ we have
\begin{itemize}
\item $g_{k,1}^2+g_{k,2}^2=h$ on $M_k$,

\vspace{1mm}
\item $\int_\gamma g_k\theta =\qgot(\gamma)$ for all $\gamma \in H_1(M_k;\z)$, and

\vspace{1mm}
\item the zero divisor  of $g_k$ on $M_k$ equals the divisor of $f$ on $S$ 
(in particular, $g_k$ does not vanish anywhere on $M_k \setminus \mathring K$).
\end{itemize}
Indeed, we can choose all $\Igot$-admissible sets $S_j=M_j\cup\alpha\cup\Igot(\alpha)$ involved in the recursive 
construction  (see \eqref{eq:Sj}) such that $h$ does not vanish anywhere on $bM_j \cup \alpha\cup \Igot(\alpha)$; 
this is possible by general position since $h$ is holomorphic and does not vanish identically on $\Ncal$. 
Moreover, for each $k\in \N$ we may choose $g_{k+1}$ close enough to $g_k$ uniformly on $M_k$
to ensure that the sequence $\{g_k\}_{k\in \n}$ converges to a map $\wt f\in \Oscr_\Igot(\Ncal,\c^2)$ 
which approximates  $f$  on $S$ and the zero locus of $\wt f$ on $\Ncal$ equals the zero locus 
of $f$ on $S$.
\end{proof}


\begin{proof}[Proof of Theorem \ref{th:Mergelyan1}]
Let $(\Ncal,\Igot,\theta)$, $S$, $(X,f\theta)$, and $\pgot=(\pgot_j)_{j=1}^n$ be as in the statement of the theorem. 
By the argument in the proof of Theorem \ref{th:Mergelyan}, we may assume that $S$ is connected. 
Set $h=f_1^2+f_2^2\colon S\to\C$. Then, $h=-\sum_{j=3}^n f_j^2$ which by the assumption extends 
to an $\Igot$-invariant  holomorphic function on $\Ncal$ that does vanish identically 
and has no zeros on $\Gamma$. Up to slightly enlarging the compact set $K$ (and hence $S$), we may also assume that 
$h$ does not vanish anywhere on $bS=(bK)\cup\Gamma$. 
Theorem \ref{th:Mergelyan2}, applied to the map $(f_1,f_2)\colon S\to\C^2$ and the group homomorphism 
$\qgot\colon H_1(\Ncal;\z)\to\c^2$ given by $\qgot(\gamma)=\imath(\pgot_1,\pgot_2)(\gamma)$,
provides maps $(\wt f_1,\wt f_2)\in\Oscr_\Igot(\Ncal,\c^2)$ which 
approximate $(f_1,f_2)$ uniformly on $S$ and satisfy the following conditions:
\begin{itemize}
\item $\wt f_1^2+\wt f_2^2=f_1^2+f_2^2=h$ on $\Ncal$, 
\vspace{1mm}
\item $\int_\gamma (\wt f_1,\wt f_2)\, \theta=\qgot(\gamma)$ for every $\gamma\in H_1(\Ncal;\z)$, and 
\vspace{1mm}
\item the zero locus of $(\wt f_1,\wt f_2)$ on $\Ncal$ is exactly the one of $(f_1,f_2)$ on $S$. 
\end{itemize}
In particular,  the map $\wt f:=(\wt f_1,\wt f_2,f_3,\ldots,f_n)\colon\Ncal\to \Agot_*$ belongs to $\Oscr_\Igot(\Ncal,\Agot_*)$, 
where $\Agot_*$ is the punctured null quadric in $\c^n$. 
It follows that $\Re(\wt f\theta)$ is exact on $\Ncal$. Fix a point $p_0\in S$. The $\Igot$-invariant 
conformal minimal immersions $\wt X\colon\Ncal\to\r^3$ given by $\wt X(p)=X(p_0)+\int_{p_0}^p\wt f\theta$
satisfy the conclusion of the theorem. 
\end{proof}

The following result generalizes Lemma \ref{lem:h} to 
surfaces with arbitrary topology. 

\begin{corollary}\label{cor:h}
Let $(\Ncal,\Igot)$ be a standard pair \eqref{eq:pair}.
Every function $h\in\Oscr_\Igot(\Ncal)\setminus\{0\}$
can be written in the form $h=f_1^2+f_2^2$ with $(f_1,f_2)\in\Oscr_\Igot(\Ncal,\c^2\setminus \{0\})$.  
\end{corollary}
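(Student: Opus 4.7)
The plan is to reduce the statement to the finite-topology setting of Lemma \ref{lem:h}, and then invoke Theorem \ref{th:Mergelyan2} to propagate the factorization from a finite-topology Runge subdomain to all of $\Ncal$.

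First, I would fix a nowhere vanishing $\Igot$-invariant holomorphic $1$-form $\theta$ on $\Ncal$ (Proposition \ref{prop:Cor6.5}). Since $h\not\equiv 0$ its zero set is discrete, so by general position I may choose a smoothly bounded $\Igot$-invariant compact Runge subdomain $K\Subset \Ncal$ of finite topology with $h$ nonvanishing on $bK$, together with a connected $\Igot$-invariant open Riemann subsurface $\Ncal'\subset\Ncal$ of finite topology containing $\bar K$. Applying Lemma \ref{lem:h} to the standard pair $(\Ncal',\Igot|_{\Ncal'})$ and the nonzero function $h|_{\Ncal'}\in\Oscr_\Igot(\Ncal')$ yields $(f_1,f_2)\in\Oscr_\Igot(\Ncal',\C^2\setminus\{0\})$ with $f_1^2+f_2^2=h$ on $\Ncal'$; in particular, $(f_1,f_2)\in\Fscr_\Igot(K,\C^2)$ has no common zeros on $K$.

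Next, I would choose a group homomorphism $\qgot\colon H_1(\Ncal;\Z)\to\C^2$ extending $\gamma\mapsto \int_\gamma (f_1,f_2)\,\theta$ on $H_1(K;\Z)$ and satisfying $\qgot\circ\Igot_*=\bar\qgot$; the compatibility on $H_1(K;\Z)$ follows componentwise from \eqref{eq:invariance} since $(f_1,f_2)\theta$ is $\Igot$-invariant. Now Theorem \ref{th:Mergelyan2} applies with $S:=K$ (so $\Gamma=\emptyset$), the initial data $(f_1,f_2)$, the $1$-form $\theta$, and the homomorphism $\qgot$: $K$ is $\Igot$-admissible and Runge in $\Ncal$, $h$ does not vanish identically and has no zeros on $bK$, and $\qgot$ is compatible with $(f_1,f_2)$ by construction. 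The theorem then produces $(\tilde f_1,\tilde f_2)\in\Oscr_\Igot(\Ncal,\C^2)$ with $\tilde f_1^2+\tilde f_2^2=h$ on $\Ncal$ and whose zero locus on $\Ncal$ equals the empty zero locus of $(f_1,f_2)$ on $K$; hence $(\tilde f_1,\tilde f_2)\in\Oscr_\Igot(\Ncal,\C^2\setminus\{0\})$ is the desired factorization.

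The hard part will be the $\Igot$-equivariant extension of the period homomorphism $\qgot$ from $H_1(K;\Z)$ to $H_1(\Ncal;\Z)$ when $\Ncal$ has infinite topology: one must enlarge the $\Igot$-equivariant homology basis of $K$ to an $\Igot$-equivariant basis of $\Ncal$ in the spirit of Section \ref{sec:homology} (but of possibly countably infinite rank), so that arbitrary complex values may be assigned on representatives of the added ``positive'' cycles, with conjugate values on their $\Igot$-images, while preserving the values already prescribed on $H_1(K;\Z)$. The remaining steps are formal consequences of the finite-topology Lemma \ref{lem:h} and the Mergelyan-type Theorem \ref{th:Mergelyan2}.
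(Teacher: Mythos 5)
Your proposal matches the paper's own (very terse) proof: reduce to Lemma~\ref{lem:h} on a finite-topology $\Igot$-invariant neighborhood and then invoke Theorem~\ref{th:Mergelyan2} to propagate the nowhere-vanishing factorization to all of $\Ncal$; your introduction of $\Ncal'$ just spells out the finite-topology hypothesis implicit in Lemma~\ref{lem:h}'s proof (which uses a finite $\Igot$-basis as a deformation retract), and your care in choosing $K$ so that $h\ne 0$ on $bK$ and in supplying $\qgot$ makes explicit what the paper leaves unsaid. The one step you flag as ``the hard part'' --- extending $\qgot$ $\Igot$-equivariantly from $H_1(K;\Z)$ to $H_1(\Ncal;\Z)$ --- is in fact immediate and needs no homology-basis bookkeeping: since $K$ is Runge so $H_1(K;\Z)$ is a direct summand, and $\C^2$ is divisible, any extension $\qgot'$ exists, and $\qgot:=\tfrac12\bigl(\qgot'+\overline{\qgot'\circ\Igot_*}\bigr)$ is a homomorphism satisfying $\qgot\circ\Igot_*=\overline\qgot$ that still agrees with $\gamma\mapsto\int_\gamma(f_1,f_2)\theta$ on $H_1(K;\Z)$, because those prescribed values already have the symmetry by \eqref{eq:invariance}.
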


\begin{proof}
Choose any $\Igot$-invariant smoothly bounded compact domain $K\subset \Ncal$. By Lemma \ref{lem:h}, we can find 
$(g_1,g_2)\in \Oscr_\Igot(K,\c^2\setminus\{0\})$ with $g_1^2+g_2^2=h$ on $K$. Thus, Theorem \ref{th:Mergelyan2} applied to 
$(g_1,g_2)$ provides $(f_1,f_2)\in\Oscr_\Igot(\Ncal,\c^2\setminus \{0\})$ satisfying the corollary. 
\end{proof}


\chapter{A general position theorem for non-orientable minimal surfaces}
\label{ch:general-position}

In this chapter, we prove Theorem \ref{th:generalposition} stated in the Introduction.

Let $\UnNcal$ be an open or bordered non-orientable surface  endowed with a conformal structure, 
and let $\pi\colon \Ncal\to \UnNcal$ be an oriented $2$-sheeted covering of $\UnNcal$
with the deck transformation $\Igot$. Then, $\Ncal$ is an open (or bordered) Riemann surface
and $\Igot\colon \Ncal\to\Ncal$ is an antiholomorphic fixed-point-free involution,
and a conformal minimal immersion $\UnX\colon \UnNcal\to\R^n$ is uniquely determined 
by a $\Igot$-invariant conformal minimal immersion $X\colon\Ncal\to\R^n$ by $X=\UnX\circ \pi$
(see Section \ref{sec:conformal}).  Hence, Theorem \ref{th:generalposition} follows immediately from the following
more precise result concerning $\Igot$-invariant conformal minimal immersions $\Ncal\to\R^n$.

\begin{theorem} 
\label{th:generalposition2}
Let $(\Ncal,\Igot)$ be a standard pair \eqref{eq:pair}.
If $n\ge 5$ then every $\Igot$-invariant conformal minimal immersion $X \colon \Ncal \to\r^n$ 
can be approximated uniformly on compacts  by $\Igot$-invariant conformal minimal immersions
$\wt X\colon \Ncal \to\r^n$ such that $\Flux_X=\Flux_{\wt X}$ and for all $p,q\in \Ncal$ we have
\begin{equation}\label{eq:pqI}
	\wt X(p) = \wt X(q)\  \Longleftrightarrow \  p=q\ \ \text{or}\ \ \Igot(p)=q.
\end{equation}
If $n=4$ then the approximating maps $\wt X$ can be chosen such that the 
equation $\wt X(p) = \wt X(q)$ may in addition hold for a 
discrete sequence of pairs $(p_j,q_j)\in \Ncal\times\Ncal$ such that $q_j \notin \{p_j, \Igot(p_j)\}$ for all $j$.

The same result holds if $\Ncal$ is a compact bordered Riemann 
surface and $X$ is of class $\Cscr^r(\Ncal)$ for some $r\in\n$; in such case, 
the approximation takes place in the $\Cscr^r(\Ncal)$ topology, and 
for every $n\ge 3$ we can approximate $X$ by $\Igot$-invariant conformal minimal immersions
$\wt X\colon \Ncal \to\r^n$ satisfying Condition \eqref{eq:pqI} for pairs of points $p,q\in b\Ncal$
in the boundary of $\Ncal$.
\end{theorem}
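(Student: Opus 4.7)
The plan is to follow the strategy of the orientable analogue \cite[Theorem 1.1]{AlarconForstnericLopez2016MZ} while maintaining $\Igot$-invariance at every stage. I would first reduce to the compact bordered case, then apply a parametric transversality argument using $\Igot$-invariant period-dominating sprays, and finally extend back to open Riemann surfaces by exhaustion together with the Runge--Mergelyan approximation furnished by Theorem \ref{th:intro-Runge}.

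For the compact bordered case, the key is to construct a spray $F\colon\Ncal\times U\to\R^n$ with $U$ a neighborhood of $0$ in some $\R^N$ (real parameters, so that $F(\cdotp,\zeta)$ is $\Igot$-invariant for each $\zeta$ by Lemma \ref{lem:composition}), whose core is $X$ and every map $F(\cdotp,\zeta)$ is an $\Igot$-invariant conformal minimal immersion with $\Flux_{F(\cdotp,\zeta)}=\Flux_X$. This is arranged by combining flows of the complete $\Igot_0$-invariant vector fields $V_{j,k}$ of \eqref{eq:Vjk} on $\Agot_*$ with $\Igot$-invariant coefficient functions chosen via Proposition \ref{prop:period-dominating-spray}, which guarantees the period-domination needed to preserve the flux. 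On top of this, one inserts further perturbations supported near $\Igot$-symmetric pairs of small disks so that the ``difference map''
\[
	\Psi\colon \bigl((\Ncal\times\Ncal)\setminus\Delta'\bigr)\times U\lra\R^n,
	\quad \Psi(p,q,\zeta)= F(p,\zeta)-F(q,\zeta),
\]
where $\Delta'=\{(p,q)\in\Ncal\times\Ncal: q\in\{p,\Igot(p)\}\}$, is submersive in $\zeta$ at every zero. The required symmetric perturbations at pairs $(p,q)$ and $(\Igot(p),\Igot(q))$ can be produced because each $V_{j,k}$ generates a $1$-parameter group of $\Igot_0$-invariant automorphisms, and $\Igot$-invariant bump functions supported near $\{p,\Igot(p)\}$ (and vanishing near $\{q,\Igot(q)\}$) decouple the infinitesimal deformations at $p$ from those at $q$.

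Once $F$ has been built, Abraham's parametric transversality theorem applied to $\Psi$ shows that for a generic $\zeta\in U$ arbitrarily close to $0$, the map $\Psi(\cdotp,\cdotp,\zeta)$ is transverse to $\{0\}\subset\R^n$. Since $(\Ncal\times\Ncal)\setminus\Delta'$ is a $4$-manifold, this yields the conclusion: if $n\ge 5$ the preimage is empty and $\wt X:=F(\cdotp,\zeta)$ satisfies \eqref{eq:pqI}, while if $n=4$ the preimage is a discrete set, giving the stated isolated doubles. For the boundary-embedding statement in the bordered case with $n\ge 3$, the same transversality is applied to the restriction of $\Psi$ to the $2$-manifold $((b\Ncal\times b\Ncal)\setminus\Delta')\times U$, whose generic preimage of $\{0\}\subset\R^n$ is empty whenever $n\ge 3$. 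The open case is then obtained by induction along a normal exhaustion of $\Ncal$ by compact $\Igot$-invariant smoothly bounded Runge domains $\Ncal_1\Subset \Ncal_2\Subset\cdots$, alternating general-position perturbations on each $\Ncal_j$ (from the bordered case) with $\Igot$-invariant Mergelyan approximations furnished by Theorem \ref{th:intro-Runge}, arranging the errors to be summable so that the limit immersion remains general position on every $\Ncal_j$ and has the prescribed flux.

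The main obstacle is the construction of the spray $F$ in Step~2: one must simultaneously achieve $\Igot$-invariance, period-domination (to preserve the flux and keep $F(\cdotp,\zeta)$ conformal minimal), and the ``difference-domination'' needed to make $\Psi$ a submersion onto $\R^n$ at \emph{every} admissible pair $(p,q)$. The pairing of perturbations forced by $\Igot$-invariance can in principle cause cancellations in the infinitesimal difference $\partial_\zeta F(p,\zeta)-\partial_\zeta F(q,\zeta)$ at symmetric configurations, and one must verify that the contribution of the $V_{j,k}$ applied at $p$ (paired with their conjugates at $\Igot(p)$) together with analogous terms at $q$ spans $\R^n$ whenever $\{p,\Igot(p)\}\cap\{q,\Igot(q)\}=\emptyset$. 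Once this uniform spanning property is established, the transversality argument and the exhaustion procedure are formally parallel to the orientable case treated in \cite[Theorem 4.1]{AlarconForstnericLopez2016MZ} and \cite[Theorem 4.5]{AlarconDrinovecForstnericLopez2015PLMS}.
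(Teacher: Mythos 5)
Your proposal follows the paper's own proof very closely: reduction to the compact bordered case, a difference map $\delta X(p,q)=X(p)-X(q)$ on $\Ncal\times\Ncal$ minus the set $\{q\in\{p,\Igot(p)\}\}$, an $\Igot$-invariant period-dominating spray whose parameter differential makes the difference map submersive, Abraham's parametric transversality for the dimension count ($n\ge 5$ gives embedding, $n=4$ gives isolated double points, $n\ge 3$ gives boundary embedding), and an exhaustion by $\Igot$-invariant Runge domains alternating with Mergelyan approximation for the open case. The only cosmetic divergence is in how the "difference domination" is localized: you propose bump functions supported near the endpoint pairs $\{p,\Igot(p)\}$ and vanishing near $\{q,\Igot(q)\}$, whereas the paper's Lemma \ref{lem:pq} supports the coefficient functions $h_i$ along an embedded arc $\Lambda$ from $p$ to $q$ with $\Lambda\cap\Igot(\Lambda)=\emptyset$ and controls $\delta H(\xi,p,q)=\Re\int_\Lambda G(\cdotp,\xi)\,\theta$ directly; the arc formulation makes the spanning condition and the non-interference between $\Lambda$ and $\Igot(\Lambda)$ immediate, but your endpoint version amounts to the same calculation and you correctly flag the potential cancellation issue that the disjointness $\Lambda\cap\Igot(\Lambda)=\emptyset$ resolves.
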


\begin{proof}
Consider the subset of $\Ncal\times \Ncal$ given by
\begin{equation}\label{eq:setD}
	D=\{(p,q)\in \Ncal\times\Ncal : p=q\ \ \text{or}\ \  \Igot(p)=q\}. 
\end{equation}
Since $\Igot$ has no fixed points, $D$ is a disjoint union $D=D_M\cup D_\Igot$ of the diagonal
$D_M:=\{(p,p)\in \Ncal\times\Ncal : p\in \Ncal\}$ and of $D_\Igot:=\{(p,\Igot(p))\in \Ncal\times\Ncal : p\in \Ncal\}$.

To any $\Igot$-invariant conformal minimal immersion $X\colon \Ncal\to\R^n$ we associate 
the difference map $\delta X\colon \Ncal\times\Ncal\to\R^n$ given by
\[
	\delta X (p,q) = X(p)-X(q), \quad p,q\in \Ncal.	
\]
Then, $X$ satisfies Condition \eqref{eq:pqI} if and only if 
\[
	(\delta X)^{-1}(0) = D.
\]
Likewise, $X$ satisfies Condition \eqref{eq:pqI} on the boundary $b\Ncal$ if and only if 
\[
	(\delta X|_{b\Ncal\times b\Ncal})^{-1}(0) = D \cap (b\Ncal\times b\Ncal).
\]

We first consider the case when $\Ncal$ is a compact Riemann surface with nonempty 
boundary and $X$ is of class $\Cscr^r(\Ncal)$ for some $r\in\n$. 
In view of Corollary \ref{cor:local-Mergelyan} and standard results (see e.g.\ Stout \cite{Stout1965}),  
we may assume that $\Ncal$ is a smoothly bounded 
$\Igot$-invariant domain in an open Riemann surface $\Rcal$, the involution $\Igot$ extends to a
fixed-point-free antiholomorphic involution on $\Rcal$, and $X$ is an $\Igot$-invariant conformal 
minimal immersion on an open neighborhood of  $\Ncal$ in $\Rcal$. 
This allows us to replace the integer $r$ by a possibly bigger number such that the transversality arguments 
used in remainder of the proof apply. 

Since $X$ is an immersion, it is locally injective, and hence there is an open  neighborhood 
$U\subset \Ncal\times \Ncal$ of the set $D$ given by \eqref{eq:setD} such that $\delta X$ does not assume the value 
$0\in \r^n$ on $\overline U\setminus D$. To prove the theorem, it suffices to approximate
$X$ sufficiently closely in $\Cscr^1(\Ncal,\R^n)$ by an $\Igot$-invariant conformal minimal immersion 
$\wt X \colon \Ncal\to\r^n$ with $\Flux_{\wt X}= \Flux_X$
such that the difference map $\delta \wt X\colon \Ncal\times\Ncal\to\R^n$ is transverse to the origin 
$0\in \r^n$ on $\Ncal\times \Ncal\setminus U$, and its restriction to $b\Ncal\times b\Ncal$
is transverse to $0$ on $b\Ncal\times b\Ncal\setminus U$.
Indeed, if $n>4=\dim_\r \Ncal\times \Ncal$, this will imply by dimension reasons that $\delta\wt X\ne 0$ 
on $\Ncal\times \Ncal\setminus U$, and hence $\wt X(p)\ne \wt X (q)$ if $(p,q)\in \Ncal\times \Ncal\setminus U$. 
The same argument shows that $\delta \wt X\ne 0$  on $b\Ncal\times b\Ncal\setminus U$ 
for every $n\ge 3$. Finally, if $n=4$ then $\delta\wt X$ may assume
the value $0\in\R^4$ in at most finitely many points $(p_j,q_j)\in \Ncal\times \Ncal\setminus U$,
and we may further arrange that the tangent planes to the immersed surface $\wt X(\Ncal)\subset \R^4$ at $p_j$ and $q_j$
intersect transversely. If on the other hand $(p,q)\in U \setminus D$, then $\wt X(p)\ne \wt X (q)$  provided that $\wt X$ 
is sufficiently close to $X$ in $\Cscr^1(\Ncal)$. Hence, a map $\wt X$ with these properties will satisfy the conclusion of the theorem.

To find such  maps $\wt X$, we construct a neighborhood $\Omega \subset \r^N$ 
of the origin in a high dimensional Euclidean space and a real analytic map $H\colon \Omega \times \Ncal \to \r^n$ 
satisfying the following properties:
\begin{itemize}
\item[\rm (a)] $H(0,\cdotp)=X$,
\vspace{1mm}
\item[\rm (b)]  for every $\xi \in \Omega$, the map $H(\xi,\cdotp)\colon \Ncal\to \r^n$ is an $\Igot$-invariant 
conformal minimal immersion of class $\Cscr^r(\Ncal)$ whose flux homomorphism equals that of $X$, and 
\vspace{1mm}
\item[\rm (c)]  the difference map $\delta H\colon \Omega \times \Ncal\times \Ncal \to \r^n$, defined by 
\[
	\delta H(\xi,p,q) = H(\xi,q) - H(\xi,p), \qquad \xi\in \Omega, \ \ p,q\in \Ncal, 
\]
is such that the partial differential  $\di_\xi|_{\xi=0} \, \delta H(\xi,p,q) \colon \r^N \to \r^n$
is surjective for every $(p,q)\in \Ncal\times \Ncal\setminus U$. 
\end{itemize}

Assume for a moment that a map $H$ with these properties exists.  It follows from (c) and 
compactness of $\Ncal\times \Ncal \setminus U$ that the differential $\di_\xi (\delta H)$ 
is surjective at all points $\xi$ in a neighborhood $\Omega'\subset \Omega$ of the origin in $\r^N$. 
Hence, the map $\delta H \colon \Omega\times (\Ncal\times \Ncal\setminus U) \to\r^n$ 
is transverse to any submanifold of $\r^n$, in particular, to the origin $0\in \r^n$. 
The standard transversality argument (cf.\ Abraham \cite{Abraham1963BAMS} or \cite[Section 7.8]{Forstneric2011-book}) 
shows that for a generic choice of  $\xi \in\Omega'$,  the  map 
$\delta H(\xi,\cdotp,\cdotp)$ is transverse to $0\in\r^n$ on $\Ncal\times \Ncal\setminus U$
and on $b\Ncal\times b\Ncal\setminus U$. 
By choosing $\xi$ sufficiently close to $0\in\r^N$ we obtain an $\Igot$-invariant conformal minimal immersion 
$\wt X=H(\xi,\cdotp)\colon \Ncal \to \r^n$ close to $X$ 
which satisfies the conclusion of the theorem. 

We can find a map $H$ satisfying properties (a)--(c) by following the construction  in 
\cite[Theorem 4.1]{AlarconForstnericLopez2016MZ}, but with a minor difference that we now explain.
The main step is given by the following lemma.

%
%
%
%
\begin{lemma} \label{lem:pq}
Let $D\subset \Ncal\times \Ncal$ be given by \eqref{eq:setD}.
For every $(p,q)\in \Ncal\times \Ncal\setminus D$ there exists a spray 
$H=H^{(p,q)}(\xi,\cdotp) \colon \Ncal\to \r^n$ of $\Igot$-invariant 
conformal minimal immersions of class $\Cscr^r(\Ncal)$, 
depending analytically on the parameter $\xi$ in a neighborhood of the origin in $\r^n$, 
satisfying properties (a) and (b) above, but with (c) replaced by the following condition:
\begin{itemize}
\item[\rm (c')]   the partial differential $\di_\xi|_{\xi=0} \, \delta H(\xi,p,q) \colon \r^n \to \r^n$ 
is an isomorphism. 
\end{itemize}
\end{lemma}

\begin{proof}
This can be seen by following \cite[proof of Lemma 4.3]{AlarconForstnericLopez2016MZ}, but 
performing all steps by $\Igot$-invariant functions. We include a brief outline.

Since $q\notin \{p,\Igot(p)\}$, there exists a smooth embedded arc $\Lambda\subset\Ncal$ with the endpoints
$p$ and $q$ such that $\Lambda \cap \Igot(\Lambda) =\emptyset$. 
Let $\lambda\colon [0,1]\to \Lambda$ be a smooth parameterization.  
Fix a nowhere vanishing $\Igot$-invariant holomorphic $1$-form $\theta$ on $\Ncal$.
Choose a homology basis $\Bcal=\Bcal^+\cup \Bcal^-$ as in \eqref{eq:B0}, \eqref{eq:B-} 
such that 
\begin{equation}\label{eq:LambdaB}
	|\Bcal|\cap \Lambda=\emptyset \quad \text{and} \quad 
	|\Bcal|\cap \Igot(\Lambda)=\emptyset.
\end{equation}
Let $2\di X=f\theta$, so $f\colon \Ncal\to\Agot_*\subset \C^n$
is an $\Igot$-invariant map of class $\Ascr^{r-1}(\Ncal)$ into the punctured null quadric.
By Proposition \ref{prop:period-dominating-spray}, there are a ball $U\subset\C^m$ around 
$0\in \C^m$ for some big $m\in\N$ and an $(\Igot,\Igot_0)$-invariant period dominating spray 
$F\colon \Ncal\times U\to \Agot_*$ of class $\Ascr^{r-1}(\Ncal)$ with the core $F(\cdotp,0)=f$. More precisely, 
if $\Pcal\colon\Ascr^{r-1}(\Ncal,\C^n)\to (\C^n)^l$ denotes the period map \eqref{eq:periodmap} 
associated to the $1$-form $\theta$ and the homology basis $\Bcal^+$, then the partial differential
\[ 
	\di_\zeta\big|_{\zeta=0}  \Pcal(F(\cdotp,\zeta)) \colon\C^m \lra (\C^n)^l 
\] 
maps $\R^m$ (the real part of $\C^m$) surjectively onto $\R^n\times (\C^n)^{l-1}$. 

Consider $(\Igot,\Igot_0)$-invariant sprays $\Psi \colon \Ncal\times \C^n \to \Agot_*$ 
of the form \eqref{eq:flow-spray2}:
\[ 
	\Psi(x,\xi_1,\ldots,\xi_n) =
	\phi^1_{\xi_1h_1(x)}\circ \phi^2_{\xi_2 h_2(x)}\circ\cdots\circ \phi^n_{\xi_n h_n(x)} (f(x)),
	\quad x\in \Ncal,
\] 
where each $\phi^i$ is the flow of some linear holomorphic vector field $V_i$ of type \eqref{eq:Vjk} on $\C^n$,
$\xi=(\xi_1,\ldots,\xi_n)\in \C^n$, and $h_1,\ldots, h_n$ are $\Igot$-invariant holomorphic 
functions on $\Ncal$ to be determined.  We have that
\[
	\frac{\di \Psi(x,\xi)}{\di \xi_{i}}\bigg|_{\xi=0} = h_{i}(x)\, V_i(f(x)), \quad x\in \Ncal, \ \ i=1,\ldots,n.
\]
We choose the coefficient functions $h_i\in \Ascr^{r-1}_\Igot(\Ncal)$ such that they are very
small on $|\Bcal|$, while on the arc $\Lambda$ they are determined so that the real vectors
\begin{equation}\label{eq:spanning-vectors}
	\Re \int_0^1 h_{i}(\lambda(t))\, V_i(f(\lambda(t))) \, \theta(\lambda(t),\dot\lambda(t)) \,dt \in \R^n,
	\quad i=1,\ldots,n
\end{equation}
span $\R^n$ as a real vector space. The construction of such functions $h_i$ is explained in 
\cite[proof of Lemma 4.3]{AlarconForstnericLopez2016MZ}. We begin by finding smooth
$\Igot$-invariant functions $h_i$ with these properties on the system of curves 
$|\Bcal|\cup \Lambda \cup \Igot(\Lambda)$, taking into account \eqref{eq:LambdaB}
and applying $\Igot$-simmetrization. Since the set $|\Bcal|\cup \Lambda \cup \Igot(\Lambda)$ is Runge in $\Ncal$,
we obtain desired holomorphic functions on $\Ncal$ by Mergelyan approximation theorem.

Since the functions $h_i$ are small on $|\Bcal|$, the complex periods of the maps $\Psi(\cdotp,\xi)\colon\Ncal\to\Agot_*$
are close to those of $f=\Psi(\cdotp,0)=F(\cdotp,0)$. Consider the map
\[
	\Phi(x,\xi,\zeta) = \phi^1_{\xi_1h_1(x)}\circ \cdots\circ \phi^n_{\xi_n h_n(x)} (F(x,\zeta))
\]
defined for $x\in\Ncal$, $\zeta\in U\subset\C^m$ and $\xi\in\C^n$. By the period domination property of $F$
and the implicit function theorem, there is a real valued map $\zeta=\zeta(\xi) \in \R^m$ for $\xi\in \R^n$ 
near the origin, with $\zeta(0)=0$, such that, setting 
\[
	G(x,\xi) = \Phi(x,\xi,\zeta(\xi)) \in \Agot_*,\quad x\in \Ncal,\ \xi\in \R^n\ \text{near}\ 0,
\]
the periods of the $\Igot$-invariant holomorphic maps $G(\cdotp,\xi)\colon \Ncal\to \Agot_*$ over the curves in 
the homology basis $\Bcal^+=\{\delta_1,\ldots,\delta_l\}$ are independent of $\xi$,
and hence they equal those of $G(\cdotp,0)=f$.
In particular, their real periods vanish since $f=2\di X/\theta$. Hence, the integrals
\[
	H(\xi,x) = X(x_0) + \Re \int_{x_0}^x G(\cdotp,\xi)\,\theta,\quad x\in\Ncal
\] 
give a well defined family of $\Igot$-invariant conformal minimal immersions $\Ncal\to\R^n$ 
satisfying $H(0,\cdotp)=X$ and $\Flux_{H(\xi,\cdotp)}=\Flux_X$ for all $\xi\in \R^n$ near $0$. 
Furthermore, 
\[
	\delta H(\xi,p,q)= H(\xi,q) - H(\xi,p)= \Re \int_{\Lambda} G(\cdotp,\xi)\,\theta.
\]
By the chain rule, taking into account that the functions $h_i$ are close to
$0$ on $|\Bcal|$, we see that $\di_{\xi_i} \delta H(\xi,p,q)|_{\xi=0}$ 
is close to the expression in \eqref{eq:spanning-vectors} so that these vectors
for $i=1,\ldots,n$ span $\R^n$. This shows that $H$ satisfies the conclusion of the lemma.
For the details we refer to \cite[proof of Lemma 4.3]{AlarconForstnericLopez2016MZ}.
\end{proof}

The construction of a map $H$ satisfying properties (a)--(c) is then completed as in 
\cite[proof of Theorem 4.1]{AlarconForstnericLopez2016MZ} by taking a composition
of finitely many sprays of the kind furnished by Lemma \ref{lem:pq}.
This completes the proof of Theorem \ref{th:generalposition2} if $\Ncal$ is a bordered 
Riemann surface.  

Assume now that $\Ncal$ is an open Riemann surface, $K$ is a compact set in $\Ncal$, and 
$X\colon \Ncal\to \R^n$ is an $\Igot$-invariant conformal minimal immersion for some
$n\ge 5$. Choose an exhaustion $M_1\Subset M_2\Subset \cdots\Subset \cup_{j=1}^\infty M_j =\Ncal$ 
by compact, smoothly bounded, $\Igot$-invariant Runge domains  such that $K\Subset M_1$.
By the already proved case, we can find $X_1\in \CMI_\Igot^n(M_1)$ (see \eqref{eq:CMI})
which satisfies  Condition \eqref{eq:pqI} on $M_1$, it  approximates $X$ as closely as desired
on $M_1\supset K$, and such that $\Flux_{X_1}=\Flux_{X|_{M_1}}$.
Note that any $X'\in \CMI_\Igot^n(M_1)$ which is sufficiently close to $X_1$ in the 
$\Cscr^1(M_1)$ topology also satisfies Condition \eqref{eq:pqI}.
By the Runge-Mergelyan theorem for $\Igot$-invariant conformal minimal immersions
(cf.\ Theorem \ref{th:Mergelyan}) there exists $X_2\in \CMI_\Igot^n(\Ncal)$  
which approximates $X_1$ as closely as desired on $M_1$ and satisfies 
$\Flux_{X_2|_{M_2}}=\Flux_{X|_{M_2}}$. (Note that we applied the cited theorem with respect to the 
group homomorphism $\pgot = \Flux_X \colon H_1(\Ncal;\z)\to \r^n$.) 
By general position, we may also assume that $X_2$ satisfies Condition \eqref{eq:pqI} on $M_2$.

Clearly this process can be continued recursively, yielding a sequence $X_j\in \CMI^n_\Igot(\Ncal)$ such 
that for every $j=1,2,\ldots$,  $X_j$ satisfies  Condition \eqref{eq:pqI}  on $M_j$, 
$\Flux_{X_j|_{M_j}}=\Flux_{X|_{M_j}} \colon H_1(M_j;\Z)\to \R^n$,
and $X_j$ approximates $X_{j-1}$ on $M_{j-1}$ (the last condition being vacuous for $j=1$).
If the approximation is sufficiently close at every step, then the limit 
$\wt X=\lim_{j\to \infty} X_j \colon \Ncal \to \R^n$ satisfies the conclusion of Theorem \ref{th:generalposition2}.
\end{proof}


\chapter{Applications}
\label{ch:applications}

In this chapter, we give a number of applications of the construction methods obtained in the previous chapters, 
thereby showing their power and versatility. The analogous results for orientable minimal surfaces are already known and, 
with the corresponding construction techniques in hand, the proofs in the non-orientable case are similar to 
those for orientable surfaces.

On the one hand, we prove general existence and approximation results for non-orientable minimal surfaces in $\r^n$, $n\geq 3$, with arbitrary conformal structure and global properties such as completeness or properness, among others; see Sections \ref{sec:Iproper} and \ref{sec:Icomplete}. On the other hand, we furnish several existence and approximation results for complete non-orientable minimal surfaces in $\r^n$ which are bounded or proper in certain subdomains of $\r^n$; see Sections \ref{sec:Jordan} and \ref{sec:proper}.
These boundedness conditions impose restrictions on the source Riemann surface, and hence we consider in this 
case only surfaces normalized by bordered Riemann surfaces.


\section{Proper non-orientable minimal surfaces in $\R^n$}
\label{sec:Iproper}

Due to classical results, it was generally believed 
that properness in $\r^3$ strongly influences the conformal properties of minimal surfaces. 
For instance, Sullivan conjectured that there are no properly immersed minimal surfaces in $\r^3$ with finite topology and hyperbolic 
conformal type. (An open Riemann surface is said to be {\em hyperbolic} if it carries non-constant negative subharmonic functions, 
and {\em parabolic} otherwise; cf.\ \cite[p.\ 179]{FarkasKrabook}.) The first counterexample to this conjecture -- a conformal disk -- was given by 
Morales \cite{Morales2003GAFA}; 
the problem was completely solved by Alarc\'on and L\'opez in \cite{AlarconLopez2012JDG} who 
constructed oriented properly immersed minimal surfaces in $\R^3$ with arbitrary topological and conformal structure. 
In the same line, Schoen and Yau \cite[p.\ 18]{SchoenYau1997IP} 
asked in 1985 whether there exist hyperbolic minimal surfaces in $\r^3$ properly projecting into a plane; a general affirmative answer 
to this question was given in \cite{AlarconLopez2012JDG}. Analogous results 
in the non-orientable framework were obtained in \cite{AlarconLopez2015GT}, using the L\'opez-Ros transformation 
\cite{LopezRos1991JDG} and other ad hoc techniques for the three dimensional case. 

On the other hand, it is well known that $\r^3$ contains no properly embedded non-orientable surfaces
due to topological obstructions.  Although the obstruction disappears in dimension $4$, 
no examples of properly embedded non-orientable minimal surfaces  into $\r^4$ seem to be available in the literature. 

In $\r^4=\c^2$ there are properly embedded complex curves, hence orientable minimal surfaces, 
with arbitrary topology \cite{AlarconLopez2013JGA}, whereas it is not known whether every open Riemann surface 
properly embeds into $\c^2$ as a complex curve or a minimal surface \cite{BellNarasimhan1990EMS};
see \cite{ForstnericWold2009JMPA,ForstnericWold2013APDE} for a discussion and recent results 
on this long-standing open problem.

We begin by providing an example of a properly embedded M\"obius strip in $\R^4$; see 
Example \ref{ex:Mobius} and Figure \ref{fig:strip4}. As mentioned above, this seems to be the first known example 
of a properly embedded non-orientable minimal surface in $\R^4$.


\begin{figure}[h]
    \begin{center}
    \scalebox{0.35}{\includegraphics{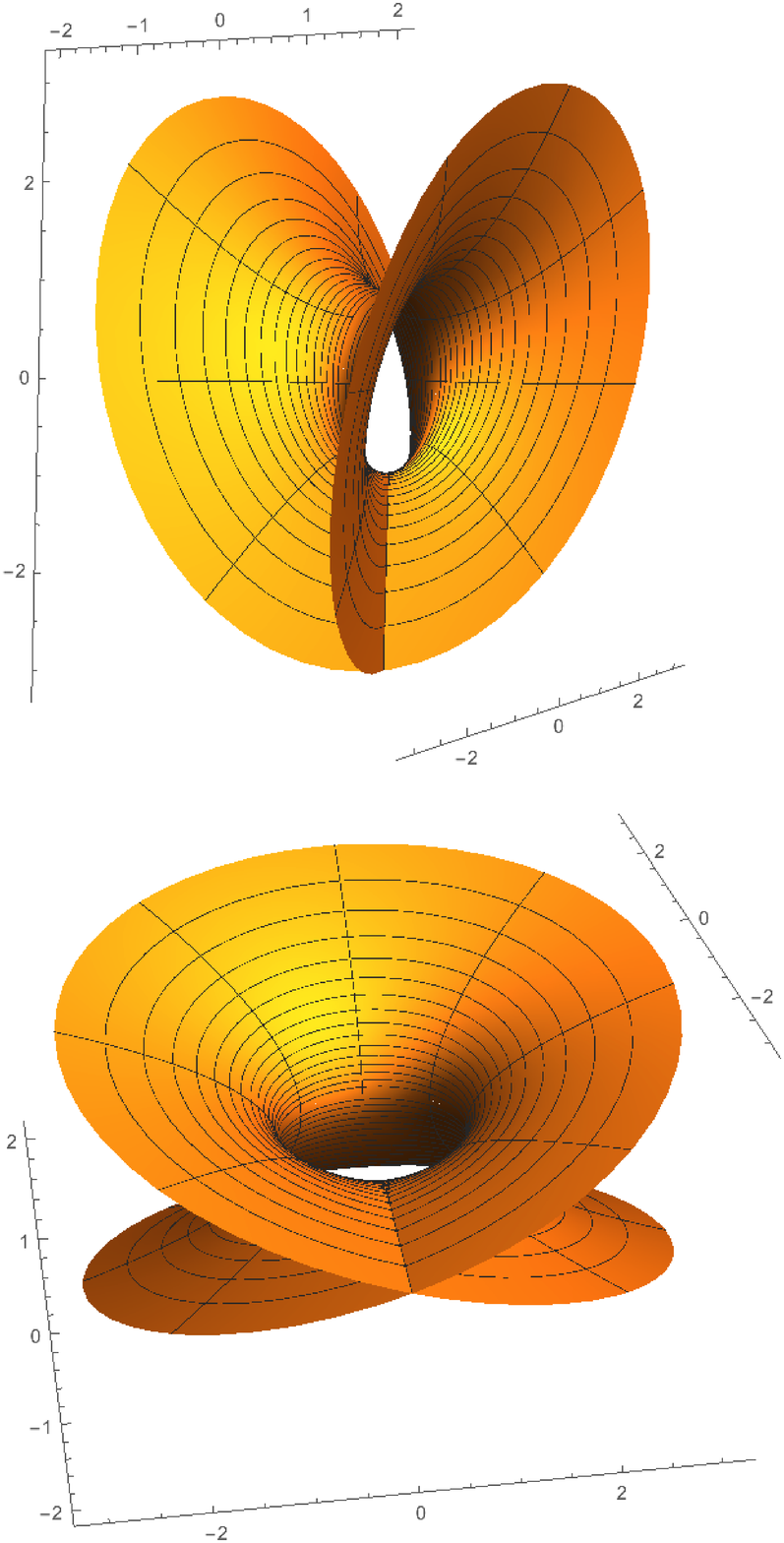}}
        \end{center}
\caption{Two views of the projection into $\r^3=\{0\}\times\r^3\subset\r^4$ of the properly embedded minimal M\"obius strip in $\r^4$ given in Example \ref{ex:Mobius}. This is a harmonically immersed M\"obius strip $\c_*/\Igot\to\r^3$, except at $\zeta=1$ where the surface is not regular. (A harmonic map $u=(u_1,u_2,u_3)\colon M\to\r^3$ from a Riemann surface is an immersion if and only if
$|\sum_{j=1}^3 (\di u_j)^2| < \sum_{j=1}^3 |\di u_j|^2$  everywhere on $M$; cf.\ \cite[Lemma 2.4]{AlarconLopez2013TAMS}.)}
\label{fig:strip4}
\end{figure}

\begin{example}\label{ex:Mobius}
Let $\Igot\colon  \c_*\to\c_*$ be the fixed-point-free antiholomorphic involution 
on the punctured plane $\c_*=\c\setminus\{0\}$ given by 
\[
    	\Igot(\zeta)= -\frac1{\bar\zeta},\quad\  \zeta\in\c_*.
\]
We claim that the harmonic map $X\colon \c_*\to\r^4$ given by
\[
	X(\zeta)=\Re\left( \imath\big(\zeta+\frac1{\zeta}\big) \,,\, \zeta-\frac1{\zeta}  \,,\,
	\frac{\imath}2 \big(\zeta^2-\frac1{\zeta^2}\big)  \,,\, \frac12 \big(\zeta^2+\frac1{\zeta^2}\big) \right) 
\]
is an $\Igot$-invariant proper conformal minimal immersion
such that $X(\zeta_1)=X(\zeta_2)$ if and only if $\zeta_1=\zeta_2$ or $\zeta_1=\Igot(\zeta_2)$, and hence 
its image surface $X(\c_*)$ is a properly embedded minimal M\"obius strip in $\r^4$.

Indeed, a direct computation shows that $X\circ\Igot=\Igot$; i.e.\ $X$ is $\Igot$-invariant. 
Moreover, we have $\di X=f\theta$ where $f$ is the $\Igot$-invariant holomorphic map
\[
    f(\zeta)= \left( \frac{\zeta^2-1}{\zeta} \,,\, -\imath\frac{\zeta^2+1}{\zeta}  \,,\, 
    \frac{\zeta^4+1}{\zeta^2}  \,,\, -\imath\frac{\zeta^4-1}{\zeta^2} \right), \quad \zeta\in\c_*,
\]
and $\theta$ is the $\Igot$-invariant holomorphic $1$-form 
\[
   \theta=\imath \, \frac{d\zeta}{\zeta}
\]
which does not vanish anywhere on $\c_*$.
A calculation shows that $f$ assumes values in the null quadric $\Agot_*\subset\c^4$ \eqref{eq:null-quadric0},
and hence $X$ is a conformal minimal immersion. In polar coordinates $(\rho,t)\in(0,+\infty)\times\r$, we have
\[
    X(\rho e^{\imath t})= \left( \big(\frac1{\rho}-\rho\big)\sin t \,,\, \big(\rho-\frac1{\rho}\big)\cos t  \,,\, 
    -\frac12\big(\rho^2+\frac1{\rho^2}\big)\sin 2t  \,,\, \frac12\big(\rho^2+\frac1{\rho^2}\big)\cos 2t \right).
\]
Since $\|X(\rho e^{\imath t})\|\geq |\rho-1/\rho|$ for all $(\rho,t)\in(0,+\infty)\times\r$ and the function 
$h(\rho)=\rho-1/\rho$ defines a diffeomorphism $h\colon (0,+\infty)\to \r$, we infer that $X$ is a proper map. 
(In fact, the first two components of $X$ provide a proper map $\c_* \to \R^2$.)
Finally, a computation shows that, if $X(\rho_1e^{\imath t_1})= X(\rho_2e^{\imath t_2})$ for 
$(\rho_1, t_1),(\rho_2, t_2)\in (0,+\infty)\times\r$ then either  $\rho_1=\rho_2$ and $t_1=t_2 \mod 2\pi$ 
(and hence $\rho_1e^{\imath t_1}=\rho_2e^{\imath t_2}$), or $\rho_1=\frac1{\rho_2}$ and $t_1=t_2\mod\pi$ 
(and hence $\rho_1e^{\imath t_1}=\Igot(\rho_2e^{\imath t_2})$). 

It follows that $X$ determines a proper minimal embedding 
$\UnX\colon \c_*/\Igot\hra\r^4$ of the M\"obius strip $\c_*/\Igot$ into $\R^4$. See Figure \ref{fig:strip4}.
\qed\end{example}

We now give a general embedding result for non-orientable minimal surfaces in $\r^n$, $n\geq 5$, with arbitrary conformal structure. 
To be more precise, given a standard pair $(\Ncal,\Igot)$ and an integer $n\geq 5$, we provide $\Igot$-invariant conformal minimal 
immersions $Y\colon \Ncal\to\r^n$ such that the image surfaces $Y(\Ncal)\subset\r^n$ are properly embedded non-orientable minimal 
surfaces. This is a consequence of the following general approximation result, which is a more precise version of 
Theorem \ref{th:applications1} {\rm (i)}, also contributing to the Sullivan and Schoen-Yau problems mentioned above.

\begin{theorem}\label{th:Iproper}
Let $(\Ncal,\Igot)$ be a standard pair \eqref{eq:pair}, 
and let $S=K\cup\Gamma\subset\Ncal$ be an $\Igot$-admissible set (see Definition \ref{def:GCMI}). 
Let $(X,f\theta)\in \GCMII^n(S)$ for some $n\ge 3$ (see \eqref{eq:GCMI}), and let $\pgot\colon H_1(\Ncal;\z)\to \r^n$ 
be a group homomorphism such that $\pgot\circ \Igot_*=-\pgot$ and $\pgot|_{H_1(S;\z)}=\Flux_{(X,f \theta)}$ (see \eqref{eq:GFlux}).
Write $X=(X_1,X_2,\ldots,X_n)$ and choose numbers 
\[
	0<\beta < \pi/4, \quad 0<\delta<\min\{(X_2+\tan(\beta)|X_1|)(p) : p\in S\}.
\]
Then, $(X,f \theta)$ may be approximated in the $\Cscr^1(S)$-topology 
(see Definition \ref{def:appGCMI}) 
by $\Igot$-invariant conformal minimal immersions  $Y=(Y_1,Y_2,\ldots,Y_n)\colon \Ncal\to\r^n$ such that 
$\Flux_Y=\pgot$ and $Y_2+\tan(\beta)|Y_1|\colon \Ncal\to\r$ is a proper map bounded below by $\delta$.
In particular, $(Y_1,Y_2)\colon \Ncal\to\r^2$ is also a proper map. 

Furthermore, $Y$ can be chosen such that $\underline{Y}\colon \underline{\Ncal}\to\r^n$ is an embedding
if $n\geq 5$ and an immersion with simple double points if $n=4$. 
\end{theorem}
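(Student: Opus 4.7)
The strategy is a Runge-type induction along an $\Igot$-invariant exhaustion of $\Ncal$, in which at each step a $\Igot$-symmetric labyrinth of arcs is used to push the wedge function $h(Z):=Z_2+\tan(\beta)|Z_1|$ outward, combined with Theorem \ref{th:Mergelyan} to realize the push as a genuine $\Igot$-invariant conformal minimal immersion with the prescribed flux. The general position conclusion will then follow from Theorem \ref{th:generalposition2}.

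Concretely, I would fix a smooth $\Igot$-invariant strongly subharmonic Morse exhaustion $\rho\colon\Ncal\to\R$ with $S\subset\{\rho<0\}$ and an increasing divergent sequence $0=c_0<c_1<\cdots$ of regular values of $\rho$ intercalating its critical values; set $M_0:=S$ and $M_j:=\{\rho\le c_j\}$, so that each $M_j$ is a smoothly bounded $\Igot$-invariant Runge compact domain. The core of the proof is the inductive construction of a sequence $X^{(j)}\in\CMI^n_\Igot(M_j)$ with $X^{(0)}$ an $\Igot$-invariant extension of $(X,f\theta)$ (using Corollary \ref{cor:local-Mergelyan} to make sense of this up to first enlarging $S$ inside $M_1$) such that: (i) $X^{(j)}$ is $\Cscr^1(M_{j-1})$-close to $X^{(j-1)}$; (ii) $\Flux_{X^{(j)}}=\pgot|_{H_1(M_j;\Z)}$; and (iii) $h\circ X^{(j)}\ge\delta$ on $M_j$ and $h\circ X^{(j)}\ge j$ on $bM_j$. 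If the approximations in (i) are chosen small enough (relative to the modulus of continuity of $h$), the limit $Y=\lim_jX^{(j)}$ is a well-defined $\Igot$-invariant conformal minimal immersion on $\Ncal$ with $\Flux_Y=\pgot$ and $h\circ Y\ge\delta$ globally; moreover $h\circ Y\to\infty$ along any divergent sequence in $\Ncal$, which gives properness of $h\circ Y$ and hence of $(Y_1,Y_2)$ since $\beta<\pi/4$ makes $\|(z_1,z_2)\|\ge c(\beta)\cdot h(z_1,z_2)$ for $h$ large.

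The inductive step is the main technical point. I would choose a $\Igot$-admissible set $T_j=M_{j-1}\cup\Gamma_j$ with $\Gamma_j=\Gamma_j'\cup\Igot(\Gamma_j')$ and $\Gamma_j'\cap\Igot(\Gamma_j')=\emptyset$, where $\Gamma_j'$ is a sufficiently dense finite union of smooth arcs in $M_j\setminus\mathring M_{j-1}$ with endpoints on $bM_{j-1}$, forming (together with $bM_{j-1}\cup\Igot(\Gamma_j')$) a labyrinth fine enough that any continuous map on $M_j$ which is $\Cscr^1$-close to $X^{(j-1)}$ on $M_{j-1}$ and satisfies $h>j+1$ on $\Gamma_j$ automatically has $h\ge j$ on $bM_j$ and $h\ge\delta$ on all of $M_j$. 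Next I would extend $X^{(j-1)}$ to a generalized $\Igot$-invariant conformal minimal immersion $(\tilde X,\tilde f\theta)\in\GCMII^n(T_j)$ by specifying $\tilde X$ along each component of $\Gamma_j'$ so that $h(\tilde X)$ rises above $j+1$ (for example, by arranging the integral of $\tilde f\theta$ along the arc to have a sufficiently large positive second component and small first component) and so that the integrals of $\tilde f\theta$ over the new loops produced by closing $\Gamma_j'$ through $bM_{j-1}$ realize the values dictated by $\pgot$; the definition is then forced on $\Igot(\Gamma_j')$ by the $\Igot$-invariance requirement $\Igot^*(\tilde f\theta)=\overline{\tilde f\theta}$. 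Finally, Theorem \ref{th:Mergelyan} applied to $(\tilde X,\tilde f\theta)$ on the $\Igot$-admissible set $T_j$, with target flux $\pgot$, furnishes an $\Igot$-invariant conformal minimal immersion $\Ncal\to\R^n$ approximating $\tilde X$ in $\Cscr^1(T_j)$; its restriction to $M_j$ is taken as $X^{(j)}$.

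The main obstacle is the simultaneous achievement, in the construction of $(\tilde X,\tilde f\theta)$, of (a) a sufficiently large increment of $h$ along each arc of $\Gamma_j'$, (b) $\Igot$-symmetry, and (c) the prescribed $\pgot$-periods along every new loop in $T_j$. The combinatorial analysis of which arcs of $\Gamma_j'$ close up (through $bM_{j-1}$ alone, or through $\Igot(\Gamma_j')$ via attachment points shared with $\Igot(\Gamma_j')$) is exactly of the type already handled in the four cases of Lemma \ref{lem:step2}, and since $\pgot\circ\Igot_*=-\pgot$ the period constraints are automatically consistent with the symmetry; condition (c) is then realized by a finite-dimensional linear adjustment on a small collection of the arcs, while the bulk of each arc remains free to carry the required increase of $h$. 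Once $Y$ is obtained, I would replace it by a general-position approximation $\tilde Y$ provided by Theorem \ref{th:generalposition2}, chosen $\Cscr^0$-small on an exhaustion of $\Ncal$ so as to preserve $h\circ\tilde Y\ge\delta/2$ (and hence the properness of $(\tilde Y_1,\tilde Y_2)$), and pass to the quotient $\underline\Ncal=\Ncal/\Igot$ to get the embedding (for $n\ge 5$) or immersion with simple double points (for $n=4$) statement for $\underline Y$.
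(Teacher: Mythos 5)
The core of your induction relies on the assertion that, for a sufficiently fine labyrinth of arcs $\Gamma_j$ in the annulus $M_j\setminus\mathring M_{j-1}$, any map that is $\Cscr^1$-close to $X^{(j-1)}$ on $M_{j-1}$ and has $h>j+1$ on $\Gamma_j$ automatically satisfies $h\ge j$ on $bM_j$ and $h\ge\delta$ on all of $M_j$. This is false, and it is the key gap. Mergelyan approximation on $T_j=M_{j-1}\cup\Gamma_j$ gives no control whatsoever off $T_j$: the complement $M_j\setminus T_j$ has nonempty interior (and $bM_j\not\subset T_j$), and a harmonic map is free to do anything there. The convexity structure that one might hope would rescue the argument works the wrong way: $h\circ Y = Y_2+\tan(\beta)|Y_1|$ is \emph{subharmonic} for a harmonic $Y$, so the maximum principle bounds it from \emph{above} on the interior, not from below; it can dip arbitrarily low between the arcs of the labyrinth even if it is huge on $\Gamma_j$. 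Labyrinths of this kind are the right tool for \emph{completeness} (Jorge--Xavier style, where a divergent path must cross many pieces and the intrinsic length accumulates), but they do not yield the pointwise lower bound on $h$ required for \emph{properness}.

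What the paper actually does is rather different and is why Theorem~\ref{th:Mergelyan1} (Mergelyan preserving $n-2$ components, extended harmonically) is developed: the inductive step is the alternating-push mechanism of Alarc\'on--L\'opez, in which the annulus is divided into finitely many compact pieces and one repeatedly applies Mergelyan \emph{with a fixed component} in the slanted direction $(1,\tan\beta,0,\ldots,0)$ or $(1,-\tan\beta,0,\ldots,0)$, increasing the orthogonal coordinate on one half of the pieces, then on the other half, so that the wedge function $h$ rises on all of $M_j\setminus\mathring M_{j-1}$. Fixing the slanted component is exactly what prevents the gain already achieved from being destroyed by the next approximation; Theorem~\ref{th:Mergelyan} alone, which gives no such control off the admissible set, cannot substitute for Theorem~\ref{th:Mergelyan1} here. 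A secondary issue: your final appeal to Theorem~\ref{th:generalposition2} as a one-shot postprocessing step does not preserve the lower bound $h\circ\tilde Y\ge\delta/2$, because that theorem only approximates \emph{uniformly on compacts}, not uniformly on $\Ncal$; the general-position requirement must be interleaved into the inductive loop, as condition (v) in the paper's sketch, rather than applied at the end.
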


The case $n=3$ of Theorem \ref{th:Iproper} is proved in \cite[Theorem 6.6]{AlarconLopez2015GT}. The analogue of 
Theorem \ref{th:Iproper} in the orientable case is given in \cite{AlarconLopez2012JDG} for $n=3$ 
and in \cite{AlarconForstnericLopez2016MZ} for arbitrary dimension. In \cite{AlarconForstnericLopez2016MZ} only the properness 
of the pair $(Y_1,Y_2)$ is ensured; we point out that, given $\beta\in(0,\pi/4)$ and an open Riemann surface $M$, 
the arguments in \cite{AlarconLopez2012JDG} can easily be adapted to construct conformal minimal immersions 
$(Y_1,Y_2,\ldots,Y_n)\colon M\to\r^n$, embeddings if $n\geq 5$, such that the map $Y_2+\tan(\beta)|Y_1|\colon \Ncal\to\r$ is proper.

The proof of Theorem \ref{th:Iproper} follows the arguments in \cite[proof of Theorem 6.6]{AlarconLopez2015GT}, but using 
Theorems \ref{th:Mergelyan} and \ref{th:Mergelyan1} instead of \cite[Theorem 5.6]{AlarconLopez2015GT}. We include a 
sketch of the proof  and refer to \cite{AlarconLopez2015GT} for details.

\begin{proof}[Sketch of proof of Theorem \ref{th:Iproper}]
By Theorem \ref{th:Mergelyan0} we may assume that $X$ extends as an $\Igot$-invariant conformal minimal immersion on a 
tubular neighborhood of $S$; hence we may assume without loss of generality that $\Gamma=\emptyset$ and $S$ is a 
smoothly bounded domain.

Let $M_0:=S\Subset M_1\Subset M_2\Subset\cdots\Subset \bigcup_{j=1}^\infty M_j=\Ncal$ be an exhaustion of $\Ncal$ as in the 
proof of Theorem \ref{th:Mergelyan0}. Set $Y^0:=X$. The key of the proof consists of constructing a sequence of $\Igot$-invariant 
conformal minimal immersions 
\[
	Y^j=(Y_1^j,Y_2^j,\ldots,Y_n^j)\colon M_j\to \r^n,\quad j\in\n
\] 
satisfying the following conditions:
\begin{enumerate}[{\rm (i)}]
\item $Y^j$ is as close as desired to $Y^{j-1}$ in the $\Cscr^1(M_{j-1})$ topology.
\item $\Flux_{Y^j}=\pgot|_{H_1(M_j;\z)}$.
\item $Y^j_2+\tan(\beta)|Y^j_1|>\delta+j$ on $bM_j$.
\item $Y^j_2+\tan(\beta)|Y^j_1|>\delta+j-1$ on $M_j\setminus \mathring{M}_{j-1}$.
\item $\underline{Y}^j\colon \underline{\Ncal}\to\r^n$ is an embedding if $n\geq 5$ 
and an immersion with simple double points if $n=4$.
\end{enumerate}
Such a sequence is constructed recursively. For the inductive step we proceed as in 
\cite[proof of Theorem 6.6]{AlarconLopez2015GT} but using Theorems \ref{th:Mergelyan} and \ref{th:Mergelyan1} 
instead of \cite[Theorem 5.6]{AlarconLopez2015GT}, which provides an immersion satisfying conditions {\rm (i)}-{\rm (iv)}. 
The role of the first and third components in \cite[Theorem 6.6]{AlarconLopez2015GT} is played here by the first two ones. 
In particular, crucial steps in the proof of the non-critical case in \cite{AlarconLopez2015GT} require to apply 
Mergelyan approximation to a generalized conformal $\Igot$-invariant minimal immersion into $\r^3$ preserving its component 
in a certain direction (to be precise, in the direction of $(1,0,\tan(\beta))$ or of $(1,0,-\tan(\beta))$; 
cf.\ \cite[Equation $(39)$]{AlarconLopez2015GT}). In our case, we use Theorem \ref{th:Mergelyan1} at these crucial points
to obtain approximation preserving the component in the direction of $(1,\tan(\beta),0,\ldots,0)$ (resp. $(1,-\tan(\beta),0,\ldots,0)$) 
and also any other $n-3$ components among the last $n-2$ ones.
Finally, Property {\rm (v)} is guaranteed by Theorem \ref{th:generalposition}.

If the approximation of $Y^{j-1}$ by $Y^j$ in {\rm (i)} is sufficiently close for all $j\in\n$, the sequence $\{Y^j\}_{j\in\n}$ converges 
uniformly on compacts on $\Ncal$ to an $\Igot$-invariant conformal minimal immersion $Y=(Y_1,Y_2,\ldots,Y_n)\colon\Ncal\to\r^n$ 
satisfying the conclusion of the theorem. Indeed, {\rm (i)} ensures that $Y$ approximates $(X,f\theta)$ as close as desired  in the 
$\Cscr^1(S)$-topology; {\rm (ii)} implies that $\Flux_Y=\pgot$; {\rm (iii)} and {\rm (iv)} guarantee that $Y_2+\tan(\beta)|Y_1|\colon \Ncal\to\r$ 
is a proper map which is bounded below by $\delta$; 
and {\rm (v)} ensures that $Y$ can be chosen such that $\underline{Y}\colon \underline{\Ncal}\to\r^n$ is an embedding
if $n\geq 5$ and  an immersion with simple double points if $n=4$.
\end{proof}

%
%

\section{Complete non-orientable minimal surfaces with fixed components}
\label{sec:Icomplete}

In this section, we prove existence and approximation results for complete non-orientable minimal surfaces in $\r^n$ 
with arbitrary conformal structure and $n-2$ given coordinate functions; see Corollary \ref{co:fixed}. By using this result, 
we then derive the existence of  complete non-orientable minimal surfaces in $\r^n$ with arbitrary conformal structure and 
generalized Gauss map omitting $n$ hyperplanes of $\cp^{n-1}$ in general position; see Corollary \ref{co:gaussmap}.

We begin with the following more precise technical result.

\begin{theorem}\label{th:fixed-derivative}
Let $(\Ncal,\Igot,\theta)$ be a standard triple \eqref{eq:triple} and let $S=K\cup\Gamma\subset \Ncal$ be an $\Igot$-admissible 
subset (see Definition \ref{def:admissible}). 
Let $\kappa\colon\Ncal\to\r_+=[0,+\infty)$ be an $\Igot$-invariant smooth function with isolated zeros. 
Let $f=(f_1,f_2)\in\Fscr_\Igot(S,\c^2)$ (see \eqref{eq:Fscr})  be such that $f_1^2+f_2^2$ extends to a function 
$h\in \Ocal_\Igot(\Ncal)$ which  does not vanish everywhere on $\Ncal$ nor anywhere on $\Gamma$, and let 
$\qgot\colon H_1(\Ncal;\z)\to\c^2$ be a group homomorphism such that $\qgot\circ \Igot_*=\overline\qgot$ and 
$\qgot(\gamma)=\int_\gamma f\theta$ for every closed curve $\gamma\subset S$.

Then, $f$ can be  approximated uniformly on $S$ by $\Igot$-invariant holomorphic maps $\wt f=(\wt f_1,\wt f_2)\colon \Ncal \to  \c^2$ 
such that  $\wt f_1^2+\wt f_2^2=h$ on $\Ncal$,  $\int_\gamma \wt f\theta=\qgot(\gamma)$ for every closed curve $\gamma\subset\Ncal$, 
the zero locus  of $\wt f$ on $\Ncal$ is exactly the one of $f$ on $S$ (in particular, $\wt f$ has no zeros on 
$\Ncal \setminus \mathring K$), and $(|\wt f_1|^2+|\wt f_2|^2+\kappa)|\theta|^2$ is a complete Riemannian metric with 
isolated singularities exactly at the zeros of $|f_1|^2+|f_2|^2+\kappa$.
\end{theorem}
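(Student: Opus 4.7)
The plan is to adapt the classical Jorge--Xavier labyrinth construction to the $\Igot$-invariant setting, using Theorem \ref{th:Mergelyan2} for the approximation--interpolation step and a ``$\Igot$-symmetric L\'opez--Ros type transformation'' to inflate the induced metric while preserving both the factorization $\wt f_1^2+\wt f_2^2=h$ and the zero divisor of the pair. The key algebraic observation is that, setting $g_1:=f_1+\imath f_2$ and $g_2:=f_1-\imath f_2$, the identity $f_1^2+f_2^2=g_1 g_2=h$ together with $\Igot$-invariance of $(f_1,f_2)$ is equivalent to $g_1\cdot g_2=h$ and $g_1\circ\Igot=\overline{g_2}$. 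If $u$ is any holomorphic function on (a neighborhood of) a $\Igot$-invariant set satisfying $u\circ\Igot=-\bar u$ (that is, $u\in \imath\OI$), then the replacement
\[
	(g_1,g_2)\longmapsto (e^u g_1,\,e^{-u}g_2)
\]
preserves the product $g_1 g_2=h$, preserves the $\Igot$-symmetry, and produces new $\Igot$-invariant components $(\tilde f_1,\tilde f_2)$ satisfying
\[
	|\tilde f_1|^2+|\tilde f_2|^2=\tfrac{1}{2}\bigl(|g_1|^2 e^{2\Re u}+|g_2|^2 e^{-2\Re u}\bigr).
\]
Thus on any compact arc along which $\Re u$ can be made very large (or very negative), the induced ``length form'' $(|\tilde f_1|^2+|\tilde f_2|^2)|\theta|^2$ blows up, while the divisor of $(\tilde f_1,\tilde f_2)$ coincides with that of $(f_1,f_2)$ since $e^{\pm u}$ is nonvanishing.

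With this mechanism in place I would set up a standard recursive scheme. Choose a smooth $\Igot$-invariant strongly subharmonic exhaustion $\rho\colon\Ncal\to\R$ such that $S\subset\{\rho<0\}$ is a strong deformation retract of $M_0:=\{\rho\le 0\}$, and define an exhaustion $M_0\Subset M_1\Subset M_2\Subset\cdots$ by $\Igot$-invariant smoothly bounded Runge domains with $\bigcup_j M_j=\Ncal$, fixing a basepoint $p_0\in\mathring M_0$. I will construct a sequence $f^{(j)}=(f_1^{(j)},f_2^{(j)})\in\OI(M_j,\C^2)$ such that: (a)~$(f_1^{(j)})^2+(f_2^{(j)})^2=h$ on $M_j$; (b)~the periods of $f^{(j)}\theta$ match $\qgot$ on $H_1(M_j;\Z)$; (c)~the zero locus of $f^{(j)}$ on $M_j$ equals that of $f$ on $S$; (d)~$f^{(j)}$ approximates $f^{(j-1)}$ as closely as desired in a neighborhood of $M_{j-1}$; and (e)~the intrinsic distance in $M_j$ from $p_0$ to $bM_j$, measured in the metric $(|f_1^{(j)}|^2+|f_2^{(j)}|^2+\kappa)|\theta|^2$, is at least $j$. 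If the approximations in (d) are taken sufficiently fast, the limit $\wt f=\lim_j f^{(j)}$ exists in $\OI(\Ncal,\C^2)$, inherits (a)--(c) by continuity and the identity principle, and inherits (e) because the distance function is lower semicontinuous under the chosen approximation; completeness of the metric follows since $p_0$ has distance $\ge j$ from $bM_j$ for every $j$.

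The heart of the argument is the inductive step from $f^{(j)}$ to $f^{(j+1)}$. In the shell $M_{j+1}\setminus\mathring M_j$ I build a $\Igot$-invariant Jorge--Xavier labyrinth: a finite $\Igot$-invariant union $\Lambda_{j+1}=\Lambda'\cup\Igot(\Lambda')$ of pairwise disjoint short compact Jordan arcs $\Lambda'$ transverse to $\Lambda'\cap\Igot(\Lambda')=\emptyset$, arranged so that every path from $bM_j$ to $bM_{j+1}$ must traverse a prescribed large number of them in any fixed reference metric on $\Ncal$, and so that $S_{j+1}:=M_j\cup\Lambda_{j+1}$ is $\Igot$-admissible in the sense of Definition \ref{def:admissible}. (This is achieved in the quotient $\UnNcal=\Ncal/\Igot$ and lifted.) On each arc of $\Lambda'$, I prescribe a smooth $\Igot$-invariant pair $\tilde f\in\Fscr_\Igot(S_{j+1},\C^2)$ of the L\'opez--Ros type described above: in local coordinates, $\tilde f$ is obtained from $f^{(j)}$ by multiplying $g_1,g_2$ by $e^{\pm u}$ for some smooth $\Igot$-skew-invariant $u$ that vanishes near $bS_{j+1}\cap M_j$ but has $\Re u$ very large on the central portion of each arc (and the conjugate on the mirror arc to preserve symmetry). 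This guarantees that $\tilde f_1^2+\tilde f_2^2=h$ on $S_{j+1}$, that $\tilde f$ agrees with $f^{(j)}$ on $M_j$ up to an arbitrarily small $\Cscr^0$ error, and that the integral of $(|\tilde f_1|^2+|\tilde f_2|^2)|\theta|^2$ along any curve crossing a rung is as large as we please. The required period compatibility on $S_{j+1}$ can be arranged by an elementary adjustment of $\tilde f$ along each arc, and the zero locus is preserved because multiplying by $e^{\pm u}$ never creates zeros. Finally, I apply Theorem \ref{th:Mergelyan2} to $\tilde f$ on $S_{j+1}$ (with the ambient Riemann surface $M_{j+1}$, which carries the same antiholomorphic involution) to obtain $f^{(j+1)}\in\OI(M_{j+1},\C^2)$ approximating $\tilde f$ in $\Cscr^0(S_{j+1})$ with $(f_1^{(j+1)})^2+(f_2^{(j+1)})^2=h$, prescribed periods, and zero locus equal to that of $\tilde f$ on $S_{j+1}$. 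Taking the approximation close enough that arc lengths along curves crossing $\Lambda_{j+1}$ are preserved up to a factor $1/2$ establishes (e).

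The main obstacle, and the reason this requires more care than in the $n=3$ case of \cite{AlarconLopez2015GT}, is that the structural constraint $f_1^2+f_2^2=h$ is very rigid compared with the classical L\'opez--Ros deformation of Weierstrass data, and must be maintained simultaneously with $\Igot$-symmetry, the period condition, and the zero divisor condition. The $\Igot$-skew-invariant parameter $u$ above resolves this by giving a one-real-parameter family of admissible deformations at each point of $\Lambda_{j+1}$; what makes the inductive step work is precisely that this family is rich enough to generate arbitrary length in the induced metric, while Theorem \ref{th:Mergelyan2} handles the global approximation without disturbing the product $h$ or the zero set. All remaining steps (choice of exhaustion, standard labyrinth estimates, control of convergence) then follow the orientable template of \cite{AlarconForstnericLopez2016MZ} combined with the $\Igot$-invariant tools of Chapter \ref{ch:sprays}.
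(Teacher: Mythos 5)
Your recursive scheme matches the paper's sketch: both fix an $\Igot$-invariant exhaustion and build a sequence $f^{(j)}$ satisfying the period, factorization, zero-divisor, approximation and completeness conditions (your (a)--(e), the paper's (i)--(v)), obtaining the metric inflation via $\Igot$-invariant Jorge--Xavier labyrinths followed by Theorem~\ref{th:Mergelyan2} (or its finite-topology version, Lemma~\ref{lem:Mergelyan1}). The concrete deformation mechanism you supply --- an $\Igot$-skew-invariant $u$ acting by $(g_1,g_2)=(f_1+\imath f_2,\,f_1-\imath f_2)\mapsto(e^u g_1,e^{-u}g_2)$ --- is exactly right: it keeps the product $g_1g_2=h$, keeps the $\Igot$-symmetry $g_1\circ\Igot=\overline{g_2}$ (since $u\circ\Igot=-\bar u$), keeps the common zero divisor, and sends $|\wt f_1|^2+|\wt f_2|^2=\tfrac12\bigl(|g_1|^2e^{2\Re u}+|g_2|^2e^{-2\Re u}\bigr)$ to infinity in either direction of $\Re u$. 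The paper's sketch leaves this local step implicit, referring to the orientable proof in~\cite{AlarconFernandezLopez2013CVPDE}, so your explicit formula usefully fills that gap.

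Two points to tighten. First, place each labyrinth arc in a region where both $h$ and $\kappa$ are nonvanishing --- a general-position choice since their zeros are isolated. Near a zero of $h$, one of $g_1,g_2$ vanishes, so only one sign of $\Re u$ pumps the metric and you cannot pick a consistent pumping direction along the whole rung; the paper explicitly imposes the corresponding $\kappa$-avoidance in its sketch. Second, your inductive step as written treats only the noncritical shell $M_j\cup\Lambda_{j+1}$. When $\mathring M_{j+1}\setminus M_j$ contains a symmetric pair $\{p,\Igot(p)\}$ of Morse critical points of $\rho$, the $\Igot$-admissible set must also include a pair of arcs $\alpha\cup\Igot(\alpha)$ that capture the new homology (exactly as in the critical case of Theorem~\ref{th:Mergelyan0} and Lemma~\ref{lem:step2}), and the labyrinth inflation is then carried out in the remaining noncritical annular subshells. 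With these two details incorporated, your argument is essentially the one the paper intends.
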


The analogue of  Theorem \ref{th:fixed-derivative} in the orientable case is 
\cite[Theorem 4.4]{AlarconFernandezLopez2013CVPDE}. The proof of Theorem \ref{th:fixed-derivative} follows the same 
arguments as those in \cite[proof of Theorem 4.4]{AlarconFernandezLopez2013CVPDE}, but symmetrizing the construction with respect 
to the involution $\Igot$ and using Theorem \ref{th:Mergelyan2} instead of \cite[Lemma 3.3]{AlarconFernandezLopez2013CVPDE}.
We shall sketch the proof, pointing out the relevant differences and referring to \cite{AlarconFernandezLopez2013CVPDE} for 
complete details. It is also worth mentioning that the bigger generality of Theorem \ref{th:Mergelyan2} when compared to 
\cite[Lemma 3.3]{AlarconFernandezLopez2013CVPDE} (its analogue in the orientable case) enables us to avoid the assumption 
that $f_1$ and $f_2$ are linearly independent (cf.\ \cite[Theorem 4.4 {\rm (A)}]{AlarconFernandezLopez2013CVPDE}). 
Indeed, our proof of Theorem \ref{th:Mergelyan2} can be adapted to the orientable case in order to improve 
\cite[Lemma 3.3]{AlarconFernandezLopez2013CVPDE}, and all the subsequent results in that paper,
by removing the assumption {\rm (A)} from its statement.

\begin{proof}[Sketch of proof of Theorem \ref{th:fixed-derivative}]
By Theorem \ref{th:Mergelyan2} we may assume that $f$ extends as an $\Igot$-invariant holomorphic function on a tubular 
neighborhood of $S$; hence we may assume without loss of generality that $\Gamma=\emptyset$ and $S$ is a smoothly bounded domain.

Let $M_0:=S\Subset M_1\Subset M_2\Subset\cdots\Subset \bigcup_{j=1}^\infty M_j=\Ncal$ be an exhaustion of $\Ncal$ as in 
the proof of Theorem \ref{th:Mergelyan0}. Set $g_0=(g_{0,1},g_{0,2}):=f$ and fix a point $p_0\in\mathring S$. 
The key of the proof is to construct a sequence of $\Igot$-invariant holomorphic maps 
$g_j=(g_{j,1},g_{j,2})\colon M_j\to \c^2$ satisfying the following conditions for every $j\in\n$:
\begin{enumerate}[{\rm (i)}]
\item $g_j$ is as close as desired to $g_{j-1}$ in the $\Cscr^0(M_{j-1})$ topology.
\vspace{1mm}
\item $g_{j,1}^2+g_{j,2}^2=h|_{M_j}$.
\vspace{1mm}
\item $\int_\gamma g_j\theta=\qgot(\gamma)$ for every closed curve $\gamma\subset M_j$.
\vspace{1mm}
\item The zero locus of $g_j$ on $M_j$ is exactly the one of $f$ on $S$. In particular, 
$\sigma(g_j):=(|g_{j,1}|^2+|g_{j,2}|^2+\kappa)|\theta|^2$ is a Riemannian metric on $M_j$ with isolated singularities exactly 
at the zeros of $|f_1|^2+|f_2|^2+\kappa$ on $S$.
\vspace{1mm}
\item $\dist_{(M_j,\sigma(g_j))}(p_0,bM_j)>j$.
\end{enumerate}
To construct such a sequence, we reason as in \cite[proof of Theorem 4.4]{AlarconFernandezLopez2013CVPDE} but symmetrizing 
the argument with respect to the involution $\Igot$ (as in \cite[proof of Theorem 6.8]{AlarconLopez2015GT}) and using 
Theorem \ref{th:Mergelyan2} instead of \cite[Lemma 3.3]{AlarconFernandezLopez2013CVPDE} and 
\cite[Theorem 5.6]{AlarconLopez2015GT}. In particular, for the non-critical case in the recursive construction, 
when $\mathring M_j\setminus M_{j-1}$ consists of finitely many pairwise disjoint annuli, we use $\Igot$-invariant 
Jorge-Xavier's type labyrinths (see \cite{JorgeXavier1980AM}) adapted to the nonnegative $\Igot$-invariant function $\kappa$. 
The latter means that the labyrinth in each component of $\mathring M_j\setminus M_{j-1}$ is contained in an annulus 
where $\kappa$ has no zeros.

If the approximation of each function $g_{j-1}$ by $g_j$ on $M_{j-1}$ in {\rm (i)} is close enough for all $j\in\n$, 
then the sequence $\{g_j\}_{j\in\n}$ converges uniformly on compacts in $\Ncal$ to an $\Igot$-invariant holomorphic map 
$\wt f=(\wt f_1,\wt f_2)\colon\Ncal\to\c^2$ satisfying the conclusion of the theorem. Indeed, {\rm (ii)} ensures that 
$\wt f_1^2+\wt f_2^2=h$; {\rm (iii)} gives that $\int_\gamma \wt f\theta=\qgot(\gamma)$ for every closed curve 
$\gamma\subset\Ncal$; {\rm (iv)} guarantees that the zero locus  of $\wt f$ on $\Ncal$ is exactly the one of $f$ on $S$, 
hence $(|\wt f_1|^2+|\wt f_2|^2+\kappa)|\theta|^2$ is a Riemannian metric on $\Ncal$ with isolated singularities exactly 
at the zeros of $|f_1|^2+|f_2|^2+\kappa$ in $S$; and {\rm (v)} implies that the metric 
$(|\wt f_1|^2+|\wt f_2|^2+\kappa)|\theta|^2$ is complete.
\end{proof}

As a corollary of Theorem \ref{th:fixed-derivative}, we obtain complete non-orientable minimal surfaces 
in $\r^n$ with arbitrary conformal structure and $n-2$ prescribed coordinate functions. We prove the following 
more precise version of Theorem \ref{th:applications1} {\rm (ii)}.

\begin{corollary} \label{co:fixed}
Let $(\Ncal,\Igot)$ be a standard pair \eqref{eq:pair}. Let $n\geq 3$ and let $X=(X_j)_{j=3}^n\colon \Ncal \to \r^{n-2}$ be an
$\Igot$-invariant harmonic map such that $\sum_{j=3}^n \big(\di X_j\big)^2$ does not vanish identically on $\Ncal$. 
Let $\pgot=(\pgot_j)_{j=1}^n\colon H_1(\Ncal;\z)\to\r^n$ be a group homomorphism such that 
$\pgot\circ \Igot_*=-\pgot$ and $\pgot_j(\gamma)=\Im\int_\gamma \di X_j$ for all $j\in\{3,\ldots,n\}$ and all closed curves 
$\gamma\subset \Ncal$. Then the following hold.
\begin{enumerate}[\rm (i)]
\item If $S=K\cup\Gamma$ is an $\Igot$-admissible subset of $\Ncal$ and $(Y,f\theta)\in\GCMI_\Igot^n(S)$ satisfies 
$(Y_j,f_j\theta)=(X_j,2\di X_j)|_S$ for all $j\in\{3,\ldots,n\}$, $\int_\gamma f_j\theta=\pgot_j(\gamma)$ for all $j\in\{1,2\}$ and 
all closed curves $\gamma\subset S$, and $f_1^2+f_2^2\neq 0$ everywhere on $bS$, where $Y=(Y_j)_{j=1}^n$ and $f=(f_j)_{j=1}^n$, 
then $(Y,f\theta)$ can be approximated in the $\Cscr^1(S)$-topology by $\Igot$-invariant complete conformal minimal 
immersions $\wt X=(\wt X_j)_{j=1}^n\colon \Ncal \to \r^n$ such that  $(\wt X_j)_{j=3}^n=X$  and $\Flux_{\wt X}=\pgot$.
\vspace{1mm}
\item In particular, there are $\Igot$-invariant complete conformal minimal immersions $\wt X=(\wt X_j)_{j=1}^n\colon \Ncal \to \r^n$ 
such that  $(\wt X_j)_{j=1}^3=X$  and $\Flux_{\wt X}=\pgot$. 
\end{enumerate}
\end{corollary}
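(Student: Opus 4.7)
The plan is to reduce part (i) to an application of Theorem \ref{th:fixed-derivative}, treating $X_3,\ldots,X_n$ as fixed ingredients and only extending $(f_1,f_2)$ from $S$ to all of $\Ncal$. Define $f_j := 2\di X_j/\theta \in \OI(\Ncal)$ for $j=3,\ldots,n$ (globally well-defined since each $X_j$ is an $\Igot$-invariant harmonic function on $\Ncal$), and set
\[
    h := -\sum_{j=3}^n f_j^2 \in \OI(\Ncal), \qquad \kappa := \sum_{j=3}^n |f_j|^2.
\]
The hypothesis $\sum_{j=3}^n (\di X_j)^2 \not\equiv 0$ gives $h \not\equiv 0$. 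The identity $(Y_j,f_j\theta)=(X_j,2\di X_j)|_S$ for $j\geq 3$ together with the nullity condition for $(Y,f\theta) \in \GCMII^n(S)$ yields $f_1^2+f_2^2 = h|_S$, and the assumption $f_1^2+f_2^2 \neq 0$ on $bS$ shows $h$ is nonvanishing on $\Gamma$. The zero locus of $\kappa$ is contained in the (discrete) zero locus of the nonzero holomorphic function $h$, so $\kappa$ has isolated zeros. Finally, with $\qgot := \imath(\pgot_1,\pgot_2)\colon H_1(\Ncal;\Z)\to\C^2$, the symmetry $\pgot\circ\Igot_*=-\pgot$ yields $\qgot\circ\Igot_*=\overline\qgot$, and for every closed curve $\gamma\subset S$ the identities $\Re\int_\gamma f_j\theta=0$ and $\Im\int_\gamma f_j\theta=\pgot_j(\gamma)$ (built into the hypotheses) give $\qgot(\gamma)=\int_\gamma(f_1,f_2)\theta$.

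Applying Theorem \ref{th:fixed-derivative} with data $(f_1,f_2)$, $h$, $\qgot$, $\kappa$ furnishes an $\Igot$-invariant holomorphic map $(\wt f_1,\wt f_2)\colon\Ncal\to\C^2$, uniformly close to $(f_1,f_2)$ on $S$, such that $\wt f_1^2+\wt f_2^2=h$ everywhere on $\Ncal$, the zero set of $(\wt f_1,\wt f_2)$ on $\Ncal$ equals that of $(f_1,f_2)$ on $S$, $\int_\gamma(\wt f_1,\wt f_2)\theta=\qgot(\gamma)$ for every closed curve $\gamma\subset\Ncal$, and $(|\wt f_1|^2+|\wt f_2|^2+\kappa)|\theta|^2$ is a complete Riemannian metric. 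Set $\wt f := (\wt f_1,\wt f_2,f_3,\ldots,f_n)\colon\Ncal\to\C^n$; then $\sum_{j=1}^n \wt f_j^2=0$ globally, and at any zero of $(\wt f_1,\wt f_2)$ — which lies in $S$ and coincides with a zero of $(f_1,f_2)$ — the immersion hypothesis for $Y$ forces $(f_3,\ldots,f_n)\neq 0$, hence $\wt f$ takes values in $\Agot_*$. The real periods of $\wt f\theta$ vanish: for $j=1,2$ because $\int_\gamma\wt f_j\theta=\imath\pgot_j(\gamma)\in\imath\R$, and for $j\geq 3$ because $\wt f_j\theta=2\di X_j$ with $X_j$ a real harmonic function. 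Thus
\[
    \wt X(p) := Y(p_0) + \int_{p_0}^p \Re(\wt f\theta),\quad p\in\Ncal,
\]
is a well-defined $\Igot$-invariant conformal minimal immersion with $\wt X_j=X_j$ for $j\geq 3$ and $\Flux_{\wt X}=\pgot$. Its induced metric is a constant multiple of $|\wt f|^2|\theta|^2=(|\wt f_1|^2+|\wt f_2|^2+\kappa)|\theta|^2$, so $\wt X$ is complete. Uniform closeness of $(\wt f_1,\wt f_2)$ to $(f_1,f_2)$ on $S$ together with the choice of integration constant yields the $\Cscr^1(S)$-approximation in the sense of Definition \ref{def:appGCMI}.

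Part (ii) reduces to (i) by producing suitable initial data on a simply connected set. Pick $p_0\in\Ncal$ with $h(p_0)\neq 0$ and a small closed disk $D_0$ around $p_0$ such that $D_0\cap\Igot(D_0)=\emptyset$ and $h$ is nonvanishing on $D_0$; set $S:=D_0\cup\Igot(D_0)$, $\Gamma:=\emptyset$. Then $S$ is $\Igot$-admissible with $H_1(S;\Z)=0$, so every period condition is vacuous. Let $s$ be a holomorphic square root of $h|_{D_0}$, set $(f_1,f_2):=(s,0)$ on $D_0$ and extend by $(f_1,f_2)(p):=(\overline{s(\Igot(p))},0)$ on $\Igot(D_0)$; take $f_j:=2\di X_j/\theta|_S$ for $j\geq 3$ and integrate $\Re(f\theta)$ from $p_0$ to define $Y$. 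The resulting $(Y,f\theta)$ lies in $\GCMII^n(S)$ and satisfies the hypotheses of (i), so applying (i) finishes the proof. The main subtlety throughout is ensuring $\wt f$ takes values in $\Agot_*$ rather than merely in the null cone $\Agot$; this reduces precisely to the disjointness of the zero sets of $(\wt f_1,\wt f_2)$ and $(f_3,\ldots,f_n)$, a condition delivered by Theorem \ref{th:fixed-derivative} (which confines the zeros of $(\wt f_1,\wt f_2)$ to $\mathring K$, where they coincide with those of $(f_1,f_2)$) together with the immersion hypothesis on $Y$.
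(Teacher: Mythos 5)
Your argument for part (i) is correct and follows the paper's own route essentially verbatim: set $h=-\sum_{j\ge 3}f_j^2$, $\kappa=\sum_{j\ge 3}|f_j|^2$, $\qgot=\imath(\pgot_1,\pgot_2)$, verify the hypotheses of Theorem~\ref{th:fixed-derivative}, and integrate the resulting $\wt f=(\wt f_1,\wt f_2,f_3,\ldots,f_n)$. You also correctly isolate and resolve the one point that needs care, namely why $\wt f$ lands in $\Agot_*$ rather than merely in $\Agot$: the zero set of $(\wt f_1,\wt f_2)$ is confined to $\mathring K$ where it equals that of $(f_1,f_2)$, and at those points the nonvanishing of $f$ (an immersion on $S$) forces $(f_3,\ldots,f_n)\ne 0$. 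This is exactly the point the paper leans on via the observation that $\sum_{j=1}^n|f_j|^2$ has no zeros on $S$.

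Part (ii) is where you genuinely diverge from the paper, and in a good way. The paper produces the seed data $(f_1,f_2)\colon S\to\C^2\setminus\{0\}$ with $f_1^2+f_2^2=h|_S$ by invoking Lemma~\ref{lem:h} (equivalently Corollary~\ref{cor:h}), which requires a nontrivial winding-number argument on an $\Igot$-basis of $H_1(\Ncal;\Z)$. You instead pick $p_0$ with $h(p_0)\neq 0$, shrink $D_0$ so that $h$ is nonvanishing there, and write $(f_1,f_2)=(s,0)$ with $s$ a local square root of $h$, extending to $\Igot(D_0)$ by the reflection formula. Since $H_1(S;\Z)=0$ the period conditions are vacuous, and all the remaining hypotheses of (i) are immediate. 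This is more elementary and fully self-contained; the only thing it loses is the global statement of Lemma~\ref{lem:h}, which is not needed here since (i) only consumes data on the admissible set $S$. Two small points you should make explicit: $D_0$ should be chosen small enough that $S=D_0\cup\Igot(D_0)$ is Runge in $\Ncal$ (automatic for small disks), and the constant of integration defining $Y_j$ should be fixed as $Y_j(p_0):=X_j(p_0)$ for $j\ge 3$ so that $Y_j=X_j$ on $S$, which is what the hypothesis of (i) demands.
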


The case $n=3$ of the above result is \cite[Theorem 6.8]{AlarconLopez2015GT}. The analogues of Corollary \ref{co:fixed} in the 
orientable case are \cite[Theorem I]{AlarconFernandezLopez2012CMH} for $n=3$ and 
\cite[Corollary 4.6]{AlarconFernandezLopez2013CVPDE} for arbitrary dimension.

\begin{proof}[Proof of Corollary \ref{co:fixed}]
Let us prove assertion {\rm (i)}. As in the proof of Theorem \ref{th:Mergelyan1}, we may assume that $S$ is connected. 
Set $h:=-\sum_{j=3}^n f_j^2\in \Ocal_\Igot(\Ncal)$ and note that, since $f$ assumes values in the null quadric $\Agot_*\subset\c^n$, 
we have $f_1^2+f_2^2=h|_S$. Set $\qgot:=\imath(\pgot_1,\pgot_2)\colon H_1(\Ncal;\z)\to\c^2$ and 
$\kappa:=\sum_{j=3}^n|f_j|^2\colon \Ncal\to \r_+$.
Obviously the zeros of $\kappa$ are isolated and, since $f$ has no zeros on $S$, the function
\[
	|f_1|^2+|f_2|^2+\kappa =\sum_{j=1}^n |f_j|^2
\]
does not vanish anywhere on $S$. 
Thus, Theorem \ref{th:fixed-derivative} ensures that $(f_1,f_2)$ can be uniformly approximated on $S$ by $\Igot$-invariant 
holomorphic maps $\wt f=(\wt f_1,\wt f_2)\colon \Ncal \to  \c^2$ such that  $\wt f_1^2+\wt f_2^2=-\sum_{j=3}^n f_j^2$ on $\Ncal$,  
$\int_\gamma \wt f\theta=\qgot(\gamma)$ for every closed curve $\gamma\subset\Ncal$, the zero locus  of $\wt f$ on $\Ncal$ 
is exactly the one of $(f_1,f_2)$ on $S$, and $(|\wt f_1|^2+|\wt f_2|^2+\sum_{j=1}^n |f_j|^2)|\theta|^2$ is a complete Riemannian 
metric on $\Ncal$ without singularities. Fix a point $p_0\in\mathring S$. Since we are assuming that $S$ is connected, the 
$\Igot$-invariant conformal minimal immersions $\wt X\colon\Ncal\to\r^n$ given by 
\[
	\wt X(p)=Y(p)+\Re\int_{p_0}^p (\wt f_1,\wt f_2,f_3,\ldots,f_n)\theta
\] 
satisfy the required properties.

In order to prove {\rm (ii)} we take a small compact disk $D\subset\Ncal$ such that $D\cap\Igot(D)=\emptyset$, set $S:=D\cup\Igot(D)$, 
and consider any $\Igot$-invariant holomorphic map $f=(f_1,f_2)\colon S\to\c^2\setminus\{0\}$ such that 
$(f_1^2+f_2^2)\theta=-4(\sum_{j=3}^n \big(\di X_j\big)^2)|_S$; such exists in view of Lemma \ref{lem:h}.
Fix $p_0\in \mathring D$ and, for $j=1,2$, consider the $\Igot$-invariant map $Y_j\colon S\to\r$ defined by 
$Y_j(p)=\Re (\int_{p_0}^p f_j\theta)$ for all $p\in D$. Consider also the $\Igot$-invariant holomorphic map 
$f=\bigl(f_1,f_2, (2\partial X_3)/\theta,\ldots,(2\partial X_n)/\theta\bigr)\colon S \to \Agot_*$. Item {\rm (i)} applied to $S$, 
$\big(Y:=(Y_1,Y_2,X_3,\ldots,X_n),f\theta \big)\in\GCMI_\Igot^n(S)$, and $\pgot$, furnishes an immersion 
$\wt X\colon \Ncal \to \r^n$ with the desired properties.
\end{proof}

We finish this section by proving a more precise version of Theorem \ref{th:applications1} {\rm (iii)}; see 
Corollary \ref{co:gaussmap} below. We first recall the definition and some basics about the generalized Gauss map 
of conformal minimal surfaces in $\r^n$.

Let $n\geq 3$ and denote by $\pi\colon\c^n\to\cp^{n-1}$ the canonical projection. A set of hyperplanes in $\cp^{n-1}$ is said to be in 
{\em general position} if each subset of $k$ hyperplanes, with $k\leq n-1$, has an $(n-1-k)$-dimensional intersection. 

Let $\Ncal$ be an open Riemann surface. A map $\Ncal\to\cp^{n-1}$ is said to be {\em degenerate} if its image lies in a 
proper projective subspace of $\cp^{n-1}$, and {\em nondegenerate} otherwise.

Given a holomorphic $1$-form $\theta$ without zeros on $\Ncal$ and a conformal minimal immersion $X\colon\Ncal\to\r^n$, 
$n\geq 3$, the map
\begin{equation}\label{eq:Gauss}
	G:=\pi\circ\frac{2\partial X}{\theta}\colon\Ncal\to\cp^{n-1} 
\end{equation}
is called the {\em generalized Gauss map} of $X$. Recall  (cf.\ \eqref{eq:Gauss-map}) that this is a Kodaira type map
given in homogeneous coordinates on $\cp^{n-1}$ by the expression
\[
	G(p) = [\di X_1(p):\di X_2(p):\cdots:\di X_n(p)],\quad p\in\Ncal.
\]
Obviously, $G$  assumes values in the projection in $\cp^{n-1}$ of the null quadric $\Agot_*\subset\c^n$. 
Chern and Osserman showed that if $X$ is complete then 
either $X(\Ncal)$ is a plane or  $G(\Ncal)$ intersects a dense set of complex hyperplanes 
(see \cite{Chern1965-book,ChernOsserman1967JAM}). Later, Ru  proved that if $X$ is complete and nonflat then $G$ 
cannot omit more than $n(n + 1)/2$ hyperplanes in general position in $\cp^{n-1}$ (see \cite{Ru1991JDG} and also 
Fujimoto \cite{Fujimoto1983JMSJ,Fujimoto1990JDG} for more information). Moreover, if $G$ is nondegenerate then this 
upper bound is sharp for some values of $n$; for instance for arbitrary odd $n$ and also for any even $n\leq 16$ 
(see \cite{Fujimoto1988SRKU}).  However, the number of exceptional hyperplanes  strongly depends on the underlying 
conformal structure of the surface. For example, Ahlfors \cite{Ahlfors1941ASSF}  proved that every holomorphic map $\c\to\cp^{n-1}$ 
avoiding $n+1$ hyperplanes of $\cp^{n-1}$ in general position is degenerate; see also 
\cite[Chapter 5, \textsection5]{Wu1970-book} and \cite{Fujimoto1972TMJ} for further generalizations. The bound $n+1$ is 
sharp since every open Riemann surface $\Ncal$ admits a complete conformal minimal immersion $\Ncal\to\r^n$ whose 
generalized Gauss map \eqref{eq:Gauss} is nondegenerate and  omits $n$ hyperplanes in general position 
(see \cite[Theorem C]{AlarconFernandezLopez2013CVPDE}). 
This is natural in light of the following recent result of Hanysz 
\cite[Theorem 3.1]{Hanysz2014PAMS}: Let $H_1,\ldots,H_N$ be distinct hyperplanes in $\cp^{n-1}$; 
then $\cp^{n-1}\setminus \bigcup_{j=1}^N H_j$ is an Oka manifold (see \cite{Forstneric2011-book}) if and only if $H_j$ 
are in general position and $N \le n$. On the other hand, the complement of $2n-1$ hyperplanes in general
position in $\cp^{n-1}$ is Kobayashi hyperbolic by the classical result of Green \cite{Green1977PAMS} from 1977.
This implies that any conformal minimal surface in $\R^n$ parametrized by $\C$ or $\C_*$ whose Gauss map omits
$2n-1$ hyperplanes in general position is flat, i.e., its Gauss map is constant.

The following natural question 
remained open in the non-orientable case: do there exist complete 
non-orientable minimal surfaces in $\r^n$ with arbitrary conformal structure whose generalized Gauss map is nondegenerate 
and omits $n$ hyperplanes in general position in $\cp^{n-1}$?  

The following result gives an affirmative answer to this problem. 

\begin{corollary} \label{co:gaussmap}
Let $(\Ncal,\Igot)$ be an open Riemann surface with an antiholomorphic involution without fixed points, let $n\geq 3$, and let
$\pgot\colon H_1(\Ncal;\z)\to\r^n$ be a group homomorphism such that $\pgot\circ \Igot_*=-\pgot$.  Then there exists a full, 
non-decomposable, $\Igot$-invariant complete conformal minimal immersion $X\colon \Ncal \to \r^n$ with $\Flux_X=\pgot$ 
whose generalized Gauss map is nondegenerate and avoids $n$ hyperplanes of $\cp^{n-1}$ in general position. 
\end{corollary}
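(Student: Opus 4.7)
\textit{Proof proposal.}
My strategy is to apply Corollary~\ref{co:fixed}~(i) to an initial datum specifically engineered so that each derivative component $\widetilde f_j := 2\di \widetilde X_j/\theta$ of the resulting complete conformal minimal immersion is nowhere zero on $\Ncal$. After applying a complex linear change of coordinates in $\c^n$ (which replaces the null quadric $\Agot_*$ by an equivalent nondegenerate complex quadric and modifies the antiholomorphic involution on $\c^n$ accordingly; the spray and Mergelyan machinery of Chapter~\ref{ch:sprays} adapts verbatim to this new target), we may assume that the $n$ prescribed hyperplanes in general position are the coordinate hyperplanes $H_j = \{z_j = 0\}$ of $\cp^{n-1}$. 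Fix a nowhere-vanishing $\Igot$-invariant holomorphic 1-form $\theta \in \OmI(\Ncal)$ (Proposition~\ref{prop:Cor6.5}); then the generalized Gauss map $[\widetilde f_1{:}\cdots{:}\widetilde f_n]$ of an $\Igot$-invariant conformal minimal immersion $\widetilde X$ omits every $H_j$ precisely when each $\widetilde f_j \in \OI(\Ncal)$ is nowhere zero.

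First I would choose $\Igot$-invariant harmonic functions $X_3, \ldots, X_n \colon \Ncal \to \r$ satisfying: (a) every $f_j := 2\di X_j/\theta \in \OI(\Ncal)$ is nowhere zero; (b) the $\Igot$-invariant holomorphic function $h := -\sum_{j=3}^n f_j^2 \in \OI(\Ncal)$ is nowhere zero on $\Ncal$; and (c) $\Flux_{X_j} = \pgot_j$ for $j = 3, \ldots, n$. Such functions exist: taking an $\Igot$-invariant holomorphic function $u_0 \in \OI(\Ncal)$ without critical points (Proposition~\ref{prop:Cor6.5}) and $c > 0$, the choice $X_3 = c\, \Re(u_0)$ makes $f_3 = c\, du_0/\theta$ nowhere zero, and for $j \geq 4$ one adds small $\Igot$-invariant harmonic perturbations constructed from further critical-point-free elements of $\OI(\Ncal)$ so that every $f_j$ stays nowhere zero, $\sum_{j=3}^n f_j^2$ stays close to $c^2(du_0/\theta)^2$ (hence $h$ is nowhere zero), and the flux conditions are met (using the antisymmetry \eqref{eq:FluxJ*} of the flux on $\Igot$-invariant 1-forms). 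Next, on an $\Igot$-admissible set $K = D \sqcup \Igot(D)$ formed by a pair of disjoint $\Igot$-conjugate closed disks in $\Ncal$ (so $\Gamma = \emptyset$ and $H_1(K;\z) = 0$), I would construct $(f_1, f_2) \in \Fscr_\Igot(K, \c^2)$ with $f_1^2 + f_2^2 = h|_K$ and with both $f_1, f_2$ nowhere zero on $K$: on the simply connected disk $D$, factor the nowhere-vanishing $h|_D$ as $u \cdot v$ with $u, v$ nowhere-vanishing holomorphic and $u \neq \pm v$ pointwise (an open, generic condition), set $f_1 := (u + v)/2$ and $f_2 := (v - u)/(2\imath)$, and extend to $\Igot(D)$ by $\Igot$-symmetry. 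Integrating produces a generalized $\Igot$-invariant conformal minimal immersion $(Y, f\theta) \in \GCMII^n(K)$ with $f = (f_1, \ldots, f_n)$ and trivial flux on $H_1(K;\z)$.

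Corollary~\ref{co:fixed}~(i) now furnishes an $\Igot$-invariant complete conformal minimal immersion $\widetilde X = (\widetilde X_1, \widetilde X_2, X_3, \ldots, X_n) \colon \Ncal \to \r^n$ with $\Flux_{\widetilde X} = \pgot$ approximating $(Y, f\theta)$ on $K$; automatically $\widetilde f_j = f_j$ is nowhere zero for $j \geq 3$ and $\widetilde f_1^2 + \widetilde f_2^2 = h$ on all of $\Ncal$. The hard part will be to ensure that $\widetilde f_1$ and $\widetilde f_2$ are \emph{individually} nowhere zero on $\Ncal$, since the corollary only controls their common zero locus. Inspecting the proof of the underlying Theorem~\ref{th:Mergelyan2} (see Lemma~\ref{lem:Mergelyan1}), one sees that the approximating maps are built so that the divisors of $\widetilde f_1 \mp \imath \widetilde f_2$ on $\Ncal$ equal prescribed effective divisors $D$ and $\Igot(D)$ with $D + \Igot(D) = (h)$ and whose restrictions to $K$ agree with those of $f_1 \mp \imath f_2$; since $h$ is nowhere zero on $\Ncal$ and $f_1 \pm \imath f_2$ are nowhere zero on $K$, both $D$ and $\Igot(D)$ are trivial, so $\widetilde f_1 \pm \imath \widetilde f_2$ are nowhere zero on $\Ncal$. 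The residual possibilities $\widetilde f_1(p) = 0$ and $\widetilde f_2(p) = 0$ correspond to the meromorphic function $w := (\widetilde f_1 + \imath \widetilde f_2)/(\widetilde f_1 - \imath \widetilde f_2) \colon \Ncal \to \c^*$ assuming the values $-1$ or $+1$ at some point. Within the inductive Mergelyan construction on an $\Igot$-invariant exhaustion of $\Ncal$, the $\Igot$-invariant, flux-preserving, period-dominating sprays of Proposition~\ref{prop:period-dominating-spray} provide at each stage ample freedom to perturb $(\widetilde f_1, \widetilde f_2)$ (keeping $\widetilde f_1^2 + \widetilde f_2^2 = h$, the last $n-2$ components, and the flux fixed) so as to push $w$ off the values $\pm 1$ on the current compact subdomain; a Baire category argument, applied stage by stage with the Banach manifold structure of Theorem~\ref{th:intro-structure}, eliminates all such sporadic zeros in the limit. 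Finally, fullness, non-decomposability, and nondegeneracy of the Gauss map are open generic conditions within the same family of $\Igot$-invariant conformal minimal immersions and can be arranged in the same perturbation step, completing the construction.
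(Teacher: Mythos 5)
The paper takes a genuinely different and substantially simpler route, and your approach has a gap at the crucial step.

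The paper does not try to avoid the coordinate hyperplanes $\{z_j=0\}$. Instead, it pairs up the coordinates as $(\wt f_{i,1},\wt f_{i,2})$ for $i=1,\ldots,\lfloor n/2\rfloor$, chooses real constants $\lambda_i\neq 0$ with $\sum_i\lambda_i=0$ (or $=-1$ with a constant last component when $n$ is odd), and applies Theorem~\ref{th:fixed-derivative} pairwise so that $\wt f_{i,1}^2+\wt f_{i,2}^2=\lambda_i$ is a \emph{nonzero constant} on $\Ncal$. Since $(\wt f_{i,1}+\imath\wt f_{i,2})(\wt f_{i,1}-\imath\wt f_{i,2})=\lambda_i$ has no zeros, each factor is automatically nowhere zero, so the Gauss map avoids the $n$ hyperplanes $\{z_{2i-1}+(-1)^\delta\imath z_{2i}=0\}$ (tangent to the quadric), plus $\{z_n=0\}$ in the odd case. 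The completeness is delivered by Theorem~\ref{th:fixed-derivative} with $\kappa=|\lambda_1|$ a positive constant, the prescribed flux via $\qgot_i=\imath(\pgot_{2i-1},\pgot_{2i})$, and fullness, nondegeneracy and non-decomposability are arranged once and for all on the starting $\Igot$-admissible disks $D\cup\Igot(D)$ by an open, generic choice.

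The gap in your proposal is the claim that one can eliminate the ``sporadic'' zeros of $\wt f_1$ and $\wt f_2$ individually by a Baire-category / generic perturbation argument within the inductive Mergelyan construction. This cannot work by small perturbations. If $\wt f_1$ has a zero at $p_0\in M_j$, then by the argument principle (Rouch\'e) every $\Cscr^0$-small perturbation of $\wt f_1$ on $M_j$ still has a zero near $p_0$; the number of zeros in a compact region is a deformation invariant. So the set of nearby immersions with $\wt f_1$ nowhere zero on $M_j$ is not dense --- in fact, once $\wt f_1$ vanishes somewhere in $\mathring M_j$, it is \emph{empty} in a small $\Cscr^0(M_j)$-ball. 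Your observation that $\wt f_1\pm\imath\wt f_2$ are nowhere zero when $h$ is nowhere zero is correct, and so is the translation of the residual constraint into the condition that $w=(\wt f_1+\imath\wt f_2)/(\wt f_1-\imath\wt f_2)$ omits $\pm1$; but controlling a holomorphic function so that it omits two more prescribed values is a global constraint that the spray/Banach-manifold machinery, which only buys you small perturbations with fixed periods, does not provide. This is precisely why the paper chooses tangent hyperplanes and forces the sum of squares of each pair to be a nonzero constant: it makes the relevant nonvanishing automatic rather than something to be achieved by perturbation. Also, your opening step --- normalizing the prescribed hyperplanes to the coordinate hyperplanes by a linear change of coordinates on $\c^n$ and asserting the $\Igot$-invariant spray machinery ``adapts verbatim'' --- is not innocuous: a general $\mathrm{GL}_n(\c)$-change neither preserves the null quadric nor commutes with $\Igot_0(z)=\bar z$, so the $\Igot$-invariant framework would have to be rebuilt. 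Since the corollary only asks for the existence of \emph{some} $n$ hyperplanes in general position, there is no reason to fix the coordinate hyperplanes in the first place; picking the paper's tangent hyperplanes dissolves the problem.
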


Recall that a conformal minimal immersion $X\colon\Ncal\to\r^n$ is said to be {\em decomposable} if, with respect to suitable 
orthonormal coordinates $X=(X_j)_{j=1}^n$ in $\r^n$,
one has $\sum_{k=1}^m (\partial X_k)^2=0$ for some $m<n$, and {\em non-decomposable} otherwise. 
Also, $X$ is said to be {\em full} if $X(\Ncal)$ is not contained in any affine hyperplane of $\r^n$. When $n=3$,
the conditions of being non-decomposable, full, and having nondegenerate generalized Gauss map are equivalent.
On the other hand, if $n>3$ then no two of these conditions are equivalent (see \cite{Osserman-book}).

\begin{proof}[Proof of Corollary \ref{co:gaussmap}]
Let $\theta$ be 
an $\Igot$-invariant holomorphic $1$-form without zeros on $\Ncal$. We distinguish cases:

\medskip
\noindent{\em Case 1:} Assume that $n$ is even. Let $D\subset\Ncal$ be a compact disk such that $D\cap\Igot(D)=\emptyset$, 
set $S:=D\cup\Igot(D)$, and fix a point $p_0\in\mathring D$. Choose numbers $\lambda_i\in\r\setminus\{0\}$ and maps
$f_i=(f_{i,1},f_{i,2})\in\Ocal_\Igot(S,\c^2\setminus\{0\})$ such that the following conditions hold:
\begin{enumerate}[\rm (a)]
\item $\sum_{i=1}^{n/2}\lambda_i=0$.
\vspace{1mm}
\item $f_{i,1}^2+f_{i,2}^2=\lambda_j$ everywhere on $S$.
\vspace{1mm}
\item the $\Igot$-invariant conformal minimal immersion $S\to\r^n$ determined by $p\mapsto \Re(\int_{p_0}^p f\theta)$ for all $p\in D$, 
where $f=(f_i)_{i=1}^{n/2}$, is full and non-decomposable and its generalized Gauss map is nondegenerate.
\end{enumerate}
One may for instance define $f_i$ on $D$ to be of the form 
\[
	\left(\frac12 \bigl(g_i+\frac{\lambda_i}{g_i}\bigr), \frac{\imath}2\bigl(g_i-\frac{\lambda_i}{g_i}\bigr)\right),
\] 
where $g_i$ is any holomorphic function $D\to\c\setminus\{0\}$; this guarantees {\rm (b)}. Further, a suitable choice of the numbers 
$\lambda_i$ satisfying {\rm (a)} and of the functions $g_i$ trivially ensures condition {\rm (c)} by general position.

Since $\lambda_i\in\r\setminus \{0\}$, $\lambda_i$ viewed as a constant function $\Ncal\to\c$ lies in $\Ocal_\Igot(\Ncal)$. 
For each $i\in\{1,\ldots,n/2\}$ write $\qgot_i=\imath(\pgot_{2i-1},\pgot_{2i})$, where $\pgot=(\pgot_j)_{j=1}^n$. 
Let $\kappa\colon\Ncal\to\r_+$ be the $\Igot$-invariant constant function $\kappa=|\lambda_1|>0$. By {\rm (b)}, 
Theorem \ref{th:fixed-derivative} ensures that the map $f_i\in\Ocal(\Ncal,\c^2\setminus\{0\})$ may be approximated uniformly 
on $S$ by $\Igot$-invariant holomorphic maps $\wt f_i=(\wt f_{i,1},\wt f_{i,2})\colon\Ncal\to\c^2\setminus \{0\}$ such that 
$\wt f_{i,1}^2+\wt f_{i,2}^2=\lambda_i$ on $\Ncal$, $\int_\gamma\wt f_i\theta=\qgot_i(\gamma)$ for every closed curve 
$\gamma\subset\Ncal$, and $(|\wt f_{i,1}|^2+|\wt f_{i,2}|^2+\kappa)|\theta|^2$ is a complete Riemannian metric on $\Ncal$. 
Taking into account {\rm (a)}, it follows that the $\Igot$-invariant holomorphic map $\wt f:=(\wt f_i)_{i=1}^{n/2}\colon\Ncal\to\c^n$ 
assumes values in $\Agot_*\setminus\{0\}$ and $\Re (\wt f \theta)$ integrates to an $\Igot$-invariant conformal minimal immersion 
$X=(X_j)_{j=1}^n\colon\Ncal\to\r^n$ with $\Flux_X=\pgot$. Furthermore, if the approximation of each $f_i$ by $\wt f_i$ on 
$S$ is close enough then {\rm (c)} implies that $X$ is full and non-decomposable, and its generalized Gauss map $\pi\circ\wt f$ 
is nondegenerate. Moreover, since $\kappa=|\lambda_1|=|\wt f_{1,1}^2+\wt f_{1,2}^2|\leq|\wt f_{1,1}|^2+|\wt f_{1,2}|^2$,
it follows that
\[
	\sum_{j=1}^n|2\di X_j|^2 = \sum_{i=1}^{n/2} \bigl(|\wt f_{i,1}|^2+|\wt f_{i,2}|^2 \bigr)|\theta|^2 \geq 
	\biggl(\kappa+ \sum_{i=2}^{n/2}(|\wt f_{i,1}|^2+|\wt f_{i,2}|^2)\biggr) |\theta|^2
\] 
and hence $\sum_{j=1}^n|2\di X_j|^2$ is a complete Riemannian metric on $\Ncal$ (recall that $n$ is even and so $n\geq 4$); 
thus $X$ is complete. 
Finally, since $(\wt f_{i,1}+\imath \wt f_{i,2})(\wt f_{i,1}-\imath \wt f_{i,2})=\wt f_{i,1}^2+\wt f_{i,2}^2=\lambda_i\neq 0$
everywhere on $\Ncal$, the generalized Gauss map $\pi\circ\wt f\colon\Ncal\to\cp^{n-1}$ of $X$ does not intersect the $n$ hyperplanes 
\[
	H_{i,\delta}:=\big\{\pi(z_1,\ldots,z_n)\in\cp^{n-1}\colon z_{2i-1}+(-1)^\delta \imath z_{2i}=0\big\}
\]
for $i=1,\ldots,n/2$ and $\delta=0,1$ which are in general position. 

\medskip
\noindent{\em Case 2:} Assume that $n$ is odd. In this case we choose $\theta$ as above and satisfying in addition that 
$\int_\gamma\theta=\imath\pgot_n(\gamma)$ for every closed curve $\gamma\subset\Ncal$, where $\pgot=(\pgot_j)_{j=1}^n$; 
such $\Igot$-invariant holomorphic $1$-form without zeros exists by \cite[Theorem 6.4]{AlarconLopez2015GT}. 

Reasoning as in the previous case, we find numbers $\lambda_i\in \r\setminus\{0\}$ and $\Igot$-invariant holomorphic maps 
$\wt f_i=(\wt f_{i,1},\wt f_{i,2})\colon\Ncal\to\c^2\setminus\{0\}$, $i=1,\ldots,(n-1)/2$, such that $\sum_{i=1}^{(n-1)/2}\lambda_i=-1$ 
and $\wt f_{i,1}^2+\wt f_{i,2}^2=\lambda_i$ everywhere on $\Ncal$; hence the map $\wt f:=((\wt f_i)_{i=1}^{(n-1)/2},1)\colon\Ncal\to\c^n$ 
assumes values in the null quadric $\Agot_*\subset\c^n$. Further, we may ensure that $\Re(\wt f\theta)$ integrates to a full, 
non-decomposable, complete $\Igot$-invariant conformal minimal immersion  $X\colon\Ncal\to\r^n$ with $\Flux_X=\pgot$ and 
whose generalized Gauss map $\pi\circ\wt f\colon\Ncal\to\cp^{n-1}$ is nondegenerate.  Finally, observe that $\pi\circ\wt f$ does 
not intersect the $n-1$ hyperplanes
\[
	H_{i,\delta}:=\big\{\pi(z_1,\ldots,z_n)\in\cp^{n-1}\colon z_{2i-1}+(-1)^\delta \imath z_{2i}=0\big\}
\]
for $i=1,\ldots,\frac{n-1}2$ and $\delta=0,1$, nor the hyperplane
\[
	H_0:=\big\{\pi(z_1,\ldots,z_n)\in\cp^{n-1}\colon z_n=0\big\}.
\]
Note that these hyperplanes are in general position. This completes the proof.
\end{proof}

%
%

\section{Complete non-orientable minimal surfaces with Jordan boundaries}
\label{sec:Jordan}

In this section, we provide the first known examples of complete non-orientable minimal surfaces in $\r^n$ with Jordan boundaries. 

Recall that, given a compact bordered Riemann surface $\Ncal$ with a fixed-point-free antiholomorphic involution $\Igot$, 
$\CMI_\Igot^n(\Ncal)$ denotes the set of $\Igot$-invariant conformal minimal immersions 
$\Ncal\to\r^n$ of class $\Cscr^1(\Ncal)$ (see \eqref{eq:CMI}).  Given $X\in\CMI_\Igot^n(\Ncal)$ we denote by $\dist_X$ 
the distance induced on $\Ncal$ by the Euclidean metric of $\r^n$ via the immersion $X$. Recall also that 
$\underline \Ncal=\Ncal/\Igot$ and $\underline X\colon\underline \Ncal\to\r^n$ the induced immersion satisfying $\underline X\circ\pi=X$, 
where $\pi\colon\Ncal\to\underline \Ncal$ is the canonical projection. 

We prove the following more precise version of Theorem \ref{th:intro-Jordan}.

%
%
\begin{theorem}\label{th:Jordan}
Let $(\Ncal,\Igot)$ be a compact bordered Riemann surface with boundary $b\Ncal\ne\emptyset$
and an antiholomorphic involution $\Igot$ without fixed points, and let $n\geq 3$. Every $X\in\CMI_\Igot^n(\Ncal)$ can be approximated 
arbitrarily closely in the $\Cscr^0(\Ncal)$-topology  by $\Igot$-invariant continuous maps 
$Y\colon \Ncal \to \r^n$ such that $Y|_{\mathring \Ncal}\colon\mathring \Ncal \to \r^n$ is a complete 
conformal minimal immersion, $\Flux_Y=\Flux_X$, and $\underline Y|_{b\underline\Ncal}\colon b\underline\Ncal\to \r^n$ 
is a topological embedding. In particular, the boundary $\underline Y(b\underline\Ncal)$ consists of finitely many Jordan curves in $\r^n$.
If $n\ge 5$ then the map $\underline Y\colon \underline\Ncal \to \r^n$ can be chosen a topological embedding.
\end{theorem}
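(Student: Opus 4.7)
The plan is to obtain the map $Y$ as a $\Cscr^0(\Ncal)$-limit $Y=\lim_{j\to\infty} Y_j$ of a sequence $\{Y_j\}_{j\in\Z_+}\subset \CMI_\Igot^n(\Ncal)$ built recursively from $Y_0:=X$. Fix a point $p_0\in \mathring\Ncal$ with $\Igot(p_0)\ne p_0$, an exhaustion $K_0\Subset K_1\Subset\cdots\Subset\bigcup_{j\ge 0} K_j=\mathring\Ncal$ by compact $\Igot$-invariant smoothly bounded domains with $p_0\in K_0$, and a summable sequence $\epsilon_j\downarrow 0$ with $\sum_{j}\epsilon_j<\infty$. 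I will arrange that, for every $j\ge 1$:
\begin{enumerate}[\rm (a)]
\item $\|Y_j-Y_{j-1}\|_{0,\Ncal}<\epsilon_j$ and $\Flux_{Y_j}=\Flux_X$;
\item $Y_j$ is $\epsilon_j$-close to $Y_{j-1}$ in $\Cscr^1(K_{j-1})$;
\item $\dist_{Y_j}(p_0,b\Ncal)>j$;
\item a certain $\Igot$-invariant finite set of $\Cscr^0$-bounds separating boundary points holds, as described below.
\end{enumerate}
Given (a)--(c), the sequence converges in $\Cscr^0(\Ncal)$ to a continuous $\Igot$-invariant map $Y$; by (b) the limit is of class $\Cscr^1_{\loc}$ on $\mathring\Ncal$ and a conformal minimal immersion there with $\Flux_Y=\Flux_X$; and by (c) plus $\|Y-Y_j\|_{0,\Ncal}\to 0$, we get $\dist_Y(p_0,b\Ncal)=\infty$, hence $Y|_{\mathring\Ncal}$ is complete. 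By (a), $Y$ is $\Cscr^0$-close to $X$ on $\Ncal$.

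The heart of the argument is the inductive step, which parallels the orientable case of \cite{AlarconDrinovecForstnericLopez2015PLMS} but is carried out with the $\Igot$-invariant Riemann-Hilbert method of Theorem \ref{th:RH}. Given $Y_{j-1}$, I subdivide $b\Ncal$ into a long collection of short pairwise disjoint compact arcs of the form $I_1,\Igot(I_1),\ldots,I_k,\Igot(I_k)$, so that the partition is $\Igot$-symmetric. On each arc $I_i$ I pick an orthonormal pair $\bu_i,\bv_i\in\R^n$ of vectors and define a planar null disk $\alpha(p,\zeta)=\Re\sigma(p,\zeta)\bu_i+\Im\sigma(p,\zeta)\bv_i$ with $\sigma(p,\zeta)=\zeta$ (extended $\Igot$-symmetrically as required by hypothesis (d) of Theorem \ref{th:RH}); I take the amplitude $r\colon b\Ncal\to\R_+$ to be an $\Igot$-invariant continuous bump of height $\approx\epsilon_j^{1/2}$ on $I_1\cup\Igot(I_1)\cup\cdots\cup I_k\cup\Igot(I_k)$. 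Applying Theorem \ref{th:RH} yields $Y_j\in \CMI_\Igot^n(\Ncal)$ which, by conditions (i)--(iii) there, is $O(\epsilon_j^{1/2})$-close to $Y_{j-1}$ on the tiny annular neighborhood $\Omega$ of $I:=\bigcup(I_i\cup\Igot(I_i))$, is $\epsilon_j$-close in $\Cscr^1$ on $\Ncal\setminus\Omega$ (hence on $K_{j-1}$ once $\Omega$ is thin enough), and satisfies $\Flux_{Y_j}=\Flux_{Y_{j-1}}=\Flux_X$. The decisive observation is that the image of a boundary point $p\in I_i$ is moved by a vector essentially in the plane $\span(\bu_i,\bv_i)$ of length $\approx r(p)\approx \epsilon_j^{1/2}$, so an arc joining $p_0$ to $p\in I_i$ in $(\Ncal,Y_j^*ds^2)$ has length $\approx \dist_{Y_{j-1}}(p_0,p)+\epsilon_j^{1/2}$ by the Pythagorean estimate, provided we choose each $\bu_i,\bv_i$ to be a pair of unit vectors in a hyperplane nearly normal to the tangent directions of the curve $Y_{j-1}(I_i)$ in $\R^n$ (this is where I use $n\ge 3$; for $n=3$ only one orthogonal direction is needed per arc and it can be chosen continuously in a $\Igot$-symmetric manner). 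By repeatedly applying this boundary-stretch on increasingly fine $\Igot$-symmetric partitions of $b\Ncal$ with many iterations per step $j$, the gain in $\dist_{Y_j}(p_0,b\Ncal)$ is made to exceed $1$, giving (c).

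To arrange the boundary injectivity in (d), I use the general position theorem (Theorem \ref{th:generalposition2}): after each $Y_j$ is constructed, I perturb it within $\CMI_\Igot^n(\Ncal)$ by an arbitrarily small $\Cscr^1(\Ncal)$ amount to make $\underline{Y}_j|_{b\underline\Ncal}$ injective (for every $n\ge 3$), and, if $n\ge 5$, to make $\underline{Y}_j$ itself an embedding of $\UnNcal$. Since these injectivities are open conditions for $\Cscr^0$-close enough maps of $b\underline\Ncal$ (a compact $1$-manifold) and, for $n\ge 5$, open for $\Cscr^1$-close enough maps of $\UnNcal$, by enforcing at step $j$ that $Y_k$ for all $k\ge j$ be uniformly closer than a fixed threshold $\eta_j>0$ to $Y_j$ on $b\Ncal$ (resp.\ on $\Ncal$ if $n\ge 5$), the limit $\underline{Y}|_{b\underline\Ncal}$ (resp.\ $\underline{Y}$ if $n\ge 5$) inherits injectivity. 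Since $\underline{Y}|_{b\underline\Ncal}$ is a continuous injective map from a compact $1$-manifold to $\R^n$, it is a topological embedding, and its image is a finite union of Jordan curves, as required.

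The main obstacle is the simultaneous bookkeeping of the three competing effects in the inductive step: the boundary-distance gain must be genuine (not offset by shrinking elsewhere), the overall $\Cscr^0$ perturbation must stay summable (so one must reduce the amplitude $r\approx\epsilon_j^{1/2}$ while increasing the number of arcs to accumulate a fixed gain of $1$ in $\dist(p_0,b\Ncal)$), and the $\Igot$-symmetry of the arc partition and of the normal frames $\{(\bu_i,\bv_i)\}$ must be preserved throughout, which forces working with the pair $(I_i,\Igot(I_i))$ as a single unit. Once these are organised, the remaining arguments - convergence, completeness via monotone increase of boundary distance, and flux preservation - are automatic from Theorem \ref{th:RH} combined with the general position result of Theorem \ref{th:generalposition2}.
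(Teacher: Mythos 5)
Your strategy is the one the paper uses: iterate the $\Igot$-invariant Riemann-Hilbert deformation (Theorem \ref{th:RH}) on $\Igot$-symmetric families of short boundary arcs to drive $\dist(p_0,b\Ncal)\to\infty$ while staying $\Cscr^0$-Cauchy on $\Ncal$, interleave applications of the general position theorem (Theorem \ref{th:generalposition2}) so that the limit embeds $b\underline\Ncal$ (and $\underline\Ncal$ when $n\ge 5$), and pass to the $\Cscr^0$-limit. The only organisational difference is that the paper packages the inner Riemann-Hilbert loop as Lemma \ref{lem:C0} (for any $\lambda>0$ one can $\Cscr^0$-approximate $X$ by $Y\in\CMI_\Igot^n(\Ncal)$ with $\dist_Y(p_0,b\Ncal)>\lambda$ and $\Flux_Y=\Flux_X$) and then applies it once per outer step with $\lambda=j$ and precision $\epsilon_j$, whereas you inline the two nested loops; both organisations are sound.

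One calibration in your inductive step needs fixing. A single application of Theorem \ref{th:RH} with amplitude $r$ displaces the boundary trace by $\approx r$ (the disk radius) and gains $\approx r$ in boundary distance, so your choice $r\approx\epsilon_j^{1/2}$ is simultaneously too big for your stated requirement $\|Y_j-Y_{j-1}\|_{0,\Ncal}<\epsilon_j$ and too small to produce a gain of $1$ in one pass. To gain $1$ per outer step with total boundary displacement $<\epsilon_j$, one must take $m$ sub-steps of amplitude $r$ each, with the successive displacement vectors chosen approximately orthogonal to the cumulative displacement; then the boundary-distance gain $\approx mr$ accumulates linearly while the $\Cscr^0$-displacement $\approx\sqrt{m}\,r$ accumulates quadratically (Pythagoras), forcing $r\lesssim\epsilon_j^2$ and $m\gtrsim\epsilon_j^{-2}$. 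This is exactly the calibration encoded in Lemma \ref{lem:C0}'s recursion $d_j=d_{j-1}+c/j$, $\delta_j^2=\delta_{j-1}^2+(c/j)^2$, with the Maximum Principle for harmonic maps promoting the boundary bound $\delta_j<\epsilon$ to a bound on all of $\Ncal$.
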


The key tool in the proof of Theorem \ref{th:Jordan} is the Riemann-Hilbert method, given by Theorem \ref{th:RH}, which allows 
one to perturb a given $\Igot$-invariant conformal minimal immersion $\Ncal\to\r^n$ near the boundary, having enough control 
on its placement in $\r^n$ to keep it suitably bounded. This enables us to avoid the typical shrinking of $\Ncal$ in every previous 
construction of complete bounded minimal surfaces in $\r^n$ (see the seminal paper by Nadirashvili \cite{Nadirashvili1996IM},
as well as \cite{AlarconLopez2013MA,AlarconForstneric2015MA} and the references therein for a history of the Calabi-Yau problem),
and hence work with a fixed conformal structure. See also the discussion in Section \ref{sec:intro-Jordan}.

The following technical lemma is crucial in the proof of Theorem \ref{th:Jordan}.

\begin{lemma}\label{lem:C0}
Let $(\Ncal,\Igot)$, $n$, and $X$ be as in Theorem \ref{th:Jordan}, and let $p_0\in\mathring\Ncal$. Given a number $\lambda>0$, 
we may approximate $X$ arbitrarily closely in the $\Cscr^0(\Ncal)$-topology by conformal minimal immersions $Y\in\CMI_\Igot^n(\Ncal)$ 
such that $\Flux_Y=\Flux_X$ and $\dist_Y(p_0,b\Ncal)>\lambda$.
\end{lemma}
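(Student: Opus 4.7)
The plan is to iterate the $\Igot$-invariant Riemann-Hilbert method (Theorem \ref{th:RH}) in the spirit of the orientable analog in \cite{AlarconDrinovecForstnericLopez2015PLMS}. Fix a $\Cscr^0$-approximation tolerance $\eta>0$. It suffices to construct a finite sequence $X=X_0,X_1,\ldots,X_N$ in $\CMI_\Igot^n(\Ncal)$ such that for each $j\in\{1,\ldots,N\}$ we have $\Flux_{X_j}=\Flux_X$, $\|X_j-X_{j-1}\|_{0,\Ncal}<\eta\,2^{-j}$, and $\dist_{X_j}(p_0,b\Ncal)\ge \dist_{X_{j-1}}(p_0,b\Ncal)+1$, and then set $Y:=X_N$ for any $N>\lambda$; the $\Cscr^0$-estimates telescope to $\|Y-X\|_{0,\Ncal}<\eta$ and the distance estimates accumulate to $\dist_Y(p_0,b\Ncal)>\lambda$.

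The inductive construction of $X_j$ from $X_{j-1}$ is a single application of Theorem \ref{th:RH} with $\Igot$-symmetric data. We pick a thin $\Igot$-invariant annular neighborhood $R$ of $b\Ncal$ with smooth $\Igot$-equivariant retraction $\rho\colon R\to b\Ncal$, and partition $b\Ncal$ into $\Igot$-symmetric pairs of short disjoint arcs $I_1,\Igot(I_1),\ldots,I_k,\Igot(I_k)$. On each pair we fix orthogonal vectors $\bu_i,\bv_i\in\R^n$ of equal positive norm and a holomorphic profile $\sigma_i(p,\zeta)$ satisfying $\sigma_i(p,0)=0$ and $\di\sigma_i(p,\zeta)/\di\zeta\neq 0$ on $\cd$, and extend it to $\Igot(I_i)$ by $\sigma_i(\Igot(p),\zeta):=\sigma_i(p,\zeta)$, so that the associated planar disk family $\alpha_i$ meets hypothesis (d) of Theorem \ref{th:RH} and is $\Igot$-symmetric. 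Together with an $\Igot$-invariant boundary weight $r\colon b\Ncal\to\R_+$ supported on $I:=\bigcup_i(I_i\cup\Igot(I_i))$, this furnishes $X_j\in\CMI_\Igot^n(\Ncal)$ with $\Flux_{X_j}=\Flux_{X_{j-1}}$ and properties (i)--(iv) of that theorem.

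The crucial quantitative content is that, for sufficiently many and sufficiently short arcs and appropriately chosen profiles $\sigma_i$ and weight $r$, the single Riemann-Hilbert step yields an intrinsic-distance gain of at least $1$ while the $\Cscr^0$-cost stays below $\eta\,2^{-j}$. The gain in $\dist_{X_j}(p_0,b\Ncal)$ comes from the fact that any path in $\Ncal$ from $p_0$ to $b\Ncal$ must cross the collar $R$, and conclusions (i) and (ii) of Theorem \ref{th:RH} force the image of its terminal segment to essentially run along the cylinder $\bigcup_{p\in I}\bigl\{X_{j-1}(p)+\alpha(p,r(p)\cd)\bigr\}$; the transverse extent of this cylinder then supplies the required length contribution by the Pythagorean-type argument underlying the orientable Calabi-Yau constructions via Riemann-Hilbert. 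The $\Cscr^0$-cost, in turn, is bounded by the sup norm of $\alpha$ on its support plus the free parameter $\epsilon$ in Theorem \ref{th:RH}, both of which are at our disposal.

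The main obstacle is precisely this balance: calibrating $(\bu_i,\bv_i,\sigma_i,r,\epsilon)$ and the fineness of the arc partition to guarantee a definite intrinsic-distance gain per step while forcing the $\Cscr^0$-perturbation to be arbitrarily small. Once this is set up exactly as in the orientable case, the only additional requirement specific to the present framework is $\Igot$-equivariance of all input data for Theorem \ref{th:RH}; this is achieved by making choices on a fundamental domain for $\Igot$ in $b\Ncal$ and extending them symmetrically, and introduces no new analytic difficulty given the $\Igot$-invariant gluing and period-dominating-spray machinery developed in Chapter \ref{ch:sprays}.
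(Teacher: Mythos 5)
Your inductive scheme is not achievable with the tools at hand, and this is a genuine gap rather than a matter of filling in details. You propose a finite sequence with a \emph{fixed} intrinsic-distance gain of $+1$ per step at an arbitrarily small $\Cscr^0$-cost $\eta\,2^{-j}$ per step. But the Riemann--Hilbert deformation of Theorem~\ref{th:RH} does not decouple these two quantities: if the deformation disks $\alpha(p,\cdot)$ have size $\epsilon$, the boundary values move by roughly $\epsilon$ (the boundary is pushed to the rim of the disks) and the intrinsic boundary distance increases by roughly $\epsilon$ as well. What is small of order $\epsilon^2$ is the change in \emph{outer radius}, not the $\Cscr^0$-displacement; these are different quantities. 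Using many short arcs within a single application of Theorem~\ref{th:RH} does not help, because the arcs are disjoint and a given boundary-bound path only sees the deformation over one of them. So there is no calibration of $(\bu_i,\bv_i,\sigma_i,r,\epsilon)$ that yields a distance gain $\ge 1$ with $\Cscr^0$-perturbation $<\eta\,2^{-j}$.

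The correct quantitative scheme, used in the paper and in the orientable precursor \cite[Lemma~4.2]{AlarconDrinovecForstnericLopez2015PLMS}, takes per-step gains $\epsilon_j=c/j$ that tend to zero but sum to infinity, and controls the \emph{accumulated} boundary displacement from the original $X$ (not the per-step cost) by a Pythagorean orthogonality argument: the $j$-th deformation at a boundary point is arranged to be essentially orthogonal to the displacement accumulated so far, so the squared deviations add, giving
$\|X_j-X\|_{0,b\Ncal}^2 \lesssim \delta_0^2 + c^2\sum_{i=1}^j i^{-2}$,
which stays below $\epsilon^2$ for $c=\frac1\pi\sqrt{6(\epsilon^2-\delta_0^2)}$, while $\dist_{X_j}(p_0,b\Ncal) > d_0 + c\sum_{i=1}^j i^{-1}\to\infty$. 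One then stops at the first $j$ with $d_j>\lambda$ and invokes the maximum principle for harmonic maps to pass the $\Cscr^0$-estimate from $b\Ncal$ to all of $\Ncal$. You invoke Pythagoras in the right spirit for the distance gain, but you need it a second time, and more essentially, to control the accumulation of $\Cscr^0$-error across steps; your scheme omits this and in its place asserts an impossible per-step estimate. Your handling of the $\Igot$-equivariance of the Riemann--Hilbert data (symmetric arc pairs, profiles extended by $\sigma_i(\Igot(p),\zeta)=\sigma_i(p,\zeta)$) is fine and matches the paper.
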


\begin{proof}[Sketch of proof of Lemma \ref{lem:C0}]
We choose $\epsilon>0$, take numbers $d_0$ and $\delta_0$ such that $0<d_0<\dist_X(p_0,b\Ncal)$ and $0<\delta_0<\epsilon$, 
and set $c=\frac1{\pi}\sqrt{6(\epsilon^2-\delta_0^2)}>0$. Thus, the sequences $\{d_j\}_{j\in\z_+}$ and $\{\delta_j\}_{j\in\z_+}$ given by
\[
	d_j:=d_{j-1}+\frac{c}j,\quad \delta_j:=\sqrt{\delta_{j-1}^2+\frac{c^2}{j^2}},\quad j\in\n
\]
satisfy
\[
	\{d_j\}_{j\in\z_+}\nearrow +\infty,\quad \{\delta_j\}\nearrow\epsilon.
\]
The proof consists of constructing a sequence of $\Igot$-invariant conformal minimal immersions $X_j\in\CMI_\Igot^n(\Ncal)$ 
satisfying the following conditions:
\begin{enumerate}[\rm (a$_j$)]
\item $\|X_j-X\|_{0,b\Ncal}<\delta_j$.
\vspace{1mm}
\item $\dist_{X_j}(p_0,b\Ncal)>d_j$.
\vspace{1mm}
\item $\Flux_{X_j}=\Flux_X$.
\end{enumerate}
By the Maximum Principle for harmonic maps, the immersion $Y:=X_j$ for sufficiently big $j\in\n$ satisfies the conclusion of the lemma. 

A sequence $X_j$ with these properties is constructed recursively. The basis of the induction is $X_0:=X$, whereas the 
inductive step can be guaranteed by adapting the proof of 
\cite[Lemma 4.2]{AlarconDrinovecForstnericLopez2015PLMS} to the non-orientable framework, 
using Theorems \ref{th:Mergelyan} and \ref{th:RH} 
(the Mergelyan theorem and the Riemann-Hilbert method for $\Igot$-invariant conformal minimal immersions in $\r^n$) instead 
of  \cite[Theorem 5.3]{AlarconForstnericLopez2016MZ} and \cite[Theorem 3.6]{AlarconDrinovecForstnericLopez2015PLMS} 
(their analogues in the orientable framework). In this case, we split the boundary components 
$\alpha_1,\Igot(\alpha_1),\ldots,\alpha_k,\Igot(\alpha_k)$ into compact connected subarcs such that the splittings of $\alpha_i$ 
and $\Igot(\alpha_i)$ are compatible with respect to the involution $\Igot$ 
(cf.\ \cite[Equation {\rm (4.5)}]{AlarconDrinovecForstnericLopez2015PLMS}).
\end{proof}

\begin{proof}[Proof of Theorem \ref{th:Jordan}]
Set $X_0:=X$ and fix a point $p_0\in\mathring \Ncal$. By using Lemma \ref{lem:C0} we may construct a sequence of 
conformal minimal immersions $X_j\in\CMI_\Igot^n(\Ncal)$, $j\in\n$, satisfying the following conditions:
\begin{enumerate}[\rm (a)]
\item $X_j$ approximates $X_{j-1}$ as closely as desired in the $\Cscr^0(\Ncal)$-topology.
\vspace{1mm}
\item $\dist_{X_j}(p_0,b\Ncal)>j$.
\vspace{1mm}
\item $\Flux_{X_j}=\Flux_X$.
\end{enumerate}
Further, by Theorem \ref{th:generalposition} we may also ensure that
\begin{enumerate}[\rm (a)]
\item[\rm (d)] $\underline X_j|_{b\underline\Ncal}\colon b\underline\Ncal\to\r^n$ is a topological embedding, 
and $\underline X_j\colon \underline\Ncal\to\r^n$ is a topological embedding if $n\geq 5$.
\end{enumerate}
If all approximations in {\rm (a)} are close enough then the sequence $\{X_j\}_{j\in\n}$ converges to an $\Igot$-invariant continuous 
map $Y\colon\Ncal\to\r^n$ satisfying the conclusion of the theorem.
See \cite[proof of Theorem 4.5]{AlarconDrinovecForstnericLopez2015PLMS} for further details.
\end{proof}

%
%

\section{Proper non-orientable minimal surfaces in $p$-convex domains}
\label{sec:proper}

In this section, we give several existence and approximation results for complete proper non-orientable minimal surfaces in $(n-2)$-convex 
domains in $\r^n$, $n>3$, or in minimally convex domains in $\r^3$. (See Section \ref{sec:intro-proper} for the definition of a $p$-convex 
domain in $\r^n$, $p\in\{1,\ldots,n-1\}$.) These results contribute to the major problem of  minimal surface theory of determining 
which domains in $\r^3$ admit complete properly immersed minimal surfaces, and how the geometry of the domain influences their 
conformal properties. For background on this topic, see e.g.\ Meeks and P\'erez \cite[Section 3]{MeeksPerez2004SDG} and 
the introduction to \cite{AlarconDrinovecForstnericLopez2015AJM}.

The paper \cite{AlarconDrinovecForstnericLopez2015AJM} contains  general existence and approximation results for proper 
complete minimal surfaces in {\em minimally convex} (i.e.\ $2$-convex domains) of $\r^3$ and $(n-2)$-convex domains of $\r^n$ 
for $n\ge 4$; for the case of convex domains see also \cite{AlarconDrinovecForstnericLopez2015PLMS} and references therein. 
We now provide analogous results for non-orientable minimal surfaces. 
In particular, we may prove the following theorem which includes  Theorem \ref{th:intro3} 
in the introduction as a corollary.

\begin{theorem}\label{th:convex}
Let $n\geq 3$, set 
\[
p=p(n):=\left\{
\begin{array}{ll}
2 & \text{if }n=3,
\\
n-2 & \text{if }n\geq 4,
\end{array}
\right.
\]
and let $(\Ncal,\Igot)$ be a compact bordered Riemann surface with a fixed-point-free antiholomorphic involution. 
Then the following assertions hold.
\begin{enumerate}[\rm (i)]
\item If $D$ is a $p$-convex domain in $\r^n$ then every immersion $X\in\CMI_\Igot^n(\Ncal)$ can be approximated, uniformly on 
compacts in $\mathring\Ncal=\Ncal\setminus b\Ncal$, by proper complete $\Igot$-invariant conformal minimal immersions 
$Y\colon\mathring\Ncal\to D$ with $\Flux_Y= \Flux_X$. 
Furthermore, if $D$ is relatively compact, smoothly bounded and strongly $p$-convex, then $Y$ can be chosen to admit a 
continuous extension to $\Ncal$ satisfying $Y(b\Ncal)\subset b D$ and the estimate
\[
\sup_{z\in \Ncal}\|Y(z)-X(z)\|\leq C\sqrt{\max_{z\in b\Ncal} \dist_{\r^n}(X(z),bD)}
\]
for some constant $C > 0$ depending only on the domain $D$. 
\vspace{1mm}
\item If $\Omega\subset D$ are open sets in $\r^n$ and $\rho\colon\Omega\to(0,+\infty)$ is a smooth strongly $p$-plurisubharmonic 
function such that for any pair of numbers $0 < c_1 < c_2$ the set 
$\Omega_{c_1,c_2} := \{\bx \in\Omega\colon c_1 \le \rho(\bx) \le c_2\}$
is compact, then every $X\in\CMI_\Igot^n(\Ncal)$ satisfying $X(b\Ncal)\subset\Omega$ 
can be approximated, uniformly on compacts in 
$\mathring\Ncal$, by complete $\Igot$-invariant conformal minimal immersions $Y\colon\mathring\Ncal\to D$ such that 
$\Flux_Y= \Flux_X$, $Y(z)\in\Omega$ for every $z\in\mathring\Ncal$ sufficiently close to $b\Ncal$, and
\[
	\lim_{z\to b\Ncal}\rho(Y(z))=+\infty.
\]
\item If $D\subset\r^n$ is a domain having a $\Cscr^2$ strongly $p$-convex boundary point,  then there exist proper, complete, 
$\Igot$-invariant conformal minimal immersions $\mathring\Ncal\to D$ extending continuously to $\Ncal$
and mapping $b\Ncal$ to $bD$.
\vspace{1mm}
\item If $D\subset\r^n$ is a domain having a $p$-convex end in the sense of {\rm (ii)}, or a strongly $p$-convex boundary point, 
then every open non-orientable smooth surface $\underline S$ carries a complete proper minimal immersion $Y\colon \underline S \to D$ 
with arbitrary flux.
\end{enumerate}
Moreover, if $n\ge 5$ then the immersions $Y$ in {\rm (i)}--{\rm (iv)} can be chosen such that the induced
map $\underline Y\colon\UnNcal\to\R^n$ is an embedding.
\end{theorem}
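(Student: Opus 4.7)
The plan is to follow the strategy of the orientable analogue \cite[Theorem 1.1]{AlarconDrinovecForstnericLopez2015AJM}, adapted to the $\Igot$-invariant setting using the tools developed in Chapters \ref{ch:sprays} and \ref{ch:Runge}. Working on the oriented double cover $(\Ncal,\Igot)$ of $\UnNcal$, I would construct an $\Igot$-invariant conformal minimal immersion $Y\colon \mathring\Ncal \to D$ (extending continuously to $\Ncal$ in the strongly $p$-convex case), which then descends to the desired map on $\UnNcal=\Ncal/\Igot$. The central tools are the Riemann-Hilbert theorem for $\Igot$-invariant conformal minimal immersions (Theorem \ref{th:RH}), together with the Runge-Mergelyan approximation theorems \ref{th:Mergelyan} and \ref{th:Mergelyan1} and the general position theorem \ref{th:generalposition2}.

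For parts (i) and (ii), the main construction is a recursive procedure producing a sequence of $\Igot$-invariant conformal minimal immersions $X_j\in\CMI^n_\Igot(\Ncal)$ that converges uniformly on compacts in $\mathring\Ncal$ to the required map $Y$, alternating two kinds of Riemann-Hilbert modifications. The first, \emph{pushing towards $bD$} (or increasing $\rho$), deforms $X_j$ along every $\Igot$-symmetric pair of short boundary arcs $I'\cup\Igot(I')\subset b\Ncal$ in the direction of an $\Igot$-invariant family of conformal minimal disks centered at the boundary values along which $\rho$ grows quadratically; such disks arise as real parts of null holomorphic Levi disks in $D\times \imath \R^n\subset\C^n$, using the strong $p$-plurisubharmonicity of $\rho$ with $p=n-2$ (or $p=2$ when $n=3$) exactly as in \cite[Lemma 3.1]{AlarconDrinovecForstnericLopez2015AJM}. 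The second, \emph{increasing intrinsic distance}, is the procedure of Lemma \ref{lem:C0} used to prove Theorem \ref{th:Jordan}: a $\Cscr^0$-small Riemann-Hilbert modification along $\Igot$-symmetric short boundary arcs that raises $\dist_{X_j}(p_0,b\Ncal)$ by a prescribed amount. Because the second step is arbitrarily $\Cscr^0$-small while the first produces a definite increment of $\rho$ (or brings the boundary a definite amount closer to $bD$), they can be alternated as in the orientable case to yield a limit $Y$ which is simultaneously complete and proper in $D$ (respectively, for (ii), has $\rho\circ Y\to+\infty$ at $b\Ncal$). Boundary continuity and the quantitative norm estimate when $D$ is smoothly bounded and strongly $p$-convex follow from the distance-to-boundary refinement of the first step exactly as in \cite[Theorem 1.9]{AlarconDrinovecForstnericLopez2015AJM}; critical levels of $\rho$ are passed by reduction to the noncritical case. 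Flux preservation is built into Theorem \ref{th:RH} (iv).

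Part (iii) reduces to (ii) by placing, via Theorem \ref{th:Mergelyan}, a small $\Igot$-symmetric perturbation of $X$ whose boundary image lies in a neighborhood $\Omega$ of a strongly $p$-convex boundary point $q\in bD$, where a local strongly $p$-plurisubharmonic exhaustion of $\Omega$ satisfying the hypothesis of (ii) is obtained by the standard construction from the defining function of $bD$. Part (iv) follows from (ii) or (iii) by exhausting the open non-orientable surface $\underline S$ by an increasing sequence of smoothly bounded compact bordered non-orientable subsurfaces, lifting to an exhaustion of an oriented double cover by $\Igot$-invariant compact bordered Riemann surfaces, and combining the construction on each level with Theorem \ref{th:Mergelyan} to bridge between successive stages while prescribing the flux. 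For the embedding statement when $n\geq 5$, Theorem \ref{th:generalposition2} is invoked at every step of the inductive construction, and embeddedness of $\underline{X_j}$ on each compact subset of $\mathring\Ncal$ is an open property under sufficiently small $\Cscr^0$ perturbations, so it persists in the limit provided the approximations are made fast enough.

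The hard part will be the $\Igot$-equivariant Levi disk construction used in the first kind of modification: for a pair of short boundary arcs $I'$ and $\Igot(I')$ one needs a $\Cscr^1$ family of null holomorphic disks $G_p\colon\overline\D\to\C^n$ with $G_{\Igot(p)}(\zeta)=\overline{G_p(\zeta)}$, such that the real parts $\alpha(p,\zeta)=\Re(G_p(\zeta)-G_p(0))$ fit condition (d) of Theorem \ref{th:RH} and, when attached at $X_j(p)\in D$, satisfy $\rho(X_j(p)+\alpha(p,\zeta))\geq \rho(X_j(p))+c|\zeta|^2$ with a uniform constant $c>0$. On $I'$ these disks are produced as in \cite[Lemma 3.1]{AlarconDrinovecForstnericLopez2015AJM} by picking Levi directions for $\rho$ in the tube $D\times\imath\R^n$; on $\Igot(I')$ they are then forced by the $\Igot$-invariance requirement, and the quadratic lower bound on the $\Igot(I')$ side follows because both $\rho$ and the null quadric behave symmetrically under complex conjugation. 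The remaining technical work is a careful but routine symmetrization of the quantitative estimates in \cite{AlarconDrinovecForstnericLopez2015PLMS, AlarconDrinovecForstnericLopez2015AJM} combined with the $\Igot$-invariant gluing results of Chapter \ref{ch:sprays}.
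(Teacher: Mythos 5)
Your proposal follows essentially the same route as the paper: a recursive alternation of two kinds of $\Igot$-invariant Riemann--Hilbert modifications (one raising $\rho$ along Levi-type conformal minimal disks attached at $\Igot$-symmetric pairs of boundary arcs, one being the $\Cscr^0$-small intrinsic-distance boost from Lemma~\ref{lem:C0}), passage through critical levels of $\rho$ by reduction to the noncritical case, and general position via Theorem~\ref{th:generalposition2}. The paper packages the two operations into a single inductive step (Lemma~\ref{lem:lifting2}) built from Lemma~\ref{lem:convex} (the $\Igot$-invariant analogue of \cite[Prop.~3.3]{AlarconDrinovecForstnericLopez2015AJM}) and Lemma~\ref{lem:C0}, but that is only a difference of organization. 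Two small remarks: the nontrivial Levi disk construction from \cite[Lemma~3.1]{AlarconDrinovecForstnericLopez2015AJM} is needed only for $n=3$; for $n\ge 4$ with $p=n-2$ one simply takes planar disks in parallel affine $2$-planes (so condition~(d) of Theorem~\ref{th:RH} is satisfied almost for free), and the $\Igot$-equivariance of the disk family is automatic once it is indexed by the image point $X(z)=X(\Igot(z))$ rather than by $z$, which is a shade simpler than imposing $G_{\Igot(p)}=\overline{G_p}$. Also, embeddedness on compacts of $\mathring\Ncal$ persists under small $\Cscr^1$, not $\Cscr^0$, perturbations; this is what the construction actually delivers on the $K_j$'s, so your conclusion stands.
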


The most general known results in this line up to now are due to Ferrer-Mart\'in-Meeks \cite{FerrerMartinMeeks2012AM} and 
Alarc\'on-L\'opez \cite[Theorem 1.2]{AlarconLopez2014AGMS}, where the conclusion of assertion {\rm (iv)} is proved for convex 
(i.e., $1$-convex) domains in $\r^3$. In particular, Theorem \ref{th:convex} furnishes the first known examples of complete proper 
non-orientable minimal surfaces in a general $p$-convex domain of $\r^n$ for $n=3$ and $p=2$, and for any $n> 3$ and $1<p\le n-2$. 
Recall that the complement of a properly embedded minimal surface in $\r^3$ is a minimally convex domain
(see \cite[Corollary 1.3]{AlarconDrinovecForstnericLopez2015AJM}), 
and hence our results in Theorem \ref{th:convex} apply to this class of domains. 
These are also the first examples of complete 
proper non-orientable minimal surfaces in convex domains in $\r^n$, for any $n\ge 3$, 
with control on the underlying conformal structure.

As usual, we construct the immersions in Theorem \ref{th:convex} by a recursive procedure. 
A main ingredient for the inductive steps is given by the following lemma.

\begin{lemma}
\label{lem:convex}
Let $n$ and $p$ be as in Theorem \ref{th:convex}, let
$D$ be a domain in $\r^n$, and let $\rho\colon D \to \r$ be a $\Cscr^2$ strongly $p$-plurisubharmonic Morse 
function with the (discrete) critical locus $P$. Given a compact set $L \subset D\setminus P$, 
there exist constants $\epsilon_0>0$ and $C_0>0$ such that the following holds.

Let $(\Ncal,\Igot)$ be a compact bordered Riemann surface with a fixed-point-free antiholomorphic involution, and let 
$X\colon \Ncal\to D$ be an $\Igot$-invariant conformal minimal immersion of class $\Cscr^1(\Ncal)$. 
Given an $\Igot$-invariant continuous function $\epsilon \colon b\Ncal\to [0,\epsilon_0]$ supported on the set
$J=\{\zeta\in b\Ncal\colon X(\zeta) \in L\}$, an open set $U\subset \Ncal$ containing $\supp(\epsilon)$ in its relative interior, 
and a constant $\delta>0$, there exists an $\Igot$-invariant conformal minimal immersion 
$Y\colon \Ncal\to D$ satisfying the following conditions:
\begin{enumerate}[\rm (i)]
\item $|\rho(Y(\zeta)) - \rho(X(\zeta)) -\epsilon(\zeta)| <\delta$ for every $\zeta\in b\Ncal$.
\vspace{1mm}
\item $\rho(Y(\zeta))\ge \rho(X(\zeta)) -\delta$ for every $\zeta\in \Ncal$.      
\vspace{1mm}                         
\item $\|Y-X\|_{1,\Ncal\setminus U}<\delta$.  	
\vspace{1mm}			                                         
\item $\|Y-X\|_{0,M} \le C_0 \sqrt{\epsilon_0}$.
\vspace{1mm}							      
\item $\Flux_Y=\Flux_X$.									                               
\end{enumerate} 
\end{lemma}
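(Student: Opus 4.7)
The plan is to adapt the proof of the corresponding orientable lemma, namely \cite[Lemma 3.1]{AlarconDrinovecForstnericLopez2015AJM} and its higher-dimensional analog, to the $\Igot$-invariant setting, using the Riemann--Hilbert theorem for $\Igot$-invariant conformal minimal immersions (Theorem \ref{th:RH}) as the key tool in place of its orientable counterpart. Since $L \subset D \setminus P$ is compact and disjoint from the critical locus, the hypothesis that $\rho$ is strongly $p$-plurisubharmonic yields, at each point $x \in L$, a $p$-dimensional linear subspace $\Lambda_x \subset \R^n$ on which $\Hess_\rho(x)$ is positive definite and which lies in $\ker d\rho(x)$. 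Using this, one constructs a continuous family of conformal minimal (in fact planar, if $p = n-2$) disks $g_x \colon \overline\D \to \R^n$ with $g_x(0) = x$, $\|g_x'(0)\| = 1$, and satisfying the Levi-type quadratic growth estimate
\begin{equation*}
   \rho(g_x(\zeta)) \ge \rho(x) + c |\zeta|^2, \qquad \zeta \in \overline\D,
\end{equation*}
with a constant $c > 0$ uniform in $x \in L$. For $n = 3$ and $p = 2$ this is precisely the content of \cite[Lemma 3.1]{AlarconDrinovecForstnericLopez2015AJM}; for $n \ge 4$ and $p = n-2$ one uses complex null disks tangent to the complexified Levi form as in \cite[Remark 3.8]{AlarconDrinovecForstnericLopez2015AJM}.

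Next, I would symmetrize this family to feed it into Theorem \ref{th:RH}. Cover the support of $\epsilon$ in $b\Ncal$ by finitely many pairwise disjoint compact subarcs $I_1, \ldots, I_k$ chosen small enough that $I' := \bigcup_i I_i$ satisfies $I' \cap \Igot(I') = \emptyset$, and set $I := I' \cup \Igot(I')$. On $I'$ define $\alpha(p, \zeta) := g_{X(p)}(\zeta) - X(p)$, and extend to $\Igot(I')$ by $\alpha(\Igot(p), \zeta) := \alpha(p, \zeta)$; by the $\Igot$-invariance of $\epsilon$, taking $r(p) := \kappa \sqrt{\epsilon(p)}$ for a suitable constant $\kappa > 0$ produces an $\Igot$-invariant boundary function $r \colon b\Ncal \to \R_+$ supported on $I$. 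The hypotheses (a)--(d) of Theorem \ref{th:RH} are then verified by construction (with condition (d) used for $n > 3$, via the planar disks furnished by the Levi-disk construction). Applying Theorem \ref{th:RH} with approximation parameter $\delta' \ll \delta$, $U$ as given, and the neighborhood $\Omega \subset U$ of $I$ arranged small, produces the desired $\Igot$-invariant conformal minimal immersion $Y \in \CMI^n_\Igot(\Ncal)$ with $\Flux_Y = \Flux_X$, giving (iii) and (v).

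The quantitative conclusions (i), (ii), (iv) will follow from the quadratic growth of $\rho$ along the disks combined with the Riemann--Hilbert estimates (i)--(iii) of Theorem \ref{th:RH}. Indeed, for $p \in I$, condition (i) of Theorem \ref{th:RH} gives $Y(p)$ within $\delta'$ of $\varkappa(p, \T) = X(p) + \alpha(p, r(p)\T)$, and since $\rho(g_{X(p)}(\zeta)) = \rho(X(p)) + c r(p)^2 |\zeta|^2 + O(r(p)^3) = \rho(X(p)) + c \kappa^2 \epsilon(p) + O(\epsilon(p)^{3/2})$ for $|\zeta| = 1$, the choice $\kappa := 1/\sqrt c$ yields (i) with error controlled by $\delta$ once $\epsilon_0$ is small. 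Condition (ii) on $\Ncal$ follows because outside $\Omega$ the map $Y$ is $\delta'$-close to $X$ (so $\rho$ changes by less than $\delta$), while on $\Omega$ condition (ii) of Theorem \ref{th:RH} confines $Y$ to a neighborhood of the disks $g_{X(\rho(p))}(\overline\D)$, on which $\rho \ge \rho(X(p)) - O(\delta')$. The uniform bound (iv) with $C_0 := \sup_{x \in L} \max_{\zeta \in \overline\D} \|g_x(\zeta) - x\|/\sqrt{\epsilon_0} \cdot \sqrt{\epsilon_0}$ follows from the Pythagoras-type estimate inherent in the Riemann--Hilbert deformation, exactly as in \cite[proof of Lemma 3.1]{AlarconDrinovecForstnericLopez2015AJM}.

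The main technical obstacle is the construction of the Levi-disk family $\{g_x\}_{x \in L}$ with the uniform quadratic estimate and, in dimension $n > 3$, with the planarity required by hypothesis (d) of Theorem \ref{th:RH}; however, this construction is essentially identical to the one in the cited orientable reference, since the $\Igot$-symmetrization enters only in the final application of Theorem \ref{th:RH} and not in the choice of disks. Once this is in hand, the remainder of the argument is a direct translation of the orientable proof, with the sole novelty being the use of $\Igot$-symmetric boundary data $(I, r, \alpha)$ to which our $\Igot$-invariant Riemann--Hilbert theorem applies.
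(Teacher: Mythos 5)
Your plan correctly identifies the two key ingredients — a continuous family of conformal minimal disks along which $\rho$ grows quadratically (planar disks when $p=n-2$), and the $\Igot$-invariant Riemann--Hilbert theorem (Theorem \ref{th:RH}) as the engine — and your $\Igot$-symmetrization of the boundary data is handled correctly. This matches the paper's approach at the level of ingredients.

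However, there is a genuine gap in your use of Theorem \ref{th:RH}. You propose a \emph{single} application, covering $\supp(\epsilon)$ by a collection of \emph{pairwise disjoint} compact subarcs $I_1,\ldots,I_k$ with $I'\cap\Igot(I')=\emptyset$, and applying the theorem once to $I=I'\cup\Igot(I')$. This cannot work in general: the hypotheses of the lemma place no restriction on $J=\{\zeta\in b\Ncal:X(\zeta)\in L\}$, so $\supp(\epsilon)$ may well be an entire boundary circle of $\Ncal$, and a circle cannot be covered by finitely many pairwise disjoint proper closed arcs. Moreover, Theorem \ref{th:RH} explicitly requires that each $I_i$ is not a whole connected component of $b\Ncal$, so enlarging the arcs is not an option. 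Outside the neighborhood $\Omega$ of $I$, Theorem \ref{th:RH} only makes $Y$ $\Cscr^1$-close to $X$ and does not raise the level of $\rho$; hence conclusion (i) would fail on $\supp(\epsilon)\setminus I$.

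The paper's proof resolves this by iterating: one follows \cite[Proposition 3.3]{AlarconDrinovecForstnericLopez2015AJM} (not \cite[Lemma 3.1]{AlarconDrinovecForstnericLopez2015AJM}, which only furnishes the Levi-disk family in $\R^3$), splitting each boundary component into short arcs and performing the Riemann--Hilbert modification in several rounds over alternating subcollections of disjoint $\Igot$-symmetric arcs, so that after finitely many steps the entire support of $\epsilon$ has been treated. The $\sqrt{\epsilon_0}$ in the $\Cscr^0$ estimate (iv) reflects precisely the Pythagoras-type accumulation of displacements across these rounds; your derivation of $C_0$ from a single deformation does not account for the iterated process. Your mis-identification of Lemma 3.1 as the orientable analogue of the present lemma appears to be the source of the missing iterative structure.
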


The proof of Lemma \ref{lem:convex} follows the lines of \cite[proof of Proposition 3.3]{AlarconDrinovecForstnericLopez2015AJM}, 
but symmetrizing the construction with respect to the involution $\Igot$ and using Theorem \ref{th:RH} instead of 
\cite[Theorem 3.2]{AlarconDrinovecForstnericLopez2015AJM} (the Riemann-Hilbert problem for conformal minimal surfaces in $\r^3$). 
In particular, all the sets and maps involved in the proof must be taken to be $\Igot$-invariant. The key of the proof is that for every 
$z\in b\Ncal$ there exists a conformal minimal disk $\alpha_{X(z)}=\alpha_{X(\Igot(z))}\colon \overline\d\to\r^n$, depending smoothly 
on $z$, such that $\alpha_{X(z)}(0)=X(z)$, $\rho(\alpha_{X(z)}(\xi))\geq \rho(X(z))$ for all $\xi\in\overline\d$, and 
$\rho(\alpha_{X(z)}(\xi))$ is suitably larger than $\rho(X(z))$ for $|\xi|=1$ 
(see \cite[proof of Proposition 3.3]{AlarconDrinovecForstnericLopez2015AJM} for more details). 
The existence of such disks when $n=3$ is guaranteed by \cite[Lemma 3.1]{AlarconDrinovecForstnericLopez2015AJM}, 
whereas for $n>3$ one can simply take planar disks in parallel planes (recall that for $n>3$ we have $p=n-2$ and see 
\cite[Remark 3.8]{AlarconDrinovecForstnericLopez2015AJM}). Using such disks as boundary disks of suitable Riemann-Hilbert problems 
in a recursive procedure and applying Theorem \ref{th:RH} yields the result. We refer to 
\cite[proof of Proposition 3.3]{AlarconDrinovecForstnericLopez2015AJM} for further details.

Combining Lemma \ref{lem:C0}, Lemma \ref{lem:convex}, and \cite[Lemmas 3.5 and 3.6]{AlarconDrinovecForstnericLopez2015AJM} 
(the latter ones enable one to avoid the critical points of a Morse exhaustion function $\rho\colon D\to\r$ when applying
Lemma \ref{lem:convex}), we may prove the following result which provides a recursive step in the proof of Theorem \ref{th:convex}.
\begin{lemma}\label{lem:lifting2}
Let $(\Ncal,\Igot)$ be as in Theorem \ref{th:convex}, let $D\subset\r^n$ $(n\ge 3)$ be a domain,
let $\rho$ be a strongly $p$-plurisubharmonic function on $D$,
and let $a<b$ be real numbers such that the set $D_{a,b}=\{x\in D: a<\rho(x) <b\} $ 
is relatively compact in $D$. Given numbers $0<\eta<b-a$ and $\delta>0$, an $\Igot$-invariant conformal minimal immersion 
$X \colon \Ncal\to D$ such that $X(b\Ncal) \subset D_{a,b}$, a point $p_0\in \mathring \Ncal$, a number $d>0$, and a 
compact set $K\subset \mathring \Ncal$, $X$ may be approximated in the $\Cscr^1(K)$-topology by $\Igot$-invariant 
conformal minimal immersions $Y\colon \Ncal \to D$ satisfying the following conditions: 
\begin{enumerate}[(i)]
\item[$\bullet$]   $Y(b\Ncal) \subset D_{b-\eta,b}$.
\vspace{1mm}
\item[$\bullet$]   $\rho(Y(\zeta))\ge \rho(X(\zeta)) -\delta$ for every $\zeta\in \Ncal$.
\vspace{1mm}
\item[$\bullet$]   $\dist_{Y}(p_0,b\Ncal) > d$.
\vspace{1mm}
\item[$\bullet$]   $\Flux_Y=\Flux_X$.
\end{enumerate}
\end{lemma}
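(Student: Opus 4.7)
The plan is to produce $Y$ as the last element of a finite sequence of $\Igot$-invariant conformal minimal immersions generated by alternating two kinds of modifications. The first kind, based on Lemma \ref{lem:convex}, pushes $X(b\Ncal)$ from $D_{a,b}$ successively into higher sublevel complements, ending in $D_{b-\eta,b}$, while barely perturbing $X$ on $K$ and decreasing $\rho\circ X$ globally by an arbitrarily small amount. The second kind is a single $\Cscr^0$-small completeness modification furnished by Lemma \ref{lem:C0}, which yields $\dist_Y(p_0,b\Ncal)>d$ without disturbing the achievements of the first part.

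First I would pick numbers $a=c_0<c_1<\cdots<c_N=b-\eta$ and positive constants $\delta_0,\ldots,\delta_{N-1}$ with $\sum\delta_j<\delta/2$, so that each subinterval $[c_j,c_{j+1}]$ either avoids the critical values of $\rho$ on $D_{a,b}$ or contains exactly one such value, well isolated from the endpoints. Setting $Y_0=X$, I would construct $Y_{j+1}\in\CMI_\Igot^n(\Ncal)$ inductively so that $Y_{j+1}(b\Ncal)\subset D_{c_{j+1},b}$, $\rho\circ Y_{j+1}\ge\rho\circ Y_j-\delta_j$ on $\Ncal$, $\|Y_{j+1}-Y_j\|_{\Cscr^1(K)}<\delta_j$, and $\Flux_{Y_{j+1}}=\Flux_{Y_j}$. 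For the noncritical transitions this is a direct application of Lemma \ref{lem:convex}, with the target increase $\epsilon\equiv c_{j+1}-c_j$ prescribed along the entire boundary (supported in the appropriate $\Igot$-symmetric set $J$) and with $U$ chosen as a thin $\Igot$-invariant collar of $b\Ncal$ disjoint from $K$; the $\Igot$-invariance of the function $\epsilon$ is automatic because $\rho\circ X\circ\Igot=\rho\circ X$. For the critical transitions one transfers \cite[Lemmas 3.5 and 3.6]{AlarconDrinovecForstnericLopez2015AJM} to the $\Igot$-invariant setting, pairing off-fixed Morse critical points by $\Igot$ and using Corollary \ref{cor:noncritical} to glue across the critical level while preserving the flux and the $\Cscr^1(K)$-closeness.

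After $N$ such steps one obtains $Y_N\in\CMI_\Igot^n(\Ncal)$ with $Y_N(b\Ncal)\subset D_{b-\eta,b}$, $\rho\circ Y_N\ge\rho\circ X-\delta/2$ on $\Ncal$, $Y_N$ close to $X$ in $\Cscr^1(K)$, and $\Flux_{Y_N}=\Flux_X$. Then I would invoke Lemma \ref{lem:C0} with $\lambda=d$ to obtain an $\Igot$-invariant $Y\in\CMI_\Igot^n(\Ncal)$ with $\dist_Y(p_0,b\Ncal)>d$, $\Flux_Y=\Flux_{Y_N}=\Flux_X$, and $\|Y-Y_N\|_{\Cscr^0(\Ncal)}$ as small as desired. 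Because $Y_N(\Ncal)$ is a compact subset of the open set $D$ and $Y_N(b\Ncal)$ lies in the open slab $\{b-\eta<\rho<b\}\Subset D$, a sufficiently close $\Cscr^0$ approximation forces $Y(\Ncal)\subset D$, $Y(b\Ncal)\subset D_{b-\eta,b}$, and $\rho\circ Y\ge\rho\circ Y_N-\delta/2\ge\rho\circ X-\delta$ on $\Ncal$.

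The main obstacle I anticipate is that the completeness step delivers only $\Cscr^0$ data, which might in principle spoil the $\Cscr^1(K)$ approximation of $X$. This is however resolved by the interior regularity of harmonic maps: since both $Y$ and $Y_N$ are conformal minimal (hence harmonic) on $\mathring\Ncal$, a sufficiently close $\Cscr^0(\Ncal)$ approximation of $Y_N$ by $Y$ automatically yields an arbitrarily close $\Cscr^1$ approximation on any compact subset of $\mathring\Ncal$, in particular on $K$. With this observation in place, choosing the tolerances $\delta_j$ and the final $\Cscr^0$ error small enough in advance closes all estimates simultaneously and completes the construction.
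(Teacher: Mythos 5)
Your outline reproduces the proof the paper has in mind: the text preceding the lemma names exactly the three ingredients you combine (Lemma \ref{lem:convex} with the critical-point-avoidance lemmas of \cite{AlarconDrinovecForstnericLopez2015AJM} for the staircase of boundary lifts, followed by a single $\Cscr^0$-small completeness modification from Lemma \ref{lem:C0}), and the paper then simply defers to \cite[Lemma~3.7]{AlarconDrinovecForstnericLopez2015AJM}, whose structure you follow. Your closing observation that interior elliptic estimates upgrade the final $\Cscr^0(\Ncal)$-closeness to $\Cscr^1(K)$-closeness on the compact $K\subset\mathring\Ncal$ is the right way to reconcile the $\Cscr^0$ nature of Lemma \ref{lem:C0} with the $\Cscr^1(K)$ approximation required; the only cosmetic imprecision is that $\epsilon$ in each lift should taper to zero rather than be literally constant (so that $\rho\circ Y_{j+1}$ does not overshoot $b$ for boundary points already high up), but this is the standard choice and does not affect the argument.
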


The proof of Lemma \ref{lem:lifting2} follows word by word the one of \cite[Lemma 3.7]{AlarconDrinovecForstnericLopez2015AJM}, 
so we omit it.

With Lemmas \ref{lem:convex} and \ref{lem:lifting2} in hand, one may prove Theorem \ref{th:convex} by following the arguments in 
\cite[proof of Theorems 1.1, 1.7, and 1.9]{AlarconDrinovecForstnericLopez2015AJM}, but using Theorem \ref{th:RH} instead of 
\cite[Theorem 3.2]{AlarconDrinovecForstnericLopez2015AJM}, Lemma \ref{lem:C0} instead of 
\cite[Lemma 4.1]{AlarconDrinovecForstnericLopez2015PLMS}, Lemma \ref{lem:convex} instead of 
\cite[Proposition 3.3]{AlarconDrinovecForstnericLopez2015AJM}, and also Theorem \ref{th:generalposition2} 
in order to guarantee the embeddednes of $\underline Y$ if $n\ge 5$.

For instance, the proof of Theorem \ref{th:convex} {\rm (i)} goes as follows. We set $X_0:=X$, choose a strongly 
$p$-plurisubharmonic Morse exhaustion function $\rho\colon D\to\r$, pick suitable increasing sequences  $a_1<a_2<a_3\cdots$ 
and $d_1<d_2<d_3\cdots$ such that $\sup_\Ncal \rho\circ X_0 < a_1$, $\lim_{j\to\infty} a_j=+\infty$, and 
$\lim_{j\to\infty} d_j=+\infty$, and take a decreasing sequence $\delta_j>0$ with 
$\delta=\sum_{j=1}^\infty \delta_j <\infty$. We then fix a point $p_0\in \mathring \Ncal$ and use the cited results to construct a 
sequence of smooth $\Igot$-invariant conformal minimal immersions $X_j\colon \Ncal\to D$, and an increasing sequence 
of $\Igot$-invariant compacts 
$K_0\subset K_1\subset  \cdots\subset \bigcup_{j=1}^\infty K_j=\mathring \Ncal$ with $p_0\in\mathring K_0$, 
such that the following conditions hold for every $j=1,2,\ldots$:
\begin{enumerate}[\rm (i)]
\item   $a_j < \rho\circ X_{j} < a_{j+1}$ on $\Ncal\setminus K_j$. 
\vspace{1mm}
\item  $\rho\circ X_j > \rho\circ X_{j-1}-\delta_j$ on $\Ncal$.           
\vspace{1mm}
\item   $X_j$ is close to $X_{j-1}$ in the $\Cscr^1(K_{j-1})$-topology.
\vspace{1mm}
\item   $\dist_{X_j}(p_0,\Ncal\setminus K_j) > d_j$.                         
\vspace{1mm}
\item    $\Flux_{X_j}=\Flux_{X_{j-1}}$.
\end{enumerate}
If $D$ is relatively compact, smoothly bounded, and strongly $p$-convex then we may also ensure that
\begin{enumerate}[\rm (i)]
\item[\rm (vi)]  $\|X_j-X_{j-1}\|_{0,\Ncal} \le C_0\sqrt{2}^{-j} \sqrt{|a_0|}$ for some constant $C_0>0$ which does not depend on $j$.
\end{enumerate}
If the all the approximations in {\rm (iii)} are close enough then the sequence $X_j$ converges to an $\Igot$-invariant conformal minimal immersion $Y\colon \mathring\Ncal\to D$ satisfying the conclusion of the theorem. 

The proofs of Theorem \ref{th:convex} {\rm (ii)}, {\rm (iii)}, and {\rm (iv)} follow similar basic patterns. We again refer to  \cite[Proof of Theorems 1.1, 1.7, and 1.9]{AlarconDrinovecForstnericLopez2015AJM} and leave the details to the interested reader.


\appendix

\backmatter

\providecommand{\bysame}{\leavevmode\hbox to3em{\hrulefill}\thinspace}
\providecommand{\MR}{\relax\ifhmode\unskip\space\fi MR }
\providecommand{\MRhref}[2]{%
  \href{http://www.ams.org/mathscinet-getitem?mr=#1}{#2}
}
\providecommand{\href}[2]{#2}

\printindex

\end{document}